\documentclass[12pt,twoside,leqno]{article}
\usepackage{amsthm,amsfonts,amssymb,amsmath,amscd,enumerate}
\usepackage[colorlinks=true]{hyperref}
\usepackage[mathscr]{eucal}
\usepackage[all]{xy}
\usepackage{mathtools}
\usepackage{mathrsfs}
\usepackage{comment}
\usepackage{listings} 
\setlength{\oddsidemargin}{0cm}
\setlength{\evensidemargin}{0cm}
\setlength{\textheight}{22cm}
\setlength{\textwidth}{16cm}\theoremstyle{plain}

\newcommand{\Addresses}{{
  \bigskip
  \footnotesize

  \textsc{Fakultät für Mathematik, Universität Regensburg, 93040 Regensburg, Germany}\par\nopagebreak
  \textit{E-mail address}: \texttt{yanshuai.qin@ur.de}\par 
\textsc{IMJ-PRG, Sorbonne Universit\'e, 4 place Jussieu, 75005 Paris, France}\par\nopagebreak
  \textit{E-mail address}: \texttt{zhenghuili@imj-prg.fr}\par\nopagebreak
  \textsc{Instytut Matematyczny Polskiej Akademii Nauk
ul. Śniadeckich 800-656 Warszawa}\par\nopagebreak
  \textit{E-mail address}: \texttt{vertl@impan.pl}\par 

}}

\newcommand{\CA}{{\mathcal {A}}}

\newcommand{\CO}{{\mathcal {O}}}

\newcommand{\CX}{{\mathcal {X}}}
\newcommand{\CY}{{\mathcal {Y}}}

\newcommand{\Char}{{\mathrm{char}}}
\newcommand{\Br}{{\mathrm{Br}}}

\newcommand{\red}{{\mathrm{red}}}

\newcommand{\Gal}{{\mathrm{Gal}}}
\newcommand{\GL}{{\mathrm{GL}}}

\newcommand{\Hom}{{\mathrm{Hom}}}

\newcommand{\Ker}{{\mathrm{Ker}}}

\newcommand{\NS}{{\mathrm{NS}}}

\newcommand{\non}{{\mathrm{non}\text{-}}}

\newcommand{\Pic}{\mathrm{Pic}}

\renewcommand{\Im}{{\mathrm{Im}}}

\font\cyr=wncyr10  \newcommand{\Sha}{\hbox{\cyr X}}

\newcommand{\Spf}{{\mathrm{Spf}}}

\newcommand{\syn}{{\mathrm{syn}}}
\newcommand{\tor}{{\mathrm{tor}}}

\newcommand{\nr}{{\mathrm{nr}}}

\newcommand{\fppf}{\mathrm{fppf}} 
\newcommand{\et}{\mathrm{et}} 
 
\newcommand{\rig}{\mathrm{rig}}
\newcommand{\cris}{\mathrm{crys}}

\DeclareMathOperator{\Spec}{Spec}

\newcommand{\QQ}{\mathbb{Q}}

\newcommand{\ZZ}{\mathbb{Z}} 
\newcommand{\FF}{\mathbb{F}} 
\newcommand{\GG}{\mathbb{G}}

\newcommand{\lra}{\longrightarrow}

\newcommand{\SI}{{\mathscr{I}}}

\newtheorem{thm}{Theorem}[section]
\newtheorem{cor}[thm]{Corollary}
\newtheorem{lem}[thm]{Lemma}
\newtheorem{prop}[thm]{Proposition}
\newtheorem{conj}[thm]{Conjecture}
\newtheorem{defn}[thm]{Definition}

\theoremstyle{definition}

\newtheorem{example}[thm]{Example}

\theoremstyle{remark}
\newtheorem{rem}[thm]{Remark}

\def\bC{{\mathbb C}}

\def\bF{{\mathbb F}}
\def\bG{{\mathbb G}}

\def\bN{{\mathbb N}}

\def\bP{{\mathbb P}}
\def\bQ{{\mathbb Q}}

\def\bZ{{\mathbb Z}}

\def\cA{{\mathcal A}}
\def\cB{{\mathcal B}}

\def\cD{{\mathcal D}}
\def\cE{{\mathcal E}}
\def\cF{{\mathcal F}}
\def\cG{{\mathcal G}}
\def\cH{{\mathcal H}}
\def\cI{{\mathcal I}}
\def\cJ{{\mathcal J}}

\def\cL{{\mathcal L}}

\def\cO{{\mathcal O}}
\def\cP{{\mathcal P}}

\def\cX{{\mathcal X}}
\def\cY{{\mathcal Y}}

\def\sE{{\mathscr{E}}}
\def\sF{{\mathscr{F}}}

\def\sO{{\mathscr{O}}}
\def\sP{{\mathscr{P}}}

\def\sX{{\mathscr{X}}}
\def\sY{{\mathscr{Y}}}

\def\fQ{{\mathfrak{Q}}}

\def\fY{{\mathfrak{Y}}}

\def\frQ{{\mathfrak{Q}}}

\def\frT{{\mathfrak{T}}}

\def\frY{{\mathfrak{Y}}}

\DeclareMathOperator{\Fisoc}{F-Isoc}
\DeclareMathOperator{\Fisocd}{F-Isoc^{\dagger}}
\DeclareMathOperator{\Fcris}{F-Crys}
\DeclareMathOperator{\crys}{crys}
\DeclareMathOperator{\Crys}{Crys}
\DeclareMathOperator{\CRYS}{CRYS}
\DeclareMathOperator{\SYN}{SYN}
\DeclareMathOperator{\ZAR}{ZAR}
\DeclareMathOperator{\Ogus}{Ogus}
\DeclareMathOperator{\isoc}{Isoc}
\DeclareMathOperator{\isocd}{Isoc^{\dagger}}
\DeclareMathOperator{\conv}{conv}
\DeclareMathOperator{\Spa}{Spa}
\DeclareMathOperator{\Mod}{Mod}
\DeclareMathOperator{\ssp}{sp}
\DeclareMathOperator{\ffp}{fp}

\DeclareMathOperator{\coker}{coker}

%\numberwithin{equation}{subsection}

%------------------------------------------------------
\begin{document}
%------------------------------------------------------

\renewcommand{\thefootnote}{\fnsymbol{footnote}} 
\footnotetext{\emph{Key words}:  Brauer group, Tate conjecture, Tate-Shafarevich group}   
\footnotetext{\emph{MSC classes}: 11G40, 14G17, 14J20, 11G25, 11G35}
\footnotetext{The paper is written when the first named author is in his Ph.D program in Sorbonne Universit\'e. This program has received funding from the European Union's Horizon 2020 research and innovation programme under the Marie Skłodowska-Curie grant agreement No 945332 and partially funded by the project “Group schemes, root systems, and related representations” founded by the European Union - NextGenerationEU through Romania’s National Recovery and Resilience Plan (PNRR) call no. PNRR-III-C9-2023- I8, Project CF159/31.07.2023.  The second named author is supported by the DFG through CRC1085 Higher Invariants (University of Regensburg).}
\title{On $p$-torsions of  geometric Brauer groups }
\author{Zhenghui Li, Yanshuai Qin \\ \\ \textit{With an appendix by Veronika Ertl}}

\maketitle

\begin{abstract}
Let $X$ be a smooth projective integral variety over a finitely generated field $k$ of characteristic $p>0$. We show that the finiteness of the exponent of the $p$-primary part of $\Br(X_{k^s})^{G_k}$ is equivalent to the Tate conjecture for divisors, generalizing D'Addezio's theorem for abelian varieties to arbitrary smooth projective varieties. In combination with the Leray spectral sequence for rigid cohomology derived from the Berthelot conjecture recently proved by Ertl-Vezzani, we show that the cokernel of $\Br_{\nr}(K(X)) \rightarrow \Br(X_{k^s})^{G_k}$ is of finite exponent. This completes the $p$-primary part of the generalization of Artin-Grothendieck's theorem on relations between Brauer groups and Tate-Shafarevich groups to higher relative dimensions.
\end{abstract}
\section{Introduction}
For any regular noetherian scheme $X$,  the \emph{cohomological Brauer group} $\Br(X)$ is defined to be the \'etale cohomology group $H^2_{\et}(X,\GG_m)$.
Let $K$ be a finitely generated field. The \emph{unramified Brauer group} $\Br_{\nr}(K)$ is defined as the intersection of Brauer groups of all DVRs with fraction field $K$. By the purity of Brauer groups \cite{Ces}, if $K$ admits a proper regular model $X$, then $\Br_{\nr}(K)=\Br(X)$. Recall the Tate conjecture for divisors: 
\begin{conj}[$T^1(X,\ell)$]
 Let $X$ be a smooth projective geometrically integral variety over a finitely generated field $k$ and $\ell$ be a prime number not equal to the characteristic of $k$, the cycle class map 
$$ \Pic(X)\otimes_\ZZ\QQ_\ell \lra H^2_\et(X_{k^s},\QQ_\ell(1))^{G_k}$$
is surjective.    
\end{conj}
It is well-known that $T^1(X,\ell)$ is equivalent to the finiteness of $\Br(X_{k^s})^{G_k}[\ell^\infty]$. In the case that $k$ is of characteristic $p>0$, a natural question is that how to formulate the conjecture using $p$-adic cohomology? This was done by P\'al \cite{Pal} using rigid cohomology, and he proved that his $p$-adic Tate conjecture is equivalent to the $\ell$-adic Tate conjecture $T^1(X,\ell)$. A natural question that arises is whether the 
$p$-torsion elements of the geometric Brauer group $\Br(X_{k^s})^{G_k}$ are the obstruction of P\'al's $p$-adic Tate conjecture for divisors? If $X$ is an abelian variety, the question was solved by D'Addezio \cite{DAd} by showing that $\Br(X_{k^s})^{G_k}[p^\infty]$ has finite exponent (the Tate conjecture for abelian varieties was known by Zarhin's work \cite{Zar1,Zar2}). To address the question in full generality, we establish the following theorem:
\begin{thm}\label{mainthm}
Let $k$ be finitely generated field of characteristic $p>0$. Let $X/k$ be a smooth projective geometrically integral variety. The finiteness of the exponent of $\Br(X_{k^s})^{G_k}[p^\infty]$ is equivalent to the Tate conjecture for divisors on $X$. The natural map $\Br_{\nr}(K(X)) \rightarrow \Br(X_{k^s})^{G_k}$ has a cokernel of finite exponent. 
\end{thm}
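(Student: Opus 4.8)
The plan is to prove that condition $A$, ``$\Br(X_{k^s})^{G_k}[p^\infty]$ has finite exponent'', is equivalent to condition $B$, ``the Tate conjecture for divisors on $X$'', and that they imply condition $C$, ``$\coker(\Br_{\nr}(K(X))\to\Br(X_{k^s})^{G_k})$ has finite exponent''; since $A\Leftrightarrow B$ and $B\Rightarrow C$ together give $A\Leftrightarrow(B\text{ and }C)$, this is the assertion of the theorem. The method is to transport the question into $p$-adic cohomology via flat (logarithmic) cohomology, and then to use the theory of overconvergent $F$-isocrystals to compare the resulting $p$-adic statement with its well-understood $\ell$-adic counterpart.

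\emph{Step 1: reformulating $A$ $p$-adically.} Because $\Br=H^2_{\fppf}(-,\GG_m)$, the fppf Kummer sequences $0\to\mu_{p^n}\to\GG_m\xrightarrow{p^n}\GG_m\to0$ identify $\Br(X_{k^s})[p^\infty]$ with the cokernel of the $p$-adic cycle class map
\[
\Pic(X_{k^s})\otimes\QQ_p/\ZZ_p\ \longrightarrow\ H^2_{\fppf}\bigl(X_{k^s},\QQ_p/\ZZ_p(1)\bigr),
\]
where $H^i_{\fppf}(-,\QQ_p/\ZZ_p(1)):=\varinjlim_n H^i_{\fppf}(-,\mu_{p^n})$ and $H^i_{\fppf}(-,\mu_{p^n})\cong H^{i-1}_{\et}(-,W_n\Omega^1_{\log})$ by the logarithmic de Rham--Witt description of flat cohomology (Illusie, Milne). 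First I would use the Picard scheme to separate off the contribution of $\Pic^0_{X/k}$, which spreads out to an abelian scheme over a model $U$ of $k$; what remains is governed by a finite-dimensional $p$-adic cohomology carrying a Frobenius structure (the ``slope-one, \'etale'' part of the relative crystalline $H^2$, suitably Tate-twisted), and the finiteness of the exponent of $\Br(X_{k^s})^{G_k}[p^\infty]$ becomes the vanishing of $(V_p\Br(X_{k^s}))^{G_k}$, where $V_p\Br(X_{k^s}):=(\varprojlim_n\Br(X_{k^s})[p^n])\otimes_{\ZZ_p}\QQ_p$, equivalently the surjectivity of the $p$-adic cycle class map $\Pic(X_{k^s})\otimes\QQ_p\to H^2_{\fppf}(X_{k^s},\QQ_p(1))^{G_k}$ --- a Tate conjecture for divisors with $\QQ_p$-coefficients.

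\emph{Step 2: comparing the $p$-adic and $\ell$-adic Tate conjectures.} For $\ell\neq p$ the surjectivity of $\Pic(X_{k^s})\otimes\QQ_\ell\to H^2_{\et}(X_{k^s},\QQ_\ell(1))^{G_k}$ is the usual Tate conjecture for divisors on $X$, and it is independent of the choice of such $\ell$ (the rank of the space of Tate classes being pinned down, via specialization to closed points of $U$, by $\ell$-independent Frobenius characteristic polynomials). The content of this step is to bring $\ell=p$ into this circle. The finite-dimensional $p$-adic cohomology of Step 1 is the generic fibre of a lisse $\QQ_p$-sheaf on $U$ which corresponds, via the de Rham--Witt complex, to a constituent of the relative rigid cohomology of $\mathcal{X}/U$ underlying an overconvergent unit-root $F$-isocrystal; one then compares it with the lisse $\QQ_\ell$-sheaf $R^2\pi_{\ast}\QQ_\ell(1)$ by means of the theory of companions (Abe, Kedlaya) together with D'Addezio's results on the monodromy groups of overconvergent $F$-isocrystals, which are what is needed to match the two algebraic monodromy representations, hence the dimensions of their invariants, hence the two Tate conjectures. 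This is the step that generalizes D'Addezio's theorem for abelian varieties: for an abelian variety the relevant slope-one crystal is the Dieudonn\'e module of a $p$-divisible group, on which his monodromy results apply directly, while here the same argument must be carried out for the slope-one part of $H^2$ of an arbitrary smooth projective $X$. \textbf{I expect this to be the main obstacle}; the delicate points are the overconvergence of the slope filtration of the relative crystalline $H^2$ and the control of the monodromy of its slope-one constituent in this generality.

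\emph{Step 3: the cokernel.} Since $X$ is smooth and projective over $k$ it is a proper regular model of $K(X)$, so $\Br_{\nr}(K(X))=\Br(X)$; as one always has $\Br_{\nr}(K(X))\supseteq\Br(X)$, it suffices to show that $C':=\coker(\Br(X)\to\Br(X_{k^s})^{G_k})$ has finite exponent, and since $C'$ is a quotient of $\Br(X_{k^s})^{G_k}$ it is enough to prove the latter group has finite exponent. Assuming $A$ (equivalently $B$): for every prime $\ell$ the group $\Br(X_{k^s})^{G_k}[\ell^\infty]$ is of finite exponent --- for $\ell\neq p$ by the classical consequence of the Tate conjecture, for $\ell=p$ by Steps 1--2 --- and, being of cofinite type, it is finite; moreover it vanishes for all but finitely many $\ell$, because for almost all $\ell$ a $G_k$-stable lattice in $V_\ell\Br(X_{k^s})$ has no nonzero $G_k$-fixed vector modulo $\ell$ (the absence of fixed vectors in $V_\ell\Br(X_{k^s})$, together with $\ell$-independence of the Tate-class rank and integrality of Frobenius characteristic polynomials, rules out extra fixed classes modulo $\ell$). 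Hence $\Br(X_{k^s})^{G_k}=\bigoplus_\ell\Br(X_{k^s})^{G_k}[\ell^\infty]$ has finite exponent, and so does $C'$; combined with the known prime-to-$p$ results this completes the $p$-primary part of the generalization of the Artin--Grothendieck theorem to higher relative dimension.
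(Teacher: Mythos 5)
Your Step 1 is essentially the paper's Proposition~\ref{firstexact}: the Kummer sequence and the Leray spectral sequence for $f\colon X\to\Spec k$ recast the finiteness of the exponent of $\Br(X_{k^s})^{G_k}[p^\infty]$ as the vanishing of $V_p\Br(X_{k^s})^{G_k}$, equivalently the surjectivity of a $p$-adic cycle-class map. This part is fine.

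Step 2 is where you diverge from the paper, and it contains the genuine gap. You propose to compare the $p$-adic Tate conjecture with the $\ell$-adic one by realising $H^2_{\fppf}(X_{k^s},\QQ_p(1))^{G_k}$ as the generic fibre of a lisse slope-one constituent of the relative rigid $H^2$, and then invoking companions and D'Addezio's monodromy results for overconvergent $F$-isocrystals. You flag this as the main obstacle, and rightly so: this is exactly the route that D'Addezio took for abelian varieties, where the slope-one part is the Dieudonn\'e module of a $p$-divisible group with well-controlled monodromy, and it is precisely the step that has no known extension to an arbitrary smooth projective $X$ (one needs overconvergence of the slope filtration and control of the monodromy of the slope-one constituent, neither of which is available in this generality). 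The paper's whole point is to \emph{avoid} this obstacle. It never uses companions or monodromy: instead it compares $\varprojlim_n H^0_{\fppf}(\Spec k,R^2f_*\mu_{p^n})\otimes\QQ_p$ directly with $H^0_{\rig}(U,R^2f_*\cO^\dagger_\cX)^{F=p}$ via Bauer's syntomic sheaf $\cI_n$ (which replaces $\mu_{p^n}$ on the small syntomic site and maps to the crystalline structure sheaf by $1-p^{-1}F$), a careful passage from syntomic to crystalline to rigid cohomology, and the pull-back trick with hyperplane sections and the Albanese map to control the error terms uniformly in $n$; and it then invokes P\'al's existing theorem that the resulting rigid-cohomological Tate statement is equivalent to the $\ell$-adic Tate conjecture. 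So your Step 2, as formulated, would fail (or at least would require new results on slope filtrations and monodromy), whereas the paper succeeds by a fundamentally different mechanism.

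Step 3 also has a problem of scope. The theorem asserts that the cokernel of $\Br_{\nr}(K(X))\to\Br(X_{k^s})^{G_k}$ has finite exponent \emph{unconditionally}, not merely under the Tate conjecture: this is the ``$p$-primary completion of the higher-dimensional Artin--Grothendieck theorem'' and it is supposed to hold for every smooth projective $X$. You only prove it assuming $A$ (equivalently $B$), by reducing to finiteness of $\Br(X_{k^s})^{G_k}$ itself. The paper's proof is a dimension count using the Leray spectral sequence in rigid cohomology (Lemma~\ref{rigsha}) together with the exact sequence $0\to V_p\Br_{\nr}(k)\to V_p\Br_{\nr}(K(X))\to V_p\Sha(\Pic^0_{X/k,\red})\to 0$ of Ertl--Keller--Qin, yielding $\dim V_p\Br_{\nr}(K(X))=\dim V_p\Br_{\nr}(k)+\dim V_p\Sha(\Pic^0_{X/k,\red})+\dim V_p\Br(X_{k^s})^{G_k}$ and hence surjectivity of $V_p\Br_{\nr}(K(X))\to V_p\Br(X_{k^s})^{G_k}$ with no Tate-conjecture input. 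Also, your sketch that $\Br(X_{k^s})^{G_k}[\ell^\infty]$ vanishes for almost all $\ell$ is not a proof; that is the content of the Cadoret--Hui--Tamagawa theorem, which the paper cites but does not reprove.
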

\begin{cor}\label{cor1.3}
Let $X$ be an abelian variety or a K3 surface or any smooth projective variety satisfying the Tate conjecture for divisors, defined over a finitely generated field $k$ of characteristic $p>0$. Then, the geometric Brauer group $\Br(X_{k^s})^{G_k}$ has finite exponent. Moreover, it is finite if the Witt-vector cohomology $H^2(X_{\bar{k}}, W\CO_{X_{\bar{k}}})$ is finitely generated over $W(\bar{k})$ and $\Pic_{X/k}$ is smooth over $k$. 
\end{cor}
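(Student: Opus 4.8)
The plan is to reduce to the Tate conjecture for divisors and then bound the $p$-primary and the prime-to-$p$ parts of $\Br(X_{k^s})^{G_k}$ separately. The Tate conjecture for divisors holds for abelian varieties (Tate, Zarhin, Mori) and, over finitely generated fields of characteristic $p>0$, for K3 surfaces (Nygaard--Ogus, Maulik, Charles, Madapusi Pera, Kim--Madapusi Pera), so we may assume it holds for $X$. Theorem~\ref{mainthm} then gives at once that $\Br(X_{k^s})^{G_k}[p^\infty]$ is of finite exponent.

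For the prime-to-$p$ part, fix $\ell\neq p$. The Kummer sequence on $X_{k^s}$, together with finite generation of $\NS(X_{k^s})$ and divisibility of $\Pic^0(X_{k^s})$, yields after passing to Tate modules and inverting $\ell$ an exact sequence of $G_k$-modules
\begin{equation*}
0\longrightarrow \NS(X_{k^s})\otimes\QQ_\ell \longrightarrow H^2_{\et}(X_{k^s},\QQ_\ell(1)) \longrightarrow V_\ell\Br(X_{k^s})\longrightarrow 0 ,
\end{equation*}
where $V_\ell\Br(X_{k^s})$ is the $\QQ_\ell$-Tate module of $\Br(X_{k^s})$. By the Tate conjecture the $G_k$-invariants of the middle term are spanned by cycle classes of divisors, so the map $H^2_{\et}(X_{k^s},\QQ_\ell(1))^{G_k}\to V_\ell\Br(X_{k^s})^{G_k}$ is zero; and $\NS(X_{k^s})\otimes\QQ_\ell$ is a $G_k$-stable direct summand of $H^2_{\et}(X_{k^s},\QQ_\ell(1))$ (for a K3 surface by non-degeneracy of the intersection form, for an abelian variety by non-degeneracy of a polarization form, and in general by semisimplicity of $H^2$), so $H^1(G_k,\NS(X_{k^s})\otimes\QQ_\ell)\hookrightarrow H^1(G_k,H^2_{\et}(X_{k^s},\QQ_\ell(1)))$. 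The long exact sequence then forces $V_\ell\Br(X_{k^s})^{G_k}=0$. Since $\Br(X_{k^s})[\ell^\infty]$ is a cofinitely generated $\ZZ_\ell$-module whose $\QQ_\ell$-Tate module has no nonzero $G_k$-invariants, its group of $G_k$-invariants contains no nonzero divisible subgroup (a $\QQ_\ell$-subspace contained in a finitely generated $\ZZ_\ell$-module vanishes), hence is finite. A spreading-out argument — extend $X$ to a smooth proper family over a finitely generated $\FF_p$-algebra and reduce the above mod $\ell$ — shows moreover that $\Br(X_{k^s})^{G_k}[\ell]=0$ for all but finitely many $\ell$. Therefore $\Br(X_{k^s})^{G_k}[\non p]$ is finite, and combined with the $p$-primary statement, $\Br(X_{k^s})^{G_k}$ is of finite exponent, which is the first assertion.

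To obtain finiteness when $H^2(X_{\bar k},W\CO_{X_{\bar k}})$ is finitely generated over $W(\bar k)$, I would pass to $X_{\bar k}$ and use the Artin--Mazur theory of the formal Brauer group $\widehat{\Br}_X$ together with Illusie's description of flat $\mu_{p^\infty}$-cohomology in terms of the de Rham--Witt complex: the hypothesis is equivalent to $\widehat{\Br}_X$ having finite height and implies that $\Br(X_{\bar k})[p^\infty]$ is a cofinitely generated $\ZZ_p$-module. A finite-exponent subgroup of a cofinitely generated $\ZZ_p$-module is finite, so $\Br(X_{k^s})^{G_k}[p^\infty]$ is finite, and with the prime-to-$p$ part this gives finiteness of $\Br(X_{k^s})^{G_k}$. (For an abelian variety $H^2(A_{\bar k},W\CO_{A_{\bar k}})$ is free of finite rank over $W(\bar k)$ automatically, so one recovers D'Addezio's unconditional finiteness; for a K3 surface the hypothesis says precisely that $X_{\bar k}$ has finite height.)

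The essential input is Theorem~\ref{mainthm}; within the corollary, the two points requiring care are the vanishing of $\Br(X_{k^s})^{G_k}[\ell]$ for almost all $\ell$ and, more seriously, the passage from finite generation of the Witt-vector cohomology to cofinite generation of $\Br(X_{\bar k})[p^\infty]$. I expect the latter to be the main obstacle: unlike for $\ell\neq p$, where cofinite generation is automatic from finiteness of $\ell$-adic cohomology, for $\ell=p$ one must analyse the part of $H^2_{\cris}(X_{\bar k}/W)$ of slopes in $[0,1)$ and its relation to $H^2_{\fppf}(X_{\bar k},\mu_{p^\infty})$, which is exactly where the hypothesis on the Witt-vector cohomology enters.
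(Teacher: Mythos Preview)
Your strategy agrees with the paper's: invoke Theorem~\ref{mainthm} for the $p$-primary exponent, handle the prime-to-$p$ part via the $\ell$-adic cycle class map (the paper does not reprove this but simply cites \cite{CHT} and \cite[Prop.~6.6]{Pal} inside the proof of Theorem~\ref{mainthm}), and use Illusie's description of $H^2_{\fppf}(X_{\bar k},\mu_{p^\infty})$ for the ``moreover'' (cf.\ the proof of Lemma~\ref{cofiniteness}). For abelian varieties and K3 surfaces your argument is essentially complete.

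For the parenthetical general $X$, however, there are two gaps. First, to conclude $V_\ell\Br(X_{k^s})^{G_k}=0$ you need the inclusion $\NS(X_{k^s})\otimes\QQ_\ell\hookrightarrow H^2_{\et}(X_{k^s},\QQ_\ell(1))$ to split $G_k$-equivariantly, and for this you invoke ``semisimplicity of $H^2$'', which is not known for arbitrary smooth projective $X$ over a finitely generated field. The unconditional fix is the pairing $(\alpha,\beta)\mapsto\alpha\cup\beta\cup c_1(L)^{\dim X-2}$ for an ample $L$ defined over $k$: Hard Lefschetz makes it nondegenerate on $H^2$, the Hodge index theorem makes its restriction to $\NS(X_{k^s})\otimes\QQ_\ell$ nondegenerate, and the orthogonal complement then splits the sequence. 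You cannot simply bypass the splitting, since $H^1(G_k,\NS(X_{k^s})\otimes\QQ_\ell)$ is typically nonzero ($G_k$ has nontrivial continuous homomorphisms to $\QQ_\ell$). Second, for finiteness you pass from cofinite generation of $\Br(X_{\bar k})[p^\infty]$ directly to finiteness of $\Br(X_{k^s})^{G_k}[p^\infty]$, but the latter is not a subgroup of the former: by Corollary~\ref{septoalgclosed} the kernel of $\Br(X_{k^s})\to\Br(X_{\bar k})$ is $H^1_{\fppf}(k^s,\Pic_{X/k})$, which has finite exponent but is not obviously finite when $\Pic^0_{X/k}$ is non-reduced. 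For abelian varieties and K3 surfaces this kernel vanishes (as $\Pic^0$ is smooth, respectively trivial), which is why your argument goes through there; in general one must also control the $G_k$-invariants of this kernel. Finally, your one-line spreading-out sketch for uniformity in $\ell$ understates the difficulty --- that is exactly the substance of \cite{CHT}.
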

The prime-to-$p$ part of the corollary was proved by Skorobogatov and Zarhin \cite{SZ1,SZ2} for abelian varieties and K3 surfaces and by Cadoret-Hui-Tamagawa \cite{CHT} in general. The $p$-primary part for abelian varieties was proved by D'Addezio \cite{DAd} by a different method. The p-primary part of the corollary for K3 surfaces, independent of us, is also obtained by Lazda and Skorobogatov \cite{2024boundednesspprimarytorsionbrauer} using the Kuga–Satake construction.

For an abelian group $M$  and a prime $\ell$, denote by $V_\ell M$ the rational Tate-module $\varprojlim_{n}M[\ell^n]\otimes_{\ZZ_\ell}\QQ_\ell$. For an abelian variety $A$ over a finitely generated field $k$, define the Tate-Shafarevich group $\Sha(A)$ as the kernel of the natural map
$$H^1(k,A) \lra \prod_{v\in M_k} H^1(k_v^{sh}, A),$$
where $M_k$ denotes the set of discrete valuations on $k$ (cf. \cite{EKQ} for more details).
\begin{cor}\label{Artin-Grothendieck}
Let $X$ be a smooth projective geometrically integral variety over a finitely generated field $k$ and $\ell$ be a prime number. Then there exists a canonical exact sequence
$$
0\longrightarrow V_\ell\Br_{\nr}(k)\longrightarrow \Ker(V_\ell\Br_{\nr}(K(X))\longrightarrow V_\ell\Br(X_{k^{s}})^{G_k}) \longrightarrow V_\ell\Sha(\Pic^0_{X/k,\red})\longrightarrow 0.
$$
Moreover, the natural map
$$\Br_{\nr}(K(X))\rightarrow \Br(X_{k^{s}})^{G_k}$$
has a cokernel of finite exponent.
\end{cor}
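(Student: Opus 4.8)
The plan is to derive the exact sequence from the Leray spectral sequence of a suitable model of $X/k$, using Theorem~\ref{mainthm} to handle the $p$-primary part; the \emph{Moreover} clause will then be essentially a restatement of the cokernel bound in Theorem~\ref{mainthm}. First I would use the purity theorem \cite{Ces} to replace $\Br_{\nr}(K(X))$ and $\Br_{\nr}(k)$ by $\Br(\CX)$ and $\Br(S)$, where $\CX$ and $S$ are regular models of $K(X)$ and $k$ proper over the prime ring; shrinking $S$ and recompactifying, one arranges a proper flat morphism $f\colon\CX\to S$ with generic fibre $X/k$, smooth with geometrically integral fibres over a dense open $U\subseteq S$, such that over $U$ the sheaf $R^1f_*\GG_m$ is represented by a smooth group scheme whose identity component $\CA\to U$ is an abelian scheme with generic fibre $\Pic^0_{X/k,\red}$ and whose component sheaf $\cC$ is a $G_k$-lattice (a twisted form of $\NS(X_{k^s})$). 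Since $\Br_{\nr}$ is computed by divisorial valuations one checks, in addition, that the construction below does not depend on these choices; the discrepancy between working over $S$ and over $U$ contributes only finitely generated and finite-exponent terms, harmless after applying $V_\ell$.

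Because $R^0f_*\GG_m=\GG_{m,S}$ over $U$, the exact sequence of terms of low degree of $H^p(S,R^qf_*\GG_m)\Rightarrow H^{p+q}(\CX,\GG_m)$ gives an exact sequence
$$
H^0(S,R^1f_*\GG_m)\to\Br(S)\to \ker\!\big(\Br(\CX)\to H^0(S,R^2f_*\GG_m)\big)\to H^1(S,R^1f_*\GG_m)\xrightarrow{\ \partial\ }H^3(S,\GG_m).
$$
Apply the rational Tate module $V_\ell$, which is exact on cofinite-type torsion abelian groups; all the groups above become of this form after $\ell$-adic completion, and $H^0(S,R^1f_*\GG_m)$ is finitely generated (its $\NS$-part is a lattice and its abelian part $\Pic^0_{X/k,\red}(k)$ is finitely generated by Lang--N\'eron), so $V_\ell$ kills it. The result is an exact sequence
$$
0\to V_\ell\Br(S)\to \ker\!\big(V_\ell\Br(\CX)\to V_\ell H^0(S,R^2f_*\GG_m)\big)\to V_\ell H^1(S,R^1f_*\GG_m)\xrightarrow{V_\ell\partial}V_\ell H^3(S,\GG_m).
$$

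It then remains to make four identifications. (i) $V_\ell H^0(S,R^2f_*\GG_m)=V_\ell\Br(X_{k^s})^{G_k}$: the fibrewise Kummer sequence together with proper base change and Gabber's theorem (for $\ell\ne p$), and the $p$-adic comparison underlying Theorem~\ref{mainthm} (for $\ell=p$), identify the stalks of $R^2f_*\GG_m$ over $U$ with $\Br(X_{k^s})$ up to groups of finite exponent, and the residual discrepancy between $H^0(S,R^2f_*\GG_m)$ and $\Br(X_{k^s})^{G_k}$ is of finite exponent --- this is the cokernel bound of Theorem~\ref{mainthm} for $\ell=p$, and classical for $\ell\ne p$. (ii) $V_\ell H^1(S,R^1f_*\GG_m)=V_\ell H^1(S,\CA)$: the sheaf $\cC$ is a lattice with finite monodromy on a scheme proper over the prime ring, hence $H^1(S,\cC)$ is finite and dies after $V_\ell$. (iii) $V_\ell\partial=0$: $\partial$ vanishes on $H^1(S,\CA)$ by the standard analysis of the Brauer group of a fibration via the Poincar\'e sheaf. (iv) $V_\ell H^1(S,\CA)=V_\ell\Sha(\Pic^0_{X/k,\red})$: comparing $H^1(S,\CA)$ with $H^1(k,\Pic^0_{X/k,\red})$ through the localisation sequence of $S$, Lang's theorem makes the good-reduction local terms vanish and the finitely many bad ones contribute a group of finite exponent, leaving $V_\ell$ of the Tate--Shafarevich group as defined by its local conditions. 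Substituting (i)--(iv) into the displayed $V_\ell$-sequence yields the asserted exact sequence
$$
0\to V_\ell\Br_{\nr}(k)\to \ker\!\big(V_\ell\Br_{\nr}(K(X))\to V_\ell\Br(X_{k^s})^{G_k}\big)\to V_\ell\Sha(\Pic^0_{X/k,\red})\to 0.
$$
The \emph{Moreover} clause is then the cokernel-of-finite-exponent statement already contained in Theorem~\ref{mainthm} (its prime-to-$p$ part being the earlier $\ell$-adic result), valid now over an arbitrary finitely generated $k$.

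The main obstacle is identification (i) at $\ell=p$: controlling $R^2f_*\GG_m$ $p$-adically in a family and bounding its discrepancy with $\Br(X_{k^s})^{G_k}$. This is precisely what Theorem~\ref{mainthm}, and the theory of $F$-isocrystals and the $p$-adic (Witt-vector and de~Rham--Witt) cohomology underlying it, are needed to supply; the prime-to-$p$ content of all four steps is essentially formal. A secondary difficulty is that in equal characteristic $p$ one must extend the Poincar\'e-sheaf vanishing of (iii) to $p$-adic coefficients, which is outside the reach of the classical $\ell$-adic duality arguments.
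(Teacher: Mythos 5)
The paper does not reprove the displayed exact sequence: the prime-to-$p$ part is quoted from \cite{Qin1,Qin2,EKQ}, and the $p$-part is taken from \cite[Thm.\ (D)]{EKQ}. Indeed, the proof of Theorem~\ref{mainthm} itself \emph{begins} the second (finite-cokernel) claim with ``By the first part of Corollary~\ref{Artin-Grothendieck}, it suffices to show\dots''. The \emph{Moreover} clause of the corollary is then exactly that second clause of Theorem~\ref{mainthm} in characteristic $p>0$ (and prior work otherwise). Your plan is instead to re-derive the exact sequence from scratch via the Leray spectral sequence of a model $f\colon\CX\to S$; that is a legitimate strategy and is essentially what the cited references do, but as you have written it the argument is circular: your identification~(i) at $\ell=p$ appeals to the cokernel bound of Theorem~\ref{mainthm}, and the paper obtains that very bound \emph{from} the exact sequence you are trying to establish.

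The circularity is repairable, but one must disentangle the two directions. For the $\Ker$-term of the corollary it suffices that the natural restriction map $H^0(S,R^2f_*\GG_m)\to\Br(X_{k^s})^{G_k}$ be al.\ injective, so that $\Ker\bigl(\Br(\CX)\to H^0(S,R^2f_*\GG_m)\bigr)$ agrees with $\Ker\bigl(\Br_{\nr}(K(X))\to\Br(X_{k^s})^{G_k}\bigr)$ up to finite exponent; al.\ \emph{surjectivity} is then the \emph{Moreover} clause, which should come out as a consequence and not be fed in as an input. Establishing the needed al.\ injectivity at $p$, and the $p$-adic vanishing of the transgression in your step~(iii) which you rightly flag as a difficulty, are genuine gaps in your sketch; both are nontrivial and are handled in \cite{EKQ}, which is why the paper cites rather than reproves.
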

This result is a higher dimensional generalization of Artin-Grothendieck's theorem \cite[\S \, 4]{Gro3} on the relation between Brauer groups and Tate-Shafarevich groups. The prime-to-$p$ part ($p=\Char(k)$) of the corollary was proved by the second named author \cite{Qin1, Qin2} for $k$ being a global field and by Ertl-Keller-Qin \cite{EKQ} in general. The first claim of the corollary was proved by  Ertl-Keller-Qin \cite{EKQ}.
\begin{cor}\label{fgfield}
Let $\CX \lra \CY$ be a projective surjective $k$-morphism between smooth projective varieties over a finitely generated field $k$ of characteristic $p>0$. Assume that its generic fiber $X/K$ is smooth and geometrically integral. The cokernel of the natural map
$$\Br(\CX_{k^s})^{G_k}\lra \Br(X_{K^s})^{G_K}$$
has finite exponent.\\
\end{cor}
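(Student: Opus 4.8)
The plan is to factor the map of the statement through the unramified Brauer group of the function field of $\CX$ and then apply the last assertion of Corollary~\ref{Artin-Grothendieck} to the generic fibre. Put $K=k(\CY)$, a finitely generated field of characteristic $p$, so that $X=\CX\times_{\CY}\Spec K$ is the generic fibre. As $\CX$ is integral and $\CX\to\CY$ is dominant, $X$ is integral and the projection $X\to\CX$ induces an isomorphism $K(\CX)\xrightarrow{\ \sim\ }K(X)$ on function fields; in particular $\Br_{\nr}(K(\CX))=\Br_{\nr}(K(X))$. Since $\CX$ and $X$ are regular, purity for the Brauer group \cite{Ces} gives $\Br_{\nr}(K(\CX))\subseteq\Br(\CX)$ and $\Br_{\nr}(K(X))\subseteq\Br(X)$, and the restriction $\Br(\CX)\to\Br(X)$ to the generic fibre carries the former isomorphically onto the latter (both are the same subgroup of $\Br(K(\CX))=\Br(K(X))$). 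One need not assume $\CX$, $\CY$ geometrically integral: the argument below is purely functorial.

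Next I would assemble the commutative square
\[
\begin{CD}
\Br_{\nr}(K(\CX)) @>{\alpha}>> \Br(\CX_{k^s})^{G_k}\\
@| @VV{\beta}V\\
\Br_{\nr}(K(X)) @>{\gamma}>> \Br(X_{K^s})^{G_K},
\end{CD}
\]
where $\alpha$ and $\gamma$ are the composites of the above inclusions with the restrictions $\Br(\CX)\to\Br(\CX_{k^s})$ and $\Br(X)\to\Br(X_{K^s})$ — whose images land in the Galois invariants — and $\gamma$ is exactly the natural map of Corollary~\ref{Artin-Grothendieck} for $X/K$; the vertical map $\beta$ is pullback along the natural morphism $X_{K^s}=\CX\times_{\CY}\Spec K^s\to\CX_{k^s}$ (using a compatible inclusion $k^s\subseteq K^s$), which is equivariant for the restriction homomorphism $G_K\to G_k$ and hence sends $G_k$-invariants into $G_K$-invariants. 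This $\beta$ is the map in the statement. Commutativity of the square is just functoriality of $H^2_{\et}(-,\GG_m)$: both $\beta\circ\alpha$ and $\gamma$ are the restriction of $\Br(\CX)\to\Br(X)\to\Br(X_{K^s})$ to $\Br_{\nr}(K(\CX))=\Br_{\nr}(K(X))$.

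To finish, I would apply Corollary~\ref{Artin-Grothendieck} to the smooth projective geometrically integral variety $X$ over the finitely generated field $K$: it says $\coker(\gamma)$ has finite exponent, say $N\cdot\coker(\gamma)=0$. Since $\gamma=\beta\circ\alpha$ we have $\Im(\gamma)\subseteq\Im(\beta)$, so $\coker(\beta)$ is a quotient of $\coker(\gamma)$ and is killed by $N$; this is the assertion. I do not expect a genuine obstacle, since the argument is formal once Corollary~\ref{Artin-Grothendieck} is available and its hypotheses clearly hold for $X$ over $K=k(\CY)$. The only points requiring care are the bookkeeping ones in the first two paragraphs — the identification $K(X)=K(\CX)$, which makes the two unramified groups coincide, and the precise normalization and equivariance (for $G_K\to G_k$) of the morphism $X_{K^s}\to\CX_{k^s}$, so that $\beta$ is indeed the map named in the statement and the diagram commutes on the nose.
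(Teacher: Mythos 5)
Your proof is correct and follows essentially the same route as the paper: identify the function field of $\CX$ with that of $X$, note that $\Br_{\nr}(K(X))\hookrightarrow\Br(\CX)$ (purity on the regular scheme $\CX$), observe that the resulting map to $\Br(X_{K^s})^{G_K}$ factors through $\Br(\CX_{k^s})^{G_k}$, and invoke the finite-exponent cokernel statement for the generic fibre $X/K$ (which you take from the "Moreover" in Corollary~\ref{Artin-Grothendieck}, the paper from the second claim of Theorem~\ref{mainthm} — the same assertion). Your write-up is a bit more careful than the paper's about not overclaiming the purity identification and about the $G_K\to G_k$ equivariance of $X_{K^s}\to\CX_{k^s}$, but the underlying argument is the same.
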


In \cite{DAd}, D'Addezio defined $H^2_{\fppf}(X_{\bar{k}},\bQ_p(1))^k$ as the image of 
$$H^2_{\fppf}(X,\bQ_p(1))\longrightarrow H^2_{\fppf}(X_{\overline{k}},\bQ_p(1)),$$
and he gave a formulation of the $p$-adic Tate conjecture for divisors:
\begin{conj}[D'Addezio]
The cycle class map 
$$\Pic(X)\otimes_\bZ\bQ_p \lra H^2_{\fppf}(X_{\bar{k}},\bQ_p(1))^k$$ is surjective.
\end{conj}
We prove the following equivalence via using the pull-back trick developed by Colliot-Th\'el\`ene and Skorobogatov (cf. \cite{CTS1}), which was used in D'Addezio's proof of his proposition 3.9 in \cite{DAd}.
\begin{thm}
The above conjecture is equivalent to the finiteness of the exponent of $\Br(X_{k^s})^{G_k}[p^\infty]$.
\end{thm}
Using Theorem \ref{mainthm} and a Lefschetz pencil argument developed by Ambrosi in \cite[\S 4]{ambrosi2021specializationneronseverigroupspositive}, we obtain a weak Lefschetz type result for Brauer groups.
\begin{thm}\label{mainlef}
Let $Y$ be a smooth projective geometrically integral variety over a finitely generated field $k$ in positive characteristic. Assume that $k$ is an infinite field and $\dim(Y)\geq 3$. Then, there exist infinitely many 
 smooth hyperplane sections $Z\subseteq Y$ such that
the pullback morphism
$$\Br(Y) \lra \Br(Z)$$
has a kernel and a cokernel of finite exponent.
\end{thm}
\begin{rem}
This question was studied by Xinyi Yuan in an unpublished note \cite{Yua3}. For a finitely generated field $k$ of characteristic $0$, the same method shows that the claim holds for $n^\infty$-torsions of the Brauer groups, where $n$ is any positive integer. Assuming the Tate conjecture for divisor, any smooth hyperplane section satisfies the condition in the theorem, as geometric Brauer groups are of finite exponent under this assumption. If one drops the finitely generated condition for $k$, the claim may not hold. For example, let $Y=\bP_{\bC}^3 \subseteq \bP_{\bC}^{34}$ be the Veronese embedding. In this case, any smooth hyperplane section $Z$ is a K3 surface with an infinite Brauer group.
\end{rem}

\subsection*{Outline of Proofs}
Firstly, by spreading out $X/k$, we get a smooth projective morphism $f:\CX \lra U$ between smooth integral varieties over $\FF_p$. On the one hand, P\'al reformulated the Tate conjecture using rigid cohomology \cite[\S\, 6]{Pal}\footnote{In P\'al's original formulation, it is not clear to us how to simultaneously establish the Leray spectral sequence and identify higher direct image with Ogus' one after completion. Since Berthelot's conjecture (of the structure sheaf) is recently proved by \cite{ertl2024berthelotsconjecturehomotopytheory}, the easiest way to fix it is to replace higher direct images in \cite{Pal} by Berthelot's higher direct image (see Appendix \ref{appendixB}). However, we also present another way in Appendix \ref{appendixA}, following P\'al, via convergent cohomology. So the first claim of our main theorem \ref{mainthm} only depends on a weaker version of Berthelot's conjecture proved by Ambrosi \cite{ambrosi2021specializationneronseverigroupspositive} while the second claim still needs the stronger version.}: the natural map
$$
\NS(X)\otimes_\ZZ\QQ_p\lra H^0_{\rig}(U, R^2f_*\CO_{\CX}^\dag)^{F=p}
$$
is surjective. On the other hand, there is a canonical exact sequence (cf. Proposition \ref{firstexact})
$$
0\lra\NS(X)\otimes_\ZZ\QQ_p\lra \varprojlim_{n}H^0_{\fppf}(\Spec(k), R^2f_{*}\mu_{p^n})\otimes_{\ZZ_p}\QQ_p \lra V_p\Br(X_{k^s})^{G_k} \lra 0.
$$
It remains to prove
$$
 \varprojlim_{n}H^0_{\fppf}(\Spec(k), R^2f_{*}\mu_{p^n})\otimes_{\ZZ_p}\QQ_p \cong H^0_{\rig}(U, R^2f_*\CO_{\CX}^\dag)^{F=p}.
$$
Secondly, we use the syntomic sheaf $S_n^1$ defined by Bauer in \cite{Bauer1992} which is isomorphic to the sheaf $\mu_{p^n}$ on the small syntomic site of $X$ to relate the flat cohomology to crystalline cohomology. By a deep result of Fontaine, there is an exact sequence of sheaves on the small syntomic site \cite[Thm. 3.4]{Bauer1992}
$$0\lra \mu_{p^n}\lra \SI_n \stackrel{1-p^{-1}F}{\lra} \CO_{n}^{\cris}\lra 0.$$
It gives a natural map
$$
H^0_{\syn}(\Spec(k), R^2f_{\syn,*}\mu_{p^n}) \lra H^0_{\syn}(\Spec(k), R^2f_{\syn,*}\SI_n)^{p^{-1}F=1}.
$$
The second term is essentially isomorphic to $H^0_{\cris}(\Spec(k), R^2f_{\cris,*}\CO_{X/\Sigma_n})^{F=p}$ up to groups of finite exponent (bounded independent of $n$). To show that the natural map is an isomorphism up to groups of finite exponent (bounded independent of $n$), we use a pull-back trick developed by Colliot-Th\'el\`ene and Skorobogatov in \cite{CTS1}. This is divided into three steps. First, we show that the claim is true for curves by a direct computation. Second, we can take the Albanese variety of $X$ and show that the natural morphism between $R^1f_*$ induced by the Albanese map is an isomorphism up to group of finite exponent. Third, we use the pull-back trick to show that the natural map above for general $X$ is an isomorphism up to groups of finite exponent (bounded independent of $n$).
\subsection*{Notations}
\subsubsection*{Fields}
By a \emph{finitely generated field}, we mean a field which is finitely generated over a prime field.\\
For any field $k$, denote by $k^s$ the separable closure. Denote by $G_k=\Gal(k^s/k)$ the absolute Galois group of $k$.
\subsubsection*{Varieties}
By a \emph{variety} over a field $k$, we mean a scheme which is separated and of finite type over $k$. A morphism between varieties over $k$ is a $k$-morphism.

For a smooth proper geometrically connected variety $X$ over a field $k$, we use $\Pic^0_{X/k}$ to denote the identity component of the Picard scheme $\Pic_{X/k}$. Denote by $\Pic^0_{X/k,\red}$ the underlying reduced closed subscheme of $\Pic^0_{X/k}$ (which is known to be an abelian variety cf. \cite[p236-p17, Cor. 3.2.]{FGA} ). The N\'eron-Severi group $\NS(X)$ of $X$ is defined as the quotient $\Pic(X)/\Pic^0(X)$.
\subsubsection*{Cohomology}
The default sheaves and cohomology over schemes are with respect to the
small \'etale site. So $H^i$ is the abbreviation of $H_{\et}^i$. $H^i_{\fppf}$ denotes the cohomology of sheaves on the $\fppf$ site.
\subsubsection*{Brauer groups}
For any noetherian scheme $X$, denote the \emph{cohomological Brauer group}
$$
\Br(X):=H^2(X,\mathbb{G}_m)_{\tor}.
$$
\subsubsection*{Abelian group}
For any abelian group $M$, integer $m$ and prime $\ell$, we set\\
$$M[m]=\{x\in M| mx=0\},\quad M_{\tor}=\bigcup\limits_{m\geq 1}M[m],\quad  M[\ell^\infty]=\bigcup\limits_{n\geq 1}M[\ell^n], $$
$$T_\ell M=\Hom_\mathbb{Z}(\mathbb{Q}_\ell/\mathbb{Z}_\ell, M)=\lim \limits_{\substack{\leftarrow \\ n}}M[\ell^n],\quad V_\ell M= T_\ell(M)\otimes_{\mathbb{Z}_\ell}\mathbb{Q}_\ell.$$
For an abelian group $M$ and a prime number $p$, we set
$$M(\non p)=\bigcup\limits_{p\nmid m}M[m].$$ 
Let $f:M\rightarrow N$ be a morphism in an abelian category,  we say that $f$ is \emph{al. injective} (resp. \emph{al. surjective} ) if the kernel (resp. cokernel ) can be killed by a positive integer. We say that $f$ is an \emph{al. isomorphism} iff it is al. injective and al. surjective. If $f_n$ are a family of morphisms in an abelian category indexed by a set $I$, we say $f$ is \emph{al. injective  uniformly in $n$} (resp. \emph{al. surjective  uniformly in $n$}) if the kernels (resp. cokernels ) can be killed by the same positive integer simultaneously. We say that $f$ is an \emph{al. isomorphism uniformly in $n$} iff it is al. injective uniformly in $n$ and al. surjective uniformly in $n$. For an object or a morphism ( resp. a family of them ) in an abelian category, we say that it is al. zero ( resp. al. zero uniformly in $n$ ) if it can be killed by some positive integer (resp. by the same positive integer).

\subsubsection*{Sites and Topos}
For a scheme $X$, we let $\syn(X)$ (resp. $\SYN(X)$) be the small (big) syntomic site and let $\fppf(X)$ be the big fppf site.\\

For a continuous functor $f:T\to T'$ between sites, we can define an adjoint pairing $(f_s,f^s)$ between their topos. The spectral sequences for $f^s$ (cf. \cite[Theorem 3.7.5; 3.7.6]{tamme2012introduction}) exists even if $f_s$ is not exact. We still denote $f^s$ by $f_*$ and denote $f_s$ by $f^{-1}$.\\

For the crystalline site and topos, we follow the notation in \cite{BerthelotOgus1978}. Let $S=(S,I,\gamma)$ be a PD-scheme and $X$ be an $S$-scheme to which $\gamma$ extends, we use $\Crys(X/S)$ to denote the small crystalline(-Zariski) site and $(X/S)_{\crys}$ to denote the topos. If $S$ is clear, we may omit it and write it as $X_{crys}$. For $\tau\in\{\ZAR,\SYN\}$, we use $\CRYS(X/S)_\tau$ to denote the big crystalline site equipped with $\tau$-topology and $(X/S)_{\CRYS,\tau}$ to denote the corresponding topos.

\section{Preliminaries}
Let $k$ be a field. Let $f:X\lra S=\Spec(k)$ be a smooth projective geometrically integral variety. Let $S^\prime$ denote $\Spec(k^s)$. The goal of this section is to prove the following proposition
\begin{prop}\label{firstexact}
There is a canonical exact sequence
$$
0\lra\NS(X)\otimes_\ZZ\QQ_p\lra \varprojlim_{n}H^0_{\fppf}(S, R^2f_{*}\mu_{p^n})\otimes_{\ZZ_p}\QQ_p \lra V_p\Br(X_{k^s})^{G_k} \lra 0,
$$
where $R^2f_{*}\mu_{p^n}$ is the higher direct image of the $\fppf$ sheaf $\mu_{p^n}$ on $X$.
\end{prop}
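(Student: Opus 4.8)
The strategy is to extract the exact sequence from the Kummer sequence in the fppf topology combined with the Leray spectral sequence for $f$, followed by a passage to the limit over $n$ and tensoring with $\QQ_p$. Concretely, I would start from the short exact sequence of fppf sheaves $0 \to \mu_{p^n} \to \GG_m \xrightarrow{p^n} \GG_m \to 0$ on $X$. Applying $Rf_*$ and using $f_*\GG_m = \GG_m$ (since $f$ is proper with geometrically integral fibers, so $\cO_X$ pushes forward to $\cO_S$), $R^1f_*\GG_m = \Pic_{X/S}$ (representable, by the theory of the Picard scheme), one gets an exact sequence of fppf sheaves on $S$
$$
0 \lra \Pic_{X/S}/p^n \lra R^2f_*\mu_{p^n} \lra (R^2f_*\GG_m)[p^n] \lra 0.
$$
Taking global sections over $S = \Spec(k)$, i.e. $H^0_{\fppf}(S, -)$, and noting $H^0_{\fppf}(S, \mathcal{F}) = \mathcal{F}(k^s)^{G_k}$ for a smooth group scheme $\mathcal{F}$ (or more carefully using that $\Pic_{X/S}$ and its quotient are what they are), yields
$$
0 \lra (\Pic_{X/S}/p^n)(k^s)^{G_k} \lra H^0_{\fppf}(S, R^2f_*\mu_{p^n}) \lra (R^2f_*\GG_m)(k^s)^{G_k}[p^n].
$$

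Next I would identify the two outer terms after passing to $\varprojlim_n$ and $\otimes_{\ZZ_p}\QQ_p$. For the left term: from $0 \to \Pic^0_{X/S,\red} \to \Pic_{X/S,\red} \to \NS \to 0$ and the fact that $\Pic^0_{X/S,\red}$ is an abelian variety $A$ over $k$, we have $A(k^s)/p^n$ fitting into the Kummer sequence for $A$, whose $\varprojlim_n \otimes \QQ_p$ vanishes because $A(k^s)/p^n$ is controlled by $H^1(G_k, A[p^n])$ which is torsion — more precisely $\varprojlim_n A(k^s)/p^n \otimes \QQ_p = 0$ since $A(k^s)$ is a torsion-plus-divisible-type group whose $p$-adic completion tensored with $\QQ_p$ dies (one uses that $A(k^s)[p^\infty]$ is cofinitely generated). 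Meanwhile $(\NS/p^n)^{G_k}$ has $\varprojlim_n \otimes\QQ_p = \NS(X)\otimes\QQ_p$ since $\NS(X_{k^s})$ is finitely generated and $\NS(X_{k^s})^{G_k}$ is commensurable with $\NS(X)$. This gives the term $\NS(X)\otimes_\ZZ\QQ_p$ on the left, provided I check the inverse-limit-exactness: the relevant $\varprojlim^1$ terms vanish because the transition maps on the torsion groups $H^1(G_k, A[p^n])$-type objects are eventually surjective or the groups are finite in each degree after the dust settles — here I should be slightly careful and argue with the Mittag-Leffler condition.

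For the right term: by definition $\Br(X_{k^s}) = H^2(X_{k^s},\GG_m)_{\tor}$, and the stalk of $R^2f_*\GG_m$ at the geometric point is $H^2(X_{k^s}, \GG_m)$ (proper base change for fppf/étale cohomology of $\GG_m$ in low degrees, or the fact that $\GG_m$-cohomology agrees étale and fppf), so $(R^2f_*\GG_m)(k^s)[p^n] = \Br(X_{k^s})[p^n]$ once we know $H^2(X_{k^s},\GG_m)$ is torsion in the relevant range (true since $X_{k^s}$ is regular, so $\Br(X_{k^s}) = H^2(X_{k^s},\GG_m)$). Then $\varprojlim_n \Br(X_{k^s})[p^n]^{G_k} \otimes \QQ_p = V_p(\Br(X_{k^s})^{G_k})$ after observing that taking $G_k$-invariants commutes with the inverse limit and that $V_p$ of a torsion group equals $V_p$ of its $p$-part. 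The one subtlety is surjectivity onto $V_p\Br(X_{k^s})^{G_k}$: the cokernel of $H^0_{\fppf}(S,R^2f_*\mu_{p^n}) \to \Br(X_{k^s})[p^n]^{G_k}$ injects into $H^1_{\fppf}(S, \Pic_{X/S}/p^n) \subseteq H^1(G_k, (\Pic_{X/S}/p^n)(k^s))$; these $H^1$'s of the finite/arithmetic group $\NS/p^n$ part are finite and of bounded exponent, while the abelian-variety part contributes $H^1(G_k, A(k^s)/p^n)$ which, after $\varprojlim_n \otimes\QQ_p$, I must show dies — this follows from the vanishing of $V_p H^1(G_k, A)$-type considerations and finiteness of the relevant Selmer-type groups for finitely generated $k$, or more elementarily from a bounded-exponent argument. \textbf{This surjectivity/inverse-limit step is the main obstacle}: one has to control the growth of $H^1_{\fppf}(S, \Pic_{X/S}/p^n)$ uniformly in $n$, which is where the finite generation of $k$ and the structure of $A(k^s)$ as a $G_k$-module genuinely enter; everything else is formal manipulation of the Kummer and Leray sequences.
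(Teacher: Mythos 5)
Your setup — Kummer sequence, Leray for $f$, decomposition of $\Pic_{X/S}$ into an abelian-variety part, a bounded part $T=\Pic^0/\Pic^0_{\red}$, and $\NS$ — matches the paper's framework, and your treatment of the left end of the sequence (noting that $\Pic^0/p^n$ is killed uniformly so the $\NS$-term survives) is essentially what Lemma~\ref{nscompu} does. But there is a genuine gap at exactly the point you flag as ``the main obstacle,'' and the paper's resolution is a different argument, not a tightening of yours.

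Two problems. First, you repeatedly identify $H^0_{\fppf}(S,\mathcal{F})$ with $\mathcal{F}(k^s)^{G_k}$, which is only legitimate for a smooth group scheme (or the pushforward of one): for the non-smooth fppf sheaf $(R^2f_*\GG_m)[p^n]$ there is no Hochschild--Serre descent for $H^0_{\fppf}$, and the cited \cite[Lem.~3.2]{DAd} gives only \emph{injectivity} of $H^0_{\fppf}(S,R^2f_*\GG_m)\to H^0_{\fppf}(S',R^2f_*\GG_m)^{G_k}$, not surjectivity. So even if you could kill the image in $H^1_{\fppf}(S,\Pic_{X/S}/p^n)$, you would still be short of hitting $\Br(X_{k^s})^{G_k}[p^n]$. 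Second, your proposed control of $H^1_{\fppf}(S,\Pic_{X/S}/p^n)$ uniformly in $n$ is not given; $H^1(G_k,\NS(X_{\bar k})/p^n)$ has no a priori exponent bound uniform in $n$ (it contains $\Hom_{\mathrm{cont}}(H,\NS/p^n)$ for an open $H\le G_k$ acting trivially, which grows), and controlling only the image of the boundary map after $\varprojlim\otimes\QQ_p$ is exactly the nontrivial content, not a routine Mittag--Leffler check.

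The paper sidesteps both issues at once via Lemma~\ref{bigtrick}: it proves directly that $\Br(X)[p^n]\to\Br(X_{k^s})^{G_k}[p^n]$ is al.\ surjective \emph{uniformly} in $n$, using the Colliot-Th\'el\`ene--Skorobogatov/Yuan pull-back trick — split off $\Br(k)\oplus H^1(k,\Pic_{X/k})$ using a rational point, cut by hyperplanes to a curve $Y$ with $\Br(Y_{k^s})=0$, and use Poincar\'e reducibility to make $H^1(k,\Pic^0_{X/k})/p^n\to H^1(k,\Pic^0_{Y/k})/p^n$ al.\ injective uniformly in $n$. Then in the commutative square comparing $H^2_{\fppf}(X,\mu_{p^n})\to H^0_{\fppf}(S,R^2f_*\mu_{p^n})$ with $\Br(X_{k^s})^{G_k}[p^n]\to H^0_{\fppf}(S',R^2f_*\GG_m)^{G_k}[p^n]$ (whose bottom row is an al.\ isomorphism uniformly in $n$ by Lemma~\ref{3vanishing}), the uniform al.\ surjectivity of the left vertical forces that of the right vertical. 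That is the mechanism that produces the surjection onto $V_p\Br(X_{k^s})^{G_k}$; to repair your proof you would need to import this lemma (or an equivalent), rather than bound $H^1_{\fppf}(S,\Pic_{X/S}/p^n)$.
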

\begin{proof}
Consider the Leray spectral sequence for the $\fppf$ sheaf $\mu_{p^n}$ on $X$
$$
E_2^{p,q}=H^p_{\fppf}(S, R^qf_{*}\mu_{p^n}) \Rightarrow H^{p+q}_{\fppf}(X, \mu_{p^n}).
$$
It gives an edge map
$$ H^{2}_{\fppf}(X, \mu_{p^n})\lra H^0_{\fppf}(S, R^2f_{*}\mu_{p^n}).$$
Consider the Leray spectral sequence for the $\fppf$ sheaf $\GG_m$ on $X_{k^s}$
$$
E_2^{p,q}=H^p_{\fppf}(S^\prime, R^qf_{*}\GG_m) \Rightarrow H^{p+q}_{\fppf}(X_{k^s}, \GG_m).
$$
Since $f_*\GG_m=\GG_m$, $E^{p,0}_2=H^p(S^\prime, \GG_m)=0$ for $p\geq 1$. There is an exact sequence 
$$
0 \lra H^1_{\fppf}(S^\prime, R^1f_{*}\GG_m) \lra H^{2}_{\fppf}(X_{k^s}, \GG_m) \lra H^0_{\fppf}(S^\prime, R^2f_{*}\GG_m) \lra H^2_{\fppf}(S^\prime, R^1f_{*}\GG_m).
$$
By Lemma \ref{3vanishing} below,
$$ \Br(X_{k^s})=H^{2}_{\fppf}(X_{k^s}, \GG_m) \lra H^0_{\fppf}(S^\prime, R^2f_{*}\GG_m)$$
is surjective and has a kernel of finite exponent. Consider the commutative diagram
\begin{displaymath}
\xymatrix{                         
H^{2}_{\fppf}(X, \mu_{p^n})\ar[r]\ar[d]& H^0_{\fppf}(S, R^2f_{*}\mu_{p^n})\ar[d] 
\\
\Br(X_{k^s})^{G_k}[p^n] \ar[r] & H^0_{\fppf}(S^\prime, R^2f_{*}\GG_m)^{G_k}[p^n]
}
\end{displaymath}
The second row is an al. isomorphism uniformly in $n$. By Lemma \ref{bigtrick} below, the first column is al. surjective uniformly in $n$. Thus, the second column is al. surjective uniformly in $n$.  The Kummer exact sequence on the $\fppf$ site of $X$ 
$$ 0\lra \mu_p^n \lra \GG_m\stackrel{p^n}{\lra}\GG_m\lra 0$$
gives an exact sequence of $\fppf$ sheaves
$$ 0\lra R^1f_*\GG_m/p^n \lra R^2f_*\mu_p^n \lra  R^2f_*\GG_m.$$
Hence we get
$$ 0\lra H^0_{\fppf}(S,R^1f_*\GG_m/p^n) \lra H^0_{\fppf}(S, R^2f_*\mu_p^n) \lra  H^0_{\fppf}(S, R^2f_*\GG_m).$$
By \cite[Lem. 3.3]{DAd}, the natural map $H^0_{\fppf}(S, R^2f_{*}\GG_m) \lra H^0_{\fppf}(S^\prime, R^2f_{*}\GG_m)^{G_k} $ is injective. This gives an exact sequence with the last map al. surjective uniformly in $n$
$$ 0\lra H^0_{\fppf}(S,R^1f_*\GG_m/p^n) \lra H^0_{\fppf}(S, R^2f_*\mu_p^n) \lra  H^0_{\fppf}(S^\prime, R^2f_*\GG_m)^{G_k}[p^n].$$
By Lemma \ref{nscompu} below,  $H^0_{\fppf}(S,R^1f_*\GG_m/p^n)$ is al. isomorphic to $(\NS(X_{\bar{k}})/p^n)^{G_k}$ uniformly in $n$. Taking inverse limit and then tensoring $\QQ_p$, we get 
$$
0\lra(\NS(X_{\bar{k}})\otimes_\ZZ\QQ_p)^{G_k}\lra \varprojlim_{n}H^0_{\fppf}(S, R^2f_{*}\mu_{p^n})\otimes_{\ZZ_p}\QQ_p \lra V_pH^0_{\fppf}(S^\prime, R^2f_*\GG_m)^{G_k} \lra 0.
$$
This completes the proof since  $V_p\Br(X_{k^s})^{G_k}\cong V_pH^0_{\fppf}(S^\prime, R^2f_*\GG_m)^{G_k}$ by arguments above and $\NS(X)\otimes_\ZZ\QQ_p\cong(\NS(X_{\bar{k}})\otimes_\ZZ\QQ_p)^{G_k}$ by Lemma \ref{nscompu} below.
\end{proof}
\begin{lem}\label{3vanishing}
$H^1_{\fppf}(S^\prime, R^1f_{*}\GG_m)$ is of finite exponent and $H^2_{\fppf}(S^\prime, R^1f_{*}\GG_m)=0$. 
\end{lem}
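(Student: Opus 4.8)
The plan is to pass to $S' = \Spec(k^s)$, over which the fppf sheaf $R^1 f_* \GG_m$ is represented by the Picard scheme $P := \Pic_{X_{k^s}/k^s}$ (base change of higher direct images, together with Grothendieck's representability theorem for the projective $X_{k^s}/k^s$), and then to cut $P$ into pieces. Since $k^s$ is perfect, $A := \Pic^0_{X_{k^s}/k^s,\red}$ is an abelian subvariety of $P$ (it is the reduction of the proper connected group scheme $\Pic^0_{X_{k^s}/k^s}$), the quotient $P/\Pic^0_{X_{k^s}/k^s}$ is the constant group scheme $\underline{\NS(X_{k^s})}$ attached to the finitely generated group $\NS(X_{k^s})$, and $N := \Pic^0_{X_{k^s}/k^s}/A$ is a finite infinitesimal $k^s$-group scheme (it is $0$-dimensional and of finite type). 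I would then run the long exact fppf cohomology sequences of the two short exact sequences of fppf sheaves on $S'$:
$$0 \to \Pic^0_{X_{k^s}/k^s} \to P \to \underline{\NS(X_{k^s})} \to 0, \qquad 0 \to A \to \Pic^0_{X_{k^s}/k^s} \to N \to 0.$$

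The outer pieces $\underline{\NS(X_{k^s})}$ and $A$ are smooth group schemes, so their fppf cohomology coincides with their étale cohomology; and since $k^s$ is separably closed, the small étale topos of $S'$ is equivalent to the category of sets, whence $H^i_{\et}(S', \underline{\NS(X_{k^s})}) = H^i_{\et}(S', A) = 0$ for all $i \geq 1$. A diagram chase through the two sequences then shows that $H^1_{\fppf}(S', R^1f_*\GG_m)$ is a quotient of $H^1_{\fppf}(S', N)$ and that $H^2_{\fppf}(S', R^1f_*\GG_m)$ is isomorphic to $H^2_{\fppf}(S', N)$, so the lemma is reduced to: $(i)$ $H^1_{\fppf}(S', N)$ is of finite exponent, and $(ii)$ $H^2_{\fppf}(S', N) = 0$. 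Statement $(i)$ is automatic: $N$ is a finite connected commutative group scheme in characteristic $p$, hence killed by some power $p^m$ of $p$, and therefore so is $H^1_{\fppf}(S', N)$. For $(ii)$ I would embed $N$ into a smooth group: by Cartier duality $N$ is a closed subgroup scheme of $H := \Res_{\mathcal{O}(N^\vee)/k^s} \GG_m$, which is smooth, affine and commutative (Weil restriction along a finite flat algebra preserves smoothness), and the quotient $H/N$ is again smooth (a quotient of a smooth group scheme by a closed subgroup scheme over a field is smooth). Applying the long exact sequence to $0 \to N \to H \to H/N \to 0$ and using $H^i_{\et}(S', H) = H^i_{\et}(S', H/N) = 0$ for $i \geq 1$ then squeezes $H^2_{\fppf}(S', N)$ between two zeros.

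The step requiring the most care is the reduction itself, since in characteristic $p$ the identity component $\Pic^0_{X_{k^s}/k^s}$ need not be reduced: one must use that $k^s$ is perfect to know that $A = \Pic^0_{X_{k^s}/k^s,\red}$ is an abelian subvariety and that $N$ is an honest finite group scheme, and one should check the base-change identification $(R^1 f_*\GG_m)|_{S'} \cong P$ as fppf sheaves. The remaining non-formal inputs — the Cartier-duality embedding $N \hookrightarrow \Res_{\mathcal{O}(N^\vee)/k^s}\GG_m$ and the smoothness of $H/N$ — are standard; granting them, the argument is a formal chase exploiting the vanishing of positive-degree cohomology over a separably closed field, together with the fact that fppf and étale cohomology agree for smooth group schemes.
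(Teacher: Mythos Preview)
Your approach is essentially the paper's: decompose $P$ via $\Pic^0$ and the \'etale group scheme $\NS_X$, then $\Pic^0$ via the abelian variety $A = \Pic^0_{\red}$ and the finite quotient, use vanishing of higher fppf cohomology of smooth commutative groups over a separably closed field, and finally resolve the finite piece by smooth groups. Where the paper cites [Ma-Ro, Prop.~5.1] for a resolution $0 \to T \to G_0 \to G_1 \to 0$ with $G_0, G_1$ smooth, your Cartier-duality embedding $N \hookrightarrow H := \Res_{\mathcal{O}(N^\vee)/k^s}\GG_m$ is a concrete realization of the same device, and your argument that $H/N$ is smooth (check over $\bar k$, where it is reduced by faithfully flat descent from $H_{\bar k}$ and hence smooth by Cartier's theorem) is fine.

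One correction: $k^s$ is \emph{not} perfect in general (take $k = \FF_p(t)$: then $t^{1/p} \notin k^s$), so your stated justification for $A = (\Pic^0_{X_{k^s}/k^s})_{\red}$ being an abelian subvariety is invalid --- over an imperfect field the reduced subscheme of a group scheme need not be a subgroup. The conclusion nonetheless holds: the paper's Notations section records (citing FGA) that $\Pic^0_{X/k,\red}$ is an abelian variety over an arbitrary base field $k$, with finite quotient $\Pic^0/\Pic^0_{\red}$; base-changing this structure from $k$ to $k^s$ supplies the exact sequences you need. With that one fix your argument goes through and coincides with the paper's.
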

\begin{proof}
 $R^1f_{*}\GG_m$ is represented by the Picard scheme $P:= \Pic_{X/k}$. Write $P^0$ for $\Pic^0_{X/k}$ and $P^0_{\red}$ for $\Pic^0_{X/k,\red}$. By \cite{FGA}, there are exact sequences $$0\lra P^0\lra P \lra \NS_X\lra 0,$$ 
 and 
 $$0\lra P^0_{\red}\lra P^0\lra T \lra0,$$
 where $\NS_X$ is an \'etale group scheme over $k$ ( $\NS_X(k^s)=\NS(X_{\bar{k}})$ ) and $T$ is a finite group scheme over $k$. Since $P^0_{\red}$ and $\NS_X$ are smooth groups over $k$,
 $$H^i_{\fppf}(S^\prime, P^0_{\red})=0  \text{ and } H^i_{\fppf}(S^\prime, \NS_X)=0$$
 for $i\geq 1$. By \cite[Prop. 5.1]{Ma-Ro}, there is an exact sequence
 $$0\lra T\lra G_0\lra G_1\lra 0,$$
 where $G_0$ and $G_1$ are smooth groups of finite type over $k$. Thus,
 $$H^i_{\fppf}(S^\prime, T)=0 \text{ for $i\geq 2$}.$$
 The claims follow from the long exact sequences associated to the two exact sequences at the beginning of the proof.
\end{proof}

\begin{cor}\label{septoalgclosed}
The the kernel of the natural map $\Br(X_{k^s}) \lra \Br(X_{\bar{k}})$ is isomorphic to $H^1_{\fppf}(S^\prime, R^1f_{*}\GG_m)$, which is of finite exponent and vanishes if $\Pic_{X/k}$ is smooth over $k$.
\end{cor}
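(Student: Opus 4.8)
From the Leray spectral sequence for the $\fppf$ sheaf $\GG_m$ on $X_{k^s}\to S^\prime$ analysed in the proof of Proposition~\ref{firstexact} and from Lemma~\ref{3vanishing} (which gives $H^2_{\fppf}(S^\prime,R^1f_*\GG_m)=0$ and that $H^1_{\fppf}(S^\prime,R^1f_*\GG_m)$ has finite exponent), the edge map sits in a short exact sequence
$$ 0\lra H^1_{\fppf}(S^\prime,R^1f_*\GG_m)\lra \Br(X_{k^s})\lra H^0_{\fppf}(S^\prime,R^2f_*\GG_m)\lra 0, $$
with first term of finite exponent. So the corollary reduces to showing that the base-change map $\Br(X_{k^s})\to\Br(X_{\bar k})$ has the same kernel, and for this it suffices to produce a natural injection $H^0_{\fppf}(S^\prime,R^2f_*\GG_m)\hookrightarrow\Br(X_{\bar k})$ through which it factors.

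Here I would run the same Leray spectral sequence over $\Spec(\bar k)$, for $\bar{f}:X_{\bar k}\to\Spec(\bar k)$. Since $\bar k$ is algebraically closed, every $\fppf$ covering of $\Spec(\bar k)$ is refined by the trivial one (a nonempty member is locally of finite type over $\bar k$, hence has a $\bar k$-point by the Nullstellensatz), so $H^p_{\fppf}(\Spec(\bar k),-)=0$ for $p\ge 1$; the spectral sequence degenerates and yields $\Br(X_{\bar k})\cong H^0_{\fppf}(\Spec(\bar k),R^2\bar{f}_*\GG_m)$. Functoriality of the Leray spectral sequence along the cartesian square $(X_{\bar k}\to X_{k^s},\ \Spec(\bar k)\to S^\prime)$, together with the base-change morphism $(R^2f_*\GG_m)|_{\Spec(\bar k)}\to R^2\bar{f}_*\GG_m$, makes $\Br(X_{k^s})\to\Br(X_{\bar k})$ compatible with the two edge maps and with the pullback map $H^0_{\fppf}(S^\prime,R^2f_*\GG_m)\to H^0_{\fppf}(\Spec(\bar k),R^2\bar{f}_*\GG_m)$; so everything comes down to the injectivity of this last map.

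For the injectivity, write $\bar k=\varinjlim_i k_i$ with $k_i$ running over the finite subextensions of $\bar k/k^s$; each $k_i/k^s$ is finite and purely inseparable, so $\Spec(k_i)\to S^\prime$ is faithfully flat of finite presentation, an $\fppf$ covering. The sheaf axiom for $R^2f_*\GG_m$ applied to these coverings shows $H^0_{\fppf}(S^\prime,R^2f_*\GG_m)\to H^0_{\fppf}(\Spec(k_i),(R^2f_*\GG_m)|_{\Spec(k_i)})$ is injective for all $i$, compatibly in $i$; and by base change along the finite flat maps $\Spec(k_i)\to S^\prime$ and continuity of $\fppf$ cohomology along $\Spec(\bar k)=\varprojlim_i\Spec(k_i)$, the colimit of the targets is $\varinjlim_i H^0_{\fppf}(\Spec(k_i),R^2(f_{k_i})_*\GG_m)\cong H^0_{\fppf}(\Spec(\bar k),R^2\bar{f}_*\GG_m)\cong\Br(X_{\bar k})$. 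The induced map $H^0_{\fppf}(S^\prime,R^2f_*\GG_m)\to\Br(X_{\bar k})$ is then injective, because an element mapping to zero already maps to zero at a finite stage, where the map is injective. This gives $\Ker(\Br(X_{k^s})\to\Br(X_{\bar k}))=H^1_{\fppf}(S^\prime,R^1f_*\GG_m)$, of finite exponent by Lemma~\ref{3vanishing}. If moreover $\Pic_{X/k}$ is smooth over $k$, then $R^1f_*\GG_m\cong\Pic_{X/k}$ is a smooth $k$-group scheme, so $H^1_{\fppf}(S^\prime,R^1f_*\GG_m)=H^1_{\et}(S^\prime,\Pic_{X/k})=0$ since $k^s$ is separably closed (equivalently, in the notation of the proof of Lemma~\ref{3vanishing}, smoothness forces the finite group scheme $T$ to vanish).

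I expect the main (though routine) difficulty to be the passage from $k^s$ to $\bar k$ in the last two paragraphs: the extension $\bar k/k^s$ is not of finite presentation, so one cannot invoke the sheaf axiom directly on $\Spec(\bar k)\to S^\prime$ and must instead factor through the finite purely inseparable layers $k_i$ and appeal to the standard base-change and continuity properties of higher direct images; all the rest is a formal consequence of the Leray spectral sequence and Lemma~\ref{3vanishing}.
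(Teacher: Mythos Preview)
Your proof is correct and follows the same architecture as the paper's: both use the Leray spectral sequence for $\GG_m$ over $S'$ together with Lemma~\ref{3vanishing} to identify the kernel with $H^1_{\fppf}(S',R^1f_*\GG_m)$, and both reduce to the injectivity of $H^0_{\fppf}(S',R^2f_*\GG_m)\to\Br(X_{\bar k})$. The only difference is that the paper dispatches this injectivity by citing \cite[Lem.~3.2]{DAd}, whereas you supply a self-contained argument via the sheaf axiom for the finite purely inseparable fppf covers $\Spec(k_i)\to S'$ and a limit over $i$, and you also spell out why the Leray spectral sequence over $\Spec(\bar k)$ degenerates (every fppf cover of $\Spec(\bar k)$ has a section, so the global sections functor is exact).
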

\begin{proof}
 $$0\lra H^1_{\fppf}(S^\prime, R^1f_{*}\GG_m)\lra H^2_{\fppf}(X_{k^s},\GG_m)\lra H^0_{\fppf}(S^\prime, R^2f_{*}\GG_m)$$
 is exact. By the lemma above, $H^1_{\fppf}(S^\prime, R^1f_{*}\GG_m)$ is of finite exponent. It suffices to show that 
 $$H^0_{\fppf}(S^\prime, R^2f_{*}\GG_m)\lra H^0_{\fppf}(\Spec(\bar{k}), R^2f_{*}\GG_m)=\Br(X_{\bar{k}})$$
 is injective. This follows from \cite[Lem. 3.3]{DAd}.
\end{proof}
\begin{lem}\label{nscompu}
There is a natural map which is al. isomorphism uniformly in $n$:
$$H^0_{\fppf}(S,R^1f_*\GG_m/p^n) \lra (\NS(X_{\bar{k}})/p^n)^{G_k}.$$
Moreover, there is a canonical isomorphism
$$\NS(X)\otimes_\ZZ\QQ_p\cong(\NS(X_{\bar{k}})\otimes_\ZZ\QQ_p)^{G_k}.$$
\end{lem}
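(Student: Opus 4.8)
The approach is to transport the computation to the small fppf site of $S=\Spec k$ by means of the Picard scheme. Recall from the proof of Lemma~\ref{3vanishing} that $R^1f_*\GG_m$ is represented by $P:=\Pic_{X/k}$, and that there are exact sequences of fppf sheaves
$$0\to P^0\to P\to \NS_X\to 0,\qquad 0\to A\to P^0\to T\to 0,$$
where $A:=\Pic^0_{X/k,\red}$ is an abelian variety, $T$ is a finite $k$-group scheme (killed by its order $N$), and $\NS_X$ is étale with $\NS_X(k^s)=\NS(X_{\bar k})$. I will reduce these sequences modulo $p^n$, take $H^0_{\fppf}(S,-)$, and track all torsion bounds uniformly in $n$.

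For the first assertion: since $[p^n]\colon A\to A$ is a finite flat surjection it is an fppf cover, so $A/p^n=0$ as fppf sheaves; the snake lemma applied to $[p^n]$ on the second sequence then gives $P^0/p^n\cong T/p^n$, which is killed by $N$ for every $n$. Likewise $\NS_X[p^n]=\NS(X_{\bar k})[p^n]$ is killed by the (finite) exponent of $\NS(X_{\bar k})_{\tor}$, again uniformly in $n$. Feeding these into the snake lemma for $[p^n]$ on the first sequence, the canonical epimorphism $P/p^n\twoheadrightarrow \NS_X/p^n$ has kernel a quotient of $P^0/p^n$, hence killed by $N$; so it is an al.\ isomorphism uniformly in $n$. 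Applying $H^0_{\fppf}(S,-)$ and using that the kernel sheaf (killed by $N$) has its $H^0$ and $H^1$ killed by $N$, the induced map $H^0_{\fppf}(S,P/p^n)\to H^0_{\fppf}(S,\NS_X/p^n)$ is an al.\ isomorphism uniformly in $n$. Finally, $\NS_X$ being étale, the fppf quotient $\NS_X/p^n$ is the étale group scheme attached to $\NS(X_{\bar k})/p^n$, so $H^0_{\fppf}(S,\NS_X/p^n)=(\NS(X_{\bar k})/p^n)^{G_k}$; and $H^0_{\fppf}(S,P/p^n)=H^0_{\fppf}(S,R^1f_*\GG_m/p^n)$ because $P=R^1f_*\GG_m$. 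This gives the first part.

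For the second assertion, first observe that $G_k$ acts on the finitely generated group $\NS(X_{\bar k})$ through a finite quotient (a generating set is defined over a finite extension of $k$), so, $\QQ_p$ being flat over $\ZZ$, forming invariants commutes with $-\otimes_\ZZ\QQ_p$ and hence $(\NS(X_{\bar k})\otimes_\ZZ\QQ_p)^{G_k}=\NS(X_{\bar k})^{G_k}\otimes_\ZZ\QQ_p$. It therefore suffices to show that the natural pullback $c\colon \NS(X)\to \NS(X_{\bar k})^{G_k}$ has finite kernel and cokernel. The Hochschild--Serre spectral sequence for $\GG_m$ on $X$ together with Hilbert~90 gives $\Pic(X)\hookrightarrow \Pic(X_{k^s})^{G_k}\hookrightarrow \Pic(X_{\bar k})$ compatibly with $\Pic^0$, so $\ker c$ is a subgroup of a finite group, hence finite. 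For $\coker c$: identify $\NS(X_{\bar k})^{G_k}$ with $\NS_X(k)$; the obstruction to lifting a class $D\in\NS_X(k)$ to $\Pic_{X/k}(k)$ lies in $H^1_{\fppf}(k,P^0)$, which by the second structural sequence is an extension of a subgroup of $H^1_{\fppf}(k,T)$ (killed by $N$) by a quotient of $H^1_{\fppf}(k,A)=H^1(G_k,A(k^s))$ (torsion, being the $H^1$ of a profinite group with discrete coefficients); hence $H^1_{\fppf}(k,P^0)$ is torsion and some multiple of $D$ lifts to $\Pic_{X/k}(k)$. Since $\coker(\Pic(X)\to\Pic_{X/k}(k))$ embeds into the torsion group $\Br(k)$, a further multiple of $D$ lifts to $\Pic(X)$, i.e.\ to $\NS(X)$; thus $\coker c$ is finitely generated and torsion, hence finite, and $c\otimes\QQ_p$ is an isomorphism.

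The delicate points I would be careful about are: (i) keeping the torsion bounds uniform in $n$ through both snake-lemma chases, which works only because $T$ and $\NS(X_{\bar k})_{\tor}$ are finite independently of $n$; (ii) the fppf-sheaf identifications $A/p^n=0$ and $\NS_X/p^n=\NS(X_{\bar k})/p^n$, resting respectively on $[p^n]$ being an isogeny and on $\NS_X$ being étale; and (iii) the fact that $P^0$ (hence $P$) is merely a group scheme, not smooth, so that groups such as $H^1_{\fppf}(k,P^0)$ and $H^0_{\fppf}(S,P/p^n)$ can be strictly larger than their naive Galois-cohomological analogues --- which is exactly why one only gets al.\ isomorphisms, and honest isomorphisms only after $\otimes\QQ_p$. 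I expect the uniform bookkeeping in (i) to be the main routine obstacle and the non-smoothness in (iii) to be the main conceptual subtlety.
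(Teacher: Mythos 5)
Your first half matches the paper's proof step for step: the same two filtrations of $\Pic_{X/k}$, the vanishing of $\Pic^0_{X/k,\red}/p^n$ as an fppf sheaf, $P^0/p^n\cong T/p^n$ killed by $|T|$ uniformly in $n$, and the identification $H^0_{\fppf}(S,\NS_X/p^n)=(\NS(X_{\bar k})/p^n)^{G_k}$ from \'etaleness. (The parenthetical about $\NS_X[p^n]$ being uniformly bounded is never used and can be dropped; the snake‑lemma chase you run only needs the $P^0/p^n$ bound.) For the second assertion you take a genuinely different route. The paper first shows $\NS(X_{k^s})\to\NS(X_{\bar k})$ is an al.\ isomorphism using $H^1_{\fppf}(S',P^0)$ of finite exponent, and then cites \cite[\S\,2.2]{Yua2} for $\NS(X)\otimes\QQ_p\cong(\NS(X_{k^s})\otimes\QQ_p)^{G_k}$. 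You instead compare $\NS(X)$ directly with $\NS(X_{\bar k})^{G_k}=\NS_X(k)$: kernel finite (in fact zero, via $\Pic(X)\hookrightarrow\Pic_{X/k}(k)\hookrightarrow\Pic(X_{\bar k})$ compatibly with $\Pic^0$), cokernel torsion because the lifting obstructions live in $H^1_{\fppf}(k,P^0)$ (torsion, by the $A$--$T$ d\'evissage and the usual fact that Galois $H^1$ with discrete coefficients is torsion) and in $\Br(k)$ (torsion). Combined with the observation that $G_k$ acts on the finitely generated group $\NS(X_{\bar k})$ through a finite quotient, so that $(-)^{G_k}$ commutes with $\otimes\QQ_p$, this gives the $\QQ_p$-isomorphism. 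Your version has the virtue of being self‑contained (it inlines the content of the Yuan citation) at the cost of a slightly longer obstruction‑theoretic chase; both are correct and rest on the same structural input, namely the $(P^0_{\red},T,\NS_X)$ filtration from Lemma~\ref{3vanishing}.
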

\begin{proof}
    Notations as in the proof of Lemma \ref{3vanishing},
    $$R^1f_{*}\GG_m/p^n=P/p^n.$$
There are exact sequences of $\fppf$ sheaves
$$P^0/p^n\lra P/p^n\lra \NS_X/p^n\lra 0,$$
and
$$P^0_{\red}/p^n\lra P^0/p^n\lra T/p^n\lra 0.$$
$P^0_{\red}/p^n=0$ since $P^0_{\red}$ is an abelian variety. $P^0/p^n\cong T/p^n$ is killed by the order of $T$. Thus,
$$H^0_{\fppf}(S,P/p^n)\lra H^0_{\fppf}(S,\NS_X/p^n)$$ 
is an al. isomorphism uniformly in $n$. Since $\NS_X/p^n$ is an \'etale group scheme over $k$,
$$H^0_{\fppf}(S,\NS_X/p^n)= H^0_{\et}(S^\prime,\NS_X/p^n)^{G_k}=(\NS_X(k^s)/p^n)^{G_k}=(\NS(X_{\bar{k}})/p^n)^{G_k}.$$
For the second claim, as in the proof of Lemma \ref{3vanishing}, taking cohomology for the following exact sequences
 $$0\lra P^0\lra P \lra \NS_X\lra 0,$$ 
 and 
 $$0\lra P^0_{\red}\lra P^0\lra T \lra0,$$
 we get an exact sequence
 $$0\lra P^0(k^s)\lra P(k^s) \lra \NS_X(k^s)\lra H^1_{\fppf}(S^\prime, P^0).$$ 
 The last groups is of finite exponent. This shows that $\NS(X_{k^s})\lra \NS(X_{\bar{k}})$ is an al. isomorphism. Thus, $\NS(X_{k^s})\otimes_{\ZZ}\QQ_p\cong\NS(X_{\bar{k}})\otimes_{\ZZ}\QQ_p$. By \cite[\S, 2.2, p10]{Yua2}, $$\NS(X)\otimes_\ZZ\QQ_p\cong(\NS(X_{k^s})\otimes_{\ZZ}\QQ_p)^{G_k}.$$
 This completes the proof.
 
\end{proof}

\begin{lem}\label{bigtrick}
The natural map $\Br(X)[p^n] \lra \Br(X_{k^s})^{G_k}[p^n]$ is al. surjective uniformly in $n$.
\end{lem}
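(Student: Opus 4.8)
The plan is to translate the statement into flat cohomology of $\mu_{p^n}$, apply the Hochschild--Serre spectral sequence for the cover $X_{k^s}\to X$, and exploit two special features of characteristic $p$: the sheaf $\mu_{p^n}$ has no global sections over a separably closed field, and $\operatorname{cd}_p(G_k)\le 1$ by Serre. Throughout write $Y=X_{k^s}$ and $G=G_k$.

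First I would record the vanishings we need. Since $X$ is geometrically integral and proper we have $\mathcal O(Y)^{\times}=(k^s)^{\times}$, hence $H^0_{\fppf}(Y,\mu_{p^n})=\mu_{p^n}(k^s)=0$; and the Kummer sequence on the $\fppf$ site of $Y$ presents $H^1_{\fppf}(Y,\mu_{p^n})$ as an extension of $\Pic(Y)[p^n]$ by $(k^s)^{\times}/((k^s)^{\times})^{p^n}$, so it is $p^n$-torsion. In the spectral sequence $E_2^{a,b}=H^a(G,H^b_{\fppf}(Y,\mu_{p^n}))\Rightarrow H^{a+b}_{\fppf}(X,\mu_{p^n})$ this gives $E_2^{a,0}=0$ for all $a$ and, since the coefficients of $E_2^{2,1}$ are $p$-power torsion, $E_2^{2,1}=H^2(G,H^1_{\fppf}(Y,\mu_{p^n}))=0$ by $\operatorname{cd}_p(G_k)\le 1$. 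Hence the differentials $d_2,d_3$ out of $E_2^{0,2}$ vanish and the edge map $H^2_{\fppf}(X,\mu_{p^n})\to H^2_{\fppf}(Y,\mu_{p^n})^{G}$ is surjective. Using the Kummer sequence on $X$ (through which $H^2_{\fppf}(X,\mu_{p^n})\twoheadrightarrow\Br(X)[p^n]$), a diagram chase identifies $\coker\!\big(\Br(X)[p^n]\to\Br(X_{k^s})^{G_k}[p^n]\big)$ with $\Im(\partial_n)$, where
$$\partial_n\colon\ \Br(Y)[p^n]^{G}\longrightarrow H^1\!\big(G,\Pic(Y)/p^n\big)$$
is the connecting map of the $G$-cohomology of $0\to\Pic(Y)/p^n\to H^2_{\fppf}(Y,\mu_{p^n})\to\Br(Y)[p^n]\to 0$. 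Thus the lemma is equivalent to: $\Im(\partial_n)$ is annihilated by one integer for all $n$.

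To bound $\Im(\partial_n)$ I would cut $\Pic(Y)$ by $0\to\Pic^0(Y)\to\Pic(Y)\to\NS(Y)\to 0$. As $\NS(Y)$ is finitely generated, $G$ acts on it through a fixed finite quotient $G'$ and $\NS(Y)[p^n]$ is bounded uniformly in $n$; since $\Pic^0(Y)/p^n$ is $p$-power torsion, $\operatorname{cd}_p(G_k)\le 1$ forces $H^2(G,\Pic^0(Y)/p^n)=0$, so up to a uniformly bounded error $H^1(G,\Pic(Y)/p^n)$ is an extension of $H^1(G,\NS(Y)/p^n)$ by a quotient of $H^1(G,\Pic^0(Y)/p^n)$. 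The contribution of $\Im(\partial_n)$ to the $\NS$-factor is elementary to control because the Galois action is through the fixed finite group $G'$. The contribution of the $\Pic^0$-factor is the essential one: writing $\Pic^0(Y)=A(k^s)$ for the abelian variety $A=\Pic^0_{X/k,\red}$, one has $A(k^s)/p^n\cong H^1_{\fppf}(\Spec k^s,A[p^n])$, so this contribution embeds into $H^2_{\fppf}(\Spec k,A[p^n])$; a uniform-in-$n$ bound for the relevant part is supplied by D'Addezio's boundedness theorem for the Brauer group of an abelian variety over a finitely generated field (alternatively by flat duality for $A[p^n]$). One may instead bypass this by the Colliot-Th\'el\`ene--Skorobogatov pull-back trick, reducing the estimate to the cases of a curve and of the Albanese variety, where it can be checked by direct computation.

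The first two paragraphs are routine homological algebra; the crux is the last step. The difficulty is that the target groups $H^1(G_k,\Pic(Y)/p^n)$ need \emph{not} be of bounded exponent in characteristic $p$ --- already $H^1(G_k,\mathbb Z/p)$ can be infinite, by Artin--Schreier --- so one cannot argue by bounding the target and must instead use that the classes in the image of $\partial_n$ are genuinely constrained (this is exactly where the abelian-variety input, via D'Addezio or via the curve/Albanese reduction, enters).
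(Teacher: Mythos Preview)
Your reduction is correct and elegant: the Hochschild--Serre spectral sequence for the pro-\'etale cover $X_{k^s}\to X$ together with $\mathrm{cd}_p(G_k)\le 1$ does give surjectivity of $H^2_{\fppf}(X,\mu_{p^n})\to H^2_{\fppf}(X_{k^s},\mu_{p^n})^{G_k}$, and the diagram chase identifying the cokernel with $\Im(\partial_n)$ is right. The paper does not argue this way; it quotes from Colliot-Th\'el\`ene--Skorobogatov that $\Br(X)\to\Br(X_{k^s})^{G_k}$ already has finite-exponent cokernel, writes $N_X$ for the kernel and $M_X$ for the image, and uses the snake lemma for multiplication by $p^n$ to reduce to bounding the image of the connecting map $\delta_X:M_X[p^n]\to N_X/p^n$.

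The genuine gap is the step you yourself flag as the crux. Neither of your proposed bounds works as stated. For the $\NS$-factor, ``elementary because the action factors through a finite quotient'' is not an argument: $H^1(G_k,\NS(X_{k^s})/p^n)$ is typically of unbounded exponent (precisely the Artin--Schreier phenomenon you note), and you give no reason why the image of $\partial_n$ avoids this. For the $\Pic^0$-factor, the embedding into $H^2_{\fppf}(k,A[p^n])$ is fine, but D'Addezio's theorem concerns $\Br(A_{k^s})^{G_k}[p^\infty]$, a quite different object; I do not see how it, or flat duality for $A[p^n]$, yields a uniform bound on the image of $\partial_n$ inside $H^2_{\fppf}(k,A[p^n])$ without further input. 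What actually works is the pull-back trick you mention last, and the paper carries it out concretely: after acquiring a rational point one has $N_X=\Br(k)\oplus H^1(k,\Pic_{X/k})$; restricting to a smooth hyperplane-section curve $Y\subset X$ (for which $\Br(Y_{k^s})=0$, hence $M_Y=0$), functoriality forces $\Im(\delta_X)\subset\ker(N_X/p^n\to N_Y/p^n)$; the $\Br(k)$-summands match, so one is left with al.\ injectivity of $H^1(k,\Pic^0_{X/k})/p^n\to H^1(k,\Pic^0_{Y/k})/p^n$ uniformly in $n$, which holds because weak Lefschetz plus Poincar\'e reducibility make $\Pic^0_{X/k,\red}$ an isogeny direct factor of $\Pic^0_{Y/k}$. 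Your framework could absorb the same argument, but it needs to be executed rather than alluded to.
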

\begin{proof}
Firstly, assume that $X(k)$ is not empty. By \cite{CTS1} (cf. also \cite{Yua2}), the natural map $\Br(X)\lra \Br(X_{k^s})^{G_k}$ is al. surjective and there is a canonical exact sequence
\begin{equation}\label{formulaexactBr}
0 \lra \Br(k) \lra \Ker(\Br(X)\lra \Br(X_{k^s})^{G_k})\lra H^1(k,\Pic_{X/k}) \lra 0
\end{equation}
Write $M_X=:\Im(\Br(X)\lra \Br(X_{k^s})^{G_k})$ and $N_X:=\Ker(\Br(X)\lra \Br(X_{k^s})^{G_k})$. By the snake lemma, there is an exact sequence
$$
0\lra N_X[p^n] \lra \Br(X)[p^n] \lra M_X[p^n]\lra N_X/p^n.
$$
Let $Y \subseteq X$ be a smooth projective geometrically integral curve over $k$ obtained by taking hyperplane sections repeatedly (cf. \cite{Yua2}). By extending $k$, we assume that $Y(k)$ is not empty. Fix a $k$-point $P:\Spec(k) \lra Y$, then the first exact sequence is split. So $N_X=\Br(k)\oplus H^1(k,\Pic_{X/k})$. Similarly, there are two exact sequences above for $Y$ and $N_Y=\Br(k)\oplus H^1(k,\Pic_{Y/k})$. By functoriality and the fact $\Br(Y_{k^s})=0$ (so $M_Y=0$), it suffices to prove that the natural map
$$N_X/p^n \lra N_Y/p^n$$
is al. injective uniformly in $n$.  Thus, it suffices to show that 
$H^1(k, \Pic_{X/k})/p^n \lra H^1(k, \Pic_{Y/k})/p^n$
is al. injective uniformly in $n$. The natural map $H^1(k, \Pic^0_{X/k})\lra H^1(k, \Pic_{X/k})$ is al. isomorphism since $\NS(X_{k^s})$ is finitely generated. Hence, it suffices to show that
$H^1(k, \Pic^0_{X/k})/p^n \lra H^1(k, \Pic^0_{Y/k})/p^n$
is al. injective uniformly in $n$. This follows from the fact that there is an abelian group $D$ and a morphism $D \lra H^1(k, \Pic^0_{Y/k})$ such that the induced map 
$H^1(k, \Pic^0_{X/k})\oplus D \lra H^1(k, \Pic^0_{Y/k})$
is an al. isomorphism (cf. \cite{Yua2} for details). This proves the claim for $X_l/l$ where $l$ is some finite Galois extension of $k$. Let $G$ denote the Galois group $\Gal(l/k)$. Then claim for $X/k$ follows from the fact that
$$\Br(X)\lra \Br(X_l)^{G}$$
is an al. isomorphism (cf. \cite{Yua2} for details).
\end{proof}
\begin{lem}\label{cofiniteness}
The Brauer group $\Br(X_{\bar{k}})$ is an extension of a group of finite exponent by a divisible group of cofinite type. The Brauer group $\Br(X_{k^s})$ is an extension of a group of cofinite type by a group of finite exponent.
\end{lem}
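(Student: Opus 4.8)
The plan is to analyze $\Br(X_{\bar{k}})$ one prime at a time, and then to transfer the result to $\Br(X_{k^s})$ using Corollary~\ref{septoalgclosed}. Since $X_{\bar{k}}$ is smooth over a field, $\Br(X_{\bar{k}})$ is torsion, so $\Br(X_{\bar{k}})=\bigoplus_{\ell}\Br(X_{\bar{k}})[\ell^\infty]$; it then suffices to describe each $\ell$-primary summand, to check that its non-divisible part is trivial for all but finitely many $\ell$, and to bound the coranks of the divisible parts independently of $\ell$.

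For $\ell\neq p$ I would invoke Grothendieck's classical computation. The Kummer sequence on the étale site of $X_{\bar{k}}$, after passing to the filtered colimit over $n$, yields a short exact sequence
$$
0\lra \NS(X_{\bar{k}})\otimes_\ZZ\QQ_\ell/\ZZ_\ell\lra H^2(X_{\bar{k}},\QQ_\ell/\ZZ_\ell(1))\lra \Br(X_{\bar{k}})[\ell^\infty]\lra 0,
$$
where one uses that $\Pic^0(X_{\bar{k}})$ is divisible, so that $\Pic(X_{\bar{k}})\otimes\QQ_\ell/\ZZ_\ell=\NS(X_{\bar{k}})\otimes\QQ_\ell/\ZZ_\ell\cong(\QQ_\ell/\ZZ_\ell)^{\rho}$ with $\rho=\rank\NS(X_{\bar{k}})$. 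Since $H^2(X_{\bar{k}},\QQ_\ell/\ZZ_\ell(1))$ is a cofinitely generated $\ZZ_\ell$-module of corank $b_2$, where $b_2$ denotes the ($\ell$-independent) second $\ell$-adic Betti number, the quotient $\Br(X_{\bar{k}})[\ell^\infty]$ is an extension of a finite group by $(\QQ_\ell/\ZZ_\ell)^{b_2-\rho}$, and its finite part vanishes for all but finitely many $\ell$. Summing over $\ell\neq p$, the prime-to-$p$ part $\Br(X_{\bar{k}})(\non p)$ is an extension of a finite group by a divisible group whose $\wh\ZZ$-Tate module is finitely generated, i.e.\ by a divisible group of cofinite type.

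The $p$-primary part is the only step requiring genuine input, and is where I expect the real work to be. The claim is that $\Br(X_{\bar{k}})[p^n]$ is finite for every $n$. From the Kummer sequence for $\mu_{p^n}$ on the flat site, $\Br(X_{\bar{k}})[p^n]$ is a quotient of $H^2_{\fppf}(X_{\bar{k}},\mu_{p^n})$, so it suffices that $H^2_{\fppf}(X_{\bar{k}},\mu_{p^n})$ be finite. This is the well-known finiteness of the flat (equivalently syntomic) cohomology of a proper variety over an algebraically closed field with $\mu_{p^n}$-coefficients: via the logarithmic de Rham--Witt description of $\mu_{p^n}$ it reduces to the finite length over $W_n(\bar{k})$ of the coherent cohomology groups $H^j(X_{\bar{k}},W_n\Omega^{\bullet}_{X_{\bar{k}}})$ together with the fact that $1-F$ acts on these with finite kernel and cokernel, which is Lang's theorem over $\bar{k}$. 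Hence $\Br(X_{\bar{k}})[p]$ is finite, so $\Br(X_{\bar{k}})[p^\infty]$ is a cofinitely generated $\ZZ_p$-module, an extension of a finite group by $(\QQ_p/\ZZ_p)^{r}$. Together with the previous paragraph, this shows $\Br(X_{\bar{k}})$ is an extension of a finite group (a fortiori of a group of finite exponent) by a divisible group of cofinite type.

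For $\Br(X_{k^s})$ I would argue as follows. By Corollary~\ref{septoalgclosed}, the kernel $N$ of $\Br(X_{k^s})\to\Br(X_{\bar{k}})$ has finite exponent, so $\Br(X_{k^s})$ is an extension of its image $B\subseteq\Br(X_{\bar{k}})$ by $N$. A subgroup of a group of cofinite type is again of cofinite type (Pontryagin duality turns the inclusion into a surjection of duals, and a quotient of a finitely generated $\wh\ZZ$-module is finitely generated); hence, by the previous paragraph, $B$ is of cofinite type, and $\Br(X_{k^s})$ is an extension of a group of cofinite type by a group of finite exponent. The one non-routine ingredient is thus the finiteness of $H^2_{\fppf}(X_{\bar{k}},\mu_{p^n})$; the prime-to-$p$ analysis and the passage from $\Br(X_{\bar{k}})$ to $\Br(X_{k^s})$ are elementary bookkeeping with cofinitely generated $\wh\ZZ$-modules.
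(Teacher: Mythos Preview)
Your treatment of the primes $\ell\neq p$ and the deduction of the second claim from Corollary~\ref{septoalgclosed} are fine and match the paper's approach. The gap is in the $p$-primary step: the assertion that $H^2_{\fppf}(X_{\bar{k}},\mu_{p^n})$ is finite is not true in general, and the ``Lang's theorem'' heuristic does not establish it. The relevant \'etale sequence has the shape
\[
0\lra W_n\Omega^1_{X,\log}\lra W_n\Omega^1_X\xrightarrow{\ \bar F-1\ } W_n\Omega^1_X/dV^{n-1}\cO_X\lra 0,
\]
so on cohomology one is taking the kernel of the difference of a $\sigma$-semilinear map and a $W_n(\bar k)$-linear projection between \emph{different} finite-length $W_n(\bar k)$-modules; such a kernel can contain an entire $\bar k$-affine subspace, and no Lang-type argument forces it to be finite. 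Concretely, finiteness of $\Br(X_{\bar{k}})[p]$ is tied to $H^2(X_{\bar{k}},W\cO_{X_{\bar{k}}})$ being finitely generated over $W(\bar k)$ --- exactly the extra hypothesis the Corollary in the introduction imposes to upgrade ``finite exponent'' to ``finite''. Your argument, if it went through, would prove something strictly stronger than what the lemma asserts and than what holds in general.

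The paper instead invokes Illusie's structure theorem for $H^2_{\fppf}(X_{\bar{k}},\mu_{p^\infty})$ directly: it is an extension of a group of \emph{finite exponent} (not necessarily finite) by a divisible group of cofinite type, and this passes to the quotient $\Br(X_{\bar{k}})[p^\infty]$. That weaker conclusion is all that is claimed and all that is needed downstream.
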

\begin{proof}
There is an exact sequence 
$$0\lra \NS(X_{\bar{k}})\otimes_{\ZZ} \QQ_p/\ZZ_p\lra H^2_{\fppf}(X_{\bar{k}},\mu_{p^{\infty}})\lra \Br(X_{\bar{k}})[p^{\infty}]\to0.$$
By the proof of \cite[Prop. 5.9]{Ill}, $H^2_{\fppf}(X_{\bar{k}},\mu_{p^{\infty}})$ is an extension of group of finite exponent by a group of cofinite type. Hence, the claim is true for $\Br(X_{\bar{k}})[p^{\infty}]$. Since $\Br(X_{\bar{k}})(\non p)$ is of cofinite type, this proves the first claim. The second claim follows from Corollary \ref{septoalgclosed} and the first claim.
\end{proof}

\section{Proofs of Theorem \ref{mainthm} and its corollaries}
In this section, we will prove Theorem \ref{mainthm} and all corollaries in the introduction under the assumption of Theorem \ref{thekeythm} below which will be proved in the next section.
\begin{lem}\label{rigsha}
Let $f:\CX \lra U$ be a smooth projective morphism between smooth integral varieties over $\FF_p$ with a generic fiber $X/k$. Assume that $X$ is geometrically integral over $k$ and $X/k$ has at least one $k$-rational point. Then
$$\dim_{\QQ_p}(H^1_{\rig}(U, R^1f_*\CO_{\CX}^\dag)^{F=p})=\dim_{\QQ_p}(V_p\Sha(\Pic^0_{X/k,\red}))+\dim_{\QQ_p}(\Pic^0(X)\otimes_\ZZ \QQ_p)$$
\end{lem}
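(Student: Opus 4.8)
The plan is to reduce the left-hand side to the case of an abelian scheme over $U$, to convert rigid cohomology into flat (syntomic) cohomology by the same mechanism that drives Theorem~\ref{thekeythm}, and then to extract $V_p\Sha(A)$ and the Mordell--Weil contribution from a Kummer sequence on the base $U$, in parallel with the $\ell$-adic computations of \cite{Qin1,Qin2,EKQ}. First, since $f$ has a section, $X$ has a $k$-point, so the Albanese morphism $X\to\mathrm{Alb}_{X/k}$ is defined over $k$ and induces an isomorphism on $H^1$ in every Weil cohomology theory; after shrinking $U$ it identifies $R^1f_*\CO_\CX^\dagger$ with the first relative rigid cohomology of a spreading-out of $\mathrm{Alb}_{X/k}$. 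As $\mathrm{Alb}_{X/k}$ is isogenous --- via a polarization --- to $A:=\Pic^0_{X/k,\red}$ and rational rigid cohomology is an isogeny invariant, I would replace $\CX\to U$ by an abelian scheme $a\colon\CA\to U$ with generic fibre $A$. Shrinking $U$ affects neither side: the right-hand side depends only on $A/k$, and the $F=p$ eigenspace of $H^1_{\rig}(U,R^1a_*\CO_\CA^\dagger)$ is pure of weight $2$, hence lies in the interior (image of compactly supported) cohomology, which is intrinsic to the generic fibre by the weight formalism for overconvergent $F$-isocrystals (Kedlaya). So it remains to compute $\dim_{\QQ_p}H^1_{\rig}(U,R^1a_*\CO_\CA^\dagger)^{F=p}$.

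Next I would push Bauer's syntomic sequence $0\to\mu_{p^n}\to\SI_n\xrightarrow{1-p^{-1}F}\CO_n^{\cris}\to0$ on $\CA$ forward along $a$ and, combining it with the crystalline Dieudonn\'e description of $\CA[p^n]$, run the argument of Theorem~\ref{thekeythm} with $U$ in place of $\Spec k$ and in cohomological degree one: the direct verification for relative curves, the reduction along the relative Albanese, and the Colliot-Th\'el\`ene--Skorobogatov pull-back trick of \cite{CTS1}. This should produce a comparison map
\[
\varprojlim_n H^1_{\fppf}(U,\CA[p^n])\otimes_{\ZZ_p}\QQ_p\;\longrightarrow\;H^1_{\rig}(U,R^1a_*\CO_\CA^\dagger)^{F=p}
\]
which is an isomorphism, all auxiliary error terms being al.\ zero uniformly in $n$ (one is in the favourable case, $a$ being a genuine abelian scheme, so the curve input applies directly; the discrepancy between the syntomic direct image $R^1a_{\syn,*}\mu_{p^n}$ and $\CA[p^n]$ involves only units on $U$ and $\Pic(U)$, which sit in weight $\le 1$ and hence are invisible after imposing $F=p$).

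Finally, the Kummer sequence $0\to\CA[p^n]\to\CA\xrightarrow{p^n}\CA\to0$ on $U_{\fppf}$ gives $0\to\CA(U)/p^n\to H^1_{\fppf}(U,\CA[p^n])\to H^1_{\fppf}(U,\CA)[p^n]\to0$; since $\CA(U)=A(k)=\Pic^0(X)$ is finitely generated (Lang--N\'eron), taking the inverse limit and inverting $p$ yields
\[
0\longrightarrow\Pic^0(X)\otimes_\ZZ\QQ_p\longrightarrow\varprojlim_n H^1_{\fppf}(U,\CA[p^n])\otimes_{\ZZ_p}\QQ_p\longrightarrow V_pH^1_{\fppf}(U,\CA)\longrightarrow0 .
\]
As $\CA$ is smooth, $H^1_{\fppf}(U,\CA)=H^1_{\et}(U,\CA)$, and over a henselian discrete valuation ring of $U$ with finite residue field this restricts to $H^1_{\et}$ of the abelian-variety special fibre, which vanishes by Lang's theorem; so the localization sequence identifies the image of $H^1_{\et}(U,\CA)$ in $H^1(k,A)$ with the classes unramified along $U$. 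Applying $V_p$ and comparing with the definition of $\Sha(A)$ --- the finitely many remaining, bad, places contributing after $V_p$ only a space absorbed as in \cite{Qin1,Qin2,EKQ,DAd} --- gives $V_pH^1_{\fppf}(U,\CA)\cong V_p\Sha(A)$, and adding the dimensions in the two displays proves the lemma.

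I expect the crux to be the second step: transplanting the uniform-in-$n$ syntomic comparison of Theorem~\ref{thekeythm} to the relative setting over a possibly higher-dimensional base $U$ and one cohomological degree up, i.e.\ making the curve case, the Albanese reduction and the pull-back trick still output error terms killed by a single power of $p$. The other genuinely $p$-specific difficulty is the local analysis at the missing places behind $V_pH^1_{\fppf}(U,\CA)\cong V_p\Sha(A)$, the residue characteristic being $p$; this could alternatively be bypassed by invoking independence of $\ell$ for the multiplicities of Frobenius eigenvalues (Katz--Messing, in this open, twisted-coefficient setting) to reduce the whole count to the classical $\ell$-adic relation, $\ell\neq p$, between $H^1_{\et}(U,R^1a_*\QQ_\ell)$, $\Sha(A)$ and the Mordell--Weil group.
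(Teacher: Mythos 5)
Your proposal takes a genuinely different route from the paper, and while the skeleton is reasonable, the two steps you yourself flag as the crux are real gaps, not just labor.

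The paper's proof avoids the relative-over-$U$ comparison entirely. After the same Albanese reduction (done, as you suggest, via Kedlaya full-faithfulness and the crystalline Dieudonn\'e comparison for the fibers, \cite{Mor,crys1motive}), it runs P\'al's Leray spectral sequence $H^p_{\rig}(U,R^qf_*\CO_\CX^\dagger)\Rightarrow H^{p+q}_{\rig}(\CX)$ and uses the Colliot-Th\'el\`ene--Skorobogatov pull-back trick to kill $d_2^{1,1}$, $d_3^{0,2}$, $d_2^{0,2}$ and, after taking $F=p$, to keep the resulting two short exact sequences exact. Then everything reduces to \emph{already-known} dimension formulas for the $F=p$ parts of $H^2_{\rig}(\CX)$, $H^2_{\rig}(U)$, $H^0_{\rig}(U,R^2f_*\CO_\CX^\dagger)$ and the sequence $0\to V_p\Br_{\nr}(k)\to V_p\Br_{\nr}(K(X))\to V_p\Sha(\Pic^0_{X/k})\to 0$ from \cite[Prop. 2.22 and Thm. (D)]{EKQ}. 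In other words, the paper turns the lemma into a pure dimension count by black-boxing precisely the arithmetic facts you propose to re-derive.

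Your route, by contrast, asks for two new theorems. First, a ``Theorem~\ref{thekeythm} in degree $1$ over the base $U$'': an isomorphism $\varprojlim_n H^1_{\fppf}(U,\CA[p^n])\otimes\QQ_p\cong H^1_{\rig}(U,R^1a_*\CO_\CA^\dagger)^{F=p}$. This is not a mild variation. The proof of Theorem~\ref{thekeythm} (Propositions~\ref{theoremtowardscrystalline} and~\ref{crystallinegenerictorigid}) leans hard on the base being $\Spec k$: e.g.\ the base change $S\to\overline S$ kills all higher syntomic cohomology of $\overline S$, the crystalline site of $S$ is controlled by a Cohen ring, and the final $F$-(iso)crystal arguments in Lemma~\ref{thelastlem} reduce to the spectrum of a field/DVR. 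In $H^1$ over a higher-dimensional $U$ you lose all of these, and you would need to control syntomic $H^1(U,-)$ for the cokernel of $1-p^{-1}F$ directly, plus the fact that $R^1a_{\syn,*}\mu_{p^n}$ is an extension of $\CA^\vee[p^n]$-type data by $\mu_{p^n,U}$ (a weight heuristic is not a proof of a uniform-in-$n$ al.\ vanishing). Second, the identification $V_pH^1_{\fppf}(U,\CA)\cong V_p\Sha(A)$ over a base of arbitrary dimension and at residue characteristic $p$ is itself one of the substantial outputs of \cite{Qin1,Qin2,EKQ}, not a corollary of Lang's theorem and localization; the sketch you give works over a curve base away from $p$, which is the classical Artin--Tate picture, but the general case needs the machinery you are trying to avoid. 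The Katz--Messing fallback also overreaches: that theorem is for smooth proper varieties, and what you actually need here (independence of $\ell$ for Frobenius eigenvalue multiplicities on cohomology of an open $U$ with lisse twisted coefficients, including $\ell=p$) is companion-theoretic and would make the argument strictly heavier than the paper's. So: correct first step, but the core comparison is a gap, and the strategy, if completed, would amount to re-proving parts of \cite{EKQ} rather than reducing to them as the paper does.
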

\begin{proof}
By the assumption, $X/k$ admits a section. This gives an Albanese morphism $a:X \lra A$ by \cite[no 236, Thm. 3.3 (iii)]{FGA}. By shrinking $U$ ( $R^1f_*\CO_{\CX}^\dag$ is of pure weight 1, the left side does not change for the reason of weights \cite{kedlaya2006fourier}), $A/k$ extends to an abelian scheme $g:\CA \lra U$. By the N\'eron model property of $g$, $a$ extends to a proper  morphism $a:\cX \lra \cA$.  The induced pullback for locally constant sheaves $R^1g_*\mu_{\ell^n} \lra R^1f_*\mu_{\ell^n}$ is an isomorphism. Thus for each $x\in U$, $a:\cX_{x} \lra \cA_{x}$ induces an isogeny between the reduced Picard varieties. This implies that the pull-back $a^*: R^1g_*\CO_{\CA}^\dag \lra R^1f_*\CO_{\CX}^\dag$ is isomorphism. To see this, by Kedlaya's full-faithfulness theorem \cite{kedlaya2004full}, it suffices to show that 
$$
a^*: R^1g_{\cris,*}\CO_{\CA/\ZZ_p}\otimes \QQ_p\lra R^1f_{\cris,*}\CO_{\CX/\ZZ_p}\otimes \QQ_p
$$
is an isomorphism of $F$-isocrystals. This follows from \cite[Prop. A.8 and Lem. A.9]{Mor} since the claim holds for fibers over a closed point in $U$ (cf. \cite{crys1motive}).

By Corollary \ref{appendspectral}, there is a Leray spectral sequence
$$E_2^{p,q}=H^p_{\rig}(U, R^qf_*\CO_{\CX}^\dag)\Rightarrow H^{p+q}_{\rig}(\CX).
$$
Following the pull-back trick in \cite[\S, 3.2]{Qin1}, these canonical maps
$$d^{1,1}_2:E_2^{1,1}\lra E_2^{3,0}$$
$$d_3^{0,2}:E_3^{0,2}\lra E_3^{3,0}$$
$$d_2^{0,2}:E_2^{0,2}\lra E_2^{2,1}$$
vanish. Thus, there are exact sequences
$$
0\lra K \lra H^{2}_{\rig}(\CX) \lra H^0_{\rig}(U, R^2f_*\CO_{\CX}^\dag) \lra 0
$$
and
$$0\lra H^2_{\rig}(U) \lra K \lra H^1_{\rig}(U, R^1f_*\CO_{\CX}^\dag)\lra 0.$$
Taking $F=p$ and using the pull-back trick in \cite[\S, 3.2]{Qin1} again, we get exact sequences
$$
0\lra K^{F=p} \lra H^{2}_{\rig}(\CX)^{F=p}\lra H^0_{\rig}(U, R^2f_*\CO_{\CX}^\dag)^{F=p} \lra 0
$$
and 
$$0\lra H^2_{\rig}(U)^{F=p} \lra K^{F=p} \lra H^1_{\rig}(U, R^1f_*\CO_{\CX}^\dag)^{F=p}\lra 0.$$
Since $H^1_{\rig}(U, R^1f_*\CO_{\CX}^\dag)^{F=p} \cong H^1_{\rig}(U, R^1g_*\CO_{\CA}^\dag)^{F=p}$ and $\Pic^0_{X/k,\red}\cong \Pic^0_{A/k}$, it suffices to prove the claim for the case that $\CX$ is an abelian scheme. Since the Tate conjecture for divisor is known for abelian varieties, $\dim_{\QQ_p}(H^0_{\rig}(U, R^2f_*\CO_{\CX}^\dag)^{F=p})=\dim_{\QQ_p}(\NS(X)\otimes_{\ZZ}\QQ_p)$ by Proposition \ref{propositionindependence}.
By \cite[Prop. 2.22]{EKQ}, 
$$
\dim_{\QQ_p}(H^2_{\rig}(\CX)^{F=p})=\dim_{\QQ_p}(\Pic(\CX)\otimes_{\ZZ}\QQ_p) + \dim_{\QQ_p}(V_p\Br_{\nr}(K(X))),
$$ 
$$
\dim_{\QQ_p}(H^2_{\rig}(U)^{F=p})=\dim_{\QQ_p}(\Pic(U)\otimes_{\ZZ}\QQ_p) + \dim_{\QQ_p}(V_p\Br_{\nr}(k)).
$$ 
Hence
\begin{align*}
&\dim_{\QQ_p}(H^1_{\rig}(U, R^1f_*\CO_{\CX}^\dag)^{F=p}) 
- \dim_{\QQ_p}(V_p\Sha(\Pic^0_{X/k})) \\
&= \dim_{\QQ_p}(\Pic(\CX) \otimes_{\ZZ} \QQ_p) 
+ \dim_{\QQ_p}(V_p \Br_{\nr}(K(X))) \\
&\quad - \dim_{\QQ_p}(V_p\Sha(\Pic^0_{X/k})) 
- \dim_{\QQ_p}(\NS(X) \otimes_{\ZZ} \QQ_p) \\
&\quad - \dim_{\QQ_p}(\Pic(U) \otimes_{\ZZ} \QQ_p) 
- \dim_{\QQ_p}(V_p\Br_{\nr}(k)) \\
&= \dim_{\QQ_p}(\Pic(\CX) \otimes_{\ZZ} \QQ_p) 
- \dim_{\QQ_p}(\Pic(U) \otimes_{\ZZ} \QQ_p) 
- \dim_{\QQ_p}(\NS(X) \otimes_{\ZZ} \QQ_p) \\
&= \dim_{\QQ_p}(\Pic(X) \otimes_{\ZZ} \QQ_p) 
- \dim_{\QQ_p}(\NS(X) \otimes_{\ZZ} \QQ_p) \\
&= \dim_{\QQ_p}(\Pic^0(X) \otimes_{\ZZ} \QQ_p).
\end{align*}

The second equality follows from the exact sequence
$$
0\longrightarrow V_p\Br_{\nr}(k)\longrightarrow V_p\Br_{\nr}(K(X)) \longrightarrow V_p\Sha(\Pic^0_{X/k})\longrightarrow 0,
$$
which was proved in \cite[Thm. (D)]{EKQ}.
The third equality follows from the exact sequence ( cf. \cite[Prop. 1.3]{EKQ})
$$
0\lra \Pic(U) \lra \Pic(\CX) \lra \Pic(X)\lra 0.
$$
\end{proof}
\subsubsection*{Proof of Theorem \ref{mainthm}}
\begin{proof}
By Proposition \ref{propositionindependence}, the Tate conjecture for divisors on $X$ is equivalent to 
$$\dim_{\QQ_p}
(\NS(X)\otimes_\ZZ\QQ_p)=\dim_{\QQ_p}(H^0_{\rig}(U, R^2f_*\CO_{\CX}^\dag)^{F=p}
).$$
By Theorem \ref{thekeythm}, this is equivalent to
$$
\dim_{\QQ_p}
(\NS(X)\otimes_\ZZ\QQ_p)=\dim_{\QQ_p}(\varprojlim_{n}H^0_{\fppf}(\Spec(k), R^2f_{*}\mu_{p^n})\otimes_{\ZZ_p}\QQ_p),
$$
which is equivalent to $V_p\Br(X_{k^s})^{G_k}=0$ by Proposition \ref{firstexact}. By Lemma \ref{cofiniteness}, $\Br(X_{k^s})^{G_k}[p^\infty]$ is al. isomorphic to a divisible group of cofinite type. Hence, $V_p\Br(X_{k^s})^{G_k}=0$ is equivalent to
that $\Br(X_{k^s})^{G_k}[p^\infty]$ is of finite exponent. By \cite[Prop. 6.6]{Pal} and \cite{CHT}, this is also equivalent to $\Br(X_{k^s})^{G_k}$ is of finite exponent. This proves the first claim.

For the second claim, we will first prove it under the assumption $X(k)\neq \varnothing$. We will remove this assumption later. To prove the claim, it suffices to show that
$$V_p\Br_{\nr}(K(X)) \lra V_p\Br(X_{k^s})^{G_k}$$
is surjective since $\Br(X_{k^s})^{G_k}$ is an extension of a group of finite exponent by a group of cofinite type (cf. Lemma \ref{cofiniteness}). Thus, $V_p\Br(X_{k^s})^{G_k}$ is a finite dimensional $\QQ_p$-linear space. Thus, it suffices to show 
$$\dim_{\QQ_p}(V_p\Br_{\nr}(K(X)))=\dim_{\QQ_p}(\Ker(V_p\Br_{\nr}(K(X)) \lra V_p\Br(X_{k^s})^{G_k}) +\dim_{\QQ_p}(V_p\Br(X_{k^s})^{G_k}).$$
By the first part of Corollary \ref{Artin-Grothendieck} (cf. \cite{EKQ} for the proof), it suffices
to show 
$$\dim_{\QQ_p}(V_p\Br_{\nr}(K(X)))=\dim_{\QQ_p}(V_p\Br_{\nr}(k)) + \dim_{\QQ_p}(V_p\Sha(\Pic^0_{X/k,\red})) +\dim_{\QQ_p}(V_p\Br(X_{k^s})^{G_k}).$$
By spreading out, $X/k$ can be extended to a morphism $f:\CX \lra U$ as in Lemma \ref{rigsha}.
By the proof of Lemma \ref{rigsha},
\begin{align*}
\dim_{\QQ_p}(H^{2}_{\rig}(\CX)^{F=p})=\dim_{\QQ_p}(H^0_{\rig}(U, R^2f_*\CO_{\CX}^\dag)^{F=p})+ \dim_{\QQ_p}(H^2_{\rig}(U)^{F=p}) \\
+ \dim_{\QQ_p}( H^1_{\rig}(U, R^1f_*\CO_{\CX}^\dag)^{F=p})
\end{align*}
Plug the following equations into the above equation, we get the desired result.
$$
\dim_{\QQ_p}(H^2_{\rig}(\CX)^{F=p})=\dim_{\QQ_p}(\Pic(\CX)\otimes_{\ZZ}\QQ_p) + \dim_{\QQ_p}(V_p\Br_{\nr}(K(X)))
$$ 
$$
\dim_{\QQ_p}(H^2_{\rig}(U)^{F=p})=\dim_{\QQ_p}(\Pic(U)\otimes_{\ZZ}\QQ_p) + \dim_{\QQ_p}(V_p\Br_{\nr}(k))
$$ 
$$
\dim_{\QQ_p}(H^0_{\rig}(U, R^2f_*\CO_{\CX}^\dag)^{F=p})=\dim_{\QQ_p}
(\NS(X)\otimes_\ZZ\QQ_p)+\dim_{\QQ_p}(V_p\Br(X_{k^s})^{G_k})
$$
$$
\dim_{\QQ_p}(H^1_{\rig}(U, R^1f_*\CO_{\CX}^\dag)^{F=p})=\dim_{\QQ_p}(\Pic^0(X)\otimes_\ZZ \QQ_p)+\dim_{\QQ_p}(V_p\Sha(\Pic^0_{X/k,\red}))
$$
This proves the claim for the case $X(k)\neq \varnothing$. In general, there exists a finite Galois extension $l/k$ such that $X(l) \neq \varnothing$. So $\Br_{\nr}(K(X_l)) \lra \Br(X_{k^s})^{G_l}$ is al. surjective. Thus, the claim for $X/k$ follows from the fact that
$\Br_{\nr}(K(X)) \lra \Br_{\nr}(K(X_l))^{\Gal(l/k)}$
is an al. isomorphism.
\end{proof}
\subsubsection*{Proof of Corollary \ref{cor1.3}}
\begin{proof}
The first claim follows directly from Theorem \ref{mainthm}. By the assumption and Corollary \ref{septoalgclosed}, the natural map $\Br(X_{k^s})\lra \Br(X_{\bar{k}})$ is injective, it follows from the proof of \cite[Thm. 5.2]{DAd} that $\Br(X_{\bar{k}})$ is of cofinite type. Thus, $\Br(X_{k^s})^{G_k}$ is of cofinite type.
\end{proof}

\subsubsection*{Proof of Corollary \ref{fgfield}}
\begin{proof}
By purity of Brauer groups \cite{Ces},
$\Br_{\nr}(K(X))=\Br(\CX)$. By Theorem \ref{mainthm},
$$\Br(\CX)\lra \Br(X_{K^s})^{G_K}$$
is al. surjective. Since it factors through 
$$\Br(\CX_{k^s})^{G_k} \lra \Br(X_{K^s})^{G_K},
$$
the claim follows.
\end{proof}

\section{Syntomic Cohomology and Crystalline Cohomology}
Let $X$ be a smooth projective geometrically integral variety over a field $k$ finitely generated over $\FF_p$. By spreading out $X/k$, we get a smooth projective morphism $f:\CX \lra U$ between smooth integral varieties over $\FF_p$. Recall P\'al's formulation of the Tate conjecture using rigid cohomology \cite[\S\, 6]{Pal}: the natural injective map
$$
\NS(X)\otimes_\ZZ\QQ_p\lra H^0_{\rig}(U, R^2f_*\CO_{\CX}^\dag)^{F=p}
$$
is surjective. P\'al \cite[Prop. 6.6]{Pal} showed that it is equivalent to the $\ell$-adic Tate conjecture. By Proposition \ref{propconvergentformulation}, it is also equivalent to the surjectivity of the natural injective map
$$\NS(X)\otimes_\ZZ\QQ_p\lra H^0_{\conv}(U, R^2f_*\cO_{\CX/K})^{F=p}.$$
The goal of this section is to prove the following theorem.
\begin{thm}\label{thekeythm}
There is a $\QQ_p$-linear isomorphism
$$
 \varprojlim_{n}H^0_{\fppf}(\Spec(k), R^2f_{*}\mu_{p^n})\otimes_{\ZZ_p}\QQ_p \cong H^0_{\conv}(U, R^2f_*\CO_{\CX/K})^{F=p}.
$$
\end{thm}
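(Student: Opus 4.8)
The plan is to bridge the flat cohomology on the left-hand side and the rigid cohomology on the right-hand side through crystalline cohomology, using Bauer's syntomic incarnation of $\mu_{p^n}$, and to carry out every intermediate comparison only up to $p$-power torsion of exponent bounded \emph{independently of $n$}, so that all error terms disappear after $\varprojlim_n(-)\otimes_{\ZZ_p}\QQ_p$.

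Concretely, I would work on the small syntomic site of $X$ — on which $\mu_{p^n}$ has the same higher direct images as on the $\fppf$ site — and apply $Rf_{\syn,*}$ to Bauer's fundamental exact sequence
$$0\lra\mu_{p^n}\lra\SI_n\xrightarrow{\;1-p^{-1}F\;}\CO_{n}^{\cris}\lra 0.$$
The associated long exact sequence of higher direct images on $(\Spec k)_{\syn}$ yields, in degree two, a short exact sequence
$$0\lra\coker\!\big(R^1f_{\syn,*}\SI_n\xrightarrow{1-p^{-1}F}R^1f_{\syn,*}\CO_{n}^{\cris}\big)\lra R^2f_{\syn,*}\mu_{p^n}\lra\ker\!\big(R^2f_{\syn,*}\SI_n\xrightarrow{1-p^{-1}F}R^2f_{\syn,*}\CO_{n}^{\cris}\big)\lra 0,$$
and applying $H^0(\Spec k,-)$ produces the natural map
$$\varphi_n\colon\; H^0_{\syn}(\Spec k,R^2f_{\syn,*}\mu_{p^n})\lra H^0_{\syn}(\Spec k,R^2f_{\syn,*}\SI_n)^{p^{-1}F=1},$$
whose kernel and cokernel are expressed through the cohomology over $\Spec k$ of the kernel and cokernel of $1-p^{-1}F$ on $R^{\le 1}f_{\syn,*}$. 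By the comparison of the syntomic and crystalline topoi, the target of $\varphi_n$ is al. isomorphic to $H^0_{\cris}(\Spec k,R^2f_{\cris,*}\CO_{X/\Sigma_n})^{F=p}$ uniformly in $n$. So the theorem reduces to (i) proving that $\varphi_n$ is an al. isomorphism uniformly in $n$, and then (ii), after $\varprojlim_n(-)\otimes_{\ZZ_p}\QQ_p$ (which kills the uniformly bounded error), identifying $\varprojlim_n H^0_{\cris}(\Spec k,R^2f_{\cris,*}\CO_{X/\Sigma_n})^{F=p}\otimes_{\ZZ_p}\QQ_p$ with $H^0_{\rig}(U,R^2f_*\CO_{\CX}^\dag)^{F=p}$.

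For (i), I would imitate the pull-back trick of Colliot-Th\'el\`ene and Skorobogatov used in Lemma~\ref{bigtrick} and Lemma~\ref{rigsha}, in three steps. After a finite Galois base change (harmless by restriction--corestriction, which one checks to be uniform in $n$), assume $X(k)\neq\varnothing$. \emph{Curves:} when $X=Y$ is a smooth projective curve, $R^2f_{\syn,*}\mu_{p^n}$ is pinned down by $H^2$ in the fibres — which is $\ZZ/p^n$ because $\Br(Y_{\bar{k}})=0$ — and the crystalline side is governed by a rank-one $F$-crystal with $F=p$, so that $\varphi_n$ can be analyzed by an explicit computation, uniformly in $n$. \emph{Albanese:} the Albanese morphism $a\colon X\to A$ induces an isomorphism on $R^1f_*\mu_{p^n}$ and, by Kedlaya's full-faithfulness theorem exactly as in the proof of Lemma~\ref{rigsha}, an al. isomorphism uniformly in $n$ on $R^1f_{\syn,*}\SI_n$ and on $R^1f_{\syn,*}\CO_{n}^{\cris}$; this reduces the study of the $R^{\le 1}$-contributions to $\ker\varphi_n$ and $\coker\varphi_n$ to the case of abelian schemes, where it is handled by the Dieudonné theory of the associated $p$-divisible group. \emph{Hyperplane sections:} choosing $Y\subseteq X$ a smooth projective curve cut out by iterated Lefschetz sections with $Y(k)\neq\varnothing$, the restriction maps $R^\bullet f_{\syn,*}\to R^\bullet(f|_Y)_{\syn,*}$ together with the section force the relevant Leray differentials $d_2^{0,2}$, $d_2^{1,1}$, $d_3^{0,2}$ for $f_{\syn}$ to vanish, exactly as in the proof of Lemma~\ref{rigsha}; combining this with the curve and abelian cases forces $\varphi_n$ for $X$ itself to be an al. isomorphism uniformly in $n$. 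I expect (i) to be the main obstacle: the delicate point is keeping every auxiliary exponent independent of $n$ throughout the Leray spectral sequences, the several base changes, and the syntomic--crystalline comparison. A secondary nuisance is that $k$ need not be perfect, so all crystalline computations over $\Spec k$ must be performed over a non-perfect base.

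For (ii), the group $\varprojlim_n H^0_{\cris}(\Spec k,R^2f_{\cris,*}\CO_{X/\Sigma_n})^{F=p}\otimes_{\ZZ_p}\QQ_p$ is, by the very construction of $R^2f_*\CO_{\CX}^\dag$ as relative rigid cohomology of $\CX/U$, the $F=p$-eigenspace of the fibre at the generic point of $U$ of the overconvergent $F$-isocrystal $R^2f_*\CO_{\CX}^\dag$. On the other hand $H^0_{\rig}(U,R^2f_*\CO_{\CX}^\dag)$ consists of the global horizontal sections, and a horizontal section over the generic point that is an $F=p$-eigenvector extends canonically over all of $U$ by the rigidity of Frobenius-equivariant sections of convergent $F$-isocrystals — the same principle underlying P\'al's reformulation of the Tate conjecture. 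Hence restriction to the generic point identifies $H^0_{\rig}(U,R^2f_*\CO_{\CX}^\dag)^{F=p}$ with the crystalline limit above, and composing with the isomorphism obtained from (i) after $\varprojlim_n(-)\otimes_{\ZZ_p}\QQ_p$ yields Theorem~\ref{thekeythm}.
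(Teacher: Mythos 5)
Your step (i) — the syntomic-to-crystalline comparison via Bauer's exact sequence, proved to be an al.\ isomorphism uniformly in $n$ by the curve / Albanese / Lefschetz-section trick — matches the paper's Proposition~\ref{theoremtowardscrystalline} and its proof in Section 5 fairly closely in structure (one small slip: for the Albanese step you invoke Kedlaya's full-faithfulness, but the paper instead establishes the $R^1$-comparison via crystalline Dieudonné theory, Theorem~\ref{theoremcrystallinemotive}; Kedlaya's theorem is used in Lemma~\ref{rigsha} for a different comparison, of overconvergent isocrystals).

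The genuine gap is in step (ii). You treat the passage from the crystalline $F=p$-invariants over the generic point $S=\Spec(k)$ to $H^0_{\rig}(U,R^2f_*\CO_\CX^\dag)^{F=p}$ as a consequence of ``rigidity of Frobenius-equivariant sections of convergent $F$-isocrystals'' and of ``the principle underlying P\'al's reformulation.'' No such general rigidity statement is available, and this is exactly why the paper's Proposition~\ref{crystallinegenerictorigid} is substantial rather than formal. For a convergent $F$-isocrystal on a non-proper $U$, restriction to a dense open, let alone to the generic point, is \emph{not} fully faithful, and the slope filtration can degenerate; so an $F=p$ horizontal section at the generic point has no reason to extend over $U$. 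The paper circumvents this through a chain of nontrivial inputs: Ertl--Vezzani's resolution of Berthelot's conjecture to get an overconvergent extension, Kedlaya's full-faithfulness of the forgetful functor from overconvergent to convergent $F$-isocrystals, Lemma~\ref{toFcrystal} (replacing the higher direct image up to isogeny by an honest $F$-crystal on a dense open $V$, via \cite{KrPa}), Kedlaya's theorem that restriction of $F$-isocrystals along $V\hookrightarrow U$ is fully faithful, and — the decisive step — Lemma~\ref{thelastlem}, which passes from $V$ to $\Spec(K)$ by taking the slope filtration, isolating the slope $[0,1]$ part, identifying it with a Dieudonn\'e crystal, and invoking de~Jong's full-faithfulness of the restriction functor for $p$-divisible groups. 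None of this is captured by your one-sentence appeal to rigidity, and P\'al's reformulation of the Tate conjecture is a statement about the image of the cycle class map, not an extension theorem for horizontal $F=p$-sections. In addition, your identification of $\varprojlim_n H^0_{\cris}(\Spec k,R^2f_{\cris*}\CO_{X/W_n})^{F=p}\otimes\QQ_p$ with ``the $F=p$-eigenspace of the fibre at the generic point'' is itself not automatic over a non-perfect $k$: the paper has to work for it, using Morrow's crystal construction (Lemma~\ref{lemmacomputepushforward1}, Lemma~\ref{takinglimit}, Lemma~\ref{gotorational}) and a base-change/localization lemma (Lemma~\ref{localization}) to compare the generic fibre's crystalline cohomology with the restriction of $R^2f_{\crys*}\CO_{\CX/W}$. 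As written, step (ii) is an assertion of the conclusion, not a proof of it.
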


\begin{rem}\label{remarkkeythm}
    By Proposition \ref{ertlberthelot}, we have an isomorphism
    $$H^0_{\rig}(U, R^2f_*\CO^\dagger_{\CX/K})^{F=p}\cong H^0_{\conv}(U, R^2f_*\CO_{\CX/K})^{F=p}$$
\end{rem}

This is divided into two steps, for the first we show

\begin{prop}\label{theoremtowardscrystalline}
There is an al. isomorphism uniformly in $n$ 
$$H_{\syn}^0(S,R^2f_{\syn*}\mu_{p^n,X})\stackrel{\cong}\longrightarrow H^0_{\crys}(S,R^2f_{\crys*}\cO_{X/W_n})^{F=p}.$$
\end{prop}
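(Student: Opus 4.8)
The plan is to use Bauer's syntomic sheaves to bridge the flat cohomology on the left with crystalline cohomology on the right, and then to reduce the comparison to a statement about $R^1f_{\syn*}$ which is handled by a pull-back argument in the spirit of Lemma~\ref{bigtrick}.

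\emph{Step 1: push forward Bauer's fundamental sequence.} On the small syntomic site of $X$ one has the exact sequence $0\lra\mu_{p^n}\lra\SI_n\stackrel{1-p^{-1}F}\lra\CO^{\cris}_n\lra0$ of \cite[Thm.~3.4]{Bauer1992}, where $\SI_n=S_n^1$. Applying $Rf_{\syn*}$ and passing to higher direct images yields a long exact sequence of syntomic sheaves on $S$ linking $R^qf_{\syn*}\mu_{p^n}$, $R^qf_{\syn*}\SI_n$ and $R^qf_{\syn*}\CO^{\cris}_n$, the connecting maps being induced by $1-p^{-1}F$. Two comparison inputs are needed: (i) $R^qf_{\syn*}\CO^{\cris}_n$ is al. isomorphic, uniformly in $n$ and compatibly with Frobenius, to the syntomic realization of $R^qf_{\crys*}\cO_{X/W_n}$; and (ii) under this identification the operator $p^{-1}F$ on $R^2f_{\syn*}\SI_n$ corresponds to $F/p$ on $R^2f_{\crys*}\cO_{X/W_n}$, so that $\{p^{-1}F=1\}$ matches $\{F=p\}$. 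Granting these, the map of the proposition factors through $H^0_{\syn}(S,R^2f_{\syn*}\SI_n)^{p^{-1}F=1}$, and the arrow from there to $H^0_{\crys}(S,R^2f_{\crys*}\cO_{X/W_n})^{F=p}$ is an al. isomorphism uniformly in $n$; so it remains to treat the first arrow.

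\emph{Step 2: reduce to $R^1$.} Taking $H^0_{\syn}(S,-)$ and using left exactness of global sections, the long exact sequence of Step~1 gives an exact sequence
$$0\lra H^0_{\syn}(S,\cC_n)\lra H^0_{\syn}(S,R^2f_{\syn*}\mu_{p^n})\lra H^0_{\syn}(S,R^2f_{\syn*}\SI_n)^{p^{-1}F=1}\lra H^1_{\syn}(S,\cC_n),$$
where $\cC_n=\coker\!\big(1-p^{-1}F\colon R^1f_{\syn*}\SI_n\to R^1f_{\syn*}\CO^{\cris}_n\big)$. Hence it suffices to show that $1-p^{-1}F$ is al. surjective, uniformly in $n$, as a map of syntomic sheaves on $S$ at the level of $R^1f_{\syn*}$: then $\cC_n$ is al. zero uniformly in $n$, so its cohomology groups are al. zero uniformly in $n$, and the first arrow of Step~1 is an al. isomorphism uniformly in $n$. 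Thus the whole statement is reduced to a question about the ``Picard part'' of the fibres of $f$.

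\emph{Step 3: curves, Albanese, pull-back.} First, for a smooth projective geometrically integral curve $Y/k$ the sheaves are explicit: using $\Br(Y_{k^s})=0$ and the known structure of $H^1_{\crys}$ of a curve, a direct computation shows $1-p^{-1}F$ is al. surjective on $R^1f_{\syn*}$ with defect killed by an integer independent of $n$. Second, after a finite field extension we may assume $X(k)\neq\varnothing$ (harmless, as in the proof of Theorem~\ref{mainthm}) and choose an Albanese map $a\colon X\to A$; since $a$ induces an isomorphism $R^1g_*\mu_{p^n}\stackrel{\sim}{\lra}R^1f_*\mu_{p^n}$ and, by the argument already used in Lemma~\ref{rigsha} (Kedlaya full-faithfulness and \cite[Prop.~A.8, Lem.~A.9]{Mor}), an isomorphism of $F$-isocrystals between the $R^1$ crystalline cohomologies of $A$ and $X$, the pull-back $a^*$ on $R^1f_{\syn*}\SI_n$ and $R^1f_{\syn*}\CO^{\cris}_n$ is an al. isomorphism uniformly in $n$. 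Third, cutting out $Y\subseteq X$ by iterated hyperplane sections and combining the splittings provided by \cite{CTS1} for $X$ and for $Y$ with the Albanese comparison, the $R^1$-defect for $X$ is bounded by that for $Y$ together with an abelian-variety contribution governed by fixed geometric data (the degree of the isogeny $\Pic^0_{Y/k}\to\Pic^0_{X/k}$, the order of the infinitesimal part of $\Pic^0_{X/k}$, the genus of $Y$, and $[l:k]$), all independent of $n$. This yields the uniform al. surjectivity needed in Step~2 and completes the proof.

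\emph{Main obstacle.} The delicate points are the comparison inputs (i)--(ii) of Step~1 --- identifying Bauer's syntomic incarnation of crystalline cohomology with $R^qf_{\crys*}\cO_{X/W_n}$ as $F$-modules and bounding the discrepancy by an integer \emph{independent of $n$} (the naive comparison maps have kernels and cokernels of bounded torsion, but uniformity of the bound in $n$ must be checked) --- and maintaining uniformity in $n$ throughout Step~3, since the \cite{CTS1} splittings and the passage through a finite Galois extension $l/k$ a priori introduce constants, which one must verify depend only on the geometry of $X$, $Y$, $A$ and on $[l:k]$, never on $n$.
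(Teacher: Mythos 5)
Your Step 1 matches the paper's setup exactly: push Bauer's sequence $0\to\mu_{p^n}\to\SI_n\to\CO^{\cris}_n\to 0$ forward along $f_{\syn}$, and the two comparison inputs you single out correspond to the paper's Proposition~\ref{propositionalmostcrystal} (syntomic $\cO^{\cris}_n$ versus crystalline direct image, via Morrow's Lemma A.2) and the commutative diagram after Proposition~\ref{Bauerexact} (matching $1-p^{-1}F$ with $p-F$ up to $p^2$). The structure of Step 3 (curve, Albanese, hyperplane section with Poincar\'e reducibility, Galois descent) also parallels the paper's Section 5.

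The genuine gap is in Step 2. You reduce the proposition to the claim that the \emph{sheaf} $\cC_n=\coker(g_1)$ on $\syn(S)$ is al.\ zero uniformly in $n$, and in Step 3 you assert that for a curve ``a direct computation shows $1-p^{-1}F$ is al.\ surjective on $R^1f_{\syn*}$.'' The paper proves something strictly weaker and it is essential that it does so: it only shows that $H^0(S,\coker(g_1))$ is al.\ zero uniformly in $n$ and that the connecting map $\gamma:H^0(S,\ker(g_2))\to H^1(S,\coker(g_1))$ has al.\ zero image uniformly in $n$. It never claims the cokernel sheaf itself is bounded, and the mechanism it uses could not produce that conclusion: the curve case is settled by base-changing to $\bar S=\Spec\bar k$, where every syntomic cover splits so that $H^{>0}_{\syn}(\bar S,-)=0$, computing $H^0(\bar S,\ker(g_2))\cong H^2_{\cris}(\overline X/\overline W_n)^{F=p}\cong\ZZ/p^n$ via the trace map (Lemma~\ref{beta3iso}), and then running a five-lemma argument against the exact comparison maps $\beta_2$ (Lemma~\ref{lemmainjofflat}), $\beta_3$, $\beta_5$ (Corollary~\ref{beta5injective}, resting on Lemma~\ref{keyinjective} and faithful flatness of $C(k)\to W(\bar k)$). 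This chase bounds $H^0(S,\coker(g_1))$ and $\operatorname{im}\gamma$; it gives you no control over sections of $\coker(g_1)$ over a general syntomic $T/S$, which is what ``$\cC_n$ is al.\ zero as a sheaf'' would demand. You have thus replaced the statement that needs to be proved with a stronger and unverified one, and the stated curve computation that is supposed to establish it is not given and does not follow from the ingredients you cite ($\Br(Y_{k^s})=0$ and Dieudonn\'e theory give the $H^0$-level statement over $\bar S$, not surjectivity of $g_1$ as a sheaf map over $S$).

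A secondary point: the paper's general case does not prove al.\ surjectivity of $g_1$ either; it propagates the curve result by showing $\beta_2:H^1(S,\coker(g_{1,X}))\to H^1(S,\coker(g_{1,Y}))$ is injective, using the Albanese comparison (Proposition~\ref{propositionrelateAlbanese}) and Poincar\'e reducibility to realize $\coker(g_{1,A_X})$ as an al.\ direct summand of $\coker(g_{1,A_Y})$. Your Step 3 gestures at the same geometric inputs but is aimed at the wrong target. To repair the argument, you should replace the ``al.\ surjectivity of the sheaf map $g_1$'' with the two weaker assertions the paper actually needs, prove them over $\bar S$ for a curve via the trace computation, and transport them to $S$ and to general $X$ using the injectivity lemmas and the Albanese/hyperplane comparison at the $H^0$ and $H^1$ level, as the paper does.
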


Then we relate crystalline cohomology of the generic fiber to rigid cohomology of $\cX$

\begin{prop}
\label{crystallinegenerictorigid}
    There is a $\bQ_p$-linear isomorphism
    $$H^0_{\crys}(S,R^2f_{\crys*}\cO_{X/W})^{F=p}[1/p]\cong H^0_{\conv}(U,R^2f_*\cO_{\cX/K})^{F=p}.$$
\end{prop}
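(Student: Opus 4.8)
The goal of Proposition~\ref{crystallinegenerictorigid} is to pass from the crystalline cohomology of the generic fiber $X/k$ (with $k$ finitely generated over $\FF_p$, not perfect) to the rigid cohomology of the total space $\CX/\FF_p$. The plan is to reduce everything to a statement about $F$-isocrystals on $U$ and then invoke P\'al's comparison theory.

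\textbf{Step 1: Identify the relative crystalline cohomology with an $F$-isocrystal on $U$.} First I would observe that $R^2f_{\cris*}\CO_{\CX/\ZZ_p}\otimes\QQ_p$ is a convergent (in fact overconvergent, after the full-faithfulness input used in Lemma~\ref{rigsha}) $F$-isocrystal $\CE$ on $U/\FF_p$, and that its restriction to the generic point recovers $R^2f_{\cris*}\CO_{X/W}\otimes\QQ_p$ as an $F$-isocrystal over $k$. The left-hand side $H^0_{\cris}(S,R^2f_{\cris*}\CO_{X/W})^{F=p}[1/p]$ is then, by definition of crystalline cohomology over $\Spec(k)$, the $F=p$ part of the ``global sections'' of the $F$-isocrystal $\CE$ over $k$, i.e.\ of $\CE$ restricted to the generic point of $U$. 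Concretely this is $\varinjlim_{V\subseteq U}H^0_{\rig}(V,\CE)^{F=p}$, the colimit running over dense opens $V$ (equivalently, sections of $\CE$ over some étale neighborhood of the generic point), intersected with the $F=p$ condition.

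\textbf{Step 2: Compare global sections over $U$ with sections over the generic point.} The heart of the matter is the claim that
$$
H^0_{\rig}(U,\CE)^{F=p}\;\cong\;\bigl(\varinjlim_{V\subseteq U}H^0_{\rig}(V,\CE)\bigr)^{F=p}.
$$
Injectivity is automatic since $U$ is integral and $H^0_{\rig}(U,-)$ injects into sections over any dense open. For surjectivity, I would argue that an $F$-invariant (up to the twist by $p$) horizontal section of $\CE$ defined on a dense open $V$ extends to all of $U$: the obstruction to extending a section of an overconvergent $F$-isocrystal across the complement $Z=U\setminus V$ is controlled by the local monodromy/weight structure, and an $F=p$ section generates a rank-one sub-$F$-isocrystal of slope $1$, which by the theory of overconvergent $F$-isocrystals (Kedlaya's semistable reduction, or more elementarily the fact that a sub-$F$-isocrystal of a constant-pure object is itself constant along the weight filtration) has no nontrivial extension problem, hence extends. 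Alternatively, and perhaps more cleanly, I would note that $H^0_{\rig}(U,R^2f_*\CO_\CX^\dagger)^{F=p}$ computes $\NS$-type classes via P\'al, and by the birational invariance already implicit in Lemma~\ref{rigsha} (where the Picard/Albanese classes are handled) the $F=p$ sections do not change under shrinking $U$.

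\textbf{Step 3: Match with rigid cohomology of $\CX$ via P\'al.} Finally I would invoke \cite[Def.\ 5.16, Prop.\ 6.6]{Pal}: by definition $H^0_{\rig}(U,R^2f_*\CO_\CX^\dagger)$ is exactly the right-hand side of the proposition, so once Step 2 identifies the left-hand side of the proposition with $H^0_{\rig}(U,\CE)^{F=p}$ and Step 1 identifies $\CE$ with $R^2f_*\CO_\CX^\dagger$ (up to the passage from convergent to overconvergent coefficients, which is harmless on $H^0$ with the $F=p$ condition by Kedlaya's full-faithfulness theorem \cite{kedlaya2004full} as used in Lemma~\ref{rigsha}), we are done after inverting $p$.

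\textbf{Main obstacle.} The delicate point is Step 2: showing that no $F=p$ horizontal section is ``lost'' when passing from the generic point of $U$ to all of $U$, i.e.\ that the natural map $H^0_{\rig}(U,\CE)^{F=p}\to H^0_{\cris}(\Spec k,\CE|_{\Spec k})^{F=p}$ is not merely injective but bijective. This requires either a careful local analysis at the boundary $U\setminus V$ using the theory of overconvergent $F$-isocrystals and their slope/weight filtrations, or a reduction to the already-established Picard and Néron–Severi cases (abelian schemes and curves) where the corresponding statement is known; I expect the proof to proceed by the latter route, matching dimensions on both sides via the exact sequences and the results of \cite{EKQ} exactly as in the proof of Lemma~\ref{rigsha}, rather than by a direct cohomological extension argument.
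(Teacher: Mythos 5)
You correctly identify the delicate point — comparing $F=p$ horizontal sections over the generic point $\Spec(k)$ with those over $U$ — but you do not actually resolve it, and the identity you assert in Step~1 is the hard content, not a definitional unwinding. Writing $H^0_{\cris}(\Spec k,\CE|_{\Spec k})^{F=p}$ as $\varinjlim_{V\subseteq U}H^0_{\cris}(V,\CE)^{F=p}$ is precisely what needs proof: a crystal over $\Spec(k)$ is a module over a Cohen ring $C(k)$ (a $p$-adic \emph{completion}), and a horizontal $F=p$ section over $C(k)$ is not a priori the image of one over the $p$-adically completed coordinate ring of any dense open $V$; $p$-adic completion does not commute with the filtered colimit over opens, so this is a genuine spreading-out problem, not bookkeeping. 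Your Step~2 then addresses a different and easier comparison — from $U$ to a single dense open $V$ — which is indeed handled by Kedlaya-type full-faithfulness (and the paper uses exactly that, in Lemma~\ref{toFcrystal}); but it does not touch the passage from a dense open down to the generic point.

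The paper's actual proof of this step (Lemma~\ref{thelastlem}) is neither the slope/weight hand-wave of your first alternative nor the dimension-matching reduction you guess at in your "main obstacle" paragraph. After replacing $R^2f_{\cris*}\cO_{\cX/W}$ by an honest $F$-crystal $\cF$ on a suitable dense $V$ (Lemma~\ref{toFcrystal}), the paper uses Kedlaya's slope filtration to split off the slope-$[0,1]$ part $\cF_{[0,1]}$ (the slope-$>1$ part contributes nothing to $\Hom(\cO(-1),-)$ after inverting $p$), shrinks $V$ so that $\cF_{[0,1]}$ is a Dieudonn\'e crystal, and then invokes the crystalline Dieudonn\'e equivalence together with de~Jong's full-faithfulness theorem for restricting $p$-divisible groups from $V$ to $\Spec(k)$. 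That last ingredient — \cite{de1995crystalline,de1998homomorphisms} — is what makes the spread-out possible, and it is the piece your proposal is missing. Your alternative "cleanest" argument (that the $F=p$ sections are $\NS$-type classes by P\'al, hence birationally invariant) would be circular: that identification is essentially the Tate conjecture, whose equivalence to finiteness of the Brauer group is what the whole theorem is establishing.
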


\subsection{Flat cohomology}
 Let $v: \syn(X) \lra \fppf(X)$ be the obvious continuous functor between sites
\begin{lem}\label{fppfsyn}
 Let $G$ be a smooth commutative group scheme over $X$ and $v: \syn(X) \lra \fppf(X)$ be the morphism of sites. Then $R^qv_*G=0$ for any $q>0$.
\end{lem}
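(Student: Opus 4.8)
The statement to prove is Lemma \ref{fppfsyn}: for a smooth commutative group scheme $G$ over $X$ and the morphism of sites $v:\fppf(X)\to\syn(X)$, one has $R^qv_*G=0$ for all $q>0$. The plan is the standard one for comparing a big topology with a coarser one: compute $R^qv_*G$ as the sheafification of the presheaf $U\mapsto H^q_\fppf(U,G)$ for $U$ ranging over syntomic $X$-schemes, and show this presheaf sheafifies to zero in degrees $>0$. Since sheafification is computed by taking stalks (or, more precisely here, by checking on syntomic covers), the problem is local: it suffices to show that for every syntomic $X$-scheme $U$ and every class in $H^q_\fppf(U,G)$ with $q>0$, there is a syntomic cover $U'\to U$ over which the class dies.

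\textbf{Key steps.} First I would reduce to showing: for a henselian local ring $A$ that is syntomic over (a local ring of) $X$ — equivalently, working at a ``point'' of the syntomic site — the higher fppf cohomology $H^q_\fppf(\Spec A, G)$ vanishes for $q>0$. The syntomic topology is finer than the étale topology but the relevant local rings for the syntomic site are the strictly henselian local rings (every syntomic cover can be refined, locally, because syntomic morphisms are flat and locally of finite presentation and in particular admit sections étale-locally after a suitable base change — more carefully, one uses that a syntomic local $A$-algebra with $A$ strictly henselian local has a section, or reduces to the étale-local structure). Second, the crucial input is a theorem of Grothendieck: for $G$ a smooth commutative group scheme over a base and $A$ a strictly henselian local ring, $H^q_\fppf(\Spec A,G)=H^q_\et(\Spec A,G)$ (the fppf and étale cohomologies of a smooth group scheme agree), and the latter vanishes for $q>0$ since $\Spec A$ has no nontrivial étale covers. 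Combining, the stalks of $R^qv_*G$ vanish for $q>0$, hence $R^qv_*G=0$.

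\textbf{Main obstacle.} The delicate point is the precise local structure of the \emph{small} syntomic site and identifying its points: one must check that the ``syntomic-local rings'' are indeed (strict) henselizations, or at least that syntomic covers of a strictly henselian local scheme split. This is where I would cite Bauer \cite{Bauer1992} (the small syntomic site is set up there, Def. 1.3) together with the standard fact that a flat finitely presented cover of a strictly henselian local ring admits a section after refinement — or, alternatively, one observes that the étale site refines into the syntomic site so that a syntomic sheaf is determined by its étale-local behavior on strictly henselian local rings, and any syntomic cover of such a ring is refined by an étale (hence trivial) cover. Once that local statement is in hand, the vanishing $H^q_\et=H^q_\fppf=0$ for smooth $G$ over a strictly henselian local ring (Grothendieck, ``Le groupe de Brauer III'', or SGA) finishes the argument immediately, and the rest is the routine translation between $R^qv_*$ and sheafified cohomology presheaves.
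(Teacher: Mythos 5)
Your proposal matches the paper's proof in all essentials: both compute $R^q v_* G$ as the sheafification of the presheaf $U \mapsto H^q_{\fppf}(U,G)$ on syntomic $X$-schemes, and both reduce the vanishing to Grothendieck's comparison theorem (cited in the paper via Milne, \'Etale Cohomology, Chap.~III, Thm.~3.9) that $H^q_{\fppf}(\Spec A,G)=H^q_{\et}(\Spec A,G)=0$ for $q>0$, $G$ smooth, and $A$ strictly henselian local. The "main obstacle" you flag is handled by the paper precisely in the way you sketch as an alternative: one does not need to analyze points of the small syntomic site directly, because the syntomic topology on the category of syntomic $X$-schemes refines the \'etale topology, so the syntomic sheafification of the cohomology presheaf factors through its \'etale sheafification, which is already zero by Milne's argument; just be careful to write "strictly henselian" rather than "henselian" throughout, and note that it is the syntomic topology that refines the \'etale one, not the reverse.
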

\begin{proof}
This follows from the the proof of \cite[Chap. III, 3.9]{Mil3}. In fact, it is the sheafification on $\syn(X)$ of the higher direct image from the $\fppf$ site to the category of syntomic $X$-scheme equipped with the \'etale topology. For $q>0$, the second is 0. So it is also $0$ for $q>0$.
\end{proof}
$f$ induces a commutative diagram of continuous functors between sites
\begin{displaymath}
\xymatrix{
\syn(S)\ar[r]^v \ar[d]^f & \fppf(S) \ar[d]^f \\
\syn(X) \ar[r]^v &  \fppf(X)
}
\end{displaymath}
Consider the Kummer exact sequence $ 0\lra \mu_p^n \lra \GG_m\stackrel{p^n}{\lra}\GG_m\lra 0$ on $\fppf(X)$. It is still exact after taking $v_*$. Thus, by the Lemma above, $R^qv_*\mu_{p^n}=0$ (also true on $S$) for any $q>0$.
Thus, there is a Leray spectral sequence
$$
E^{p,q}_2=R^pv_*R^qf_*\mu_{p^n,X} \Rightarrow R^{p+q}f_*(v_*\mu_{p^n,X})
$$
\begin{lem}\label{fppfsyncoho}
 The edge map 
 $R^{q}f_*(v_*\mu_{p^n,X})\lra v_*R^qf_*\mu_{p^n,X}$ is an isomorphism for $q\leq 2$.
\end{lem}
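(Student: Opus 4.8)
The plan is to read the statement off from the two Leray spectral sequences attached to the commutative square of topoi, reducing it to a single vanishing statement for $R^{\bullet}v_*$ of finite flat group schemes. First I would pin down the abutment: since the square commutes and $R^{q}v_*\mu_{p^n,X}=0$ on $X$ for $q>0$ (the Kummer sequence together with Lemma \ref{fppfsyn}), one has $Rv_*\mu_{p^n,X}\simeq\mu_{p^n,X}$ on $X_{\syn}$, so the abutment $R^{p+q}v_*\mu_{p^n,X}$ of the displayed spectral sequence is canonically $R^{p+q}f_{\syn*}\mu_{p^n,X}$. Thus the edge map in the lemma is the natural map $R^{q}f_{\syn*}\mu_{p^n,X}\to v_*R^{q}f_*\mu_{p^n,X}=E_2^{0,q}$, and by the usual description of edge maps it will be an isomorphism for $q\le2$ once one knows $E_2^{p,q}=R^{p}v_*R^{q}f_*\mu_{p^n,X}=0$ for $(p,q)\in\{(1,0),(2,0),(3,0),(1,1),(2,1)\}$: $(1,0),(2,0)$ make the filtration trivial in degrees $\le1$, $(1,1)$ together with $(2,0)$ does so in degree $2$, $(2,1)$ kills $d_2$ out of $E^{0,2}$ and $(3,0)$ kills $d_3$ out of $E^{0,2}$ (higher differentials out of $E^{0,2}$ vanish for degree reasons).

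Next I would compute the relevant $R^{q}f_*\mu_{p^n,X}$. Since $X/S$ is proper and geometrically integral, $f_*\GG_m=\GG_{m,S}$; feeding this into the Kummer sequence on $X$ and then on $S$ (where $\GG_m\xrightarrow{p^n}\GG_m$ is an epimorphism of $\fppf$ sheaves) gives $f_*\mu_{p^n,X}=\mu_{p^n,S}$ and $R^{1}f_*\mu_{p^n,X}=\Pic_{X/k}[p^n]$, a finite flat $k$-group scheme killed by $p^n$ (finite because $\Pic^0_{X/k}$ is of finite type and $\NS_X$ is finitely generated). Then $E_2^{p,0}=R^{p}v_*\mu_{p^n,S}$ vanishes for all $p>0$: from the Kummer sequence and $R^{p}v_*\GG_m=0$ (Lemma \ref{fppfsyn}), using that $\GG_m\xrightarrow{p^n}\GG_m$ over $k$ is finite flat and a local complete intersection, hence a syntomic covering, so still an epimorphism after $v_*$. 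This takes care of $(1,0),(2,0),(3,0)$.

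The remaining, and central, point is that $R^{p}v_*N=0$ for $p>0$ when $N=\Pic_{X/k}[p^n]$; I would prove it for any finite flat commutative $k$-group scheme $N$. Embedding $N$ as a closed subgroup of a smooth commutative $k$-group scheme $G$ — so that $G/N$ is again smooth and commutative — and using the long exact sequence of $0\to N\to G\to G/N\to0$: one gets $R^{p}v_*N=0$ for $p\ge2$ at once because $R^{>0}v_*G=R^{>0}v_*(G/N)=0$ by Lemma \ref{fppfsyn}, while $R^{1}v_*N=\coker(v_*G\to v_*(G/N))=0$ because $G\to G/N$, being an $N$-torsor with $N/k$ finite flat and lci, is a syntomic covering and hence an epimorphism of syntomic sheaves. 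Applying this with $N=\Pic_{X/k}[p^n]$ kills $(1,1)$ and $(2,1)$ and completes the argument.

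The hard part is the existence of the closed embedding $N\hookrightarrow G$ into a smooth commutative $k$-group scheme. Over a perfect field this is classical structure theory (the multiplicative-type part embeds into a torus, the infinitesimal unipotent part into a product of Witt-vector group schemes, the étale part into a Weil restriction of such along a finite separable splitting field). For the case at hand one can avoid the imperfect-field subtleties altogether: with $A=\Pic^0_{X/k,\red}$ the associated abelian variety, $N=\Pic_{X/k}[p^n]$ is an extension of a finite $k$-group scheme $Q$ killed by $p^n$ by $A[p^n]$, and $A[p^n]\subset A$ already has smooth quotient, so the long-exact-sequence bookkeeping reduces the problem to the unipotent, multiplicative and étale factors of $Q$, each of which embeds visibly into $\GG_a^{r}$, a torus, or a Weil restriction of Witt groups. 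Everything else is formal manipulation of the two Leray spectral sequences of the commutative square.
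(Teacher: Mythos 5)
Your strategy is the paper's: run the Leray spectral sequence for the composition $v_S\circ f_{\fppf}=f_{\syn}\circ v_X$, observe that $E_2^{p,0}=R^pv_{S*}\mu_{p^n,S}$ vanishes for $p>0$ via the Kummer sequence, identify $R^1f_*\mu_{p^n,X}=\Pic_{X/k}[p^n]$ as a finite flat $k$-group $N$, and reduce the remaining vanishings $E_2^{p,1}=0$ ($p\ge1$) to $R^{>0}v_{S*}N=0$, which you obtain from a short exact sequence $0\to N\to G_0\to G_1\to 0$ with $G_0,G_1$ smooth, combined with Lemma~\ref{fppfsyn} and the fact that $G_0\to G_1$ is a syntomic cover. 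Your spectral-sequence bookkeeping (the list $(1,0),(2,0),(3,0),(1,1),(2,1)$) is exactly right, and your observation that a finite flat closed normal subgroup of a smooth group has smooth quotient (a subring of a reduced ring is reduced, applied after base change to $\bar k$) is correct. This is essentially the paper's proof.

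The one place where your argument has a genuine hole is the justification of the embedding itself over imperfect $k$. You correctly sense that the classical structure theory is a perfect-field statement, but the proposed ``workaround'' — reducing to ``the unipotent, multiplicative and \'etale factors of $Q$'' — relies on a factorization that does not exist over an imperfect field: the connected--\'etale sequence of a finite $k$-group scheme does not split in general, and the connected part need not decompose into unipotent and multiplicative pieces. Since here $k$ is a finitely generated field of positive characteristic and is therefore imperfect in general (and the curve case proved separately in the paper is even over such a $k$), this step cannot be waved through as stated. The paper avoids the issue entirely by citing \cite[Prop.~5.1]{Ma-Ro}, which produces, for any finite (flat) commutative $k$-group $N$, a two-term resolution $0\to N\to G_0\to G_1\to 0$ by smooth affine $k$-groups of finite type; with that reference in place your argument is complete, and the ``factors'' paragraph can simply be deleted.
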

\begin{proof}
$f_*\mu_{p^n,X}=\mu_{p^n,S}$ Since $f$ is geometrically connected. This shows that $E_2^{p,0}=0$ for $p>0$. $T:=R^1f_*\mu_{p^n,X}=\Pic_{X/k}[p^n]$ is a finite flat group scheme over $k$. By \cite[Prop. 5.1]{Ma-Ro}, there is an exact sequence
 $$0\lra T\lra G_0\lra G_1\lra 0,$$
 where $G_0$ and $G_1$ are smooth group of finite type over $k$. $G_0\lra G_1$ is faithful flat. So it is exact on $\syn(S)$ since $G_0\lra G_1$ is in fact a syntomic cover. By the Lemma above, $R^pv_*T=0$ for $p>0$. This proves the claim.
\end{proof}
\begin{cor}
There is a canonical isomorphism
$$
H^0_{\fppf}(S, R^2f_*\mu_{p^n,X})\cong H^0_{\syn}(S, R^2f_*v_*\mu_{p^n,X}).
$$
\end{cor}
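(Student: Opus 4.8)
The statement is essentially a reformulation of Lemma~\ref{fppfsyncoho}, so the plan is short. First I would record the case $q=2$ of Lemma~\ref{fppfsyncoho} as a canonical isomorphism of sheaves on $S_{\syn}$,
$$
R^2 f_{\syn,*}\bigl(v_*\mu_{p^n,X}\bigr)\;\xrightarrow{\ \sim\ }\;v_*\bigl(R^2 f_{\fppf,*}\mu_{p^n,X}\bigr),
$$
where on the left $f$ is the map $X_{\syn}\to S_{\syn}$, on the right $f$ is $X_{\fppf}\to S_{\fppf}$, and $v\colon S_{\fppf}\to S_{\syn}$ is the morphism of topoi. This is exactly the content of Lemma~\ref{fppfsyncoho}: using the commutative square of topoi ($v\circ f=f\circ v$) together with the vanishing $R^{q}v_*\mu_{p^n,X}=0$ for $q>0$ from Lemma~\ref{fppfsyn}, the Leray spectral sequence for $f_{\syn}\circ v_X$ identifies $R^2$ of the composite with the left-hand side, while the Leray spectral sequence for $v_S\circ f_{\fppf}$ — whose terms $R^pv_{S,*}R^qf_{\fppf,*}\mu_{p^n,X}$ vanish for $p>0$, $q\le 1$ by the computation in the proof of Lemma~\ref{fppfsyncoho} — identifies the same $R^2$ with the right-hand side.

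Second, I would apply $H^0_{\syn}(S,-)$ to this isomorphism. The only point is that pushforward along $v\colon S_{\fppf}\to S_{\syn}$ does not change the value of a sheaf on the common final object $S$: for any sheaf $\mathcal G$ on $S_{\fppf}$ one has $(v_*\mathcal G)(S)=\mathcal G(S)$, hence $H^0_{\syn}(S,v_*\mathcal G)=H^0_{\fppf}(S,\mathcal G)$. Taking $\mathcal G=R^2 f_{\fppf,*}\mu_{p^n,X}$ then gives
$$
H^0_{\syn}\bigl(S,\,R^2 f_* v_*\mu_{p^n,X}\bigr)\;\cong\;H^0_{\syn}\bigl(S,\,v_*R^2 f_{\fppf,*}\mu_{p^n,X}\bigr)\;=\;H^0_{\fppf}\bigl(S,\,R^2 f_*\mu_{p^n,X}\bigr),
$$
and every arrow here is canonical.

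I do not expect any real obstacle: all the substance lies in Lemma~\ref{fppfsyncoho}, which in turn rests on the vanishing of $R^{>0}v_*$ applied to $\mu_{p^n}$ and to the smooth group schemes resolving $\Pic_{X/k}[p^n]$. The one point deserving a line of care is the identification in the first step — verifying that the sheaf written $R^2 f_* v_*\mu_{p^n,X}$ in the statement is literally the object produced by the degeneration of the two Leray spectral sequences, i.e.\ that $v_*$ may legitimately be pulled outside $R^2 f_*$ — and this is precisely what Lemma~\ref{fppfsyncoho} asserts.
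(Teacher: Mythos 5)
Your proof is correct and follows essentially the same route the paper leaves implicit: combine the degenerating Leray spectral sequence for $f_{\syn}\circ v_X$ (using $R^{>0}v_{X,*}\mu_{p^n}=0$ from the Kummer-sequence argument) with the edge-map isomorphism of Lemma~\ref{fppfsyncoho} for $v_S\circ f_{\fppf}$, then evaluate on the common final object $S$. The one remark worth noting is that Lemma~\ref{fppfsyncoho} as literally stated compares the abutment $R^q(v_S\circ f_{\fppf})_*\mu_{p^n,X}$ with $v_{S,*}R^qf_{\fppf,*}\mu_{p^n,X}$, so the identification of that abutment with $R^qf_{\syn,*}(v_{X,*}\mu_{p^n,X})$ — the extra step you correctly supply via the second Leray spectral sequence — is genuinely needed to reach the statement of the corollary; you handle this cleanly.
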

In the following, we will first prove Theorem \ref{thekeythm} for curves by a direct computation. Then, we prove it for abelian varieties by reducing to curves. Then, we prove the theorem for general $X$ by reducing to curves and abelian varieties.
\begin{lem}\label{lemmainjofflat}
Let $f: X\lra S=\Spec(k)$ be smooth projective geometrically integral curve. Then
$$H^0_{\fppf}(S,R^2f_*\mu_{p^n})\cong H^0_{\fppf}(\Spec(\bar{k}),R^2f_*\mu_{p^n})\cong \ZZ/p^n.$$
\end{lem}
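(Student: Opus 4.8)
The plan is to compute both sides directly. For a smooth projective geometrically integral curve $X/k$ of genus $g$, the relevant piece of the $\fppf$ cohomology of $\mu_{p^n}$ is governed by the cohomology of the Picard scheme; the key point is that $R^2f_*\mu_{p^n}$, as an $\fppf$ sheaf on $S=\Spec(k)$, is closely tied to $R^1f_*\GG_m/p^n$ together with the trace/degree map to $\ZZ/p^n$. Concretely, I would start from the Kummer sequence $0\to\mu_{p^n}\to\GG_m\xrightarrow{p^n}\GG_m\to 0$ on $\fppf(X)$ and the induced exact sequence of higher direct images
$$
0\lra R^1f_*\GG_m/p^n \lra R^2f_*\mu_{p^n}\lra (R^2f_*\GG_m)[p^n]\lra 0 .
$$
For a curve, $R^2f_*\GG_m$ can be analyzed via $\Br$ of the generic fiber: over a separably (or algebraically) closed field a curve has trivial Brauer group, and more relevantly the higher direct image sheaf $R^2f_*\GG_m$ vanishes because the fibers are curves over fields (Tsen/the fact that $\Br$ of a curve over an algebraically closed field vanishes and the cohomological dimension bound for curves). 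Granting $R^2f_*\GG_m=0$, one gets $R^2f_*\mu_{p^n}\cong R^1f_*\GG_m/p^n = \Pic_{X/k}/p^n$.

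Next I would take global sections over $S$. From the exact sequence $0\to\Pic^0_{X/k}\to\Pic_{X/k}\xrightarrow{\deg}\ZZ\to 0$ of $\fppf$ sheaves on $S$, reducing mod $p^n$ gives
$$
\Pic^0_{X/k}/p^n\lra \Pic_{X/k}/p^n\lra \ZZ/p^n\lra 0 ,
$$
and since $\Pic^0_{X/k,\red}$ is an abelian variety, multiplication by $p^n$ on it is surjective as an $\fppf$ sheaf, so $\Pic^0_{X/k}/p^n$ is killed by the order of the (finite, possibly non-reduced) group scheme $\Pic^0_{X/k}/\Pic^0_{X/k,\red}$, hence is al. zero uniformly in $n$ — wait, more precisely it is annihilated by a single integer independent of $n$. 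Therefore $H^0_{\fppf}(S,R^2f_*\mu_{p^n})\to H^0_{\fppf}(S,\ZZ/p^n)=\ZZ/p^n$ is an al. isomorphism uniformly in $n$, and the same computation over $\bar k$ shows $H^0_{\fppf}(\Spec\bar k,R^2f_*\mu_{p^n})$ is al. $\ZZ/p^n$ as well; chasing the degree map identifies the base change map with the identity on $\ZZ/p^n$. (I should note that the statement as written claims genuine isomorphisms $\cong \ZZ/p^n$, not merely al. isomorphisms; to get this on the nose one uses that $\Pic^0_{X/k}/p^n$ injects — the map $\Pic^0_{X/k}\xrightarrow{p^n}\Pic^0_{X/k}$ is surjective with kernel $\Pic^0_{X/k}[p^n]$, which has no global sections issues after one checks $H^0(S,\Pic^0_{X/k}/p^n)=0$, e.g. because $\Pic^0(X_{\bar k})$ is $p$-divisible as an abstract group when $X$ is a curve over an algebraically closed field, or by the finite-flat group scheme cohomology argument as in Lemma~\ref{fppfsyncoho}.)

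The main obstacle I expect is the vanishing $R^2f_*\mu_{p^n}\cong R^1f_*\GG_m/p^n$, i.e. controlling the term $(R^2f_*\GG_m)[p^n]$ — one must be careful that $R^2f_*\GG_m$ really vanishes as an $\fppf$ sheaf on $\Spec k$ and not merely that its sections do, and in characteristic $p$ the flat-topology subtleties (non-smooth $\mu_{p^n}$, non-reduced Picard schemes) mean the spectral-sequence/edge-map arguments need the syntomic-site comparison already set up in Lemma~\ref{fppfsyncoho} and its corollary. The secondary technical point is ensuring $H^0_{\fppf}(S,\Pic^0_{X/k}/p^n)$ vanishes (or is al. zero uniformly in $n$) so that the degree map cleanly yields $\ZZ/p^n$; this is where I would invoke the Mazur–Roberts presentation $0\to T\to G_0\to G_1\to 0$ of the finite flat $p^n$-torsion and the smoothness of $\Pic^0_{X/k,\red}$, exactly as in the proof of Lemma~\ref{3vanishing}.
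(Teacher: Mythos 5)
Your route is genuinely different from the paper's, though it ends up needing the same underlying ingredient. The paper never attempts the sheaf-level vanishing $R^2f_*\GG_m=0$: it takes the exact sequence
\[
0\lra H^0_{\fppf}(S,R^1f_*\GG_m/p^n)\lra H^0_{\fppf}(S,R^2f_*\mu_{p^n})\lra H^0_{\fppf}(S,R^2f_*\GG_m),
\]
then kills the rightmost term purely at the level of global sections by applying D'Addezio's injectivity lemma \cite[Lem.\ 3.2]{DAd} twice (once to embed $H^0_{\fppf}(S,R^2f_*\GG_m)$ into $H^0_{\fppf}(\Spec\bar k,R^2f_*\GG_m)=\Br(X_{\bar k})=0$, and once more at the very end to embed $H^0_{\fppf}(S,R^2f_*\mu_{p^n})$ into the version over $\bar k$ and conclude the isomorphism). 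Your version instead tries to prove the stronger statement $R^2f_*\GG_m=0$ as an $\fppf$ sheaf, which would give $R^2f_*\mu_{p^n}\cong\Pic_{X/k}/p^n\cong\ZZ/p^n$ as sheaves and trivialize the comparison of $H^0$'s.

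You correctly flag this sheaf vanishing as the obstacle, but the justification you offer (``Tsen / cohomological dimension'') is not yet enough, and this is where the real content is. To make it work one needs: (a) to pass from the $\fppf$ to the \'etale higher direct image, which is legitimate because $\GG_m$ is smooth (the analogue for the syntomic site is exactly Lemma~\ref{fppfsyn}); and (b) to see that the \'etale sheaf $R^2f_*\GG_m$ has vanishing stalks at geometric points of $\Spec k$, i.e.\ $\Br(X_K)=0$ for $K$ a \emph{separably} closed extension. In characteristic $p$ Tsen only gives $\Br(X_{\bar K})=0$; the step from $\bar K$ down to $K^s$ for curves is precisely Corollary~\ref{septoalgclosed}, whose proof again relies on \cite[Lem.\ 3.2]{DAd} together with the smoothness of $\Pic_{X/k}$. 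So your cleaner-looking approach does not actually avoid D'Addezio's lemma; it just moves it one step back. Once that hole is filled, the rest of your argument (degree exact sequence, $\Pic^0_{X/k}$ an abelian variety so $p^n$-divisible as an $\fppf$ sheaf, hence $\Pic_{X/k}/p^n\cong\NS_X/p^n\cong\ZZ/p^n$) is correct and matches the paper's appeal to the proof of Lemma~\ref{nscompu} under the hypothesis that $\Pic_{X/k}$ is smooth. One small slip: there is no ``generic fiber'' here, since $S=\Spec k$ is a point; you mean the geometric fiber.
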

\begin{proof}
By the proof of Proposition \ref{firstexact}, there is an exact sequence
$$ 0\lra H^0_{\fppf}(S,R^1f_*\GG_m/p^n) \lra H^0_{\fppf}(S, R^2f_*\mu_p^n) \lra  H^0_{\fppf}(S, R^2f_*\GG_m).$$
 By \cite[Lem. 3.3]{DAd}, the natural map
  $$H^0_{\fppf}(S, R^2f_*\GG_m)\lra H^0_{\fppf}(\Spec(\bar{k}), R^2f_*\GG_m)$$
is injective. By the fact $H^0_{\fppf}(\Spec(\bar{k}), R^2f_*\GG_m)=\Br(X_{\bar{k}})=0$, the second arrow in the exact sequence is an isomrphism.
Since $\Pic_{X/k}$ is smooth, the proof of Lemma \ref{nscompu} actullay implies that
 $$
 H^0_{\fppf}(S,R^1f_*\GG_m/p^n)\cong ((\NS(X_{\bar{k}})/p^n)^{G_k}\cong \ZZ/p^n
 $$
By \cite[Lem. 3.3]{DAd}, the natural map
$$
H^0_{\fppf}(S,R^2f_*\mu_{p^n})\lra H^0_{\fppf}(\Spec(\bar{k}),R^2f_*\mu_{p^n})
$$
is injective. So it is an isomorphism.
\end{proof}

\subsection{Crystalline cohomology}

\begin{defn}
	\begin{enumerate}
        \item Let 
        $(X/W_n)_{\CRYS,\SYN}\stackrel{v_X}\to X_{\SYN}$
        be the natural morphism of topos and $\cO^{\crys}_{X,n}:=v_*(\cO_{X/W_n})$. 
        \item Let $p:\syn(X)\to \SYN(X)$ be the natural continuous functor.
        Since $p_*=p^s$ is exact, we also use $\cO_{n}^{\crys}$ to denote $p_*\cO_n^{\crys}$ when there is no confusion.
		\item Define the sheaf $\cJ_n$ on $SYN(X)$ via the exact sequnce
		$$\begin{CD}
		0@>>> \cJ_n@>j_n>> \cO^{\crys}_n@>>> \bG_a@>>> 0.
        \end{CD} $$
	\end{enumerate}
\end{defn}

Define $\cI_n$ as the quotient $\cJ_{n+1}/\cJ_1$.  There is an exact sequence
$$\begin{CD}
		0@>>> p^n\cO^{\crys}_{n+1}/\cJ_1@>>> \cI_n @>i_n>>\cJ_n@>>> 0.
\end{CD}$$
Our proof depends on the following deep syntomic comparison result of Fontaine which was used in Bauer's proof:
\begin{prop}[{\cite[Thm. 3.4]{Bauer1992}\cite[Lem. 9.9]{trihan2018comparisontheoremsemiabelianschemes}}]\label{Bauerexact}
	There is an exact sequence of sheaves on $syn(X)$
	$$\begin{CD}
	0@>>> \mu_{p^n}@>>> \cI_n@>1-p^{-1}F>> \cO^{\crys}_n@>>> 0
		\end{CD}$$
\end{prop}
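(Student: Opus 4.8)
This is Bauer's theorem \cite[Thm.~3.4]{Bauer1992}, and the plan is to reproduce his argument, which follows the template of Fontaine--Messing and Kurihara. Since $\mu_{p^n}$, $\cI_n$ and $\cO^{\crys}_n$ are sheaves on the small syntomic site $\syn(X)$ and exactness of a complex of such sheaves is local for the syntomic topology, I would first pass to members $U$ of a syntomic covering of $X$ on which $U$ admits a closed immersion $\iota\colon U\hookrightarrow Z$ into a smooth $W_{n+1}$-scheme $Z$ carrying a lift $\phi\colon Z\to Z$ of absolute Frobenius; writing $D$ for the PD envelope of $\iota$ and $D_m=D/p^mD$, the sheaf $\cO^{\crys}_m$ is modelled locally by $D_m$ and $\cJ_m$ by its PD ideal $J_m$. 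The computation that drives everything is the elementary inclusion $\phi(J)\subseteq pD$, valid because $\phi$ reduces to Frobenius modulo $p$ and $x^p=p!\,x^{[p]}\in pD$ for $x$ in the defining ideal (and similarly on divided powers). This lets one divide $\phi$ by $p$ on $J_{n+1}$ with values in $D_{n+1}$; the ambiguity of the division lies in $D_{n+1}[p]=p^nD_{n+1}$, so $\phi/p$ descends to a \emph{well-defined} map $\cJ_{n+1}\to\cO^{\crys}_n$ which kills $\cJ_1$ (included via multiplication by $p^n$), hence gives $p^{-1}F\colon\cI_n\to\cO^{\crys}_n$; together with the reduction map $\cI_n\to\cO^{\crys}_n$ this produces $1-p^{-1}F$. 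The map $\mu_{p^n}\to\cI_n$ I would define by $u\mapsto\log u=\sum_{k\ge1}(-1)^{k+1}(u-1)^k/k$ for $u$ a local $p^n$-th root of unity lifted to $D_{n+1}$; this lies in $\cJ_{n+1}$ because $u-1\in J$ and the divided powers tame the denominators, and since $\phi(\log u)=\log(u^p)=p\log u$ one gets $(1-p^{-1}F)(\log u)=0$. Independence of the auxiliary data $(Z,\phi,D)$ follows from functoriality of the crystalline topos, so these maps glue.

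The surjectivity of $1-p^{-1}F$ is the heart of the matter. Using short exact sequences comparing the level-$n$, level-$1$ and level-$(n-1)$ versions of $\cO^{\crys}_\bullet$, $\cJ_\bullet$ and $\cI_\bullet$ (coming from $0\to W_1\xrightarrow{p^{n-1}}W_n\to W_{n-1}\to0$) together with the snake lemma, I would reduce to the case $n=1$. There it becomes an Artin--Schreier-type assertion: solving $(1-p^{-1}F)(x)=a$ syntomic-locally amounts, after choosing coordinates on $Z$, to a monic polynomial equation in one new variable over the base ring, and adjoining a root of such an equation defines a finite locally free --- hence syntomic --- cover over which $a$ becomes a coboundary. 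I expect this to be the main obstacle: one must run the Artin--Schreier construction uniformly over $\syn(X)$, keep the divided-power variables under control, and verify that the covers it produces are genuinely syntomic --- this is precisely the step that forces the syntomic, rather than the étale or flat, topology.

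It remains to identify $\ker(1-p^{-1}F)$ with the image of $\mu_{p^n}$. Reducing once more to $n=1$ by the same dévissage, I would compute the kernel in the local model, where $\phi$ acts on the divided-power variables as $p$ times an operator that is the sum of a unit and a nilpotent; the fixed part is then spanned by logarithms of $p$-th roots of unity, a finite linear-algebra computation over $\cO_U$, and the same computation shows that $\mu_{p^n}\to\cI_n$ is injective. As an independent check one can instead compare the whole complex with the $\fppf$ Kummer sequence $0\to\mu_{p^n}\to\GG_m\xrightarrow{p^n}\GG_m\to0$ via the $d\log$ map into the crystalline complex; since $\GG_m$ is smooth its syntomic and $\fppf$ cohomology coincide by Lemma~\ref{fppfsyn}, which again pins down the injectivity on the left and the exactness in the middle.
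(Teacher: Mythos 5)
The paper offers no proof of this proposition: it is imported wholesale from Bauer's article (the bracketed citation in the proposition's header is the paper's entire justification). So there is no ``paper's proof'' to compare against; what you have written is a reconstruction of Bauer's own argument, which follows the Fontaine--Messing/Kurihara template. Your outline is the standard and, as far as I can tell, the correct one: locally model $\cO^{\crys}_n$ by a PD envelope $D$ of a closed immersion into a smooth $W_{n+1}$-lift carrying a Frobenius lift $\phi$; use $\phi(J)\subseteq pD$ to define $p^{-1}F$ on $\cJ_{n+1}$, check it descends to $\cI_n=\cJ_{n+1}/\cJ_1$ with values in $\cO^{\crys}_n$ because the ambiguity in dividing by $p$ lives in $p^nD_{n+1}$; send $\mu_{p^n}$ into $\cI_n$ via the divided-power $\log$; prove surjectivity of $1-p^{-1}F$ by dévissage to $n=1$ and an Artin--Schreier-type syntomic cover; identify the kernel by a local computation plus the same dévissage.

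You correctly single out the two real work points: (i) the well-definedness and gluing of $p^{-1}F$ independently of the chosen $(Z,\phi)$, and (ii) the surjectivity, which is the step that forces the syntomic topology. One small cautionary remark on a place you slide past: your claim that $\phi(\log u)=\log(u^p)=p\log u$ is only correct modulo the choice of lift. For a lift $\tilde u$ of a $p^n$-th root of unity one has $\phi(\tilde u)=\tilde u^p(1+p\varepsilon)$ for some $\varepsilon\in D$, so $\phi(\log\tilde u)=p\log\tilde u+\log(1+p\varepsilon)$, and one must check that the correction term is absorbed after passing to $\cI_n$ and reducing to $\cO^{\crys}_n$ (this is where the precise definition of $\cI_n$ as $\cJ_{n+1}/\cJ_1$ and the target being level $n$ rather than $n+1$ matters). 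This is a genuine calculation in Bauer's proof, not a tautology, and your write-up asserts it without verification. Similarly, your Artin--Schreier reduction to $n=1$ is described in a single sentence; the actual dévissage requires checking compatibility of the $F$-structures through the multiplication-by-$p$ maps at each level, which takes some care. Neither of these is a wrong turn, but both are places where a careless reader could mistake a sketch for a proof. Since the paper itself cites Bauer rather than reproving the statement, this level of detail is appropriate only if one wants the argument self-contained; for the paper's purposes the citation suffices.
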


\begin{rem}
    By definition, the following diagram commutes
    $$\begin{CD}
        \cI_n@>1-p^{-1}F>> \cO_{n}^{\crys}\\
        @Vj_n\circ i_nVV @VVpV\\
        \cO_{n}^{\crys}@>p-F>> \cO_{n}^{\crys}
    \end{CD}$$
    such that kernel and cokernel of each vertical map is kill by $p^2$. Thus the fiber of each line are al. quasi-isomorphic uniformly in $n$.
\end{rem}

Apply $Rf_{\syn*}(-)$ to the complex in proposition \ref{Bauerexact} and write $g_i$ to be the map $1-p^{-1}F$ between $R^if_{\syn*}\cJ_n$ and $R^if_{syn*}\cO_n^{crys}$, we get an exact sequence
$$\begin{CD}
	0@>>>\coker(g_1)@>>> R^2f_{syn*}\mu_{p^n}@>>>\ker(g_2)@>>>0
\end{CD}
$$
which induces a long exact sequence 
$$0\to H^0(S,\coker(g_1))\to H^0(S,R^2f_{\syn*}\mu_{p^n})\to H^0(S,\ker(g_2))\stackrel{\gamma_X}\to H^1(S,\coker(g_1)).$$
We need to show that $H^0(S,\coker(g_1))$ is al. zero uniformly in $n$ and 
the last map $\gamma_X$ is al. zero uniformly in $n$.

\subsubsection{Change of topology}

%Does it holds for small site? commutes similar to Daxin Xu 4.10.3
\begin{lem}\label{lemmachangetopo2}
	Consider the following commutative diagram
	$$\begin{CD}
	X_{\CRYS,\SYN}@>f_{\CRYS,\SYN}>> S_{\CRYS,\SYN}\\
	@Vc_{\CRYS,SZ}VV @VVc_{\CRYS,SZ}V \\
	X_{\CRYS,\ZAR}@>f_{\CRYS,\ZAR}>> S_{\CRYS,\ZAR}.
\end{CD}$$
Let $\cE^i_{X,\SYN,n}:= R^if_{\CRYS,\SYN*}\cO_{X/W_n}$ and $\cE^i_{X,\ZAR,n}:= R^if_{\CRYS,\ZAR*}\cO_{X/W_n}$ similarly, then 
$$\cE^i_{X,\SYN,n}\cong c_{\CRYS,SZ}^*\cE^i_{X,\ZAR,n}.$$
\end{lem}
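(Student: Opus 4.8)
The plan is to produce a canonical base-change morphism $c_{\CRYS,SZ}^*\cE^i_{X,\ZAR,n}\to\cE^i_{X,\SYN,n}$ and to check it is an isomorphism by a local computation on affine PD-thickenings, where crystalline cohomology is computed by the de Rham complex of a smooth lift and hence does not see the topology chosen on the big crystalline site.

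First I would note that the structure sheaf is topology-independent: $c_{\CRYS,SZ}^*\cO_{X/W_n}=\cO_{X/W_n}$ (and similarly on $S$), since $\cO_{X/W_n}$ sends a PD-thickening to the ring of global functions of its structure sheaf, which is already a sheaf for the syntomic topology and whose restriction along $c_{\CRYS,SZ}$ is the Zariski structure sheaf. Feeding this into the base-change morphism attached to the commuting square in the statement gives a canonical map
$$
c_{\CRYS,SZ}^*\,Rf_{\CRYS,\ZAR*}\cO_{X/W_n}\longrightarrow Rf_{\CRYS,\SYN*}\cO_{X/W_n},
$$
and, since $c_{\CRYS,SZ}^*$ is exact, passing to degree-$i$ cohomology sheaves yields the desired natural map $c_{\CRYS,SZ}^*\cE^i_{X,\ZAR,n}\to\cE^i_{X,\SYN,n}$.

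Next I would check this map is an isomorphism by evaluating both sides on the objects $(U\hookrightarrow T)$ of the big crystalline site with $T$ affine; such objects form a generating family for both the Zariski and the syntomic topology, so it suffices to treat them. Since $f$ is smooth and projective, the crystalline base-change theorem of \cite{BerthelotOgus1978} identifies, for such a $T$, the value $\cE^i_{X,\ZAR,n}(T)$ with $H^i$ of the relative de Rham complex of a smooth formal lift of $X\times_S U$ over $T$, which exists syntomic-locally (indeed \'etale-locally) on $T$ because $X\to S$ is smooth; in particular $\cE^i_{X,\ZAR,n}$ restricted over such $T$ is the restriction of a coherent $\cO_T$-module, hence already a sheaf for the syntomic topology, so that the syntomic sheafification implicit in $c_{\CRYS,SZ}^*$ leaves its value over $T$ unchanged. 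The same comparison identifies $\cE^i_{X,\SYN,n}(T)$ with the same de Rham cohomology group, and the base-change map is precisely the resulting identification; hence it is an isomorphism on a generating family, and therefore an isomorphism.

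The main obstacle is the middle step: one must check that forming $c_{\CRYS,SZ}^*$ does not disturb the values of $\cE^i_{X,\ZAR,n}$ on affine PD-thickenings, which rests on the crystalline base-change theorem for the smooth projective morphism $f$ together with fppf---hence syntomic---descent for quasi-coherent modules, applied to the coherent de Rham cohomology sheaves of local lifts. The remaining steps are formal manipulations with morphisms of topoi and the classical crystalline--de Rham comparison.
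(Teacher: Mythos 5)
Your plan is a genuinely different route from the paper's, and as written it has a gap at the decisive step.

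The paper's proof is purely formal: it notes that both $\cE^i_{X,\SYN,n}$ and $\cE^i_{X,\ZAR,n}$ are the respective sheafifications (syntomic and Zariski) of the \emph{same} presheaf
$\cP\colon (U,T,\delta)\mapsto H^i_{\crys}(X_U/T,\cO_{X_U/T})$
on the category $\CRYS(S/W_n)$; this is Berthelot's Prop.\ 1.1.11, combined with topology-independence of crystalline cohomology of a thickening (1.1.19). Since $c_{\CRYS,SZ}^*$ of a Zariski sheafification of a presheaf is exactly the syntomic sheafification of that presheaf, the lemma is immediate. Nothing about smoothness, projectivity, coherence, or base change enters.

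Your argument instead constructs a base-change morphism and tries to verify it is an isomorphism by local evaluation. The gap is this: you write that ``the same comparison identifies $\cE^i_{X,\SYN,n}(T)$ with the same de Rham cohomology group,'' but you have not provided any mechanism that computes the value of the syntomic higher direct image $R^if_{\CRYS,\SYN*}\cO_{X/W_n}$ at an object $(U,T,\delta)$. The crystalline--de Rham comparison and the crystalline base-change theorem compute $H^i_{\crys}(X_U/T)$ (equivalently the naive presheaf $\cP$), not the value of the sheafified higher direct image in the syntomic topology. The missing ingredient is precisely the statement (Prop.\ 1.1.11 in Berthelot) that $R^if_{\CRYS,\tau*}\cO$ is the $\tau$-sheafification of $\cP$; once you invoke it for both $\tau=\ZAR$ and $\tau=\SYN$, you might as well drop the base-change construction altogether and conclude as the paper does.

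Two smaller issues in your writeup would also need attention even if the main gap were filled. First, the equality $\cE^i_{X,\ZAR,n}(T)=H^i_{\crys}(X_U/T)$ for $T$ affine uses vanishing of higher Zariski cohomology of the coherent sheaves $R^{q}f_{X_U/T*}\cO$ on $T$; this is fine for affine $T$ but should be stated. Second, the descent step requires base change of the slice sheaves along a syntomic cover $T'\to T$ of thickenings, which is a flat (l.c.i.) but not necessarily smooth morphism; the B-O references you gesture at are for smooth base change, so you would need to quote the flat (or, more precisely, $\mathrm{Tor}$-independent) version. None of these hypotheses are needed in the paper's approach, which is why it is both shorter and more robust; your route trades formal simplicity for a dependence on the crystalline finiteness and base-change theorems that the lemma itself does not require.
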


\begin{proof}
 We consider the presheaf $\cP$ on the category $\CRYS(S/W_n)$ 
	$$(U,T,\delta)\to H^i((X_U/T)_{\CRYS,\SYN},\cO_{X_U/T}).$$
	The cohomology group is the same as $H^i((X_U/T)_{\CRYS,\ZAR},\cO_{X_U/T})$ (see \cite[1.1.19]{berthelot2006theorie}). 
	By \cite[Prop. 1.1.11]{berthelot2006theorie}, the syntomic (resp. Zariski) sheaf associated to $\sP$ is $\cE^i_{X,\SYN,n}$ (resp. $\cE^i_{X,\ZAR,n}$) and the claim follows.	
\end{proof}

\subsubsection{Crystal property of higher direct image}
Recall quasi-smoothness of morphisms between schemes is defined in \cite[IV Definition 1.5.1]{berthelot2006cohomologie}. Note that all smooth maps are quasi-smooth (\cite[IV 1.5.5]{berthelot2006cohomologie}) and for a characteristic $p$ field $k$ and $C(k)$ is a Cohen ring of $k$. Then $C_n(k)=C(k)/p^nC(k)$ is quasi-smooth over $W_n$ because $k$ has a $p$-base (see \cite[Proposition 1.2.6]{berthelot2007theorie} and \cite[Remark A.4]{Mor}). 

\begin{lem}\label{lemmacomputepushforward1}
	For each $(U,T,\delta)\in \CRYS(S/W_n)_{\ZAR}$, Zariski locally on $T$ there exists a PD-morphism $v:T\to D_n:=\Spec(C_n(k))$ and 
	$$(Rf_{\CRYS*}\cO_{X/W_n})_T\cong Lv^*R\Gamma_{\crys}(X/D_n)$$
	where $R\Gamma_{\crys}(X/D_n)$ is regarded as a quasi-coherent Zariski sheaf over $D_n$. \end{lem}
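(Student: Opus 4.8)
The plan is to combine the two tools set up in the surrounding text: the quasi-smoothness of $D_n=\Spec C_n(k)$ over $W_n$, which produces the PD-morphism $v$, and Berthelot's base change theorem for the smooth proper morphism $f$ (equivalently, the crystal property of its higher direct images), which lets one replace an arbitrary PD-thickening $T$ by the single thickening $D_n$.

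First I would record how $Rf_{\CRYS*}\cO_{X/W_n}$ is computed on the crystalline site. Exactly as in the proof of Lemma~\ref{lemmachangetopo2}, for $(U,T,\delta)\in\CRYS(S/W_n)_{\ZAR}$ the sheaf $R^qf_{\CRYS*}\cO_{X/W_n}$ is the Zariski sheafification of the presheaf $(U,T,\delta)\mapsto H^q((X_U/T)_{\crys},\cO_{X_U/T})$ by \cite[Prop.~1.1.11]{berthelot2006theorie} (together with \cite[1.1.19]{berthelot2006theorie} to remain independent of the topology on the crystalline site), and this presheaf is already a sheaf Zariski-locally on $T$; hence, Zariski-locally on $T$, there is a natural identification in the derived category $(Rf_{\CRYS*}\cO_{X/W_n})_T\cong R\Gamma((X_U/T)_{\crys},\cO_{X_U/T})$. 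In particular, equipping the ideal $pC_n(k)$ with its canonical divided powers $\gamma$ makes $(S,D_n,\gamma)$ an object of $\CRYS(S/W_n)_{\ZAR}$ (note that $C_n(k)/p=k$ and that $p$ is nilpotent on $D_n$), and taking $U=S$, $T=D_n$ gives $(Rf_{\CRYS*}\cO_{X/W_n})_{D_n}\cong R\Gamma((X/D_n)_{\crys},\cO_{X/D_n})=R\Gamma_{\crys}(X/D_n)$, which we regard as a complex of quasi-coherent $\cO_{D_n}$-modules, perfect because $C_n(k)$ is noetherian and $f$ is smooth and proper.

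Next I would produce $v$ and conclude. For $(U,T,\delta)\in\CRYS(S/W_n)_{\ZAR}$, the immersion $U\hookrightarrow T$ is a PD-thickening over $W_n$ with nilpotent PD-ideal, while $D_n\to W_n$ is quasi-smooth because $k$ has a $p$-base (\cite[Prop.~1.2.6]{berthelot2007theorie}, \cite[Rem.~A.4]{Mor}); so the infinitesimal lifting property of quasi-smooth morphisms (\cite[IV]{berthelot2006cohomologie}) allows us, after shrinking $T$ to a Zariski-open, to lift the composite $U\to S\hookrightarrow D_n$ to a $W_n$-PD-morphism $v\colon T\to D_n$. Together with $U\to S$ this is a morphism $(U,T,\delta)\to(S,D_n,\gamma)$ of $\CRYS(S/W_n)_{\ZAR}$. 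Since $f$ is smooth and proper, $Rf_{\CRYS*}\cO_{X/W_n}$ satisfies base change along it, i.e. the base change morphism $Lv^{*}(Rf_{\CRYS*}\cO_{X/W_n})_{D_n}\to(Rf_{\CRYS*}\cO_{X/W_n})_T$ is an isomorphism (\cite[\S\,7]{BerthelotOgus1978}; the derived pullback $Lv^{*}$ is needed because $v$ need not be flat, and the fact that $W_n$ is torsion causes no trouble since $R\Gamma_{\crys}(X/D_n)$ is perfect). Combining with the first step gives, Zariski-locally on $T$, the desired isomorphism $(Rf_{\CRYS*}\cO_{X/W_n})_T\cong Lv^{*}R\Gamma_{\crys}(X/D_n)$.

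I expect the main obstacle to be marshalling Berthelot's crystalline base change theory in precisely this situation: torsion coefficients $W_n$, a PD-morphism $v$ that is in general non-flat (so that $Lv^{*}$ is genuinely needed), and a ``base'' $D_n$ that is only quasi-smooth -- not of finite type -- over $W_n$; one must also keep track that this base-change/crystal statement is compatible with the sheafification identification of the first step and with the big crystalline site, where $U$ runs over all $S$-schemes rather than only open subschemes of $S$. By comparison, the construction of $v$ is a soft consequence of quasi-smoothness and the identification $(Rf_{\CRYS*}\cO_{X/W_n})_{D_n}\cong R\Gamma_{\crys}(X/D_n)$ is routine given the cited results.
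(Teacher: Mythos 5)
Your proposal is correct and follows essentially the same route as the paper's proof: identify $(Rf_{\CRYS*}\cO_{X/W_n})_{D_n}$ with $R\Gamma_{\crys}(X/D_n)$ via the sheafification description, produce $v$ from the quasi-smoothness of $C_n(k)$ over $W_n$ (the paper packages this as the observation that $D_n$ is its own PD-envelope over $S$), and then invoke Berthelot's smooth--proper base change [BO78, Cor.~7.12] to get $Lv^*(Rf_{\CRYS*}\cO_{X/W_n})_{D_n}\cong (Rf_{\CRYS*}\cO_{X/W_n})_T$. The only cosmetic difference is that the paper routes the base change through the square over $g:U\to S$ and then evaluates at $T$, whereas you apply the derived-crystal transition isomorphism along $(U,T)\to(S,D_n)$ directly; these are the same ingredient phrased two ways.
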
 

\begin{proof}
The sheaf $Rf_{\CRYS*}\cO_{X/W_n}$ is the sheafification  (in the derived $\infty$-category) of $(U',T',\delta')\to R\Gamma_{\crys}(X_{U'}/T')$ under the Zariski-topology. So 
	$$(Rf_{\CRYS*}\cO_{X/W_n})_T\cong Rf_{X_U/T*}\cO_{X_U/T}.$$

To see the existence of $v$, recall that the PD-envelope of $S\to D_n$ is $D_n$ ($p$ is the maximal ideal) so the result follows from $C_n$ being a quasi-smooth $W_n$-algebra.

	Assume there exists a PD-morphism $v:T\to C_n$, we apply smooth base change (\cite[Corollary 7.12]{berthelot2006theorie}) to the diagram 
	$$\xymatrix{ X_U\ar^{g'}[r]\ar_{f'}[d] & X\ar^f[d]\\
	U\ar^g[r] & S}$$
	and get 
	$$Lg_{\crys}^*Rf_{\crys*}\cO_{X/W_n}\simeq Rf'_{\crys*}Lg'^{*}_{\crys}\cO_{X/W_n}.$$
	The claim follows as 
 $$(Lg_{\crys}^*Rf_{crys*}\cO_{X/W_n})_T\simeq Lv^*(Rf_{\crys*}\cO_{X/W_n})_{D_n}\simeq Lv^*R\Gamma_{\crys}(X/D_n)$$ 
 and 
$$(Rf'_{\crys*}Lg'^*_{\crys}\cO_{X/W_n})_T\simeq Rf_{X_U/T*}\cO_{X_U/T}.$$
\end{proof}

Consider the following commutative diagrams of topos
$$\begin{CD}
	X_{\CRYS,\SYN}@>v_X>> X_{\SYN}\\
	@Vf_{\CRYS,\SYN}VV @Vf_{\SYN}VV \\
	S_{\CRYS,\SYN}@>v_S>> S_{\SYN}.
	\end{CD}$$
and continuous functors between sites
$$\begin{CD}
	\syn(S)@>p_S>> \SYN(S)\\
	@Vf_{\syn}VV @Vf_{\SYN}VV\\
	\syn(X)@>p_X>> \SYN(X).
\end{CD}$$
	
By \cite[Proposition 1.17]{Bauer1992}, $Rp_{X*}Rv_{X,*}\cO_{X/W_n}\cong p_{X*}v_{X,*}\cO_{X/W_n}=\cO^{\crys}_{X,n}$. So
\begin{equation}\label{formulafirstes}
	p_{S*}Rv_{S*}Rf_{\CRYS,\SYN*}\cO_{X/W_n}\simeq Rf_{\syn*}\cO^{\crys}_{X,n}.
\end{equation}

\begin{prop}\label{propositionalmostcrystal}
 The edge map 
$$R^if_{\SYN*}\cO^{\crys}_{X,n}\to v_{S*}\cE^i_{X,\SYN,n}$$
is an al. isomorphism uniformly in $n$.
\end{prop}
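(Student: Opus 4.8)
My plan is to exploit the equality $f_{\SYN}\circ v_X=v_S\circ f_{\CRYS,\SYN}$ of morphisms of topoi, which gives a canonical isomorphism $Rf_{\SYN*}Rv_{X*}\cO_{X/W_n}\simeq Rv_{S*}Rf_{\CRYS,\SYN*}\cO_{X/W_n}$ together with two Leray spectral sequences,
$$E_2^{p,q}=R^pf_{\SYN*}R^qv_{X*}\cO_{X/W_n},\qquad F_2^{p,q}=R^pv_{S*}\cE^q_{X,\SYN,n},$$
both abutting to $R^{p+q}(v_S\circ f_{\CRYS,\SYN})_*\cO_{X/W_n}$. Since $\cO^{\crys}_{X,n}=v_{X*}\cO_{X/W_n}$, the map in the statement is the composite of the bottom edge map $R^if_{\SYN*}\cO^{\crys}_{X,n}=E_2^{i,0}\to R^i(v_S\circ f_{\CRYS,\SYN})_*\cO_{X/W_n}$ of the first spectral sequence with the left edge map $R^i(v_S\circ f_{\CRYS,\SYN})_*\cO_{X/W_n}\to F_2^{0,i}=v_{S*}\cE^i_{X,\SYN,n}$ of the second. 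Its kernel and cokernel are then assembled from subquotients of $E_2^{p,q}$ with $q>0$ and of $F_2^{p,q}$ with $p>0$; since $R^pf_{\SYN*}$ carries a sheaf killed by an integer $N$ to one killed by $N$, it will therefore suffice to show that $R^qv_{X*}\cO_{X/W_n}$ is al. zero uniformly in $n$ for $q>0$, and that $R^pv_{S*}\cE^q_{X,\SYN,n}$ is al. zero uniformly in $n$ for $p>0$.

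For the first of these I would invoke \cite[Prop.~1.17]{Bauer1992}: the small syntomic topology is fine enough that the crystalline cohomology sheaf of the structure sheaf is concentrated in degree $0$ up to $p$-torsion bounded independently of $n$, since syntomic-locally one may adjoin $p$-power roots of a $p$-basis to trivialise the relevant differentials. A minor point is that \cite[Prop.~1.17]{Bauer1992} is stated after applying $p_{X*}$ — so formula~\eqref{formulafirstes} is really only the $p_{S*}$-pushforward of the assertion needed here — but the extra terms occurring on the big syntomic site are computed syntomic-locally on $X$, where the cited vanishing applies, and hence are again controlled uniformly in $n$.

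For the second assertion I would first use Lemma~\ref{lemmachangetopo2} to write $\cE^q_{X,\SYN,n}\cong c_{\CRYS,SZ}^*\cE^q_{X,\ZAR,n}$, where $\cE^q_{X,\ZAR,n}=R^qf_{\CRYS,\ZAR*}\cO_{X/W_n}$ is a crystal on $\Crys(S/W_n)$ by the smoothness of $f$ (flat base change and the crystal property of higher direct images, cf.~\cite[Ch.~VI--VII]{BerthelotOgus1978}). By Lemma~\ref{lemmacomputepushforward1} this crystal corresponds to the free $C_n(k)$-module $H^q_{\crys}(X/D_n)$, $D_n=\Spec(C_n(k))$, with its Gauss--Manin connection, and $Rv_{S*}$ of a crystal is, via the quasi-smooth thickening $D_n$ of $S$, computed by the de Rham complex of the corresponding module relative to $W_n$. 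So everything reduces to the statement: for a free crystal $\cG$ on $\Crys(S/W_n)$ of rank bounded independently of $n$, the sheaves $R^pv_{S*}\cG$ on $S_\SYN$ are al. zero uniformly in $n$ for $p>0$. The contrast to keep in mind is that over the Zariski site this would be false, since the de Rham complex of $C_n(k)$ relative to $W_n$ already carries $p$-torsion in positive degrees growing with $n$; but on the syntomic site the covers of $S=\Spec(k)$ obtained by adjoining $p$-power roots of a $p$-basis of $k$ trivialise the relevant differentials after pullback, which should force the higher direct images to vanish — or at worst to be killed by a power of $p$ depending only on the size of a $p$-basis of $k$ and on the rank of $\cG$.

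The hard part will be precisely this last step and, above all, its uniformity in $n$. Already for $\cG=\cO_{S/W_n}$, $p=1$ and $k=\FF_p(t)$, the de Rham complex of $C_n(k)$ relative to $W_n$ has $H^1$ containing an element of order $p^{n}$, so no argument at the level of presheaves can succeed; one must instead produce, uniformly in $n$, syntomic covers that trivialise all such torsion, and — the genuinely delicate point — verify that matching families over those covers, not merely globally defined classes, become locally trivial, so that the sheaves $R^pv_{S*}$ truly vanish. Carrying out this descent while keeping track of the big-versus-small syntomic bookkeeping of the first step is where I expect the real work to lie.
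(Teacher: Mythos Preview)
Your spectral-sequence setup matches the paper's, but there is a genuine gap where you assert that $\cE^q_{X,\ZAR,n}=R^qf_{\CRYS,\ZAR*}\cO_{X/W_n}$ is a crystal and that it corresponds to a \emph{free} $C_n(k)$-module $H^q_{\crys}(X/D_n)$. Neither holds in general: the crystal property of higher direct images in \cite{BerthelotOgus1978} requires the cohomology modules to be locally free, which fails whenever $H^q_{\crys}(X/D)$ has $p$-torsion, and that is the generic situation once $\dim X>1$. Lemma~\ref{lemmacomputepushforward1} only identifies $(\cE^q_{X,\ZAR,n})_T$ with $H^q(Lv^*R\Gamma_{\crys}(X/D_n))$, not with $v^*H^q_{\crys}(X/D_n)$; the discrepancy between the two is governed exactly by this torsion and is where the actual content of the proposition lies.

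The paper closes the gap by following Morrow \cite[Lem.~A.2]{Mor}: since $D=\Spec C(k)$ is a DVR and $R\Gamma_{\crys}(X/D)$ is perfect, the torsion in each $H^q_{\crys}(X/D)$ is annihilated by a fixed power of $p$, so the edge maps $g_T^*H^q_{\crys}(X/D_n)\to H^q(Lg_T^*R\Gamma_{\crys}(X/D_n))$ are al.\ isomorphisms uniformly in $n$ and $g_T$. This furnishes honest coherent crystals $\cE^q_{X/S,n}$ (with value $g_T^*H^q_{\crys}(X/D_n)$ on $(U,T,\delta)$) together with maps $\cE^q_{X/S,n}\to\cE^q_{X,\ZAR,n}$ that are al.\ isomorphisms uniformly in $n$. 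For genuine coherent crystals, \cite[Prop.~1.17]{Bauer1992} already gives $R^pv_{S*}=0$ for $p>0$; so the syntomic-cover descent you flag as ``the hard part'' is precisely what Bauer's result packages and need not be redone by hand. In short, you have inverted the difficulty: the delicate step is not the vanishing of $R^pv_{S*}$ on crystals, but showing uniformly in $n$ that $\cE^q_{X,\ZAR,n}$ is almost a crystal in the first place.
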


\begin{proof}
 Apply \cite[Prop. 1.17]{Bauer1992} and lemma \ref{lemmachangetopo2}, it suffices to check there exist crystals $\cE^i_{X/S,n}$ such that  $\cE^i_{X,\ZAR,n}$ is al. isomorphic to $\cE^i_{X/S,n}$ uniformly in $n$. For this, we follow Morrow's method: Let $C$ denote a Cohen ring of $k$, $C_n$ denote $C/p^n$, $D_n:=\Spec C_n$ and $D:=\Spec C$. The same argument as in the proof of \cite[Lem. A.2]{Mor} provides a morphism $\psi_n^i:\cE^i_{X/S,n}\to \cE^i_{X,\ZAR,n}$ where $\cE^i_{X/S,n}$ is the crystal constructed by Morrow in the proof of \cite[Lem. A.2]{Mor} whose value on affine $(U,T,\delta)$ is $g^*_TH^i(X/D_n)$ for any chosen map $g_T:T\to D_n$ lifting $U\to S$. Note that although the site considered there is the small site, \cite[Lem. A.3]{Mor} applies to big sites. Restricted to $T$, $\psi_{n,T}^i$ is given by the edge map
 $$\psi_{n,T}^i: g^*_TH^i(X/D_n)\to H^i(Lg^*_TR\Gamma_{\crys}(X/D_n))\simeq (R^if_{\CRYS*}\cO_{X/W_n})_T.$$
 We need to see the first map is an al. isomorphism uniform in $n$ and $g_T$.\par
  By \cite[Theorem 7.24]{BerthelotOgus1978}, the limit complex $R\Gamma_{\crys}(X/D)$ is a perfect complex with finite Tor-amplitude and $R\Gamma_{\crys}(X/D_n)\simeq R\Gamma_{\crys}(X/D)\otimes^L_{D} D_n$. Note that $D$ is a complete DVR thus the torsion part of $H^i_{crys}(X/D)$ is uniformly bounded and $H^i_{crys}(X/D)\otimes_D D_n$ is al. isomorphic to $H^i_{crys}(X/D_n)$ uniformly in $n$. So for each $i\geq 0$, the edge map 
$$g_T^*H^i_{\crys}(X/D_n)\to H^i(Lg_T^*R\Gamma_{\crys}(X/D_n))$$
is an al. isomorphism uniformly in $n$ and $g_T$.
 \end{proof}

 \begin{cor}
     There is an al. isomorphism uniformly in $n$
     $$H^0_{\syn}(S,R^if_{\syn*}\cO_{X,n}^{\crys})\longrightarrow H^0_{\crys}(S,R^if_{\crys*}\cO_{X/W_n}).$$
 \end{cor}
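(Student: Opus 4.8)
The plan is to deduce the Corollary by applying $H^0_{\syn}(S,-)$ to the al. isomorphism of Proposition~\ref{propositionalmostcrystal} and matching the two resulting groups with the ones in the statement by means of the change-of-topology comparisons already in place; the one genuine issue is keeping the torsion bounds independent of $n$.

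First I would record the soft fact that applying $H^0(S,-)$ to an al. isomorphism uniformly in $n$ again yields an al. isomorphism uniformly in $n$: factoring a map $\cF\to\cG$ through its image $\cI$, so that $\cF\to\cI$ is surjective and $\cI\to\cG$ injective, the kernel of $\cF\to\cI$ and the cokernel of $\cI\to\cG$ are killed by a single integer $N$ independent of $n$, hence so are $H^0(S,-)$ and $H^1(S,-)$ of that kernel and $H^0(S,-)$ of that cokernel, and therefore the kernel and cokernel of $H^0(S,\cF)\to H^0(S,\cG)$ are killed by $N^2$. Then, for the left-hand side, I would use the commutative square of topoi with horizontal arrows $p_X\colon X_{\SYN}\to X_{\syn}$ and $p_S\colon S_{\SYN}\to S_{\syn}$, together with the exactness of $p_{X*}$ and $p_{S*}$ (the former invoked already in the definition of $\cO^{\crys}_n$), to obtain $R^if_{\syn*}\cO^{\crys}_{X,n}\cong p_{S*}R^if_{\SYN*}\cO^{\crys}_{X,n}$; since $p_{S*}$ preserves global sections this gives $H^0_{\syn}(S,R^if_{\syn*}\cO^{\crys}_{X,n})\cong H^0_{\SYN}(S,R^if_{\SYN*}\cO^{\crys}_{X,n})$. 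Applying $H^0_{\SYN}(S,-)$ to the edge map of Proposition~\ref{propositionalmostcrystal} and invoking the observation above, this group is al. isomorphic, uniformly in $n$, to $H^0_{\SYN}(S,v_{S*}\cE^i_{X,\SYN,n})=H^0((S/W_n)_{\CRYS,\SYN},\cE^i_{X,\SYN,n})$, the last equality because $v_{S*}$ preserves global sections.

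For the right-hand side I would move from the syntomic to the Zariski crystalline topology. By Lemma~\ref{lemmachangetopo2} one has $\cE^i_{X,\SYN,n}\cong c_{\CRYS,SZ}^{*}\cE^i_{X,\ZAR,n}$, and by the proof of Proposition~\ref{propositionalmostcrystal} the sheaf $\cE^i_{X,\ZAR,n}$ is al. isomorphic, uniformly in $n$, to an honest crystal $\cE^i_{X/S,n}$ (the one constructed following \cite[Lem.~A.2]{Mor}); likewise, on the small crystalline site, $R^if_{\crys*}\cO_{X/W_n}$ is al. isomorphic uniformly in $n$ to the restriction of $\cE^i_{X/S,n}$. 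For an honest crystal the crystalline cohomology over $S/W_n$ is unchanged under replacing the Zariski topology on the crystalline site by the syntomic one, and under passing between the big and the small crystalline site; this is the change-of-topology fact already used in the proof of Lemma~\ref{lemmachangetopo2} (cf. \cite[1.1.19]{berthelot2006theorie}), and it can be made concrete by computing with the de Rham complex of a quasi-smooth PD-thickening of $S$ over $W_n$, for instance $\Spec C_n(k)$ as in Lemma~\ref{lemmacomputepushforward1}. Combining these, $H^0((S/W_n)_{\CRYS,\SYN},\cE^i_{X,\SYN,n})$ is al. isomorphic, uniformly in $n$, to $H^0_{\crys}(S,R^if_{\crys*}\cO_{X/W_n})$; composing with the identification of the left-hand side proves the Corollary.

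The only point I expect to require care is the uniform-in-$n$ bookkeeping: the argument strings together finitely many al. isomorphisms --- from Proposition~\ref{propositionalmostcrystal}, from the crystal $\cE^i_{X/S,n}$, and from the big-versus-small comparison --- and one must check that each carries a torsion bound independent of $n$, so that the composite does as well. The bound attached to $\cE^i_{X/S,n}$ comes, as in the proof of Proposition~\ref{propositionalmostcrystal}, from the uniformly bounded torsion of $H^i_{\crys}(X/D)$, $D$ being a complete discrete valuation ring; the rest is formal manipulation with the diagram of topoi set up before Proposition~\ref{propositionalmostcrystal}.
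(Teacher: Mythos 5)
Your proposal is correct and follows essentially the same route as the paper: apply $H^0$ to the edge map of Proposition~\ref{propositionalmostcrystal}, pass from $\cE^i_{X,\SYN,n}$ to Morrow's coherent crystal $\cE^i_{X/S,n}$ (uniformly in $n$), invoke invariance of crystal cohomology under change of topology (Lemma~\ref{lemmachangetopo2}) and under passing from the big to the small crystalline site, and finish with the al.\ isomorphism back to $R^if_{\crys*}\cO_{X/W_n}$. You spell out the ``soft fact'' about $H^0$ of al.\ isomorphisms and the small/big syntomic comparison more explicitly than the paper does, but the argument and the source of the uniform torsion bound are the same.
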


 \begin{proof}
     Pushforward the edge map in proposition \ref{propositionalmostcrystal} along $p_S$ and take cohomology, we get an al. isomorphism uniformly in $n$
     $$H^0_{\syn}(S,R^if_{\syn*}\cO_{X,n}^{\crys})\longrightarrow H^0(S_{\CRYS,\SYN},\cE^i_{X,\SYN,n}).$$
     We also see in the proof that $\cE^i_{X,\SYN,n}$ is al. isomorphic to $\cE^i_{X/S,n}$ (coherent crystals in Zariski or syntomic topology coincide) uniformly in $n$. As a coherent crystal 
     $$H^0(S_{\CRYS,\SYN},\cE^i_{X/S,n})\cong H^0(S_{\CRYS,\ZAR},\cE^i_{X/S,n})\cong H^0_{\crys}(S,\cE^i_{X/S,n}).$$
     Pushforward the al. isomorphism $\cE^i_{X/S}\to \cE^i_{X,\ZAR,n}$ to small crystalline site, we get an al. isomorphism $H^0_{\crys}(S,\cE^i_{X/S})\to H^0_{\crys}(S,R^if_{\crys*}\cO_{X/W_n})$ uniformly in $n$.
 \end{proof}

\begin{lem}\label{takinglimit}
Let $i_n:(S/W_n)_{\crys} \longrightarrow (S/W)_{\crys}$ be the canonical morphism of topos and $\cE^i_{X/S}$ be the crystal constructed by Morrow in the proof of \cite[Lem. A.2]{Mor} (recalled in proof of proposition \ref{propositionalmostcrystal}). Then
$$
i_n^*\cE^2_{X/S} \lra R^2f_{\crys*}\cO_{X/W_{n}}
$$
is an al. isomorphism uniformly in $n$.
\end{lem}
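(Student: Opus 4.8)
The plan is to prove Lemma~\ref{takinglimit} by combining the two descriptions of $R^2f_{\crys*}\cO_{X/W_n}$ already established: on the one hand the crystal $\cE^2_{X/S}$ constructed via Morrow's method in the proof of Proposition~\ref{propositionalmostcrystal}, whose value on an affine PD-thickening $(U,T,\delta)$ (with a chosen lift $g_T:T\to D_n$ of $U\to S$) is $g_T^*H^2_{\crys}(X/D_n)$; and on the other hand the intrinsic higher direct image $R^2f_{\crys*}\cO_{X/W_n}$, whose value on $(U,T,\delta)$ is $H^2\bigl(Lg_T^*R\Gamma_{\crys}(X/D_n)\bigr)\simeq (R^2f_{\CRYS*}\cO_{X/W_n})_T$ by Lemma~\ref{lemmacomputepushforward1}. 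The comparison map $\psi^2_{n,T}$ between these was already shown, in the proof of Proposition~\ref{propositionalmostcrystal}, to be an al.\ isomorphism uniformly in $n$ and in $g_T$. The point of the present lemma is merely to repackage this: the map $i_n^*\cE^2_{X/S}\to R^2f_{\crys*}\cO_{X/W_n}$ on the small crystalline site $(S/W_n)_{\crys}$ is, section by section over each PD-thickening, exactly the map $\psi^2_{n,T}$. Hence it is an al.\ isomorphism uniformly in $n$.

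More precisely, I would proceed as follows. First, recall that $i_n^*\cE^2_{X/S}$ is the restriction of the $p$-adic crystal $\cE^2_{X/S}$ to the mod-$p^n$ crystalline site; since $\cE^2_{X/S}$ is the crystal whose value on affine $(U,T,\delta)$ in $\Crys(S/W)$ is $g_T^*H^2_{\crys}(X/D)$ (compatibly under transition maps), its restriction to $\Crys(S/W_n)$ has value $g_T^*\bigl(H^2_{\crys}(X/D)\otimes_D D_n\bigr)$ on affine thickenings $T$ over $W_n$, and by the base-change isomorphism $R\Gamma_{\crys}(X/D_n)\simeq R\Gamma_{\crys}(X/D)\otimes^L_D D_n$ of \cite[Theorem 7.24]{BerthelotOgus1978} together with the uniform boundedness of the torsion in $H^*_{\crys}(X/D)$ (as $D$ is a complete DVR), this differs from $g_T^*H^2_{\crys}(X/D_n)$ by an al.\ isomorphism uniform in $n$. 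Second, by Lemma~\ref{lemmacomputepushforward1}, the value of $R^2f_{\crys*}\cO_{X/W_n}$ on the same affine $T$ is $H^2\bigl(Lv^*R\Gamma_{\crys}(X/D_n)\bigr)$ for a PD-morphism $v=g_T:T\to D_n$. Third, the natural map between the two is the edge map $g_T^*H^2_{\crys}(X/D_n)\to H^2\bigl(Lg_T^*R\Gamma_{\crys}(X/D_n)\bigr)$; this is precisely the map analyzed at the end of the proof of Proposition~\ref{propositionalmostcrystal}, where it was shown to be an al.\ isomorphism uniformly in $n$ and $g_T$, using again perfectness and finite Tor-amplitude of $R\Gamma_{\crys}(X/D)$. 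Since these local identifications are compatible with the transition maps in the crystalline site (i.e.\ glue to a morphism of crystals/sheaves) and the bounding integer is independent of $T$ and of $n$, the global map of sheaves $i_n^*\cE^2_{X/S}\to R^2f_{\crys*}\cO_{X/W_n}$ is an al.\ isomorphism uniformly in $n$.

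The only genuine content beyond bookkeeping is the uniformity in $n$, and that has in fact already been done: it rests on the fact that $R\Gamma_{\crys}(X/D)$ is a perfect complex of $D$-modules with finite Tor-amplitude, so that derived and ordinary base change to $D_n$ differ by torsion of bounded exponent, independent of $n$ — and this exponent also controls the discrepancy between $Lg_T^*$ and $g_T^*$ applied to cohomology. I expect the main (mild) obstacle to be purely formal: checking that the section-wise maps $\psi^2_{n,T}$ are compatible with the pullback/transition structure so that they assemble into a morphism of crystalline sheaves, and that the restriction functor $i_n^*$ really does commute with the formation of Morrow's crystal in the way asserted. Both are standard once one writes out the definitions, using \cite[Prop. 1.1.11]{berthelot2006theorie} to pass between presheaf-level cohomology and the associated crystalline sheaf, exactly as in the proofs of Lemmas~\ref{lemmachangetopo2} and~\ref{lemmacomputepushforward1}.
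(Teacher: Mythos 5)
Your proposal is correct and follows essentially the same route as the paper's (very terse) proof: first compare $i_n^*\cE^2_{X/S}$ to Morrow's crystal $\cE^2_{X/S,n}$ built from $H^2_{\crys}(X/D_n)$ via the uniformly bounded comparison $H^2_{\crys}(X/D)/p^n \to H^2_{\crys}(X/D_n)$ (Berthelot--Ogus), and then invoke the edge-map comparison from the proof of Proposition~\ref{propositionalmostcrystal}. Your write-up just makes explicit the bookkeeping that the paper leaves implicit.
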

\begin{proof}
 The claim follows from Morrow's proof of \cite[Lemma A.2]{Mor} and that 
$$H^i_{\cris}(X/D)/p^n \lra H^i_{\cris}(X/D_n)$$
is an al. isomorphism uniformly in $n$ for any $i\geq 0$ (cf. \cite[Cor. 7.25]{BerthelotOgus1978}).
\end{proof}

\subsection{Relation with Albanese}
Let $X$ be a smooth projective geometrically connected variety over  field $k$, there is an Albanese variety $A$ associated to $X$ which is the dual abelian variety of the reduced part of the Picard scheme $\Pic^0_{X/k,\red}$ (cf. \cite[no 236, Thm. 3.3 (iii)]{FGA}) . The motivation is the following result \cite{crys1motive}
\begin{thm}\label{theoremcrystallinemotive}
	Let $X$ be a smooth and projective over a perfect field $k$ of characteristic $p > 0$. Let $T_{\crys}(-)$ denote the covariant Dieudonne module. Then, there is a canonical isomorphism
	$$T_{\crys}(\Pic^0_{X/k,\red})\cong H^1_{crys}(X/W(k)).$$
\end{thm}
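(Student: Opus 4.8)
The plan is to reduce the statement to the classical crystalline Dieudonn\'e theory of abelian varieties by means of the Albanese morphism, exploiting that $H^{1}_{\crys}(-/W(k))$ is a ``motivic'' invariant depending only on the Picard/Albanese part of $X$. Since $k$ is perfect, both $T_{\crys}(-)$ and $H^{1}_{\crys}(-/W(k))$ commute with extension of the (perfect) base field and with Galois descent, so one may enlarge $k$ to assume $X(k)\neq\varnothing$; a choice of base point then produces the Albanese morphism $a\colon X\to B$, where $B:=\mathrm{Alb}(X)$ is the abelian variety dual to $A:=\Pic^{0}_{X/k,\red}$. By construction of $B$ as the dual of $A$, the map induced by $a$ on identity components of Picard schemes, $a^{*}\colon\Pic^{0}_{B/k}=B^{\vee}=A\to\Pic^{0}_{X/k,\red}=A$, is the identity.

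First I would show that $a^{*}\colon H^{1}_{\crys}(B/W)\to H^{1}_{\crys}(X/W)$ is an isomorphism. After inverting $p$ this is immediate: both sides are $F$-isocrystals of rank $2\dim A=b_{1}(X)$ and $a^{*}$ is injective (indeed split after inverting $p$), hence an isomorphism of isocrystals. For the integral statement one uses that $H^{1}_{\crys}(X/W)$ is torsion-free (Illusie); then $a^{*}$ is an isomorphism iff its reduction modulo $p$ is injective, since source and target have the same $W$-rank. Via the universal coefficient injections $H^{1}_{\crys}(B/W)/p\hookrightarrow H^{1}_{\mathrm{dR}}(B/k)$ and $H^{1}_{\crys}(X/W)/p\hookrightarrow H^{1}_{\mathrm{dR}}(X/k)$ it suffices that $a^{*}\colon H^{1}_{\mathrm{dR}}(B/k)\to H^{1}_{\mathrm{dR}}(X/k)$ be injective; this is part of Oda's de Rham realization of the Albanese and can be verified on the two graded pieces of the Hodge filtration, where $a^{*}$ is the inclusion $\Lie A=\Lie\Pic^{0}_{B/k}\hookrightarrow\Lie\Pic_{X/k}$ on $H^{1}(-,\cO)$ and is injective on invariant $1$-forms because the image of $a$ generates $B$ as a group; strictness of the Hodge filtration on $H^{1}_{\mathrm{dR}}(B/k)$ lets the two assertions combine.

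Next, for the abelian variety $B$ I would invoke the crystalline Dieudonn\'e comparison of Mazur--Messing and Berthelot--Breen--Messing: $H^{1}_{\crys}(B/W)$ is canonically the contravariant Dieudonn\'e module $\mathbb{D}(B[p^{\infty}])$ (the integral lift of Oda's $H^{1}_{\mathrm{dR}}(B/k)\cong\mathbb{D}(B[p])$). Since the covariant Dieudonn\'e module of a $p$-divisible group is, by definition, the contravariant one of its Cartier dual, and the Cartier dual of $A[p^{\infty}]$ is $A^{\vee}[p^{\infty}]=B[p^{\infty}]$, one obtains $T_{\crys}(A)=\mathbb{D}(A^{\vee}[p^{\infty}])=\mathbb{D}(B[p^{\infty}])=H^{1}_{\crys}(B/W)$. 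Composing with the Albanese isomorphism of the previous step gives the asserted canonical isomorphism $T_{\crys}(\Pic^{0}_{X/k,\red})\cong H^{1}_{\crys}(X/W)$; one then checks that it is independent of the auxiliary base point and of the finite extension of $k$, so that it descends to the original situation.

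A variant closer to the machinery of the present paper avoids the Albanese: one shows directly that the crystal $R^{1}f_{\crys*}\cO_{X/W}$ (for $f\colon X\to\Spec k$) is the Dieudonn\'e crystal of $\Pic^{0}_{X/k,\red}[p^{\infty}]$, using $R^{1}f_{*}\GG_{m}=\Pic_{X/k}$, the crystalline Kummer sequence $0\to\mu_{p^{n}}\to\cI_{n}\to\cO^{\crys}_{n}\to 0$ of Proposition~\ref{Bauerexact} on the syntomic site of $X$, and the description of $\mathbb{D}$ of a $p$-divisible group via its universal vector extension. In either route the main obstacle is not a single deep input but the bookkeeping: pinning down the \emph{integral} structure (the Albanese comparison is transparent only after inverting $p$, and upgrading it to an isomorphism over $W$ requires torsion-freeness of $H^{1}_{\crys}$ together with Oda's de Rham computation), and, above all, making all the duality conventions consistent --- covariant versus contravariant Dieudonn\'e module, Poincar\'e duality on $H^{1}_{\crys}$, and the autoduality $\Pic^{0}_{X/k,\red}=\mathrm{Alb}(X)^{\vee}$ --- so that variances and Tate twists line up and the resulting isomorphism is genuinely canonical.
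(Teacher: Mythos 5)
The paper offers no proof of Theorem \ref{theoremcrystallinemotive}: it is imported as a known result with a pointer to \cite{crys1motive}, so there is no internal argument to compare yours against. Judged on its own terms, your Albanese route is the standard proof of this classical statement and is essentially sound: reduce to $X(k)\neq\varnothing$ by an \'etale extension and Galois descent, pass through $a\colon X\to B=A^{\vee}$, show $a^{*}$ is an isomorphism on $H^{1}_{\crys}$, and conclude by Mazur--Messing/Berthelot--Breen--Messing for $B$ together with the compatibility of Cartier duality with duality of abelian varieties (your convention $T_{\crys}(A)=\bD(A^{\vee}[p^{\infty}])$ is exactly the one that makes the statement come out without a Tate twist, consistent with how the paper uses the theorem in Proposition \ref{propositionrelateAlbanese}). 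Two inputs you invoke deserve to be flagged as the places where the real content sits. First, the rank count $\rank_{W}H^{1}_{\crys}(X/W)=b_{1}(X)=2\dim\Pic^{0}_{X/k,\red}$, which you need for the isogeny statement, is not formal: it rests on Katz--Messing (comparison of crystalline and $\ell$-adic Betti numbers) or on Illusie's slope spectral sequence. Second, torsion-freeness of $H^{1}_{\crys}(X/W)$ is genuinely needed but is elementary rather than a deep citation: the universal coefficient sequence $0\to H^{0}_{\crys}/p\to H^{0}_{\mathrm{dR}}\to H^{1}_{\crys}(X/W)[p]\to 0$ together with $H^{0}_{\crys}(X/W)=W$ forces $H^{1}_{\crys}(X/W)[p]=0$. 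Finally, in the mod-$p$ step note that the graded pieces of the Hodge filtration on $H^{1}_{\mathrm{dR}}(X/k)$ are $E_{\infty}^{1,0}\subseteq H^{0}(X,\Omega^{1}_{X})$ and $E_{\infty}^{0,1}\subseteq H^{1}(X,\cO_{X})$ rather than the full Hodge groups, since Hodge--de Rham need not degenerate for $X$ in characteristic $p$; because these inclusions are equalities for $B$ by Oda, your injectivity arguments on invariant forms and on $\Lie\Pic$ still go through, but the statement should be phrased for the $E_{\infty}$-terms. With these points made precise your sketch is a complete proof of the cited statement.
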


Assume there is a rational point $a\in X$. Let $a:X\to A$ be the Albanese morphism induced by $a$. By definition, $a$ induced an isomorphism between the reduced Picard varieties \cite[no 236, Thm. 3.3 (iii)]{FGA}. So we have
$$\xymatrix{ X\ar^{a}[rr] \ar_{f}[rd] & & A\ar^{g}[ld]\\
& S &
}$$
and $a^*$ induces a map $\cE^1_{A,\SYN,n}\to \cE^1_{X,\SYN,n}$ which is the sheafification of the corresponding map $a^*:\cE^1_{A,\ZAR,n}\to \cE^1_{X,\ZAR,n}$ (Lemma \ref{lemmachangetopo2}). By crystalline Dieudonne theory, we know $\cE^1_{A,\ZAR,n}$ is a coherent crystal (cf \cite[Sec. 2.5]{berthelot2006theorie}). 

\begin{prop}\label{propositionrelateAlbanese}
 The morphism $a^*:\cE^1_{A,\ZAR,n}\to \cE^1_{X,\ZAR,n}$ is an al. isomorphism uniformly in $n$.
\end{prop}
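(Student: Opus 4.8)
The plan is to descend the question to a perfect base field, where it becomes the motivic statement of Theorem~\ref{theoremcrystallinemotive}, via flat base change in crystalline cohomology. First, reductions: by Lemma~\ref{lemmachangetopo2} the map $a^*\colon\cE^1_{A,\SYN,n}\to\cE^1_{X,\SYN,n}$ is obtained by applying the exact inverse-image functor $c_{\CRYS,SZ}^*$ to $a^*\colon\cE^1_{A,\ZAR,n}\to\cE^1_{X,\ZAR,n}$, so it suffices to treat the Zariski version. By the proof of Proposition~\ref{propositionalmostcrystal}, each of $\cE^1_{A,\ZAR,n}$ and $\cE^1_{X,\ZAR,n}$ is al. isomorphic, uniformly in $n$, to Morrow's crystal $\cE^1_{A/S,n}$, resp. $\cE^1_{X/S,n}$, via the comparison maps $\psi^1_n$; since that construction is functorial in the variety over $S$, these al. isomorphisms are compatible with $a^*$, and it remains to show that $a^*\colon\cE^1_{A/S,n}\to\cE^1_{X/S,n}$ is an al. isomorphism uniformly in $n$. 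On an affine object $(U,T,\delta)\in\Crys(S/W_n)$ equipped with a lift $g_T\colon T\to D_n:=\Spec C_n(k)$ of $U\to S$, this morphism is $g_T^*$ of $a^*\colon H^1_{\crys}(A/D_n)\to H^1_{\crys}(X/D_n)$; if the kernel and cokernel of the latter are killed by $p^{c}$ with $c$ independent of $n$, then — by right exactness of $g_T^*$, together with the fact that $\mathrm{Tor}_1^{C_n(k)}(M,-)$ is killed by $p^{c}$ whenever $M$ is — those of $g_T^*(a^*)$ are killed by $p^{2c}$, a bound independent of $(U,T,\delta)$ and of $n$. So everything reduces to showing that $a^*\colon H^1_{\crys}(A/D_n)\to H^1_{\crys}(X/D_n)$ is an al. isomorphism uniformly in $n$.

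Second, I base change to a perfect field. By formal smoothness of the Cohen ring $C(k)$ over $\ZZ_p$ there is a ring homomorphism $C(k)\to W(\bar{k})$ lifting $k\hookrightarrow\bar{k}$; as a local homomorphism of discrete valuation rings carrying the uniformizer $p$ to $p$ it is faithfully flat, and hence so is its reduction $C_n(k)\to W_n(\bar{k})$, which again lifts $k\hookrightarrow\bar{k}$. Applying the base-change theorem for the crystalline cohomology of the smooth proper scheme $X/k$ along this flat morphism of $p$-torsion-free PD-bases (\cite[Ch.~7]{BerthelotOgus1978}) gives $R\Gamma_{\crys}(X_{\bar{k}}/W_n(\bar{k}))\simeq R\Gamma_{\crys}(X/D_n)\otimes^{L}_{C_n(k)}W_n(\bar{k})$, which by flatness is an ordinary base change commuting with $H^{i}$ and with $a^*$, and likewise for $A$. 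By faithfully flat descent it therefore suffices to show that $a^*\colon H^1_{\crys}(A_{\bar{k}}/W_n(\bar{k}))\to H^1_{\crys}(X_{\bar{k}}/W_n(\bar{k}))$ is an al. isomorphism uniformly in $n$.

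Third, over the perfect field $\bar{k}$ I invoke Theorem~\ref{theoremcrystallinemotive}. The $W(\bar{k})$-modules $H^{*}_{\crys}(X_{\bar{k}}/W(\bar{k}))$ and $H^{*}_{\crys}(A_{\bar{k}}/W(\bar{k}))$ are finitely generated with bounded torsion, so (arguing as in Lemma~\ref{takinglimit}) for both $X_{\bar{k}}$ and $A_{\bar{k}}$ the group $H^1_{\crys}(\,\cdot\,/W_n(\bar{k}))$ is al. isomorphic, uniformly in $n$, to $H^1_{\crys}(\,\cdot\,/W(\bar{k}))/p^n$, functorially; hence it suffices to prove that $a^*\colon H^1_{\crys}(A_{\bar{k}}/W(\bar{k}))\to H^1_{\crys}(X_{\bar{k}}/W(\bar{k}))$ is an isomorphism. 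By the naturality of the comparison isomorphism of Theorem~\ref{theoremcrystallinemotive}, this map is identified with $T_{\crys}$ applied to the homomorphism of abelian varieties $\Pic^0_{A_{\bar{k}}/\bar{k}}\to\Pic^0_{X_{\bar{k}}/\bar{k},\red}$ induced by $a$; since $a$ is the Albanese morphism this homomorphism is an isomorphism, and $T_{\crys}$ is exact, so we are done.

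The step I expect to be the main obstacle is the crystalline base change of the second paragraph: one has to verify that the base-change theorem of \cite{BerthelotOgus1978} genuinely applies along $C(k)\to W(\bar{k})$ — using that $X/k$ is smooth and proper, that the ring map is flat, and the canonical divided powers on $(p)$ in each of the rings involved — and that the comparison isomorphisms used throughout, namely Morrow's $\psi^i_n$ and the one of Theorem~\ref{theoremcrystallinemotive}, are natural enough to carry the map $a^*$ along. Once this bookkeeping is in place, the argument itself is short.
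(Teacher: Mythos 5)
Your proposal is correct and follows essentially the same route as the paper's proof: reduce via Lemma~\ref{lemmachangetopo2} and Morrow's crystals to the comparison of $H^1_{\crys}(A/D_n)$ with $H^1_{\crys}(X/D_n)$ locally on affine PD-thickenings admitting a lift to $D_n=\Spec C_n(k)$, base change along the faithfully flat map $C(k)\to W(\bar{k})$ using \cite[Cor. 7.12]{BerthelotOgus1978}, and conclude by Theorem~\ref{theoremcrystallinemotive} together with the fact that the Albanese map induces an isomorphism on reduced Picard varieties. The only cosmetic difference is that you base change before passing to the limit $W(\bar{k})$ whereas the paper passes to $D=\Spec C(k)$ first; your more explicit Tor-bookkeeping for $g_T^*$ is a harmless elaboration of what the paper leaves implicit.
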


\begin{proof}
Choose a Cohen ring $C=C(k)$, let $C_n:=C_n(k):=C(k)/p^n$ and $D_n:=\Spec(C_n)$. It suffices to check the statement locally in $\CRYS(S)_{\ZAR}$. Let $(U,T,\delta)\in \CRYS(S)_{\ZAR}$ and apply lemma \ref{lemmacomputepushforward1} we may assume there exists a PD-morphism $v:T\to D_n$.
 Via the proof of proposition \ref{propositionalmostcrystal}, for each $p\geq 0$, the edge map 
$$v^*H^p_{\crys}(X/D_n)\to H^p(Lv^*R\Gamma_{\crys}(X/D_n))$$
is an al. isomorphism uniformly in $n$ and $v$. Combine it with Lemma \ref{lemmacomputepushforward1}, we get an al. isomorphism uniformly bounded in $n$ and $v$
$$v^*H^p_{\crys}(X/D_n)\longrightarrow \cE^p_{X,\ZAR,n}|_T.$$
 Similarly, it works for the Albanese variety $A$. Take $p=1$, it is enough to see 
 $$H^1_{\crys}(X/D)\to H^1_{\crys}(A/D)$$
 induced by $a$ is an al. isomorphism uniformly bounded in $n$. To show the statement it is enough to show it after base change to $\overline{W}=W(\overline{k})$ which is faithfully flat $C$-algebra. Let $\overline{D}:=\Spec\overline{W}$, via smooth base change \cite[Coro 7.12]{BerthelotOgus1978} we get 
 $$H^1_{\crys}(X/D)\otimes_{D} \overline{W}\cong H^1_{\crys}(X_{\overline{k}}/\overline{D}).$$
 Now the result follows from Theorem \ref{theoremcrystallinemotive} and the duality between Albanese and Picard scheme. 
 \end{proof}

\section{Proof of Proposition \ref{theoremtowardscrystalline}}
 \subsection{Case of a curve}
 Assume $\dim X=1$.
 
 Base change to $\overline{S}=\Spec \overline{k}$ we have the following diagram 
 $$\begin{CD}
 	0@>>> H^0(S,\coker(g_1))@>>> H^0(S,R^2f_{syn*}\mu_{p^n})@>\beta_4>>H^0(S,\ker(g_2))@>\gamma_X>> H^1(S,\coker(g_1))\\
 	@. @V\beta_1VV @V\beta_2VV @V\beta_5VV @VVV\\
 	0@>>> H^0(\overline{S},\coker(g_1)_{\overline{S}})@>>> H^0(\overline{S},R^2f_{syn*}\mu_{p^n,X_{\overline{k}}})@>\beta_3>>H^0(\overline{S},\ker(g_2)_{\overline{S}})@>\overline{\gamma}_X>> H^1(\overline{S},\coker(g_1)_{\overline{S}}
 \end{CD}$$
 In the following, we show that $\beta_5$ is al. injective uniformly in $n$ (cf. Corollary \ref{beta5injective}) and $\beta_3$ is an al. isomorphism uniformly in $n$ (cf. Lemma \ref{beta3iso}). By Lemma \ref{lemmainjofflat}, $\beta_2$ is already an isomorphism. This shows that $\beta_4$ is an al. isomorphism uniformly in $n$. The digram chasing shows that
 $H^0(S,\coker(g_1))$ and the image of $\gamma_X$ can be killed by a positive integer independent of $n$.

  \begin{lem}\label{lemmaflatnessofhigherimage}
     We have $R^if_{\CRYS*}\cO_{X/W_n}$ ($i=0,1,2$) is a coherent crystal over $\SYN(S)$.
 \end{lem}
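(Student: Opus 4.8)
The plan is to deduce the statement from Morrow's construction used in the proof of Proposition \ref{propositionalmostcrystal}, upgrading the ``al.'' there to an honest isomorphism in the curve case. By Lemma \ref{lemmachangetopo2} it suffices to treat the Zariski higher direct images $\cE^i_{X,\ZAR,n} := R^if_{\CRYS,\ZAR*}\cO_{X/W_n}$, since $R^if_{\CRYS,\SYN*}\cO_{X/W_n} \cong c_{\CRYS,SZ}^*\cE^i_{X,\ZAR,n}$. Recall from the proof of Proposition \ref{propositionalmostcrystal} the coherent crystal $\cE^i_{X/S,n}$ of \cite[Lem. A.2]{Mor}, whose value on an affine object $(U,T,\delta)$ with a chosen lift $g_T : T \to D_n$ of $U \to S$ (such $g_T$ exists locally by Lemma \ref{lemmacomputepushforward1}) is $g_T^*H^i_{\crys}(X/D_n)$, together with the comparison map
$$\psi^i_{n,T} : g_T^* H^i_{\crys}(X/D_n) \lra H^i\bigl(Lg_T^* R\Gamma_{\crys}(X/D_n)\bigr) \cong (R^if_{\CRYS*}\cO_{X/W_n})_T,$$
where $D_n = \Spec C_n$ for a Cohen ring $C = C(k)$ of $k$ and $D = \Spec C$. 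It was shown there that $\psi^i_n$ is an al. isomorphism uniformly in $n$, and that the only source of the discrepancy is the torsion in the finitely generated $C$-modules $H^j_{\crys}(X/D)$. So the whole statement reduces to showing that these modules are torsion-free when $X$ is a curve.

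Since $\dim X = 1$ we have $H^j_{\crys}(X/D) = 0$ for $j \geq 3$, and $H^0_{\crys}(X/D) = C$ because $X$ is geometrically connected. To handle $H^1$ and $H^2$ I would base change along the faithfully flat homomorphism $C \to \overline{W} := W(\overline{k})$: by the base change theorem \cite[Thm. 7.12]{BerthelotOgus1978}, together with the perfectness of $R\Gamma_{\crys}(X/D)$ \cite[Thm. 7.24]{BerthelotOgus1978}, one gets $H^j_{\crys}(X/D)\otimes_C \overline{W} \cong H^j_{\crys}(X_{\overline{k}}/\overline{W})$. Over the perfect field $\overline{k}$, crystalline Poincar\'e duality gives $H^2_{\crys}(X_{\overline{k}}/\overline{W}) \cong \overline{W}$ via the trace, and Theorem \ref{theoremcrystallinemotive} identifies $H^1_{\crys}(X_{\overline{k}}/\overline{W})$ with $T_{\crys}(\Pic^0_{X_{\overline{k}}/\overline{k},\red})$, a free $\overline{W}$-module. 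A finitely generated module over the DVR $C$ is torsion-free precisely when its base change to $\overline{W}$ is, so $H^1_{\crys}(X/D)$ and $H^2_{\crys}(X/D)$ are free $C$-modules.

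Knowing that all $H^j_{\crys}(X/D)$ are free, the perfect complex $R\Gamma_{\crys}(X/D)$ is formal, hence $H^j_{\crys}(X/D)\otimes_C C_n \cong H^j_{\crys}(X/D_n)$ is $C_n$-free (compatibly with \cite[Cor. 7.25]{BerthelotOgus1978}), and the universal coefficient sequence for a finite free representative of $R\Gamma_{\crys}(X/D_n)$ has vanishing $\mathrm{Tor}_1$-terms; thus $\psi^i_{n,T}$ is an isomorphism for every $g_T$. Therefore $\cE^i_{X,\ZAR,n} \cong \cE^i_{X/S,n}$ is a coherent crystal, and by Lemma \ref{lemmachangetopo2} so is $R^if_{\CRYS,\SYN*}\cO_{X/W_n}$. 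I expect the only real point to be the torsion-freeness of $H^1_{\crys}(X/D)$, which is exactly where crystalline Dieudonn\'e theory (Theorem \ref{theoremcrystallinemotive}) enters; everything else is formal bookkeeping with results already in hand.
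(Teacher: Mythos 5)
Your proof is correct and follows essentially the same route as the paper: both reduce via Morrow's comparison map from the proof of Proposition \ref{propositionalmostcrystal} to the torsion-freeness of $H^i_{\crys}(X/D)$ (equivalently, flatness of $H^i_{\crys}(X/D_n)$ over $D_n$), and both establish that freeness by faithfully flat base change to $W(\overline{k})$, handling $H^0$ trivially, $H^2$ by Poincar\'e duality, and $H^1$ by Theorem \ref{theoremcrystallinemotive}. The only difference is that you spell out the formality/universal-coefficient bookkeeping slightly more explicitly, which the paper leaves implicit.
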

 \begin{proof}
      Through the proof of proposition \ref{propositionalmostcrystal}, the obstruction preventing $R^if_{\CRYS*}\cO_{X/W_n}$ from being a crystal lies in torsions in cohomology groups $H^i_{\crys}(X/D_n)$. So it suffices to see $H^i_{\crys}(X/D_n)$ ($i=0,1,2$) are flat $D_n$-modules. Since $\overline{W}_n$ is a faithfully flat $C_n$-algebra, apply smooth base change, it suffices to see it when $k$ is algebraically closed. Now the claim for $H^0$ is clear and Poincare duality gives $H^2$. For $H^1$, one can apply Theorem \ref{theoremcrystallinemotive}.
 \end{proof}
 
\begin{rem}
Any smooth projective geometrically connnected curve over a separable closed field $k$ of characteristic $p>0$ admits a lifting to $C(k)$.
This follows from theory of moduli of curves.
\end{rem}

So the edge map $R^2f_{\syn*}\cO^{\crys}_n\to v_*R^2f_{\crys*}\cO_{X/W_n}$ is an isomorphism.

 \begin{lem}\label{keyinjective}
 The natural map
$$
H^0(S,R^2f_{\crys*}\cO_{X/W_n})\rightarrow H^0(\overline{S},R^2f_{\crys*}\cO_{\overline{X}/W_n})
$$
is injective.
 \end{lem}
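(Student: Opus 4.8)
The plan is to trivialise the statement by evaluating the coherent crystal $R^2f_{\crys*}\cO_{X/W_n}$ on a Cohen ring lift of $k$ and reducing the injectivity to a faithful flatness statement. Fix a Cohen ring $C=C(k)$ and write $C_n:=C/p^n$, $D_n:=\Spec(C_n)$, $\overline{W}_n:=W_n(\bar k)$, and $\cE^2_n:=R^2f_{\crys*}\cO_{X/W_n}$. By Lemma~\ref{lemmaflatnessofhigherimage}, $\cE^2_n$ is a coherent crystal; by Lemma~\ref{lemmacomputepushforward1}, together with the absence of torsion in $H^*_{\crys}(X/D_n)$ for the curve $X$, its value on $D_n$ is the $C_n$-module $H^2_{\crys}(X/D_n)$, which is $C_n$-flat (Lemma~\ref{lemmaflatnessofhigherimage}; in fact free of rank one, by Poincar\'e duality for $X$). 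Hence $H^0_{\crys}(S,\cE^2_n)$ is the $C_n$-submodule of horizontal sections of $(H^2_{\crys}(X/D_n),\nabla)$ for the canonical integrable connection, and in particular the evaluation-at-$D_n$ map $H^0_{\crys}(S,\cE^2_n)\hookrightarrow H^2_{\crys}(X/D_n)$ is injective.

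Next I would compute the target of the map in the lemma in the same way. Since $C_n$ is quasi-smooth over $W_n$, there is a $W_n$-PD-morphism $\Spec(\overline{W}_n)\to D_n$ lifting $\Spec(\bar k)\to\Spec(k)$. By the crystal property of $\cE^2_n$ and smooth base change for crystalline cohomology \cite[Cor.~7.12]{BerthelotOgus1978}, the pullback of $\cE^2_n$ along $\overline{S}\to S$ is $R^2f_{\crys*}\cO_{\overline{X}/W_n}$, whose value on $\Spec(\overline{W}_n)$ is $H^2_{\crys}(X/D_n)\otimes_{C_n}\overline{W}_n\cong H^2_{\crys}(\overline{X}/\Spec(\overline{W}_n))$. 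As $\bar k$ is perfect we have $\widehat{\Omega}^1_{\overline{W}_n/W_n}=0$, so for any coherent crystal $\cF$ on $\overline{S}$ one has $H^0_{\crys}(\overline{S},\cF)=\cF_{\Spec(\overline{W}_n)}$; thus $H^0_{\crys}(\overline{S},R^2f_{\crys*}\cO_{\overline{X}/W_n})=H^2_{\crys}(X/D_n)\otimes_{C_n}\overline{W}_n$, and under these identifications the map of the lemma is the composite
$$H^0_{\crys}(S,\cE^2_n)\hookrightarrow H^2_{\crys}(X/D_n)\longrightarrow H^2_{\crys}(X/D_n)\otimes_{C_n}\overline{W}_n .$$

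Finally, $\overline{W}_n$ is faithfully flat over $C_n$ (as recorded in the proof of Lemma~\ref{lemmaflatnessofhigherimage}) and $H^2_{\crys}(X/D_n)$ is $C_n$-flat, so the right-hand arrow is injective; together with injectivity of the left-hand arrow this yields the claim. I expect the only point needing genuine care to be the bookkeeping in the middle paragraph --- verifying that the abstract base change map between the two crystalline $H^0$'s is computed by $-\otimes_{C_n}\overline{W}_n$ on the values of the crystal --- since the actual input (flatness of $H^2_{\crys}(X/D_n)$ over $C_n$ and faithful flatness of $C_n\to\overline{W}_n$) is already supplied by the earlier lemmas.
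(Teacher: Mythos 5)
Your argument is correct and is in essence the same as the paper's: both proofs hinge on the fact (Lemma~\ref{lemmaflatnessofhigherimage}) that $R^2f_{\crys*}\cO_{X/W_n}$ is a coherent crystal, the base-change identification $R^2f_{\crys*}\cO_{\overline X/W_n}\cong g^*R^2f_{\crys*}\cO_{X/W_n}$, and faithful flatness of $C(k)\to W(\bar k)$. The only difference is one of packaging: the paper stays at the categorical level, quoting that faithful flatness of $C_n\to\overline W_n$ makes the pullback functor on coherent crystals faithful and hence injective on $\Hom(\cO_{S/W},-)$, whereas you unwind this into evaluations at $D_n$ and $\Spec(\overline W_n)$ and a tensor-product computation. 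One small remark: once $C_n\to\overline W_n$ is faithfully flat, the map $M\to M\otimes_{C_n}\overline W_n$ is injective for \emph{every} $C_n$-module $M$, so invoking $C_n$-flatness of $H^2_{\crys}(X/D_n)$ for that last step is harmless but not needed; the flatness is really used earlier, to know the higher direct image is a genuine crystal so that evaluation at $D_n$ detects sections.
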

 \begin{proof}
By definition,
$H^0(S,R^2f_{\crys*}\cO_{X/W_n})=\Hom(\cO_{S/W},R^2f_{\crys*}\cO_{X/W_n}).$
By Lemma \ref{lemmaflatnessofhigherimage},
$R^2f_{\crys*}\cO_{X/W_n}$ is a coherent crystal.
Since $C(k) \lra W(\bar{k})$ is faithfully flat, the pull-back functor from the category of crystal on $S$ to the one of $\bar{S}$ is faithful. By the base change theorem of crystalline cohomology, 
$R^2f_{\crys*}\cO_{\overline{X}/W_n}$
is naturally isomorphic to the pull-back of 
$R^2f_{\crys*}\cO_{X/W_n}$. This proves the claim.
 \end{proof}
\begin{cor}\label{beta5injective}
The natural map
$$H^0_{\syn}(S,R^2f_{\syn*}\cO_{X,n}^{\crys})\lra H^0_{\syn}(\bar{S},R^2f_{\syn*}\cO_{\overline{X},n}^{\crys})
$$
is injective. As a result, the natural map
$$H^0_{\syn}(S,R^2f_{\syn*}\cI_n)^{1=p^{-1}F}\lra H^0_{\syn}(\bar{S},R^2f_{\syn*}\cI_n)^{1=p^{-1}F}
$$
is al. injective uniformly in $n$.
\end{cor}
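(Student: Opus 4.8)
The plan is to reduce both assertions to Lemma~\ref{keyinjective}, the crystalline injectivity already proved for curves, via the identification of $R^2f_{\syn*}\cO^{\crys}_n$ with the genuine crystal $R^2f_{\crys*}\cO_{X/W_n}$ that is available when $\dim X=1$, together with the Remark following Proposition~\ref{Bauerexact}.

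For the first map, since $\dim X=1$ Lemma~\ref{lemmaflatnessofhigherimage} gives that $R^2f_{\CRYS*}\cO_{X/W_n}$ is a coherent crystal, so (as noted just above) the edge map $R^2f_{\syn*}\cO^{\crys}_n\to v_{S*}R^2f_{\crys*}\cO_{X/W_n}$ is an isomorphism, and likewise after base change to $\bar S$. Taking global sections over the small syntomic site, and using that the cohomology of a coherent crystal is insensitive to the topology, one obtains a base-change-compatible identification $H^0_{\syn}(S,R^2f_{\syn*}\cO^{\crys}_n)\cong H^0_{\crys}(S,R^2f_{\crys*}\cO_{X/W_n})$, and the same over $\bar S$; under these the first map of the corollary becomes the map of Lemma~\ref{keyinjective}, hence is injective.

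For the second map, I would pass from $\cI_n$ to $\cO^{\crys}_n$ along $j_n\circ i_n$. By the Remark after Proposition~\ref{Bauerexact} this morphism has kernel $\ker(i_n)$ and cokernel $\bG_a$, both killed by $p$ since everything is in characteristic $p$; factoring $j_n\circ i_n$ as $\cI_n\xrightarrow{i_n}\cJ_n\xrightarrow{j_n}\cO^{\crys}_n$ and applying $Rf_{\syn*}$, the two associated long exact sequences show that $\ker\bigl(R^2f_{\syn*}(j_n\circ i_n)\bigr)$ is killed by $p^2$, uniformly in $n$, hence so is the kernel of the induced map on $H^0_{\syn}(S,-)$. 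Then I would chase the commutative square
\[\begin{CD}
H^0_{\syn}(S,R^2f_{\syn*}\cI_n)^{1=p^{-1}F} @>>> H^0_{\syn}(\bar S,R^2f_{\syn*}\cI_n)^{1=p^{-1}F}\\
@V{j_n\circ i_n}VV @VV{j_n\circ i_n}V\\
H^0_{\syn}(S,R^2f_{\syn*}\cO^{\crys}_n) @>>> H^0_{\syn}(\bar S,R^2f_{\syn*}\cO^{\crys}_n)
\end{CD}\]
whose bottom row is injective by the first part: an element killed by the top row maps into the kernel of the bottom row, hence to $0$, so it lies in the kernel of the left vertical arrow and is killed by $p^2$. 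This yields al. injectivity of the top map with a bound independent of $n$, which is exactly the second claim.

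I do not anticipate a genuine obstacle, as the substantive inputs — coherence of the $R^if_{\CRYS*}\cO_{X/W_n}$ for curves and the injectivity of Lemma~\ref{keyinjective} — are already in hand; the points deserving care are the base-change compatibility of the comparison $H^0_{\syn}(S,R^2f_{\syn*}\cO^{\crys}_n)\cong H^0_{\crys}(S,R^2f_{\crys*}\cO_{X/W_n})$ and checking that the exponent bounding $\ker\bigl(R^2f_{\syn*}(j_n\circ i_n)\bigr)$ is genuinely uniform in $n$, both of which are routine.
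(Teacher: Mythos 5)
Your proposal is correct and follows essentially the same route as the paper: identify $H^0_{\syn}(S,R^2f_{\syn*}\cO^{\crys}_n)$ with $H^0_{\crys}(S,R^2f_{\crys*}\cO_{X/W_n})$ via the coherent-crystal statement (Lemma~\ref{lemmaflatnessofhigherimage}) and Proposition~\ref{propositionalmostcrystal}, deduce the first injectivity from Lemma~\ref{keyinjective}, and then obtain the second claim from the fact that $j_n\circ i_n\colon\cI_n\to\cO^{\crys}_n$ has kernel and cokernel uniformly $p$-torsion, your diagram chase being exactly what the paper leaves implicit. One small imprecision: the reason the edge map $R^2f_{\syn*}\cO^{\crys}_n\to v_{S*}R^2f_{\crys*}\cO_{X/W_n}$ is an honest isomorphism (not just al.) is the coherence of $R^if_{\CRYS*}\cO_{X/W_n}$ for $i=0,1$ (which kills the lower-degree $E_2$-terms of the Leray spectral sequence), not the coherence of $R^2$ itself; since Lemma~\ref{lemmaflatnessofhigherimage} gives all of $i=0,1,2$, the conclusion is unaffected.
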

\begin{proof}
By the proof of Proposition \ref{propositionalmostcrystal}, the natural map
$$
H^0_{\syn}(S,R^2f_{\syn*}\cO_{X,n}^{\crys}) \lra H^0(S,R^2f_{\crys*}\cO_{X/W_n})
$$
is actually an isomorphism since $R^if_{\CRYS*}\cO_{X/W_n}$ ($i=0,1$) is a coherent crystal over $\SYN(S)$. Then, the first claim follows from Lemma \ref{keyinjective}.
The second claim follows from the fact that the natural map
$$\cI_n \lra \cO_{n}^{\crys}$$
has a kernel and cokernel killed by $p$.
\end{proof}
 \begin{lem}\label{beta3iso}
  The natural map  $$H^0(\overline{S},R^2f_{\syn*}\mu_{p^n,X_{\overline{k}}})\lra H^0_{\syn}(\overline{S},R^2f_{syn*}\cI_n)^{1=p^{-1}F} $$
  is surjective and al, injective uniformly in $n$.
 \end{lem}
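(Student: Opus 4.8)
The plan is to push Bauer's sequence forward along $f$ and analyze it over the point $\overline S=\Spec\overline k$. Applying $Rf_{\syn*}$ to $0\to\mu_{p^n}\to\cI_n\xrightarrow{1-p^{-1}F}\cO_n^{\crys}\to 0$ (Proposition \ref{Bauerexact}) and writing $g_i$ for the induced map $R^if_{\syn*}\cI_n\to R^if_{\syn*}\cO_n^{\crys}$ gives an exact sequence of syntomic sheaves on $\overline S$
$$0\to\coker(g_1)\to R^2f_{\syn*}\mu_{p^n}\to\ker(g_2)\to 0.$$
As $H^0_{\syn}$ is left exact, $H^0_{\syn}(\overline S,\ker g_2)=H^0_{\syn}(\overline S,R^2f_{\syn*}\cI_n)^{1=p^{-1}F}$, so the map in the statement is $H^0_{\syn}(\overline S,R^2f_{\syn*}\mu_{p^n})\to H^0_{\syn}(\overline S,\ker g_2)$, with kernel $H^0_{\syn}(\overline S,\coker g_1)$ and cokernel embedding into $H^1_{\syn}(\overline S,\coker g_1)$. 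Hence it suffices to prove: \textbf{(a)} $H^1_{\syn}(\overline S,\coker g_1)=0$, giving surjectivity; and \textbf{(b)} $H^0_{\syn}(\overline S,\coker g_1)$ is al. zero uniformly in $n$, giving al. injectivity.

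For \textbf{(b)} I would use the crystalline description. By Lemma \ref{lemmaflatnessofhigherimage} and Proposition \ref{propositionalmostcrystal}, in the curve case $R^1f_{\syn*}\cO_n^{\crys}$ is the pushforward of the coherent $F$-crystal $\cE^1$, whose value over the point $\overline S$ is $H^1_{\crys}(X_{\overline k}/W_n(\overline k))=H^1_{\crys}(X_{\overline k}/W(\overline k))/p^n$; moreover, by the Remark after Proposition \ref{Bauerexact}, $\cI_n\to\cO_n^{\crys}$ is an al. isomorphism with a bound independent of $n$ under which $g_1$ corresponds to $p-F$. Since cokernels of additive operators $\phi$ with $\phi(pM)\subseteq pM$ commute with reduction mod $p^n$, it follows that $H^0_{\syn}(\overline S,\coker g_1)$ is, up to a uniformly bounded al. isomorphism, $\coker\!\big(p-F\mid H^1_{\crys}(X_{\overline k}/W(\overline k))\big)/p^n$. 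A slope decomposition of $H^1_{\crys}(X_{\overline k}/W(\overline k))$ then shows this cokernel has bounded exponent: on the unit-root part $p-F$ reduces mod $p$ to $-\overline F$, a bijection, so $p-F$ is invertible; on the complement all slopes are positive, and Lang's theorem (surjectivity of $1-\sigma$ on $W(\overline k)$) together with the isoclinic normal forms shows $p-F$ has finite cokernel killed by a fixed power of $p$. This proves (b).

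For \textbf{(a)} I would \emph{not} pass through the al. replacement $p-F$, but argue by dévissage directly on $\cI_n$. Over $\overline S$, $R^1f_{\syn*}\cO_n^{\crys}=v_{S*}\cE^1$ is a pushforward of a coherent crystal, and $R^1f_{\syn*}\cI_n$ is, through the defining sequences $0\to\cJ_m\to\cO_m^{\crys}\to\bG_a\to 0$ and $\cI_n=\cJ_{n+1}/\cJ_1$ and the fact that $X_{\overline k}$ is a proper curve, built from quasi-coherent $\cO_{\overline S}$-modules (the $R^jf_{\syn*}\bG_a=\widetilde{H^j(X_{\overline k},\cO)}$, vanishing for $j\ge 2$) and pushforwards of coherent crystals (the $R^jf_{\syn*}\cO_m^{\crys}=v_{S*}\cE^j_m$, vanishing for $j\ge 3$). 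Both kinds of sheaf have vanishing higher syntomic cohomology over the algebraically closed point $\overline S$: quasi-coherent sheaves because $\overline S$ is affine, and pushforwards of coherent crystals because, by \cite[Prop. 1.17]{Bauer1992}, their syntomic cohomology computes the crystalline cohomology of $\overline S$, concentrated in degree $0$. Hence every subquotient and extension built from these — in particular $\operatorname{im}(g_1)$, a quotient of $R^1f_{\syn*}\cI_n$, and $\coker g_1=v_{S*}\cE^1/\operatorname{im}(g_1)$ — has vanishing higher syntomic cohomology over $\overline S$; so $H^1_{\syn}(\overline S,\coker g_1)=0$. Together with Lemma \ref{lemmainjofflat}, which identifies the source with $\ZZ/p^n$, this finishes the proof.

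The main obstacle is step (a). Because $\coker g_1$ is not itself a pushforward of a coherent crystal, the degree-$0$ concentration of the crystalline cohomology of the point $\overline S$ cannot be applied directly; one must instead carry out the dévissage of $\cI_n$ (and of the low-degree direct images occurring in it) into the two controlled classes — quasi-coherent sheaves and pushforwards of coherent crystals — and verify that every relevant subquotient stays in these classes. It is essential here to work with the \emph{actual} sheaves $\cI_n,\cO_n^{\crys}$ rather than their al. replacement $p-F$, since only this yields honest (not merely al.) surjectivity. A secondary technical point is making the al.-isomorphism bounds in step (b) uniform in $n$.
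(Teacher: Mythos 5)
Your reduction — surjectivity via $H^1_{\syn}(\overline S,\coker g_1)=0$ and al.\ injectivity via $H^0_{\syn}(\overline S,\coker g_1)$ being al.\ zero — is a correct setup, but the execution of step (a) has a genuine gap, and the paper's own route is much shorter and avoids it.

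For (a), the dévissage reasoning does not go through as stated. From the vanishing of higher syntomic cohomology for quasi-coherent $\cO_{\overline S}$-modules and for pushforwards of coherent crystals, one \emph{cannot} conclude the same for arbitrary subquotients and extensions built from these. For a quotient $0\to K\to B\to C\to 0$ with $H^{\geq 1}(B)=0$, the long exact sequence gives $H^1(C)\hookrightarrow H^2(K)$, so you still need to control $K$; for a subobject you run into the symmetric problem. In particular you assert that $\operatorname{im}(g_1)$, a quotient of $R^1f_{\syn*}\cI_n$, has vanishing $H^{\geq 1}$, but this would require knowing the same for $\ker(g_1)$, which is not among your controlled classes. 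The paper sidesteps all of this with a single observation: since $\overline S=\Spec\overline k$ with $\overline k$ algebraically closed, \emph{every} syntomic cover of $\overline S$ admits a section, so $H^i_{\syn}(\overline S,\cF)=0$ for \emph{every} sheaf $\cF$ and every $i>0$. The Leray spectral sequence for $f$ then degenerates, giving $H^i_{\syn}(\overline X,\cG)\cong H^0_{\syn}(\overline S,R^if_{\syn*}\cG)$ for all sheaves $\cG$, and surjectivity in the lemma is immediate from the long exact sequence of Bauer's short exact sequence on $\overline X$. Your ad hoc vanishing arguments for each class of sheaf are subsumed by this, and once you notice it the whole dévissage becomes unnecessary.

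For (b), your route via the slope decomposition of $H^1_{\crys}(X_{\overline k}/W)$ is genuinely different from the paper's. It is essentially correct, with one inaccuracy: on the slope-$1$ piece, $p-F=p(1-\phi)$ with $\phi$ a unit, and $1-\phi$ is surjective by Lang, so $\coker(p-F)$ there is $\overline k^{\,g}$ — \emph{not finite}, but killed by $p$, which is all that is needed. (You should also be explicit that the uniform al.\ comparison between $\coker(g_1)$ and $\coker(p-F\mid H^1_{\crys}/p^n)$ is justified by the commutative square in the remark after Proposition \ref{Bauerexact}, whose vertical maps have kernel and cokernel killed by $p^2$, hence the induced maps on $R^1f_*$ have kernel and cokernel killed by a fixed power of $p$.) The paper's own argument for al.\ injectivity is structurally different and slicker: it computes the source as exactly $\ZZ/p^n$ (Lemma \ref{lemmainjofflat}), identifies the target with $H^2_{\crys}(\overline X/W_n)^{F=p}\cong(\overline{W_n})^{F=\mathrm{id}}\cong\ZZ/p^n$ up to a uniform al.\ isomorphism via the crystalline trace map, and then deduces that a surjection from $\ZZ/p^n$ onto a group al.\ isomorphic to $\ZZ/p^n$ (with uniform bound) must have kernel killed by a fixed power of $p$. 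This avoids any slope analysis of $H^1_{\crys}$ entirely.
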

 \begin{proof}
Since any syntomic cover of $\overline{S}$ has a section, so $H^i_{\syn}(\overline{S}, \cF)=0$ for any sheaf $\cF$ on $\syn(\overline{S})$ and $i>0$. By the Leray spectral sequence, for any sheaf $\cG$ on $\syn(\overline{X})$, there is a canonical isomorphism
$$H^i_{\syn}(\overline{X},\cG)\cong H^0_{\syn}(\overline{S},R^if_{\syn*}\cG).$$
Take cohomology on $\overline{X}$ for the exact sequence in Proposition \ref{Bauerexact}. This shows the surjectivity in the claim. The natural map
$$H^0_{\syn}(\overline{S},R^2f_{\syn*}\cI_n)^{1=p^{-1}F} \lra H^0(\overline{S},R^2f_{\crys*}\cO_{\overline{X}/W_n})^{F=p}=H^2_{\crys}(\overline{X}/W_n)^{F=p}$$
is an al. isomorphism. Since $\Spec(\overline{W})$ is a final object in $\CRYS(\overline{S}/W)$ \cite[II. 2.2]{Kat},  $H^2_{\crys}(\overline{X}/W_n)=H^2_{\crys}(\overline{X}/\overline{W_n})$. By the trace map of crystalline cohomology,
$$H^2_{\crys}(\overline{X}/\overline{W_n})^{F=p}=(\overline{W_n})^{F=id}=\bZ/p^n.
$$
Considering Lemma \ref{lemmainjofflat}, the surjectivity implies that the map is al. injecitive uniformly in $n$.
\end{proof}

\subsection{Albanese variety}
 Assume that $X$ has a $k$-rational point $a$.
 $a$ induces an Albanese map $a:X\to A:=Alb(X)$ be the map. Proposition \ref{propositionrelateAlbanese} and its proof implies $a^*$ induces an al. isomorphism uniformly in $n$
 $R^1f_{syn*}\cO_{X,n}^{crys}\cong R^1f_{syn*}\cO_{A,n}^{crys}$ and the result holds for $\cI_n$ as $\cI_n$ is al. isomorphic to $\cO_n^{crys}$ uniformly in $n$. All our construction are functorial, so $\coker(g_{1,A})\cong \coker(g_{1,X})$ is  al. uniformly in $n$.

 \subsection{General $X$}
 For general $X$, we use Grothendieck's trick: We choose a series of hyperplane sections to get $Y\subset X$ such that $Y$ is a smooth projective geometrically connected curve.  To see the al. vanishing of $\gamma_X$ uniformly in $n$, we have the following diagram 
 $$\begin{CD}
 	H^0(S,\ker(g_{2,X}))@>\gamma_X>> H^1(S,\coker(g_{1,X}))\\
 	@V\beta_1VV @VV\beta_2V\\
 	H^0(S,\ker(g_{2,Y}))@>\gamma_Y>> H^1(S,\coker(g_{1,Y})).
 \end{CD}$$
 We already know $\gamma_Y$ vanishes so it is enough to see $\beta_2$ is injective. For simplicity, we assume that $Y$ has a rational $k$-point. The universal property of Albanese variety induces a diagram 
 $$\begin{CD}
 	Y@>>> X\\
 	@VVV @VVV\\
 	A_Y@>>> A_X.
 \end{CD}$$
The natural map $A_Y\to A_X$ is surjective. This is equivalent to show that the natural map between the associated Picard varities has a finite kernel. Since $Y$ is obatined by taking hyperplanes section, 
$$
H^1_{et}(X_{k^s},\bQ_\ell(1)) \lra H^1_{et}(Y_{k^s},\bQ_\ell(1))
$$
is injective. So the  map between the Tate-modules of the associated Picard varieties is injective. By the Poincar\'e reducibility theorem, $A_Y\cong A_X\times B$ for another abelian variety $B$ up to an isogeny. Roughly speaking,
$\coker(g_{1,A_X})$ is almost a direct summand of the sheaf $\coker(g_{1,A_Y})$ uniformly in $n$. So the injectivity follows from the comparison between $X$ and $A_X$. This proves that  $\gamma_X$ is al. 0 uniformly in $n$.
The almost vanishing of $H^0(S,\coker(g_1))$ follows from a similar argument.

Now we need to remove the assumption of existence of a rational $k$-point. By taking a finite Galois extension $l$ of $k$, we can assume that the claim is true for $X_l/l$. Then the claim for $X$ follows from the Galois \'etale descent. This proves that the natural map
$$H^0(S,R^2f_{\syn*}\mu_{p^n})\to H^0(S, R^2f_{\syn*}\cI_n)^{1=p^{-1}F}$$
is an al. isomorphism uniformly in $n$. Composites with the natural map
$$H^0(S, R^2f_{\syn*}\cI_n)^{1=p^{-1}F} \lra H^0_{\cris}(S,R^2f_{\crys*}\cO_{X/W_n})^{F=p}.$$
We get an al. isomorphism uniformly in $n$
$$H^0(S,R^2f_{\syn*}\mu_{p^n})\to H^0_{\cris}(S,R^2f_{\crys*}\cO_{X/W_n})^{F=p}$$
By the Lemma below and invert $p$, we get an isomorphism
 $$(\varprojlim_n H_{\syn}^0(S,R^2f_{\syn*}\mu_{p^n}))\otimes_{\bZ_p} \bQ_p\stackrel{\cong} \longrightarrow H^0_{\crys}(S,R^2f_{\crys*}\cO_{X/W})^{F=p}[1/p].$$
This completes the proof of Proposition \ref{theoremtowardscrystalline}.
\begin{lem}\label{gotorational}
  The natural morphism 
  $$
  H^0_{\cris}(S,R^2f_{crys*}\cO_{X/W}) \lra \varprojlim_n H^0_{\cris}(S,R^2f_{\crys*}\cO_{X/W_n}) 
  $$
  is an al. isomorphism.
\end{lem}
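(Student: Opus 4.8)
The plan is to reduce the statement to a comparison between $H^i_{\crys}(X/W)$ and its completion, for which the key input is that crystalline cohomology over $W$ is a finitely generated $W$-module with uniformly bounded torsion. First I would pass from the crystalline site over $W$ (resp. $W_n$) to a Cohen ring setup: since the formation of $R^2f_{\crys*}\cO_{X/W_n}$ is (al.) a crystal, its value on the final-type object $D_n = \Spec C_n(k)$ is (al., uniformly in $n$) the module $H^2_{\crys}(X/D_n)$, and taking global sections over $\Crys(S/W_n)$ amounts (al., uniformly in $n$) to taking the $C_n$-module $H^2_{\crys}(X/D_n)$ — this is exactly the content of Proposition \ref{propositionalmostcrystal}, Lemma \ref{lemmacomputepushforward1} and Lemma \ref{lemmaflatnessofhigherimage}/Lemma \ref{takinglimit}. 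So the left-hand side is al. isomorphic to $H^0_{\crys}(S, R^2f_{\crys*}\cO_{X/W})$, which (again by the crystal description, now over $W$, using that $D = \Spec C(k)$ computes things after the analogous reduction) is al. isomorphic to $H^2_{\crys}(X/D)$, while the right-hand side is al. isomorphic to $\varprojlim_n H^2_{\crys}(X/D_n)$.

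Next I would invoke Berthelot–Ogus: by \cite[Theorem 7.24]{BerthelotOgus1978}, $R\Gamma_{\crys}(X/D)$ is a perfect complex of $D$-modules of finite Tor-amplitude and $R\Gamma_{\crys}(X/D_n) \simeq R\Gamma_{\crys}(X/D) \otimes^L_D D_n$; in particular, since $D$ is a complete DVR, each $H^i_{\crys}(X/D)$ is a finitely generated $D$-module whose torsion submodule is killed by a fixed power $p^c$ of $p$, and $H^i_{\crys}(X/D)\otimes_D D_n \to H^i_{\crys}(X/D_n)$ is an al. isomorphism uniformly in $n$ (this is \cite[Cor. 7.25]{BerthelotOgus1978}, already quoted in Lemma \ref{takinglimit}). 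Writing $M := H^2_{\crys}(X/D)$, a finitely generated $D$-module, it then suffices to show the natural map $M \to \varprojlim_n (M/p^n M)$ is an al. isomorphism. But $D$ is a complete DVR and $M$ is finitely generated, so this map is literally an isomorphism (it is the $p$-adic completion of an already $p$-adically complete module), hence a fortiori al.; and combining this with the uniformly bounded al. discrepancies introduced at each of the previous reduction steps gives that the original map is an al. isomorphism.

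The only mild subtlety — and the step I expect to require the most care — is bookkeeping the al.-isomorphisms \emph{uniformly in $n$} through the passage to the inverse limit: an al. isomorphism uniformly in $n$ (kernels and cokernels killed by a single $p^c$) does pass to the inverse limit up to an al. discrepancy, because $\varprojlim$ is left exact and the $\varprojlim^1$ of a system of groups killed by $p^c$ is itself killed by $p^c$; I would spell out this $\varprojlim^1$ estimate for the cokernel systems appearing in Proposition \ref{propositionalmostcrystal} and Lemma \ref{takinglimit}. Once that is in place the proof is a short chain of al. isomorphisms, and no new geometric input beyond \cite{BerthelotOgus1978} and the crystal description already established is needed.
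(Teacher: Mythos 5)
Your plan has a genuine gap at the key reduction step. You claim that for the crystal $\cF = R^2f_{\crys*}\cO_{X/W_n}$ (resp.\ $\cO_{X/W}$), "taking global sections over $\Crys(S/W_n)$ amounts (al., uniformly in $n$) to taking the $C_n$-module $H^2_{\crys}(X/D_n)$,'' and correspondingly that the left-hand side is al.\ isomorphic to $H^2_{\crys}(X/D)$. This is not correct when $k$ is imperfect (which is the case of interest here, since $k$ is a finitely generated field of positive transcendence degree over $\FF_p$). The object $D_n = \Spec C_n(k)$ is not a final object of $\Crys(S/W_n)$; it only covers the final object of the topos, so $H^0_{\crys}(S,\cF)$ is the \emph{equalizer} of the two maps $\cF(D_n) \rightrightarrows \cF(D_n\widehat{\times}D_n)$, i.e.\ the \emph{horizontal} sections of the value of $\cF$ on $D_n$, which is in general a much smaller module than $\cF(D_n) \cong H^2_{\crys}(X/D_n)$. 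None of the results you cite (Proposition \ref{propositionalmostcrystal}, Lemma \ref{lemmacomputepushforward1}, Lemma \ref{takinglimit}) assert that $H^0_{\crys}$ equals the value of the crystal on $D_n$ — they establish the (al.)~crystal property and compute that value, which are different things. In addition, Lemma \ref{lemmaflatnessofhigherimage} is proved only in the case $\dim X = 1$, so it cannot be invoked here for general $X$. Your final step — that a finitely generated module over the complete DVR $D$ equals its own $p$-adic completion — is correct, but it is being applied to the wrong group: the module value, not the actual $H^0_{\crys}$.

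The paper's proof avoids all of this. By Lemma \ref{takinglimit}, $i_n^*\cE^2_{X/S} \to R^2f_{\crys*}\cO_{X/W_n}$ is an al.\ isomorphism uniformly in $n$, where $\cE^2_{X/S}$ is a genuine coherent crystal on $S/W$; one therefore reduces to showing $H^0_{\crys}(S,\cE_{X/S}) \to \varprojlim_n H^0_{\crys}(S,i_n^*\cE_{X/S})$ is an isomorphism, and this is exactly what \cite[7.22.2]{BerthelotOgus1978} gives for coherent crystals. That result handles the passage of $H^0_{\crys}$ (horizontal sections, not module values) through the inverse limit — the very point your plan glosses over. If you wanted to argue by hand in the spirit of your plan, you would at minimum have to replace the module $H^2_{\crys}(X/D)$ by its horizontal submodule throughout, check that taking horizontal sections commutes with $\varprojlim_n$, and separately deal with the fact that the transition maps on $H^2_{\crys}(X/D_n)$ are only al.\ reductions mod $p^n$ — which is essentially a re-proof of the Berthelot--Ogus result and considerably less direct than citing it.
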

\begin{proof}
By Lemma \ref{takinglimit}, it suffices to prove that
$$
  H^0_{\cris}(S,\cE_{X/S}) \lra \varprojlim_n H^0_{\cris}(S,i_n^*\cE_{X/S}) 
$$
is an isomorphism. This follows from  \cite[7.22.2]{BerthelotOgus1978} since $\cE_{X/S}$ is a coherent crystal on $S/W$.
  \end{proof}

\section{Proof of Proposition \ref{crystallinegenerictorigid}}
\subsection{F-crystals}
 We still need to see 
 $$H^0_{\crys}(S,R^2f_{\crys*}\cO_{X/W})^{F=p}[1/p]\cong H^0_{\conv}(U,R^2f_*\cO_X)^{F=p}$$

Let $S$ be a Noetherian scheme of characteristic $p$. An $F$-crystal over $S$ is an $\cO_{S/\bZ_p}$-coherent crystal $\cE$ with a morphism $F:\cE^{(\sigma)}\to \cE$ such that the kernel and cokernel of $F$ are annihilated by a power of $p$, where $\cO_{S/\bZ_p}$ is the structure sheaf of the big crystalline site $S$ over $\bZ_p$. The Morphisms are morphisms of coherent crystals compatible with $F$. The category of $F$-isocrystals is the isogeny category of $F$-crystals. We denote the category of $F$-crystals via $\Fcris (S/W)$ and $F$-isocrystals via $\Fisoc(S/W)$. If $S$ is of finite type, by \cite[Theorem 2.4.2]{berthelot1996cohomologie}, our category of $F$-isocrystals can be naturally seen as a full subcategory of convergent $F$-isocrystals and if $S$ is smooth over $\bF_p$, it is an equivalence of categories up to Tate twists. \\
\begin{lem}\label{abeliancate}
If $S$ is the spectrum of a field of characteristic $p>0$ or an affine smooth variety over a perfect field $k$ of characteristic $p>0$. Then $\Fcris(S/W)$ is an abelian category and the functor from $\Fcris(S/W)$ to the category of coherent crystals on $(S/W)$ is exact. Moreover, if $S$ is an affine smooth variety over a perfect field $k$, the functor from $\Fcris(S/W)$ to the category of convergent $F$-isocrystals is exact.
\end{lem}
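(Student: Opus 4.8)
\emph{The plan} is to translate coherent crystals on $S/W$ into finite modules with connection over a single Noetherian ring — where being an abelian category is immediate — and then to carry the Frobenius structure through kernels and cokernels by a direct diagram chase with powers of $p$; the abelianness of the isogeny category and exactness into convergent $F$-isocrystals will then be formal.

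\emph{First} I would fix a PD-thickening of $S$, which is possible since $S$ is affine. If $S=\Spec k$, take $R=C(k)$, the Cohen ring: as $k$ is finitely generated over $\bF_p$ it has a finite $p$-basis, so $R$ is quasi-smooth over $W$ (recalled before Lemma~\ref{lemmacomputepushforward1}) and the PD-envelope of $\Spec k\hookrightarrow\Spec R$ is $\Spec R$ itself. If $S$ is affine and smooth over the perfect field $k$, take $R$ to be a $p$-adically complete, formally smooth lift of $\cO_S$ over $W$. In either case the crystal--connection dictionary (\cite[Thm.~6.6]{BerthelotOgus1978}) identifies coherent crystals on $S/W$ with finite $R$-modules carrying an integrable, topologically quasi-nilpotent $W$-connection. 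Since $R$ is Noetherian, such a category has kernels and cokernels computed on underlying modules — the induced connection is automatically integrable, and quasi-nilpotence passes to submodules and quotients — so it is abelian and the forgetful functor to $R$-modules is exact. On this model $\cE\mapsto\cE^{(\sigma)}$ is Frobenius pullback, $M\mapsto M\otimes_{R,\sigma}R$ for a lift $\sigma$ of Frobenius, and I would check $\sigma$ is flat: modulo $p$ it is the Frobenius of the regular $\bF_p$-algebra $\cO_S$, which is flat by Kunz's theorem, and one lifts by the local flatness criterion. Hence $(-)^{(\sigma)}$ is exact on coherent crystals.

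\emph{Next}, given $\phi\colon(\cE,F_\cE)\to(\cE',F_{\cE'})$ in $\Fcris(S/W)$, I would form $K=\ker\phi$ and $Q=\coker\phi$ in coherent crystals; exactness of $(-)^{(\sigma)}$ lets $F_\cE$ restrict to $F_K\colon K^{(\sigma)}\to K$ and descend to $F_Q\colon Q^{(\sigma)}\to Q$. A short chase with powers of $p$ — choosing $p^{a}$ killing all of $\ker F_\cE,\ \coker F_\cE,\ \ker F_{\cE'},\ \coker F_{\cE'}$, and using $p^{a}\cE\subseteq F_\cE(\cE^{(\sigma)})$, $p^a\cE'\subseteq F_{\cE'}(\cE'^{(\sigma)})$, together with $K^{(\sigma)}=\ker(\phi^{(\sigma)})$ — shows $\ker F_K,\coker F_K,\ker F_Q,\coker F_Q$ are all killed by $p^{2a}$, so $(K,F_K)$ and $(Q,F_Q)$ lie in $\Fcris(S/W)$. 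They are the categorical kernel and cokernel there, because the unique crystal-theoretic factorization of any test morphism is automatically $F$-equivariant (since $K\hookrightarrow\cE$ is monic, resp.\ $\cE'\twoheadrightarrow Q$ is epic, in crystals). Hence a morphism in $\Fcris(S/W)$ is monic, resp.\ epic, iff it is so on underlying crystals, and normality is inherited from the abelian category of coherent crystals. Thus $\Fcris(S/W)$ is abelian and, by construction, the forgetful functor $\Fcris(S/W)\to\{\text{coherent crystals on }S/W\}$ is exact.

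\emph{Finally}, for the convergent statement (with $S$ affine smooth over perfect $k$), I would note that the $F$-crystals killed by a power of $p$ form a Serre subcategory of $\Fcris(S/W)$ (stable under subobjects, quotients, extensions), so $\Fisoc(S/W)$ is the associated Serre quotient — abelian, with $\Fcris(S/W)\to\Fisoc(S/W)$ exact — and then compose with the functor to convergent $F$-isocrystals recalled above, which is an equivalence up to Tate twists by \cite[Thm.~2.4.2]{berthelot1996cohomologie}; since Tate twisting is exact, exactness of $\Fcris(S/W)\to\{\text{convergent }F\text{-isocrystals}\}$ follows. \emph{The main obstacle} I expect is the first step: setting up the crystal--connection equivalence over the correct base (in particular the Cohen ring in the imperfect-field case) and verifying flatness of the Frobenius lift, which is precisely what makes the power-of-$p$ chase in the second step valid; everything after that is formal category theory.
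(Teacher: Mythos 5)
Your proof is correct, and its overall architecture matches the paper's: translate coherent crystals on $S/W$ into finite modules with connection over a quasi-smooth lift $\cA$ (Cohen ring, resp.\ $p$-adic smooth lift), carry the $F$-structure through kernels and cokernels by a $p$-power chase, and then obtain the isocrystal statement by inverting $p$. The genuine difference is how you handle the kernel. The paper invokes only right-exactness of $(-)^{(\sigma)}$ (enough for cokernels) and, to put an $F$-structure on the kernel $\cF_1$, proves that $F_S^*\cF_1 \to \ker(F_S^*\cF_2\to F_S^*\cF_3)$ becomes an isomorphism after inverting $p$, by citing Morrow's result that $M_i[1/p]$ is finite projective over $\cA[1/p]$, splitting the sequence, and observing the remaining kernel is $p$-power torsion and finitely generated. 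You instead prove outright that the Frobenius lift $\sigma$ on $\cA$ is flat — reducing it mod $p$ to the Frobenius of a field or of a regular $\bF_p$-algebra (flat by Kunz), and lifting along $p$ by the local flatness criterion — so that $(-)^{(\sigma)}$ is \emph{exact}, $K^{(\sigma)}=\ker(\phi^{(\sigma)})$ on the nose, and the $F$-structure on the kernel is automatic. That is a real simplification and even strengthens the intermediate statement from ``al.\ isomorphism'' to honest isomorphism. Both arguments tacitly assume a finite $p$-basis on $k$ — needed to have a Frobenius lift on $C(k)$ at all — which is automatic in the paper's applications since $k$ is finitely generated; worth making explicit. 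For the convergent $F$-isocrystal exactness, the paper works concretely with $\cD$-modules over $\cA[1/p]$ and inverts $p$; your route through the Serre quotient $\Fcris(S/W)\to\Fisoc(S/W)$ followed by the full embedding (equivalence up to Tate twist) into convergent $F$-isocrystals is equally valid, though you should note that the identification of the isogeny category with the Serre quotient by $p$-power-torsion objects is being used implicitly.
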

\begin{proof}
By \cite[Prop. IV.1.7.6]{berthelot2006cohomologie}, the category of coherent crystals is an abelian category. It suffices to show that the kernel and cokernel inherit a non-degenerated $F$-structure.  Let $F_S$ denote the absolute Frobenus on $S$. The cokernel is clear since $F_S^*$ is a right exact functor. Let $0\to \cF_1\to \cF_2\to \cF_3\to 0$ be an exact sequence is an exact sequence in the category of crystals. Assume that the third arrow is an morhpism of $F$-crystals. The pull-back gives an exact sequence of crystals
$$
F_S^*\cF_1\to F_S^*\cF_2\to F_S^*\cF_3\to 0
$$
Next, we show that the surjective morphism
$$
F_S^*\cF_1\to \ker(F_S^*\cF_2\to F_S^*\cF_3)
$$
is al. injective. Write $S=\Spec(A)$. Let $\cA$ be an p-adic comolete noetherian algebra such that $A=\cA/p$ (cf. \cite[Lem. A.3 and Cor. A.5]{Mor} for details). Let $M_i$ ($i=1,2,3$) denote the corresponding $\cA$-modules of $\cF_i$. By \cite[Cor. A.5]{Mor}, each $M_i[1/p]$ is a finite projective $\cA$-module. So the exact sequence 
$$
0\to M_1[1/p]\to M_2[1/p]\to M_3[1/p]\to 0
$$
split as $A[1/p]$-modules. Let $\sigma:\cA \lra \cA$ be a lifting of the absolute Frobenius $F_S$. So the sequence 
$$
0\to (M_1\otimes_{\cA,\sigma}\cA)[1/p]\to (M_2\otimes_{\cA,\sigma}\cA)[1/p]\to (M_3\otimes_{\cA,\sigma}\cA)[1/p]\to 0
$$
is exact. So 
$$
F_S^*\cF_1\to \ker(F_S^*\cF_2\to F_S^*\cF_3)
$$
is an isomorphism after tensor $\bQ_p$. Its kernel is killed by a power of $p$ since the $\cA$-module of the kernel is finitely generated. Then, by digaram chasing, the induced morphism 
$$F_S^*\cF_1\to \cF_1$$
is an isogeny. It is easy to check by diagram chasings that $\cF_1$ is the $\ker(F_S^*\cF_2\to F_S^*\cF_3)$ in $\Fcris(S/W)$. This proves the first claim. For the second claim, by definition \cite[Def. 2.1]{kedlaya2016notes}, the category of convergent $F$-isocrystals is equivalent to the category of coherent $\cD$-module over $\cA[1/p]$ with a semi-linear $\sigma$-action. The claim follows from the fact that an exact sequence of $\cA$-modules is an exact sequence of $\cA[1/p]$-modules after tensor $\bQ_p$.
\end{proof}

\begin{lem}\label{toFcrystal}
Let $f:\cX \to U$ be a smooth projective morphism with geometrically connected fibers. Assume that $U$ is a smooth affine integral variety over $\bF_p$. Then, there is an open dense subset $V\subseteq U$, a $F$-crystal $\cF \in \Fcris(V/W)$ and a $F$-morphism $\cF \to R^2f_{\crys*}\cO_{\cX/W}|_V$ whose kernel and cokernel are killed by a power of $p$. Moreover, the induced morphism
$$
H^0_{\cris}(V,\cF)^{F=p}[p^{-1}]\cong H^0_{\cris}(U,R^2f_{\crys*}\cO_{\cX/W})^{F=p}[p^{-1}]
$$
is an isomorphism.
\end{lem}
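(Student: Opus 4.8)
The plan is to realize $R^2f_{\crys*}\cO_{\cX/W}$, after restricting to a sufficiently small dense open $V\subseteq U$ and up to isogeny, as the $F$-crystal attached to the relative crystalline cohomology computed on a fixed $p$-adic lift of the base — the relative analogue of the construction used in the proofs of Proposition \ref{propositionalmostcrystal} and Lemma \ref{takinglimit}. Write $U=\Spec A$. Since $U$ is smooth affine over $\bF_p$, choose as in \cite[Lem. A.3, Cor. A.5]{Mor} a $p$-adically complete, formally smooth $\bZ_p$-algebra $\cA$ with $\cA/p\cA=A$ together with a lift $\sigma\colon\cA\to\cA$ of the absolute Frobenius, and set $M:=H^2_{\crys}(\cX/\cA)$, the relative crystalline cohomology over the PD-base $(\Spf\cA,(p),\gamma)$. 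By the finiteness theorem for crystalline cohomology of smooth proper morphisms \cite[Thm. 7.24]{BerthelotOgus1978}, $M$ is a finitely generated $\cA$-module carrying the Gauss--Manin connection $\nabla$ and a $\sigma$-semilinear Frobenius $F$.

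The first and main step is to produce the $F$-crystal $\cF$. Since $M$ is Noetherian, its $p$-power torsion $M[p^\infty]$ is killed by a fixed $p^N$; it is stable under $\nabla$ (which is $\bZ_p$-linear) and under $F$, so $\cF^0:=M/M[p^\infty]$ inherits $\nabla$ and $F$ and is $p$-torsion free. Applying Grothendieck generic freeness to the $A$-module $\cF^0/p\cF^0$ — inverting only some $g\in A$, never $p$ — we may, after replacing $A$ by $A[1/g]$, $\cA$ by its $p$-completion $\cA\{1/g\}$, and $U$ by $V:=\Spec A[1/g]$, assume $\cF^0/p\cF^0$ is free over $A[1/g]$; as $\cF^0$ is $p$-adically complete and $p$-torsion free, it is then finite free over $\cA\{1/g\}$. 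Shrinking $V$ further so that $F$ becomes an isomorphism on $M[1/p]$ over $V$ (possible because at the closed points of $V$, which have perfect residue fields, the Frobenius on $H^2_{\crys}$ is an isogeny), and using that the Gauss--Manin connection is topologically quasi-nilpotent \cite{BerthelotOgus1978}, the triple $(\cF^0,\nabla,F)$ defines an object $\cF\in\Fcris(V/W)$. Exactly as in \cite[Lem. A.2, A.3]{Mor} — the only obstruction being torsion in $H^2_{\crys}(\cX/D_n)$, which is controlled via \cite[Thm. 7.24, Cor. 7.25]{BerthelotOgus1978} — this $\cF$ comes equipped with a morphism $\cF\to R^2f_{\crys*}\cO_{\cX/W}|_V$ of $\cO_{V/W}$-modules-with-Frobenius whose kernel and cokernel are killed by a fixed power of $p$.

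For the cohomological identification I would argue in two steps. First, since $\cF\to R^2f_{\crys*}\cO_{\cX/W}|_V$ has kernel and cokernel killed by a fixed power of $p$, the induced maps on $H^i_{\crys}(V,-)$ have kernel and cokernel killed by a fixed power of $p$; hence, after inverting $p$ and passing to the $F=p$ parts (which commutes with $[1/p]$ here, as these are finite-dimensional $\bQ_p$-spaces), $H^0_{\crys}(V,\cF)^{F=p}[1/p]\cong H^0_{\crys}(V,R^2f_{\crys*}\cO_{\cX/W}|_V)^{F=p}[1/p]$. Second, restriction along the dense open immersion $V\hookrightarrow U$ induces an isomorphism on $H^0_{\crys}(-,R^2f_{\crys*}\cO_{\cX/W})^{F=p}[1/p]$: this follows from the full faithfulness of restriction of convergent $F$-isocrystals to a dense open subscheme \cite{berthelot1996cohomologie}, applied to the convergent $F$-isocrystal $R^2f_{\crys*}\cO_{\cX/W}\otimes\bQ$ on the integral variety $U$, together with the identification of $H^0_{\crys}(-)^{F=p}[1/p]$ with the $F=p$-invariant global sections of the associated isocrystal. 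Composing the two isomorphisms gives the claim.

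I expect the construction of $\cF$ to be the crux: one must control the $p$-power torsion of the relative crystalline cohomology $H^2_{\crys}(\cX/\cA)$ well enough that, generically on $U$, it becomes a finite locally free module with connection and Frobenius — an honest $F$-crystal — while simultaneously keeping the comparison with $R^2f_{\crys*}\cO_{\cX/W}$ an al-isomorphism and uniform in the PD-thickenings. This is precisely where Morrow's appendix lemmas and the finiteness and base-change results of Berthelot--Ogus are indispensable; everything else is formal manipulation in the isogeny category and the standard behaviour of $F$-isocrystals under restriction to a dense open.
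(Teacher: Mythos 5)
Your proposal takes a genuinely different route for the construction of $\cF$, and a slightly reorganized one for the cohomological identification. The paper does not build $\cF$ directly from a lift; instead it takes Morrow's coherent crystal $\cE_{\cX/U}$ (which a priori has no Frobenius structure), transports a Frobenius $b=\beta\,a\,F_U^*(\alpha)$ onto it via the isogeny $\alpha$, observes that this forces $(\cE,b)$ to be isogenous to the \emph{twist} $R^2f_{\crys*}\cO_{\cX/W}(-n)$ with all slopes $\geq n$, and only then invokes Krishnamoorthy--P\'al (Lem. 5.8) to realize the untwisted $F$-isocrystal $\cE(n)$ as an honest $F$-crystal $\cF$ on a dense open $V$. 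Your construction bypasses both the transport-of-Frobenius twist and Krishnamoorthy--P\'al by directly taking $M=H^2_{\crys}(\cX/\cA)$ modulo its $p$-power torsion and making it free by generic localization; this is cleaner in that $\cF$ carries the ``true'' Frobenius from the start, so no $(-n)$-bookkeeping is needed later. The cost is that you must directly verify the $F$-crystal axioms, and that is where your write-up has gaps: the assertion that ``$F$ becomes an isomorphism on $M[1/p]$ because it is an isogeny at closed points'' does not immediately give that $F\colon\cF^{0,(\sigma)}\to\cF^0$ has $p$-power kernel and cokernel over $\cA\{1/g\}$ (closed points see the special fiber, not the generic fiber of $\Spf\cA\{1/g\}$); one should instead appeal to Poincar\'e duality for the smooth projective $f$, which gives a $V$ with $FV=VF=p^d$, or to the Frobenius isogeny statement underlying \cite[Prop. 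A.7, A.8]{Mor}. Likewise ``the Gauss--Manin connection is topologically quasi-nilpotent'' is asserted rather than checked for the quotient $\cF^0=M/M[p^\infty]$, though this is standard since quasi-nilpotence passes to quotients. For the second part, your two-step argument (isogeny on $V$, then full faithfulness of restriction to a dense open) is essentially the same as the paper's, just without the twist-tracking; but the full-faithfulness citation is wrong — it is Kedlaya \cite[Thm. 5.3]{kedlaya2016notes} (via the identification with convergent $F$-isocrystals from \cite[Thm. 2.4.2]{berthelot1996cohomologie}), not a theorem of \cite{berthelot1996cohomologie} itself — and you should make explicit that $R^2f_{\crys*}\cO_{\cX/W}\otimes\bQ$ is an object of $\Fisoc(U/W)$, i.e., lies in the isogeny category of coherent $F$-crystals on all of $U$ (the paper secures this via $\cE_{\cX/U}$ before restricting), so that the $\Hom_{\Fisoc}$-identification of $H^0_{\crys}(-)^{F=p}[1/p]$ makes sense over $U$ and not just over $V$.
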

\begin{proof}
By \cite[Lem. A.2]{Mor}, there is a coherent crystal $\cE_{\cX/U}$ and a morphism of $\cO_{U/W}$-module
$\alpha:\cE_{\cX/U} \to R^2f_{\crys*}\cO_{\cX/W} $ whose kernel and cokernel are killed by a power of $p$. So, there is a morphism of $\cO_{U/W}$-module
$$\beta: R^2f_{\crys*}\cO_{\cX/W} \to \cE_{\cX/U}$$
such that $\alpha\beta =\beta\alpha=p^n$. Let $a: F^*_{U}R^2f_{\crys*}\cO_{\cX/W} \to R^2f_{\crys*}\cO_{\cX/W} $ be the morphism defined in \cite[Prop. A.7 (2)]{Mor}. Define $b:=\beta a F^*_U(\alpha)$. 
$$\alpha b=\alpha\beta aF^*_U(\alpha)=p^naF^*_U(\alpha)=p^n aF^*_U(\alpha)$$
Let $R^2f_{\crys*}\cO_{\cX/W}(-n)$ denote $p^n a:F^*_{U}R^2f_{crys*}\cO_{\cX/W} \to R^2f_{\crys*}\cO_{\cX/W}$. $(\cE, b)$ is a $F$-crystal isogenous to $R^2f_{\crys*}\cO_{\cX/W}(-n)$. By \cite[Prop. A.8]{Mor}, all slopes of $(\cE, b)$ are $\geq n$. View $\cE(n)$ as an object in the category of convergent $F$-isocrystals. By \cite[Lem. 5.8]{KrPa}, there is an open dense open subset $V$ of $U$ such that $\cE(n)|_V$ is isomorphic to a $F$-crystal $(\cF, c)$ in the category of convergent $F$-isocrystals on $V$. So $\cE|_V$ is isomorphic to $\cF(-n)$ in the category of convergent $F$-isocrystals on $V$. Hence, there is an $F$-isogeny $\gamma: \cF(-n) \to \cE|_V$ in $\Fcris(V/W)$ ( It is also an isogeny in the category of $\cO_{V/W}$-modules) . By definition, $\cF(-n)=(\cF, p^nc)$ and $\gamma:\cF \to \cE|_V$ is a morphism of crystals on $V$. The composition $\eta:=\alpha_V \gamma$ is a $F$-morphism $\cF(-n)\to R^2f_{\crys*}\cO_{\cX/W}(-n)|_V$, where $\alpha_V:=\alpha|_V$. Thus, $(p^na|_V)F_V^*(\eta)=\eta (p^nc)$. This is same as $a|_V F_V^*(p^n\eta)=(p^n\eta)c$. Thus, $p^n\eta$ is a $F$-morphism $\cF\to R^2f_{\crys*}\cO_{\cX/W}|_V$ and is also an isogeny in the category of $\cO_{V/W}$-modules since both of $\alpha_V$ and $\gamma$ are isogenies in the category of $\cO_{V/W}$-modules. This proves the first claim.

Next, we prove the second claim. By definition,
\begin{align*}
    H^0_{\cris}(U,R^2f_{\crys*}\cO_{\cX/W})^{F=p}[p^{-1}]&=H^0_{\cris}(U,R^2f_{\crys*}\cO_{\cX/W}(-n))^{F=p^{n+1}}[p^{-1}]\\
    =H^0_{\cris}(U,\cE)^{F=p^{n+1}}[p^{-1}]&=\Hom_{\Fisoc}(\cO_{U/W}(-(n+1)),\cE).
\end{align*}
By \cite[Thm. 5.3]{kedlaya2016notes}, 
\begin{align*}
    &\Hom_{\Fisoc}(\cO_{U/W}(-(n+1)),\cE)\cong \Hom_{\Fisoc}(\cO_{V/W}(-(n+1)),\cE|_V)\\
    &\cong  \Hom_{\Fisoc}(\cO_{V/W}(-(n+1)),\cF(-n))
    \cong  \Hom_{\Fisoc}(\cO_{V/W}(-1),\cF)\\
    &\cong   H^0_{\cris}(V, \cF)^{F=p}[p^{-1}].
\end{align*}
\end{proof}

\begin{prop}\label{thelastprop}
Let $f:\cX \to U$ as in the previous lemma. Write $g: S=\Spec(K) \to U$ for the generic point of $U$ and $X$ for the generic fiber.
Then there is a canonical isomorphism induced by $g$
$$H^0_{\cris}(U,R^2f_{\crys*}\cO_{\cX/W})^{F=p}[p^{-1}] \cong
H^0_{\cris}(S,R^2f_{\crys*}\cO_{X/W})^{F=p}[1/p].
$$
\end{prop}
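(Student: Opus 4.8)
The plan is to reduce the proposition to a full-faithfulness property of convergent $F$-isocrystals under restriction to the generic point, using Lemma \ref{toFcrystal} and Lemma \ref{abeliancate}. First I would observe that $g\colon S=\Spec(K)\to U$ factors through every nonempty open $V\subseteq U$. Applying Lemma \ref{toFcrystal}, I may pass to such a $V$ and assume given an $F$-crystal $\cF\in\Fcris(V/W)$ with an $F$-morphism $\cF\to R^2f_{\crys*}\cO_{\cX/W}|_V$ whose kernel and cokernel are killed by a fixed power of $p$, and with $H^0_{\crys}(U,R^2f_{\crys*}\cO_{\cX/W})^{F=p}[1/p]\cong H^0_{\crys}(V,\cF)^{F=p}[1/p]$. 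Since the left-hand side of the proposition depends only on $U$ and the generic fiber only on the function field $K$, there is no loss in replacing $U$ by $V$ and assuming $\cF$ lives on $U$ with an isogeny $\cF\to R^2f_{\crys*}\cO_{\cX/W}$ compatible with $F$. Pulling this back along $g$ and invoking crystalline base change along the localization $S\to U$ — which I would verify modulo $p^n$ and then pass to the inverse limit, the crystal property of $\cF$ ensuring that $g^*\cF$ is the expected object — I obtain an isogeny $g^*\cF\to R^2f_{\crys*}\cO_{X/W}$ over $S$, hence $H^0_{\crys}(S,R^2f_{\crys*}\cO_{X/W})^{F=p}[1/p]\cong H^0_{\crys}(S,g^*\cF)^{F=p}[1/p]$.

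Next, by Lemma \ref{abeliancate} the categories $\Fcris(U/W)$ and $\Fcris(S/W)$ are abelian, so after inverting $p$ the two eigenspaces become morphism groups in the corresponding isogeny categories:
$$H^0_{\crys}(U,\cF)^{F=p}[1/p]\cong\Hom_{\Fisoc(U/W)}(\cO_{U/W}(-1),\cF),\qquad H^0_{\crys}(S,g^*\cF)^{F=p}[1/p]\cong\Hom_{\Fisoc(S/W)}(\cO_{S/W}(-1),g^*\cF).$$
Under these identifications the map induced by $g$ is the restriction of morphism groups, so it remains to prove that
$$\Hom_{\Fisoc(U/W)}(\cO_{U/W}(-1),\cF)\longrightarrow\Hom_{\Fisoc(S/W)}(\cO_{S/W}(-1),g^*\cF)$$
is bijective.

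For this I would view $\cF$ as a convergent $F$-isocrystal on $U$ and apply \cite[Thm. 5.3]{kedlaya2016notes}: for every nonempty open $V'\subseteq U$ the restriction map
$$\Hom_{\Fisoc(U/W)}(\cO_{U/W}(-1),\cF)\longrightarrow\Hom_{\Fisoc(V'/W)}(\cO_{V'/W}(-1),\cF|_{V'})$$
is an isomorphism. Since $S=\varprojlim_{V'}V'$ over the cofiltered family of nonempty opens of $U$, every convergent $F$-isocrystal on $S$, and every morphism between two of them, descends to some $V'$; consequently
$$\Hom_{\Fisoc(S/W)}(\cO_{S/W}(-1),g^*\cF)\cong\varinjlim_{V'}\Hom_{\Fisoc(V'/W)}(\cO_{V'/W}(-1),\cF|_{V'}),$$
and by the previous display this colimit is canonically identified with $\Hom_{\Fisoc(U/W)}(\cO_{U/W}(-1),\cF)$. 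This yields the required isomorphism, which is induced by $g$ by construction.

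The main obstacle is this last step: I rely on the fact that restriction of \emph{convergent $F$-isocrystals} to a dense open — and thereby to the generic point — is fully faithful, which genuinely uses the $F$-structure (it fails for convergent isocrystals without one), together with the spreading-out statement that $F$-isocrystals over $\Spec(K)$ and their morphisms are already defined over a nonempty open of $U$. Everything else, including the crystalline base-change step along the non-finite-type morphism $g$, is comparatively routine once carried out at finite level modulo $p^n$ and then passed to the limit.
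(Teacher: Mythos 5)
Your reduction to an $F$-crystal $\cF$ via Lemma~\ref{toFcrystal}, the base-change step (this is Lemma~\ref{localization} in the paper), and the reformulation of both eigenspaces as Hom-groups in the isogeny categories all match the paper's argument. The gap is in the final step. You assert that ``every convergent $F$-isocrystal on $S=\Spec(K)$, and every morphism between two of them, descends to some nonempty open $V'\subseteq U$,'' and deduce the desired bijection as a filtered colimit together with Kedlaya's full-faithfulness for restriction to dense opens. But this spreading-out statement for morphisms is precisely what has to be proved, and it is \emph{not} routine: a horizontal, $F$-fixed vector in $g^*\cF$ lives in $M\otimes_\cA\cB[1/p]$, where $\cB$ is the $p$-adic completion of a localization of $\cA$ at the generic point of the special fiber. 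Elements of this completion (e.g.\ $\sum_n p^n t^{-c_n}$ with $c_n$ unbounded) need not lie in any finite localization $\cA[1/f]$, so there is no a priori spreading out. The colimit identity
$$\Hom_{\Fisoc(S/W)}(\cO_{S/W}(-1),g^*\cF)\cong\varinjlim_{V'}\Hom_{\Fisoc(V'/W)}(\cO_{V'/W}(-1),\cF|_{V'})$$
is therefore exactly the content that remains to be established, and you cite nothing that delivers it.

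The paper's Lemma~\ref{thelastlem} is devoted to closing this gap, and it does so by an argument that genuinely uses the $F$-structure at a finer level than you invoke. After shrinking $V$ so the Newton polygon is constant, it takes the slope filtration and splits off the piece of slope $>1$: for that piece, $\Hom_{\Fcris}(\cO(-1),-)$ is killed by a $p$-power both over $V$ and over $S$, so it contributes nothing after inverting $p$. The slope-$[0,1]$ part is (after further shrinking) isogenous to a Dieudonné crystal, and then the required full-faithfulness of restriction from $V$ to the generic point is supplied by de~Jong's theorems identifying Dieudonné crystals with $p$-divisible groups and proving that the restriction functor for $p$-divisible groups over a normal base to its generic point is fully faithful. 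In other words, the ``spreading-out for $F$-isocrystals over $\Spec(K)$'' that your proof treats as an ingredient is a Tate/de~Jong-type theorem, available in this generality only after the slope reduction to the Dieudonné-crystal range. You correctly flag this step as the main obstacle, but the proposal does not overcome it.
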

\begin{proof}
By Lemma \ref{localization} below, the $F$-morphism
$$g^*_{\crys}R^nf_{\crys*}\cO_{\cX/W}\longrightarrow R^nf_{\crys*}\cO_{X/W}$$
has a kernel and a cokernel killed by a $p$-power. Let $\cF$ as in the previous lemma, $\cF|_S \lra g^*_{\crys}R^nf_{\crys*}\cO_{\cX/W}$
is an $F$-morphism with a kernel and a cokernel killed by a $p$-power. This gives
$$H^0_{\cris}(S,\cF|_S)^{F=p}[p^{-1}]\cong H^0_{\cris}(S,R^2f_{\crys*}\cO_{X/W})^{F=p}[1/p].$$
By the second claim of the previous lemma, it remains to prove that the pull-back induces a canonical isomorphism
$$
H^0_{\cris}(V,\cF|_S)^{F=p}[p^{-1}]\cong H^0_{\cris}(S,\cF)^{F=p}[p^{-1}].
$$
This is proved in Lemma \ref{thelastlem} below.
\end{proof}

\begin{lem}\label{localization}
    Consider the following Catesian diagram 
    $$\begin{CD}
        X@>g'>>\cX\\
        @Vf'VV @VVfV\\
        S@>>g> U
    \end{CD}$$
    There exists a natural $F$-morphism $$g^*_{\crys}R^nf_{\crys*}\cO_{\cX/W}\longrightarrow R^nf'_{\crys*}\cO_{X/W}$$
    which is an isomorphism up to a finite $p$-exponent. 
\end{lem}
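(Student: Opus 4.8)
The plan is to produce the arrow from the general base-change formalism and then to control it by reducing, as elsewhere in this section, to crystalline cohomology over a $p$-adic lift. The Cartesian square induces a commutative square of crystalline topoi, hence a base-change natural transformation $g^{*}_{\crys}Rf_{\crys*}(-)\to Rf'_{\crys*}(g')^{*}_{\crys}(-)$; applying it to $\cO_{\cX/W}$, using $(g')^{*}_{\crys}\cO_{\cX/W}=\cO_{X/W}$, and taking $H^{n}$ yields the morphism
$$g^{*}_{\crys}R^{n}f_{\crys*}\cO_{\cX/W}\longrightarrow R^{n}f'_{\crys*}\cO_{X/W}.$$
Since the relative Frobenius on crystalline cohomology is functorial in the base, this map is compatible with the $F$-structures (as in \cite[App.\ A]{Mor}), so it is an $F$-morphism. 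Note that, in contrast to the rest of the section, everything here lives over $W$ rather than over a truncation $W_{m}$, so no uniformity in an auxiliary index is needed.

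To bound the kernel and cokernel I would pass to lifts. Choose a $p$-adically complete flat lift $\widetilde{U}=\Spf(\cA)$ of $U$ (possible since $U$ is affine and smooth) and a lift $\widetilde{S}=\Spf(\cA_{K})$ of $S=\Spec K$, together with a PD-morphism $\cA\to\cA_{K}$ lifting $\cO(U)\to K$, which exists by formal smoothness of $\cA$ over $\ZZ_{p}$. By the description of the crystalline cohomology of a smooth proper morphism via the de Rham complex of a lift (as in the proof of Lemma~\ref{lemmacomputepushforward1}), the two sheaves above are, on a thickening lying over $\widetilde{U}$ resp.\ over $\widetilde{S}$, given by $H^{n}_{\crys}(\cX/\cA)\otimes_{\cA}\cA_{K}$ and $H^{n}_{\crys}(X/\cA_{K})$, and $\phi$ identifies with the natural base-change map between them.

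Finally, this base-change map is an al.\ isomorphism. By \cite[Cor.\ 7.12]{BerthelotOgus1978} one has a quasi-isomorphism $R\Gamma_{\crys}(X/\cA_{K})\simeq R\Gamma_{\crys}(\cX/\cA)\otimes^{L}_{\cA}\cA_{K}$, and since $R\Gamma_{\crys}(\cX/\cA)$ is a perfect complex by \cite[Thm.\ 7.24]{BerthelotOgus1978}, the finitely generated modules $H^{j}_{\crys}(\cX/\cA)$ have $p$-power torsion killed by a fixed power of $p$; the discrepancy between $H^{n}_{\crys}(\cX/\cA)\otimes_{\cA}\cA_{K}$ and $H^{n}_{\crys}(X/\cA_{K})$ is then measured by higher $\mathrm{Tor}^{\cA}$ against $\cA_{K}$ and is annihilated by a power of $p$, exactly as in the proofs of Lemma~\ref{takinglimit} and Proposition~\ref{propositionalmostcrystal}. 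The main technical obstacle is that this is base change along the non-finite-type (though flat) morphism $\Spec K\to U$: one must construct the compatible lift $\cA\to\cA_{K}$ and, if one wants an honest isomorphism rather than only an al.\ one, check that $\cA\to\cA_{K}$ is flat, e.g.\ by realizing $\cA_{K}$ as the $p$-adic completion of a localization of $\cA$. The topos-theoretic construction of $\phi$ and its $F$-compatibility are formal.
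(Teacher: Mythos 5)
Your proposal is essentially correct and matches the paper's argument in its key points: produce the $F$-morphism from the base-change transformation and the functoriality of Frobenius, then pass to a $p$-adically complete flat lift $\cA$ of $U$ and use the perfectness of $R\Gamma_{\crys}(\cX/\cA)$ (with $H^j_{\crys}(\cX/\cA)[1/p]$ finite projective, so the $p$-power torsion is uniformly bounded) to kill the higher $\mathrm{Tor}$ terms measuring the failure of the edge map $g^*_{\crys}R^nf_{\crys*}\to H^n(Lg^*_{\crys}Rf_{\crys*})$ to be an isomorphism. Two small remarks on the places where your phrasing diverges from the paper. First, the ``technical obstacle'' you raise about flatness of $\cA\to\cA_K$ is not needed: the lemma only asserts an al.\ isomorphism, and the Tor bound $L^iv_T^*H^{n+i}_{\crys}(\cX/\cA)$ being killed by a fixed $p$-power (independent of $v_T$) is exactly what one gets from the bounded torsion of the fixed module $H^{n+i}_{\crys}(\cX/\cA)$ against an arbitrary $\cA$-algebra $B$; the paper never constructs a single preferred lift of $K$ and never needs flatness. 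Second, the statement is about a morphism of sheaves on $\Crys(S/W)$, so one must check the discrepancy is uniformly bounded on a cover of the site rather than on a single formal lift $\Spf(\cA_K)$; the paper does this by using quasi-smoothness of $\cA/p^n$ over $W_n$ to reduce to affine thickenings $(V,T=\Spec B,\delta)$ admitting some (not necessarily flat) $v_T:\cA\to B$, and the uniformity in $v_T$ comes for free from the fixed bound on the torsion of $H^{n+i}_{\crys}(\cX/\cA)$. With these two adjustments, your argument coincides with the paper's.
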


\begin{proof}
    By smooth base change, we have a map 
    $$g^*_{\crys}R^nf_{\crys*}\cO_{\cX/W}\to H^n(Lg^*_{\crys}Rf_{\crys*}\cO_{\cX/W})\stackrel{\cong}\longrightarrow H^n(Rf'_{\crys*}\cO_{X/W})=R^nf'_{\crys*}\cO_{X/W}.$$
    This map is $F$-equivariant because both $F$-structures are induced from the absolute and relative Frobenius whose formation are functorial (see \cite[Prop. A.7]{Mor}). To see the first map is an isomorphism up to a finite $p$-exponent, take a $p$-adically complete, noetherian flat lifting $R$ of $U$ to $W$. Since $R/p^nR$ is quasi-smooth over $W_n$ for all $n$, it suffices to check on each affine $(V,T=\Spec B,\delta)\in Crys(S/W)$ such that there exists a map $v_T:R\to B$ lifting $V\to S$ (after a easy covering argument). We need to see for $i\geq 1$, $L^iv_T^*H^{n+i}_{\crys}(\cX/R)$ is killed by a power of $p$ independent of $v_T$. This follows from \cite[Coro. A.5]{Mor} that $H^{n+i}_{\crys}(\cX/R)$ is a finitely generated $R$-module which becomes finite projective over $R[1/p]$ after inverting $p$ (cf. \cite[Proof of Lem. A.2]{Mor}).
\end{proof}

\begin{lem}\label{thelastlem}
Notations as in Lemma \ref{toFcrystal},
$$
H^0_{\cris}(V,\cF)^{F=p}[p^{-1}]\cong H^0_{\cris}(S,\cF|_S)^{F=p}[p^{-1}].
$$
\end{lem}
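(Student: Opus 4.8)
The plan is to reinterpret both sides through the dictionary already used in the proof of Lemma~\ref{toFcrystal}: for $T\in\{V,S\}$ and an $F$-crystal $\cG$ on $T$ one has $H^0_{\cris}(T,\cG)^{F=p}[p^{-1}]=\Hom_{\Fisoc(T/W)}(\cO_{T/W}(-1),\cG)$, which makes sense because $\Fisoc(T/W)$ is abelian in both cases by Lemma~\ref{abeliancate}. Thus the lemma is equivalent to the assertion that restriction along the generic point $j\colon S=\Spec K\hookrightarrow V$ induces an isomorphism
$$\Hom_{\Fisoc(V/W)}(\cO_{V/W}(-1),\cF)\;\xrightarrow{\ \sim\ }\;\Hom_{\Fisoc(S/W)}(\cO_{S/W}(-1),\cF|_S).$$
Injectivity is immediate: fixing a $p$-adically complete flat lift $R$ of $\cO_V$ over $\ZZ_p$ (smooth over $\ZZ_p$, as in Morrow's setup), the crystal $\cF$ is given by a finite projective $R[1/p]$-module $M$ with integrable connection and Frobenius, a morphism on the left is an element $s\in M$ with $\nabla s=0$ and $\Phi(s)=ps$, and $R[1/p]$ is a domain injecting into the $p$-adic lift $\cE$ of $K$ used to compute $\cF|_S$, so $M\hookrightarrow M\otimes_{R[1/p]}\cE$ and $s\mapsto 0$ forces $s=0$.

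For surjectivity I would first reduce to a statement about dense opens. Write $S=\varprojlim_{V'}V'$ over the nonempty opens $V'\subseteq V$. By \cite[Thm.~5.3]{kedlaya2016notes}---the same input used in the proof of Lemma~\ref{toFcrystal}---for every chain $V''\subseteq V'\subseteq V$ of dense opens the restriction $\Hom_{\Fisoc(V'/W)}(\cO(-1),\cF|_{V'})\to\Hom_{\Fisoc(V''/W)}(\cO(-1),\cF|_{V''})$ is an isomorphism; hence all these groups coincide canonically with $\Hom_{\Fisoc(V/W)}(\cO(-1),\cF)$, and it suffices to prove
$$\Hom_{\Fisoc(S/W)}(\cO_{S/W}(-1),\cF|_S)=\varinjlim_{V'\subseteq V}\Hom_{\Fisoc(V'/W)}(\cO_{V'/W}(-1),\cF|_{V'}),$$
i.e. that every horizontal Frobenius‑eigensection of $\cF|_S$ over $S$ extends to such a section over some dense open $V'\subseteq V$.

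This last spreading‑out is the crux, and I expect it to be the main obstacle. A priori the section is only an element $s\in M\otimes_{R[1/p]}\cE$, and $\cE$ is far larger than any localization of $R$; the way to control it is to combine the Frobenius‑eigenvector equation $\Phi(s)=ps$ with the convergence condition built into the $F$‑isocrystal $\cF$. Since $\Phi$ is induced from a Frobenius lift defined over $R$ and is an isogeny, iterating this relation forces $s$ to satisfy the growth estimates of a section over the tube of a dense open of $V$---this is the usual Frobenius‑descent rigidity of $F$‑(iso)crystals, the same mechanism by which a sub‑isocrystal of an overconvergent $F$‑isocrystal is automatically overconvergent---after which the coefficients descend to $\widehat{R}_f[1/p]$ for a suitable $f\in R$. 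Alternatively, if the generic‑fibre form of \cite[Thm.~5.3]{kedlaya2016notes} is available, namely that restriction of $F$‑isocrystals from the smooth affine $V$ to its generic point is fully faithful, then it supplies the displayed identity directly and the lemma follows at once; in either case the real work is the bookkeeping with the explicit lift $R$ and the ring $\cE$, and checking that the estimates are uniform in the chosen Frobenius lift.
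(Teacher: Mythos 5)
Your reduction to showing
$$\Hom_{\Fisoc(V/W)}(\cO_{V/W}(-1),\cF)\;\xrightarrow{\ \sim\ }\;\Hom_{\Fisoc(S/W)}(\cO_{S/W}(-1),\cF|_S)$$
is correct, and your injectivity argument (a horizontal Frobenius-eigensection over $R[1/p]$ that vanishes in $M\otimes_{R[1/p]}\cE$ must be zero since $R[1/p]\hookrightarrow\cE$) is fine. Your appeal to \cite[Thm.~5.3]{kedlaya2016notes} to identify the $\Hom$-groups over all dense opens $V'\subseteq V$ is also correct. But the surjectivity step --- spreading a section over the generic point $S$ out to a dense open --- is a genuine gap, and you acknowledge as much. ``Frobenius-descent rigidity'' is not a theorem here; there is no general statement that restriction of $F$-isocrystals from a smooth affine $V$ to its generic point is fully faithful, and the ``growth estimate'' heuristic you sketch is exactly the thing that has to be proven, not invoked. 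Iterating $\Phi(s)=ps$ does not by itself confine $s$ to a localization of $R$: the ring $\cE$ carries its own Frobenius lift, and a priori the coefficients of $s$ need not satisfy any integrality or overconvergence constraint that pins them to a dense open of $V$.

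The paper's proof fills this gap by a structural argument rather than by estimates. After shrinking $V$ so the slope polygon of $\cF$ is constant, it invokes the slope filtration \cite[Cor.~4.2]{kedlaya2016notes} and, using that every slope of $\cF$ is $\geq 0$, splits off the part $\cF_{[0,1]}$ with slopes in $[0,1]$ from the part $\cF_{>1}$ with slopes $>1$. Sections satisfying $F=p$ that land in $\cF_{>1}$ are killed by a power of $p$ (slope reason, on $S$ as on $V$ by \cite[Lem.~A.9]{Mor}), so one may replace $\cF$ by $\cF_{[0,1]}$. After a further shrinking via \cite[Lem.~5.8]{KrPa}, $\cF_{[0,1]}$ may be taken to be a Dieudonn\'e crystal, at which point the desired full faithfulness of restriction from $V$ to $S$ is de Jong's theorem on homomorphisms of $p$-divisible groups \cite{de1995crystalline,de1998homomorphisms}. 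This is precisely the input you would need to make your sketch rigorous, and it is not elementary: it is the hard theorem hiding behind the ``spreading out'' you identified as the crux. A side point: the paper also needs to justify that restriction along $V\to S$ is exact on $F$-crystals (it takes the local ring at the generic point of the special fiber of a lift $\cA$ of $V$, which is a DVR with maximal ideal $(p)$, and notes $\cA\to\cB$ is flat), which is used to get an exact sequence from the slope filtration over $S$; your blind proposal does not address this either.
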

\begin{proof}
By shrinking $V$, we can assume that $V$ is affine and the slope polygon of $\cF$ is constant on $V$ (cf. \cite[Thm. 3.12]{kedlaya2016notes}). By \cite[Corollary 4.2]{kedlaya2016notes}, there exists a unique filtration in the category of convergent $F$-isocrystals
$$0=\cE_0\subset \cE_1\subset \cdots \cE_l=\cF_{\bQ_p}$$
such that each successive quotient $\cE_i/\cE_{i-1}$ is isoclinic of slope $s_i\in \bQ_{\geq 0}$ and $s_1<s_2<...<s_l$. Since all slope of $\cF$ are nonnegative.
There is a $\cE_i$ such that all of slopes of $\cE_i$ are contained in $[0,1]$ and all of slopes of $\cF_{\bQ_p}/\cE_i$ are $>1$. Denote it by $\cF_{\bQ_p,[0,1]}$. By shrinking $V$ further, we can assume that $\cF_{\bQ_p,[0,1]}$ comes from a $F$-crystal $\cF_{[0,1]}$ (cf. \cite[Lem. 5.8]{KrPa}). There is a morphism $\cF_{[0,1]} \to \cF$ in $\Fcris(V/W)$ which becomes injective in $\Fisoc(V/W)$. By Lemma \ref{abeliancate} replacing $\cF_{[0,1]}$ by its image in $\cF$, we can assume that $\cF_{[0,1]} \to \cF$ in $\Fcris(V/W)$ is injective. Let $\cF_{>1}$ denote the cokernel which has all slope greater than $1$ since restricting to a closed point is exact (cf. \cite[Lem. A.9]{Mor}). Let $\cA$ be a smooth lifting of $V$ to $W$ and $\cB$ denote the local ring of the generic point of the special fiber of $\Spec(\cA)$.  It's easy to see that $ \cB$ is a DVR with the maixmal ideal generated by $p$. Since $\cA \to \cB$ is flat, it is still flat after taking completion at $p$ (cf. \cite[\href{https://stacks.math.columbia.edu/tag/0AGW}{Tag 0AGW}]{stacks-project}). By abusing notations, we still denote them by $\cA$ and $\cB$ respectively. Hence, the pull-back functor $\Fcris(V/W) \to \Fcris(S/W)$ is exact. Thus, there is an exact sequence in $\Fcris(S/W)$
$$0\to\cF_{[0,1]} |_S\to \cF |_S\to \cF_{>1}|_S \to 0.$$
This gives
$$0\to \Hom_{\Fcris}(\cO_{S/W}(-1),\cF_{[0,1]}|_S) \to \Hom_{\Fcris}(\cO_{S/W}(-1),\cF |_S)\to \Hom_{\Fcris}(\cO_{S/W}(-1), \cF_{>1}|_S)
$$
The last group is of finite exponent, since the pull-back functor from $\Fcris(S/W)$ to $\Fcris(\Spec(\bar{K})/W)$ is faithful and all slopes of $\cF_{>1}|_S$ are greater than 1. Thus,
$$\Hom_{\Fcris}(\cO_{S/W}(-1),\cF_{[0,1]}|_S)[p^{-1}] \cong \Hom_{\Fcris}(\cO_{S/W}(-1),\cF |_S)[p^{-1}]
$$
Similarly (\cite[Lem. A.9]{Mor}),
$$\Hom_{\Fcris}(\cO_{V/W}(-1),\cF_{[0,1]})[p^{-1}] \cong \Hom_{\Fcris}(\cO_{V/W}(-1),\cF)[p^{-1}].
$$
Thus, we may assume that $\cF_{[0,1]}=\cF$. By shrinking $V$ (cf.\cite[Lem. 5.8]{KrPa}), we may assume that $\cF$ is $F$-isogenous to a Dieudonn\'e crystal. In fact, we can assume that $\cF$ is a Dieudonn\'e crystal. Since the functor from the category of Dieudonn\'e crystals to $\Fcris$ on $S$ or $V$ is fully faithful (cf. \cite[Rmk. 5.2]{KrPa}), it suffices to show
$$\Hom_{DC(V)}(\cO_{V/W}(-1),\cF)[p^{-1}] \cong \Hom_{DC(S)}(\cO_{S/W}(-1),\cF|_S)[p^{-1}].
$$
This follows from the equivalences between Dieudonne crystals and $p$-divisble groups on $S$ and on $V$ and the fact that the restriction functor for $p$-divisible groups from $V$ to $S$ is fully faithful (cf. \cite{de1995crystalline,de1998homomorphisms}). We note that the fully faithfulness on a normal base is reduced to DVR in \cite[Sec. 1]{messing1982theorie}, which is proved in $loc.cit$.
\end{proof}

\subsection{Proof of Theorem \ref{thekeythm}}
It remains to show that
 $$H^0_{\crys}(U,R^2f_{\crys*}\cO_{\cX/W})^{F=p}[1/p]\cong H^0_{\conv}(U,R^2f_*\cO_{\cX/K})^{F=p}$$
It follows from \cite[Cor. A.13]{Mor} that the F-isocrystal
$R^2f_{\crys*}\cO_{\cX/W}\otimes\bQ_p$ is naturally isomorphic to the convergent F-isocrystal $R^2f_*\cO_{\cX/K}$
constructed by Ogus \cite[Thm. 3.1]{ogus1984f}. This completes the proof of Theorem \ref{thekeythm}.

\section{Equivalence with D'Addezio's Conjecture}
Let $X$ be a smooth projective geometrically connected variety over a finitely generated field $k$ of characteristic $p>0$. In \cite{DAd}, D'Addezio defined $H^2_{\fppf}(X_{\bar{k}},\bQ_p(1))^k$ as the image of 
$$H^2_{\fppf}(X,\bQ_p(1))\longrightarrow H^2_{\fppf}(X_{\overline{k}},\bQ_p(1)),$$
and he gave a formulation of the $p$-adic Tate conjecture for divisors:
\begin{conj}[D'Addezio]
The cycle class map 
$$\Pic(X)\otimes_\bZ\bQ_p \lra H^2_{\fppf}(X_{\bar{k}},\bQ_p(1))^k$$ is surjective.
\end{conj}

\begin{thm}\label{theoremequivalence}
The above conjecture is equivalent to the finiteness of the exponent of $\Br(X_{k^s})^{G_k}[p^\infty]$.
\end{thm}

\begin{lem}\label{lemmarationales}
Let $X$ be a smooth projective geometrically connected variety over a field $k$. The sequence from the Kummer theory
\begin{equation}\label{formulacanonicalks}
0\lra (\Pic(X_{\overline{k}})\widehat{\otimes} \bQ_p)^{G_k}\lra H^2_{\fppf}(X_{\overline{k}},\bQ_p(1))^{G_k} \lra V_p\Br(X_{\overline{k}})^{G_k}\lra 0 \end{equation}
is exact.
\end{lem}
\begin{proof}
It suffices to show the right exactness. There is a long exact sequence
$$0\lra (\Pic(X_{\overline{k}})\widehat{\otimes} \bQ_p)^{G_k}\lra H^2_{\fppf}(X_{\overline{k}},\bQ_p(1))^{G_k} \lra V_p\Br(X_{\overline{k}})^{G_k}\lra H^1(G_k, \Pic(X_{\overline{k}})\widehat{\otimes} \bQ_p).$$
It remains to show the last arrow vanishes. Note that if we
replace $X$ by a curve, the last arrow vanishes. Thus, by the pull-back trick, it suffices to find finitely many smooth projective geometrically connected curves $Y_i\subseteq X$ ($1\leq i\leq r$) such that the induced natural map
 $$H^1(G_k, \Pic(X_{\overline{k}})\widehat{\otimes} \bQ_p)\lra  \prod_i^r H^1({G_k, \Pic(Y_{\overline{k}})\widehat{\otimes} \bQ_p})$$
 is injective. Since $$\Pic(X_{\bar{k}})\widehat{\otimes} \bQ_p\cong \NS(X_{\bar{k}})\otimes_\bZ \bQ_p\cong\NS(X_{k^s})\otimes_\bZ \bQ_p$$
it suffices to choose $Y_i$ such that $\NS(X_{\overline{k}})\otimes_{\bZ}\bQ_p \lra \prod^r_i \NS(Y_{i,\overline{k}})\otimes_{\bZ} \bQ_p$ is injective (because then it splits as the $G_k$-action factors through a finite quotient). Let $l/k$ be a finite Galois extension such that $\NS(X_l)\lra \NS(X_{k^s})$ is surjective. We first prove the claim for $X_l/l$. This follows from \cite[p17]{Yua2} that there exist curves $Y_i\subseteq X_l$ such that $\NS(X_{\overline{k}})\lra \prod_i^r NS(Y_{i,\overline{k}})$ has a kernel and a cokernel of finite exponent. So there is an exact sequence
$$0\lra (\Pic(X_{\overline{k}})\widehat{\otimes} \bQ_p)^{G_l}\lra H^2_{\fppf}(X_{\overline{k}},\bQ_p(1))^{G_l} \lra V_p\Br(X_{\overline{k}})^{G_l}\lra 0.$$
Take $\Gal(l/k)$-invariant of the above exact sequence, we get desired result since 
$$H^1(\Gal(l/k),(NS(X_{\overline{k}})\otimes_{\bZ} \bQ_p)^{G_l})=0.$$ 
\end{proof}

\begin{lem}\label{lemmasurj}
    The map $V_p\Br(X) \lra V_p\Br(X_{k^s})^{G_k}$ is surjective.
\end{lem}

\begin{proof}
 Write $M_{X,n}:=\Im(\Br(X)[p^n]\lra \Br(X_{k^s})^{G_k}[p^n])$ and $N_X:=\Ker(\Br(X)\lra \Br(X_{k^s})^{G_k}$. By Lemma \ref{bigtrick}, the map $\varprojlim_n M_{X,n}\lra  T_p(\Br(X_{k^s})^{G_k}$ has a cokernel of finite exponent. By the exact sequence
 $$0\lra  T_p(N_X)\lra T_p(\Br(X))\lra \varprojlim_n M_{X,n}\stackrel{\delta}\lra R^1\varprojlim_n N_X[p^n] $$
 it suffices to show $\delta$ has a image of finite exponent. Let $Y\subseteq X$ be a smooth projective geometrically integral curve over $k$ obtained by hyperplane sections repeatedly. By the same argument as in proof of Lemma \ref{bigtrick} to enlarge $k$, we get $N_X[p^n]=\Br(k)[p^n]\oplus H^1(k,\Pic_{X/k})[p^n]$ and $N_Y[p^n]=\Br(k)[p^n]\oplus H^1(k,\Pic_{Y/k})[p^n]$. Since $\Br(Y_{k^s})=0$, it is enough to show $R^1\varprojlim_n H^1(k,\Pic_{X/k})[p^n]\lra R^1\varprojlim_n H^1(k,\Pic_{Y/k})[p^n]$ is injective. The result follows from the proof of Lemma \ref{bigtrick}, that the natural map $H^1(k,\Pic^0_{X/k})\lra H^1(k,\Pic_{X/k})$ is al. isomorphism and that there is ab abelian group $D$ together with a morphism $D\lra H^1(k,\Pic_{X/k}^0)$ such that the induced map $H^1(k,\Pic_{X/k}^0)\oplus D\to H^1(k,\Pic^0_{Y/k})$ is an al. isomorphism.
\end{proof}

\begin{proof}[Proof of theorem \ref{theoremequivalence}]
Consider the commutative diagram
\begin{displaymath}
\xymatrix{
0\ar[r]& (\Pic(X)\otimes \bQ_p)\ar[r]\ar[d] &H^2_{\fppf}(X,\bQ_p(1))\ar[r]\ar[d]& V_p\Br(X)\ar[r]\ar[d]& 0 \\
0\ar[r]&(\Pic(X_{\bar{k}})\widehat{\otimes} \bQ_p)^{G_k}\ar[r]&H^2_{\fppf}(X_{\bar{k}},\bQ_p(1))^{G_k} \ar[r]& V_p\Br(X_{\bar{k}})^{G_k}\ar[r]& 0}  
\end{displaymath}
Since $\Pic(X_{\bar{k}})\widehat{\otimes} \bQ_p\cong\NS(X_{k^s})\otimes_\bZ\bQ_p$, the image of the first column is identified with $\NS(X)\otimes_\bZ\bQ_p$. It's easy to see 
 $\NS(X)\otimes_\bZ\bQ_p\cong (\NS(X_{k^s})\otimes_\bZ\bQ_p)^{G_k}$. Hence, the first column is surjective. By Lemma \ref{lemmasurj}, the image of the third arrow coincides with the image of the injective map $ V_p\Br(X_{k^s})^{G_k}\lra V_p\Br(X_{\bar{k}})^{G_k}$. By diagram chasing, the equivalence between the two formulations of the $p$-adic Tate conjecture follows from proposition \ref{firstexact}.
\end{proof}

\section{Weak Lefschetz theorem for Brauer Groups}
In this section, we prove Theorem \ref{mainlef}. We first recall two lemmas on Brauer groups.

\begin{lem}\label{lemmareduce}
    Let $X$ be a smooth projective geometrically integral variety over a field $k$ of dimension $\geq 3$. Let $Y\subseteq X$ be a smooth hyperplane section of $X$. Then, to show that the pullback map $\Br(X)\to \Br(Y)$ has a kernel and cokernel of finite exponent, it suffices to show that for all primes $l'$, the pullback map $V_{l'}(\Br(X_{k^s})^{G_k})\to V_{l'}(\Br(Y_{k^s})^{G_k})$ is an isomorphism, and for all but finitely many $\ell'$, the pullback map $\Br(X_{k^s})^{G_k}[l'^\infty]\to \Br(Y_{k^s})^{G_k}[l'^\infty]$ is an isomorphism.
\end{lem}
\begin{proof}

 By \cite[Lem 3.1]{Yua2}, there is a functorial exact sequence in the isogeny category of abelian groups.
\begin{equation}\label{formulaarithgeo}
 0\longrightarrow H^1(k,\Pic(X_{k^s}))\longrightarrow \Br(X)/\Br(k)\longrightarrow \Br(X_{k^s})^{G_k}\longrightarrow 0.\end{equation}
 The inclusion induces a morphism from the exact sequence \eqref{formulaarithgeo} of $X$ to that of $Y$. By the weak Lefschetz theorem of $l$-adic cohomology and isogeny theorem (cf. \cite{Zar1}), the induced map $\Pic^0_{X/k,\red}\to \Pic^0_{Y/k,\red}$ is an isogeny, so is the corresponding map $H^1(k,\Pic(X_{k^s}))\to H^1(k,\Pic(Y_{k^s}))$. Thus, to show that the map $\Br(X)\to \Br(Y)$ has a kernel and cokernel of finite exponent, it suffices to show it for the map $\Br(X_{k^s})^{G_k}\to \Br(Y_{k^s})^{G_k}$. The claim for $\Br(X_{k^s})^{G_k}\to \Br(Y_{k^s})^{G_k}$ is equivalent to that $\Br(X_{k^s})^{G_k}[l'^\infty]\to \Br(Y_{k^s})^{G_k}[l'^\infty]$ has a kernel and cokernel of finite exponents and is an isomorphism for almost all $l'$.  Since $\Br(X_{k^s})^{G_k}$ and $\Br(Y_{k^s})^{G_k}$ are isogenous to torsion abelian groups of cofinite type, this is equivalent to the condition in the lemma.
\end{proof}

So to prove Theorem \ref{mainlef}, it is enough to check the condition in Lemma \ref{lemmareduce}, where the condition on rational Tate module is proved in Proposition \ref{proprational} and the condition on torsion groups for sufficiently large primes is proved in Proposition \ref{propintegral}.

\begin{lem}\label{lemmaglobalapproximation}
 	Let $X$ be a smooth projective geometrically integral variety over a field k. Then for primes $l\neq \mathrm{char}(k)$ sufficiently large, the sequence from the Kummer theory
 	\begin{equation}\label{formulaexact2}
 	 0\longrightarrow  \NS(X)/l^n\NS(X)\longrightarrow H^2(X_{k^s},\mu_{l^n})^{G_k}\longrightarrow \Br(X_{k^s})^{G_k}[l^n]\longrightarrow 0 
 	 \end{equation}
 	are exact for any $n\geq 1$.
 	 \end{lem}
 	 
 	 \begin{proof} 
     cf. \cite[Thm. 5.3.1]{CTS2} or \cite[Prop. 2.1]{Qin3}.
 	 \end{proof}

\subsection{Rational result}
Fixed a prime $l\neq p$. We choose an algebraic closure $\bF$ of $\bF_p$ and all finite field extensions of $\bF_p$ are seen as subfield of $\bF$.

\begin{lem}\label{lemmaweaklef1} Let $f:Y\to X$ be a smooth projective morphism between smooth integral varieties over $\bF_p$ with a geometrically connected generic fiber. Let $p_1:X'\to X$ be a connected finite etale Galois covering map.
    Then there is a sparse subset $S\subseteq |X|$ such that for any $z\in X(k)\setminus S $, $\pi_1^{-1}(z)$ consists of a single point $z'$, which is geometrically irreducible over $k_0':=k(X')\cap \bF$ and that $\pi_1(z')$ and $\pi_1(X')$ have the same image under 
    $$\pi_1(z')\to \pi_1(X')\to \GL(H^2(Y_{\overline{\eta}},\bQ_l(1))). $$
\end{lem}

\begin{proof}
    Let $d:=[K(X'):K(X)]$, $k_0':=K(X')\cap \bF$ and $k_0''$ be a finite extension of $k_0'$ such that for any finite extension $l/k_0'$ of degree $\leq d$, $l\subseteq k_0''$. Put $X'':=X'\times_{\Spec k_0'} \Spec k_0''$ and $p_2$ be the projection $p_2: X''\to X'\to X$. Let $S_1$ be the set $S_1:=\{x\in |X|: p_2^{-1}(x) \text{ contains more than one closed point} \}$. So, if $z\in |X|\setminus S_1$, $p_1^{-1}(z)=z'$ is a reduced closed point. We say $z'\subseteq |X'|$ is $l$-strictly Galois generic if the image of $\pi_1(z',\overline{z'})$ and $\pi_1(X',\overline{z'})$ under 
    $$\pi_1(z',\overline{z'})\to \pi_1(X',\overline{z'})\to \GL(H^2_{et}(Y'_{\overline{z'}},\bQ_l(1)))$$
    coincides where $Y':=Y\times_X X'$. Let $S_2:=p_1(\{x'\in |X'|: x \text{ is not } l\text{-strictly Galois generic}\})$
    and put $S:=S_1\cup S_2.$ By definition, $S_1$ is sparse and by \cite[Fact 1.7.1.2.]{ambrosi2021specializationneronseverigroupspositive} and \cite[Prop. 8.5(c)]{Maulik-Poonen2012}, $S_2$ is sparse. By \cite[Prop. 8.5(b)]{Maulik-Poonen2012}, $S$ is also a sparse subset of $|X|$.

    Let $z\in X(k)\setminus S$ and $z'=p_1^{-1}(z)$. To show that $z'$ is geometrically irreducible over $k_0'$, it remains to show that $k_0'$ is algebraically closed in $k(z')$. Since $k(z)=k$,
    $$d\geq[k(z'):k(z)]\geq[k(z')\cap \bF:k(z)\cap \bF]\geq [k(z')\cap \bF:k_0'].$$
    By the definition of $k_0''$, the algebraic closure of $k_0'$ in $k(z')$ is contained in $k_0''$. So, it suffices to show that $k_0''\otimes_{k_0'} k(z')$ is a field. This follows from the definition of $S_1$.

\end{proof}
\begin{rem}
Note that if $X\subseteq\bP^1_k$ is open and dense and $k$ is Hilbertian, $X(k)\setminus S$ is an infinite set. 
\end{rem}

By the theory of Lefschetz pencil (cf. \cite[\S. 4.2]{ambrosi2021specializationneronseverigroupspositive}), there is a diagram 
$$Y \stackrel{\pi} \longleftarrow \widetilde{Y} \stackrel{f}\longrightarrow \check{\bP}^1_k$$
where $\pi$ is a blowing up at smooth $B\subseteq Y$. For any $z\in \check{\bP}^1_k(k)$, the fiber $f^{-1}(z)$ is identified, via $\pi$, with a hyperplane section $H_z$ on $Y$. Let $X$ be an open dense subset of $\check{\bP}^1_k$ such that the restriction of $f$ on $f^{-1}(X)$ is smooth. Let $g: Y\to \Spec(k)$ be the structure morphism, $\eta$ be the generic point of $X$ and $\overline{\eta}$ is a geometric point over $\eta$ (taking separable closure).
By \cite[Fact 3.3]{CHT2017annmath}, there is a connected finite etale Galois covering map $p_1:X'\to X$ such that the Zariski closure of the image of $\pi_1(X'_{\overline{k_0'}},\overline{\eta'})$ in
$\GL(H^2_{et}(\widetilde{Y}_{\overline{\eta'}},\bQ_l(1)))$ is connected, where $\eta'$ is the generic point of $X'$ and $k_0':=k(X')\cap \bF$. By an abuse of notation, we use $f: \widetilde{Y} \lra X$ to denote the base change of $f$ to $X$. Consider the base change

$$\begin{CD}
    \widetilde{Y}'@>f'>> X'\\
    @V p_2 VV @VV p_1 V\\
    \widetilde{Y} @>f >> X
\end{CD}$$

Let $z\in X(k)$ not in $S$ as in Lemma \ref{lemmaweaklef1} and $z'=p_1^{-1}(z)$. For primes $l'\neq p$ and $p$, we have the following exact sequences
\begin{equation}\label{formula1}\begin{CD}
    0@>>> \NS(\widetilde{Y}_{\eta})\otimes \bQ_{l'} @>>> H^2(\widetilde{Y}_{\overline{\eta}},\bQ_{l'}(1))^{\pi_1(X,\overline{\eta})} @>>> V_{l'}(\Br(\widetilde{Y}_{\overline{\eta}})^{G_\eta})@>>> 0
\end{CD}\end{equation}
\begin{equation}\label{formula2}\begin{CD}
    0@>>> \NS(\widetilde{Y}_{z})\otimes \bQ_{l'} @>>> H^2(\widetilde{Y}_{\overline{z}},\bQ_{l'}(1))^{G_k} @>>> V_{l'}(\Br(\widetilde{Y}_{\overline{z}})^{G_k})@>>> 0\\ 
    @. @AAA @AAA @AAA @.\\
    0@>>> \NS(Y)\otimes \bQ_{l'} @>>> H^2(Y_{k^s},\bQ_{l'}(1))^{G_k} @>>> V_{l'}(\Br(Y_{k^s})^{G_k})@>>> 0
\end{CD}\end{equation}
\begin{equation}\label{formula3}\begin{CD}
    0@>>> \NS(\widetilde{Y}_{\eta})\otimes \bQ_p @>>> H^0_{\fppf}(\eta,R^2f_*\bQ_p(1)) @>>> V_p(\Br(\widetilde{Y}_{\overline{\eta}})^{G_\eta})@>>> 0
\end{CD}\end{equation}
\begin{equation}\label{formula4}\begin{CD}
    0@>>> \NS(\widetilde{Y}_{z})\otimes \bQ_p @>>> H^0_{\fppf}(z,R^2f_{z*}\bQ_p(1)) @>>> V_p(\Br(\widetilde{Y}_{\overline{z}})^{G_k})@>>> 0\\
    @. @AAA @AAA @AAA @.\\
    0@>>> \NS(Y)\otimes \bQ_p @>>> H^0_{\fppf}(\Spec(k),R^2g_{*}\bQ_p(1)) @>>> V_p(\Br(Y_{k^s})^{G_k})@>>> 0
\end{CD}\end{equation}
where we simplify 
$$H^0_{\fppf}(-,R^2f_{*}\bQ_p(1)):=(\varprojlim_{n} H^0_{\fppf}(-,R^2f_{*}\mu_{p^n}))[1/p].$$ The vertical maps in diagrams \eqref{formula2}, \eqref{formula4} are induced by identifying $\widetilde{Y}_z$ with $H_z\subseteq Y$ as a hyperplane and $z$ with $\Spec k$. Rows of diagrams \eqref{formula3}, \eqref{formula4} are exact because of \eqref{firstexact}.\\

By \cite[Thm 1.4.2.2, Lem 4.2.1]{ambrosi2021specializationneronseverigroupspositive},  the morphisms 
\begin{equation}\label{formulaNS}
\NS(Y)\otimes \bQ\to \NS(\widetilde{Y}_\eta)\otimes \bQ\stackrel{sp^{ar}_{\eta,z}}\longrightarrow \NS(\widetilde{Y}_z)\otimes \bQ .
\end{equation}
are both isomorphisms.
 So the left vertical maps of \eqref{formula2}, \eqref{formula4} are bijective and by the weak Lefschetz theorem for etale cohomology and for crystalline cohomology (cf. Lemma \ref{lemweaklefcrys}), the middle vertical maps of these two diagrams are injective. Hence, to show that the third columns are isomorphisms, it suffices to prove that $\QQ_\ell$ (resp. $\QQ_p$ ) linear spaces in the middle of each diagram have the same dimension.
 \begin{lem}\label{lemweaklefcrys}
     Let $k$ be a finitely generated field over $\bF_p$ and $f:X\to \Spec(k)$ be a smooth projective variety over $k$ of dimension $\geq 3$. Assume that $H\subseteq X$ is a smooth hyperplane section and $f_H:H\to \Spec (k)$ is the structure map. Then the induced map 
     $$H^0_{\fppf}(\Spec(k),R^2f_{*}\bQ_p(1))\longrightarrow H^0_{\fppf}(\Spec(k),R^2f_{H*}\bQ_p(1))$$
     is injective.
 \end{lem}
 \begin{proof}
 By \cite[Lem. 3.3]{DAd}, $H^0_{\fppf}(\Spec(k),R^2f_{*}\bQ_p(1))\lra H^0_{\fppf}(\Spec(\bar{k}),R^2f_{*}\bQ_p(1))$
 is injective.  By Proposition \ref{theoremtowardscrystalline}, it suffices to show that the map 
     $$H^2_{\crys}(\overline{X}/\overline{W})_{\bQ_p}\to H^2_{\crys}(\overline{H}/\overline{W})_{\bQ_p}$$
     is injective. This follows from the weak Lefschetz theorem for crystalline cohomology (cf. \cite[Sec. 3.8(a)]{illusie1975report}).
 \end{proof}
 
By spreading out
$$\widetilde{Y}\stackrel{f}\longrightarrow X$$
over $\bF_p$, we get
$$\widetilde{\cY} \stackrel{f}\longrightarrow  \cX.$$ Similarly, by spreading out the point $z\in X$, we get a commutative diagram
$$\begin{CD}
    \widetilde{\cY}_Z@>f_{\cY_Z}>> Z\\
    @Vi_{\cY_Z}VV @V i_Z VV\\
    \widetilde{\cY}@>f>> \cX
\end{CD}$$
By suitably shrinking $Z$ and $\cX$, we can assume that $f$ is smooth, and both $Z$ and $\cX$ are smooth over $\bF_p$.
By Theorem \ref{thekeythm} 
                $$\dim H^2_{\fppf}(\eta,R^2f_*\bQ_p(1))= \dim H^0_{\rig}(\cX,R^2f_*\cO_{\cY}^\dagger(1))^{F=1}$$
$$\dim H^2_{\fppf}(z,R^2f_{z*}\bQ_p(1))= \dim H^0_{\rig}(Z,R^2f_{\cY_Z*}\cO_{\cY_Z}^\dagger(1))^{F=1}$$
Note that $R^2f_{\cY_Z*}\cO_{\cY_Z}^\dagger=i_Z^*R^2f_*\cO_{\cY}^\dagger$ and the functor $i_Z^*: (F\text{-})\isocd(\cX)\to (F\text{-})\isocd(Z)$ is faithful because restricting to a closed point is a faithful functor. 

\begin{prop}\label{proprational}
    The right vertical morphisms in \eqref{formula2} and \eqref{formula4}
    $$V_{l'}(\Br(Y_{k^s})^{G_k})\longrightarrow V_{l'}(\Br(\widetilde{Y}_{\overline{z}})^{G_k}) $$
    are isomorphisms for all prime $l'$.
\end{prop}

\begin{proof}
Firstly, we show that it suffices to prove these two equalities below
    \begin{equation}\label{rationall-part}
        \dim H^2(\widetilde{Y}_{\overline{z}},\bQ_{l'}(1))^{G_k}=\dim H^2(\widetilde{Y}_{\overline{\eta}},\bQ_{l'}(1))^{\pi_1(X,\overline{\eta})}. 
    \end{equation}
   \begin{equation}\label{rationalp-part2}
        \dim H^0_{\rig}(\cX,R^2f_*\cO_{\cY}^\dagger(1))^{F=1}= \dim H^0_{\rig}(Z,R^2f_{\cY_Z*}\cO_{\cY_Z}^\dagger(1))^{F=1}.
    \end{equation}
Assume that these two equalities hold. Combine them with \eqref{formulaNS}, we have 
    $$\dim V_{l'}(\Br(\widetilde{Y}_{\overline{z}})^{G_k})=\dim V_{l'}(\Br(\widetilde{Y}_{\overline{\eta}})^{G_\eta}) $$
    for all primes $l'$. But then we have 
    $$\dim V_{l'}(\Br(Y_{k^s})^{G_k})= \dim V_{l'}(\Br(\widetilde{Y}_{k^s})^{G_k})\geq  \dim V_{l'}(\Br(\widetilde{Y}_{\overline{\eta}})^{G_\eta})$$
    where the first equality is because Brauer group is birational invariant and the second one is Corollary \ref{fgfield}.

    We first prove the case $l'\neq p$, that is \eqref{rationall-part}. Recall that the action of $\pi_1(z')$ and $\pi_1(X')$ factors as the following diagram 
    \begin{equation}\label{formulafactor}\xymatrix{
    \pi_1(z') \ar[r] \ar@{->>}[d] & \pi_1(X') \ar[r] \ar@{->>}[d]& \GL(H^2(\widetilde{Y}_{\overline{\eta}},\bQ_{l'}(1)))=\GL(H^2(\widetilde{\cY}_{\overline{t}},\bQ_{l'}(1)))\\
    \pi_1(Z',\overline{t})\ar[r] & \pi_1(\cX',\overline{t}) \ar[ur]
    }\end{equation}
    where $\overline{t}$ is a geometric point of a closed point $t'$ of $Z'$ and can be regarded as a geometric point of a closed point $t$ of $Z$.
    Therefore
    \begin{equation}\label{pointtomodel}\begin{aligned}
    \dim H^0(Z'_{\bF},R^2f'_{Z*}\bQ_{l'}(1))& = \dim H^2(\widetilde{Y}'_{\overline{z}'},\bQ_{l'}(1))^{\pi_1(Z'_\bF,\overline{t})}\\
    \dim H^0(\cX'_{\bF},R^2f'_{*}\bQ_{l'}(1)) &= 
    \dim H^2(\widetilde{Y}'_{\overline{\eta}},\bQ_{l'}(1))^{\pi_1(\cX'_{\bF},\overline{\eta}')}
    \end{aligned}\end{equation}
    By \cite[Fact 2.2.1.2.]{ambrosi2021specializationneronseverigroupspositive}, the left hand side of \eqref{pointtomodel} is independent of $l'$. For our chosen prime $l$, the geometric monodromy groups of $\pi_1(Z'_\bF,\overline{t})$ and $\pi_1(\cX'_\bF,\overline{\eta}')$ acting on $H^2(\widetilde{\cY}_{\overline{t}},\bQ_{l}(1))$ coincide (cf. \cite[\S 2.2.2]{ambrosi2021specializationneronseverigroupspositive}). So, for all primes $l'\neq p$,
    \begin{equation}\label{formulageosame}
    H^2(\widetilde{Y}'_{\overline{z}'},\bQ_{l'}(1))^{\pi_1(Z'_\bF,\overline{t})}\cong H^2(\widetilde{Y}'_{\overline{\eta}},\bQ_{l'}(1))^{\pi_1(\cX'_{\bF},\overline{\eta}')}. 
     \end{equation}
    Note that $\widetilde{Y}'_{\overline{\eta}}\cong \widetilde{Y}_{\overline{\eta}}$ and $\widetilde{Y}'_{\overline{z}'}\cong \widetilde{Y}_{\overline{z}}$. Also, by the same diagram as \eqref{formulafactor}, the action of $\pi_1(z'_\bF)$ and $\pi_1(X'_{\bF})$ on $H^2(\widetilde{Y}'_{\overline{z}'},\bQ_{l'}(1))$ factors through $\pi_1(Z'_\bF,\overline{t})$ and $\pi_1(\cX'_{\bF},\overline{\eta}')$, respectively. Once a path from $\overline{\eta}$ to $\overline{z}$ is chosen, there is an identification (since $R^2f_*\bQ_{l'}(1)$ is a local system) $H^2(\widetilde{Y}_{\overline{z}},\bQ_{l'}(1))\cong H^2(\widetilde{Y}_{\overline{\eta}},\bQ_{l'}(1))$ which is compatible with the actions of $G_k$ and $\pi_1(X,\overline{\eta})$ on the left and righd hand sides respectively. Let $G$ be the Galois group of the covering $X'\to X$. We have exact sequences
    $$0\longrightarrow \pi_1(X')\longrightarrow \pi_1(X)\longrightarrow G
    \longrightarrow 0$$
    $$0\longrightarrow \pi_1(X'_\bF)\longrightarrow \pi_1(X')\longrightarrow G_{k_0'}\longrightarrow 0.$$
    There are similar exact sequences for $Z$, replacing $X'$ and $X'_\bF$ by $Z'$ and $Z'_\bF$ respectively. The desired result \eqref{rationall-part} follows from taking $G_{k_0'}$-invaraint and further taking $G$-invariant part of both sides of \eqref{formulageosame}.

    Now we prove the $p$-part: since the pullback to a closed point is faithful, the natural map
    $$H^0_{\rig}(\cX,R^2f_*\cO_{\cY}^\dagger(1))\lra H^0_{\rig}(Z,R^2f_{\cY_Z*}\cO_{\cY_Z}^\dagger(1))$$
    is injective.Hence, to show \eqref{rationalp-part2}, it suffices to show
    \begin{equation}\label{rationalp-part3}
        \dim H^0_{\rig}(\cX,R^2f_*\cO_{\cY}^\dagger(1))= \dim H^0_{\rig}(Z,R^2f_{\cY_Z*}\cO_{\cY_Z}^\dagger(1)).
    \end{equation}
    By an abuse of notation, write $\cX'_{\bF}$ (resp. $Z'_{\bF}$) for the base change $\cX'\otimes_{k_0}{\bF}$ (resp. $Z'\otimes_{k_0}\bF$), where $k_0=K(X)\cap \bF$. The natural map
    \begin{equation}\label{rationalp-part4}
    H^0(\cX'_{\bF},R^2f'_*\bQ_l(1))\cong H^0(Z'_{\bF},R^2f'_{Z*}\bQ_l(1))
    \end{equation}
    is an isomorphism. \eqref{rationalp-part3} follows from the computation
    $$\begin{aligned}
        &\dim H^0_{\rig}(\cX,R^2f_*\cO_{\cY}^\dagger(1))=\dim H^0(\cX_{\bF},R^2f_*\bQ_l(1))= \dim H^0(\cX'_{\bF},R^2f'_*\bQ_l(1))^G\\
        &=\dim H^0(Z'_{\bF},R^2f'_{Z*}\bQ_l(1))^G= \dim H^0(Z_{\bF},R^2f_{Z*}\bQ_l(1))=\dim H^0_{\rig}(Z,R^2f_{Z*}\cO_{\cY_Z}^\dagger(1))
    \end{aligned}$$
    The first and last equalities are because \cite[Fact 2.2.1.2.]{ambrosi2021specializationneronseverigroupspositive}, the second and forth are by Galois descent and the third one follows from taking $G$-invariant in \eqref{rationalp-part4}.
\end{proof}

\subsection{Integral result}
 Consider $l'$ large enough such that the sequence \eqref{formulaexact2} is exact for $Y,\widetilde{Y}_{\eta}$ and $\widetilde{Y}_{z}$. We have a morphism of exact sequences
\begin{equation}\label{integralexact3}\begin{CD}
    0@>>>  \NS(Y)/l'^n\NS(Y)@>>> H^2(Y_{k^s},\mu_{l'^n})^{G_k}@>>> \Br(Y_{k^s})^{G_k}[l'^n]@>>> 0\\
    @. @VVV @VVV @VVV @.\\
    0@>>>  \NS(\widetilde{Y}_z)/l'^n\NS(\widetilde{Y}_z)@>>> H^2(\widetilde{Y}_{\overline{z}},\mu_{l'^n})^{G_k} @>>> \Br(\widetilde{Y}_{\overline{z}})^{G_k}[l'^n]@>>> 0.
\end{CD}\end{equation}

 \begin{prop}\label{propintegral}
     The right vertical map $\gamma:\Br(Y_{k^s})^{G_k}[l'^n]\to \Br(\widetilde{Y}_{\overline{z}})^{G_k}[l'^n]$ of \eqref{integralexact3} is an isomorphism for primes $l'$ sufficiently large.
 \end{prop}

 \begin{proof}
  Note that $\NS(Y)$ and $\NS(\widetilde{Y}_{\overline{z}})$ are finitely generated abelian groups and the map $\NS(Y)\otimes \bQ\to \NS(\widetilde{Y}_{\overline{z}})\otimes \bQ$ is an isomorphism. So, after only considering larger $l'$ if necessary, we can assume the left vertical map of \eqref{integralexact3} is an isomorphism. By weak Lefschetz theorem, the middle vertical map of \eqref{integralexact3} is injective thus the right vertical map is injective. 
  Since both the domain and target are finite abelian groups ($l$-part of Brauer groups is of cofinite type when $l\neq p$), it suffices to compare the cardinality. By the weak Lefschetz theorem, the pullback $H^2(Y_{k^s}, \mu_{l'^n}) \lra H^2(\widetilde{Y}_{\overline{z}},\mu_{l'^n})$
  is injective. Consequently, the pullback $H^2(Y_{k^s}, \mu_{l'^n}) \lra H^2(\widetilde{Y}_{\overline{\eta}}, \mu_{l'^n})$ is also injective. For sufficiently large $l'$, the pullback
  $$\NS(Y)/l'^n \lra \NS(\widetilde{Y}_\eta)/l'^n $$
  is an isomorphism for any $n\geq 1$. Hence, for sufficiently large $l'$, the pullback
  $$\Br(Y_{k^s})^{G_k}[l'^\infty]\lra\Br(\widetilde{Y}_{\overline{\eta}})^{G_{\eta}}[l'^\infty]$$
  is injective.
  By Corollary \ref{fgfield}, for sufficiently large $l'$,
  $\Br(Y_{k^s})^{G_k}[l'^n]\cong \Br(\widetilde{Y}_{\overline{\eta}})^{G_{\eta}}[l'^n]$ for any $n\geq 1$. Considering the exact sequence
$$\begin{CD}
    0@>>>  \NS(\widetilde{Y}_\eta)/l'^n\NS(\widetilde{Y}_\eta)@>>> H^2(\widetilde{Y}_{\overline{\eta}},\mu_{l'^n})^{G_\eta}@>>> \Br(\widetilde{Y}_{\overline{\eta}})^{G_\eta}[l'^n]@>>> 0
\end{CD}$$
and use the same argument on the Neron-Severi groups by applying \cite[Lem 4.2.1]{ambrosi2021specializationneronseverigroupspositive}, it suffices to show there is an isomorphism
\begin{equation}\label{integraltarget}
 H^2(\widetilde{Y}_{\overline{\eta}},\mu_{l'^n})^{G_\eta}\cong H^2(\widetilde{Y}_{\overline{z}},\mu_{l'^n})^{G_k}.\end{equation}

 Since $R^2f'_*\mu_{l'^n}$ is a local system over $X$, the specialization map gives an isomorphism 
 \begin{equation}\label{isomorphismlocsys}
     H^2(\widetilde{Y}'_{\overline{z}},\mu_{l'^n})\stackrel{\cong}\longrightarrow H^2(\widetilde{Y}'_{\overline{\eta}},\mu_{l'^n}).
    \end{equation}
 By a theorem of Gabber \cite{Gabbertorsion1983}, we may assume $H^2(\widetilde{Y}'_{\overline{\eta}},\bZ_{l'}(1))$ and $H^2(\widetilde{Y}'_{\overline{z}},\bZ_{l'}(1))$ are torsion free. In this case, we have 
 $$H^2(\widetilde{Y}'_{\overline{\eta}},\bZ_{l'}(1))^{G_{K(X')\bF}}=H^2(\widetilde{Y}'_{\overline{\eta}},\bZ_{l'}(1))\cap H^2(\widetilde{Y}'_{\overline{\eta}},\bQ_{l'}(1))^{G_{K(X')\bF}}$$
 $$H^2(\widetilde{Y}'_{\overline{z}},\bZ_{l'}(1))^{G_{k\bF}}=H^2(\widetilde{Y}'_{\overline{z}},\bZ_{l'}(1))\cap H^2(\widetilde{Y}'_{\overline{z}},\bQ_{l'}(1))^{G_{k\bF}}.$$
 By the proof of \eqref{rationall-part} (cf. \eqref{formulageosame} and the action of $G_{K(X')\bF}$ factors through $\pi_1(X'_{\bF})$), we have an isomorphism induced by specilization map
 $$H^2(\widetilde{Y}'_{\overline{\eta}},\bQ_{l'}(1))^{G_{K(X')\bF}}\cong H^2(\widetilde{Y}'_{\overline{z}},\bQ_{l'}(1))^{G_{k\bF}}.$$
 Thus, the specialization map induces an isomorphism
 $$H^2(\widetilde{Y}'_{\overline{\eta}},\bZ_{l'}(1))^{G_{K(X')\bF}}\cong H^2(\widetilde{Y}'_{\overline{z}},\bZ_{l'}(1))^{G_{k\bF}}$$
 and by Lemma \ref{lemmalast}, an isomorphism
 $$H^2(\widetilde{Y}'_{\overline{\eta}},\mu_{l^n})^{G_{K(X')\bF}}\cong H^2(\widetilde{Y}'_{\overline{z}},\mu_{l^n})^{G_{k\bF}}.$$
 for all $n$. Since $\widetilde{Y}'_{\overline{\eta}}\cong \widetilde{Y}_{\overline{\eta}}$ and $\widetilde{Y}'_{\overline{z}'}\cong \widetilde{Y}_{\overline{z}}$, taking $G$-invariant, we get the desired isomorphism \eqref{integraltarget}.
\end{proof}

\begin{lem}\label{lemmalast}
    Let $k_0$ be a finitely generated field over $\bF_p$ and $X/k_0$ be a smooth projective geometrically integral variety. Put $k:=k_0\bF\subseteq (k_0)^s$ and $\Pi:=\Gal(k^s/k)$. Then for primes $l'$ sufficiently large, there is a canonical isomorphism
    $$H^2(X_{k^s},\bZ_{l'}(1))^{\Pi}\otimes \bZ/l^n\cong H^2(X_{k^s},\mu_{l'^n})^{\Pi}.$$
\end{lem}

\begin{proof}
    Let $M:=H^2(X_{k^s},\bZ_{l'}(1))$. By a theorem of Gabber \cite{Gabbertorsion1983}, $M$ is a finite free $\bZ_{l'}$-module when $l'$ large enough. So we have an exact sequence 
    $0\to M\stackrel{l'^n}\to  M\to M/l'^nM\to 0.$
    Taking $\Pi$-invariant part, the conclusion follows from that $H^1(\Pi,M)[\ell^\prime]=0$ (cf. \cite[\S 4.3 and Thm. 4.5]{CHT2017annmath}).
\end{proof}

\appendix

\section{$p$-adic Tate via convergent cohomology}\label{appendixA}

In this appendix, we reprove a result of P\'al (\cite[Prop. 6.9]{Pal}) via Ogus' formalism of convergent F-isocrystals. This is because it is not clear to us how to simultaneously establish the Leray spectral sequence and identify higher direct image with Ogus’ one after completion.\\

Let $L$ be a finitely generated field in characteristic $p$ which is the function field of a smooth quasi-projective variety $S$ of dimension $s$ over $\bF_p$. Assume $X$ is a smooth projective variety geometrically irreducible of dimension $d$ over $L$. By shrinking $S$ if necessary, we assume $f:\sX\to S$ is a smooth projective morphism between two smooth varieties over $\bF_p$ such that the generic fiber $\sX_\eta$ is isomorphic to $X$ as an $L$-scheme. We call such $f:\sX\to S$ a model of $X$ over $L$. Let $K:=\bQ_p$. We consider Ogus's convergent cohomology \cite{ogus1984f} and put 
$$\cH^i(\sX):=H^0(S,R^if_{\Ogus *}\cO_{\sX/K}).$$
Here $R^if_{\Ogus *}\cO_{\sX/K}$ is the convergent $F$-isocrystal defined by Ogus (\cite[Thm. 3.1]{ogus1984f}). Note that it is a finite dimensional $\bQ_p$-vector space. This is because $H^0(S,R^if_{\Ogus *}\cO_{\sX/K})=\Hom_{\isoc}(\cO_S,R^if_{\Ogus *}\cO_{\sX/K})$ and pullback to a closed point is a faithful functor between  isocrystals (cf. \cite[Thm. 4.1]{ogus1984f}) as $S$ is irreducible. The usual multiplication map $\cO_{\sX/K}\otimes \cO_{\sX/K}\to \cO_{\sX/K}$ induces a cup product 
$$R^if_{\Ogus*}\cO_{\sX/K}\otimes R^jf_{\Ogus*}\cO_{\sX/K}\to R^{i+j}f_{\Ogus*}\cO_{\sX/K} $$
which induces a cup product $\cH^i(\sX)\otimes\cH^j(\sX)\to \cH^{i+j}(\sX)$. By restricting to Frobenius eigenspaces, it induces a cup product $H^i(\sX)^{F=p^i}\otimes H^j(\sX)^{F=p^j}\to \cH^{i+j}(\sX)^{F=p^{i+j}}$.

Suppose $z\in Z_r(X)$ is a cycle and shrink $S$ if necessary, we can assume there is $z'\in Z_{r+s}(\sX)$ whose restriction to the generic fiber is $z$. Let $\gamma_{\sX}(-): CH^r(\sX)\to H^{2r}_{\crys}(\sX)$ be the crystalline cycle class map (cf. \cite{gros1985classes}) where $CH^r(\sX)$ is the rational equivalence classes of cycles in codimension $r$.
Let $\eta_X(z)$ be the image of $\gamma_{\sX}(z')_K$ under the composition of the edge map of the spectral sequence
$$ H^{2d-2r}_{\crys}(\sX)_K\to H^0_{\crys}(S,R^{2d-2r}f_{\crys*}\cO_{\sX/\bZ_p})_K$$
and the isomorphism $H^0_{\crys}(S,R^{2d-2r}f_{\crys*}\cO_{\sX/\bZ_p})_K\cong H^0(S,R^{2d-2r}f_{\Ogus *}\cO_{\sX/K})$ (cf. \cite[Coro. A.13]{Mor}).
 Let $\eta^r_\sX: Z_{d-r}(X)\to \cH^{2r}(\sX)$  be the map constructed above. 
 
 \begin{rem}
 	The image of crystalline cycle class map $CH^r(\sX)_K\to H^{2r}_{\crys}(\sX)_K$ lies in the Frobenius eigenspace $H^{2r}_{\crys}(\sX)^{F=p^r}_K$. The edge map of the spectral sequence preserves Frobenius action, thus the image of $\eta^r_{\sX}$ lie in $\cH^{2r}(\sX)^{F=p^r}\subseteq \cH^{2r}(\sX)$.
 \end{rem}
 
 \begin{lem}\label{lemmacanonical}
 	\begin{enumerate}
 		\item The space $\cH^{2r}(\sX)^{F=p^r}$ does not depend on the model $f:\sX\to S$ and $X\to \cH^{2r}(X)^{F=p^r}$ is a contravariant functor.
 		\item The cup product 
 		$$\cH^{2i}(\sX)^{F=p^i}\otimes \cH^{2j}(\sX)^{F=p^j}\to \cH^{2i+2j}(\sX)^{F=p^{i+j}}$$
 		is independent of the model $f:\sX\to S$.
 		\item The cycle class $\eta_X^r\in \cH^{2r}(X)^{F=p^r}$ does not depend on the model $f:\sX\to S$ and the choice of $z'\in Z_{s+r}(\sX)$.
 	\end{enumerate}
 \end{lem}
 
 \begin{proof}
 	The proof is the same as the proof of \cite[Prop. 5.11, Lem. 5.13, 5.17]{Pal} with small changes. For convenience of the reader, we summarize it as the following.\par 
 	For (1), we consider the same category $\cI$ constructed in the proof of \cite[Prop. 5.11]{Pal}. Let 
 	$$(\alpha,\beta): (U,\sX,\pi,\phi,\iota)\longrightarrow (U',\sX',\pi',\phi',\iota')$$ 
 	be a morphism between two objects in $\cI$. By fully faithfulness of restriction functor $\Fisoc(U')\longrightarrow \Fisoc(U)$, $(\alpha,\beta)$ induces an isomorphism 
 	$$\cH^r(\alpha,\beta): H^0(U',R^{2r}\pi'_*(\cO_{\sX'/K}))^{F=p^r}\longrightarrow H^0(U,R^{2r}\pi_*(\cO_{\sX/K}))^{F=p^r}.$$
 	Then Pal proved $\cI$ is a filtered category (\cite[Lem. 5.12]{Pal}) thus the morphism 
 	$$H^0(U,R^{2r}\pi_*(\cO_{\sX/K}))^{F=p^r}\longrightarrow \lim_{\cI} H^0(U',R^{2r}\pi'_*(\cO_{\sX'/K}))^{F=p^r}$$
 	is an isomorphism. The first claim follows. For the second claim, given a map $f:X\to Y$ between smooth, projective varieties over $L$ and $g:\sX\to \sY$ is a morphism between models over $U$ whose generic point is $L$. The morphism $F$ induces a morphism between convergent $F$-isocrystals $R^r\pi_{\sX*}(\cO_{\sX/K})\longrightarrow R^r\pi_{\sY*}(\cO_{\sY/K})$ over $U$. Apply $\Hom_{\Fisoc}(\cO(-r),-)$ to this morphism, we get desired morphism and the first claim shows it does not depend on choice of $\sX,\sY,U$.\par 
 	For (2), it is short and identical to the proof of \cite[Lem. 5.13]{Pal}.\par 
 	For (3), we consider the same category $\cI(z)$ constructed in the proof of \cite[Prop. 5.17]{Pal}. It is a filtered cateogry (\cite[Lem. 5.18]{Pal}). Since the proof of (1) shows that 
 	$$H^0(U,R^{2r}\pi_*(\cO_{\sX/K}))^{F=p^r}\longrightarrow \lim_{\cI(z)} H^0(U',R^{2r}\pi'_*(\cO_{\sX'/K}))^{F=p^r}$$
 	is an isomorphism, the claim follows.
 \end{proof}
 
 By lemma above, we will replace the notation $\cH^{2r}(\sX)^{F=p^r}$ by $\cH^{2r}(X)^{F=p^r}$ to emphasise it is independent of the model. We denote $\eta_X^r: Z_r(X)\to \cH^{2r}(X)^{F=p^r}$ the map that factors $\eta_{\sX}^r$. The map $\eta_X^r$ is independent of the model.
 
 \begin{rem}
 	Since $\eta_{\sX}^r$ is the composition of $\eta_X^r$ and $\cH^{2r}(X)^{F=p^r}\to \cH^{2r}(\sX)$, it is well-defined, i.e, independent of the choice of the cycle $z'$.
 \end{rem}
 
 \begin{rem}
 	We learned from D'Addezio that restriction to an open dense subset of $S$ induces an isomorphism between $\cH^r(\sX)$ (cf. \cite[Thm 3.2.4]{daddezio2022heckeorbitsshimuravarieties}). Thus Lemma \ref{lemmacanonical} also applies to $\cH^r(\sX)$.
 \end{rem} 
 
 \begin{lem}
 	The morphism $\eta^r_X$ factors through $CH^r(X)$. In particular, $\eta_{\sX}^r$ factors through $CH^r(X)$ for any model $\sX$.
 \end{lem}
 
 \begin{proof}
 	Assume $z\in Z_{d-r}(X)$ can be written as a divisor of a rational function $\phi$ on a irreducible subvariety $Y\subseteq X$ of codimension $r-1$. We can take Zariski closure of $Y$ in $\sX$ and view $\phi$ as a rational function on it. So $Y$ is the restriction of a cycle on $\sX$ which is rational equivalent to $0$. The result follows as the crystalline cycle class map factors through $CH^r(\sX)$.
 \end{proof}
 
 \begin{defn}
	Let $T(X,r,p)$ be the Tate conjecture which says that the cycle class map
	$$\eta_X^r: CH^r(X)_K\to \cH^{2r}(X)^{F=p^r}$$
	is surjective. For a prime number $l\neq p$, let $T(X,r,l)$ be the usual Tate conjecture which says that the cycle class map
$$\eta_{X,l}^r: CH^r(X)_{\bQ_l}\to H^{2r}(X_{L^s},\bQ_l(r))^{G_L}$$
is surjective.
\end{defn}

We want to show the following result:
\begin{prop}\label{propositionindependence} The following statements are equivalent:
    \begin{enumerate}
        \item The claim $T(X,1,l)$ is true for some prime number $l$.
        \item The claim $T(X,1,l)$ is true for all prime numbers $l$.
    \end{enumerate}
\end{prop}
The argument follows P\'al.
 
 \begin{prop}
 	The morphism $\eta^i_X$ factors through $CH^i(X)\to CH^i_{SN}(X)$ where $CH^i_{SN}(X)$ is the quotient of $CH^i(X)$ by the subgroup $SNCH^i(X)$ generated by the rational equivalence classes of cycles smash-nilpotent to zero.
 \end{prop}
 
 \begin{proof}
  The proof is the same as \cite[Prop. 5.23]{Pal}. For convenience of the reader, we sketch it as the following.\par 
 	Let $\rho_i:\sX_i\to S$ ($i=1,...,n$) be models of $X$ over $L$ and $\rho: \sX_1\times_S \cdots \times_S \sX_n\to S$ be the fiber product of $\rho_i$. Denote $\pi_i:\sX_1\times_S \cdots \times_S \sX_n\to \sX_i$ the $i$-th projection. Let $\rho_{i_1,\cdots,i_n*}$ be the map 
 	$$R^{i_1}\rho_{1*}\cO_{\sX_1/K}\otimes\cdots \otimes R^{i_n}\rho_{n*}\cO_{\sX_n/K}\to R^{i_1+\cdots +i_n}\rho_*\cO_{\sX_1\times \cdots\sX_n/K}$$
 	of convergent $F$-isocrystals induced by fiberwise exterior cup product. We define 
 	$$\cH^{i_1,...,i_n}(\sX_1,...,\sX_n/K):=H^0(S,R^{i_1}\rho_{1*}\cO_{\sX_1/K}\otimes\cdots \otimes R^{i_n}\rho_{n*}\cO_{\sX_n/K})$$
 	and the map 
 	$$\overline{\pi}_{i_1,...,i_n}^*: \cH^{i_1,...,i_n}(\sX_1,...,\sX_n/K)\longrightarrow \cH^{i_1+\cdots +i_n}(\sX_1\times_S\cdots\times_S\sX_n/K) $$
 	to be the map induced by $ \rho_{i_1,\cdots,i_n*}$. Let $\pi_{i_1,...,i_n}^*$ be the map
 	$$\cH^{i_1}(\sX_1/K)\otimes_K \cdots \otimes_K \cH^{i_n}(\sX_n/K)\longrightarrow \cH^{i_1+\cdots+i_n}(\sX_1\times_S\cdots \times \sX_n/K) $$
 	given by $x_1\otimes \cdots \otimes x_n\mapsto \pi_1^*(x_1)\cup \cdots \cup \pi_n^*(x_n)$.
 	 By proper base change and Kunneth formula, we see that
 	$$\bigoplus_{i_1+\cdots +i_n=r} \rho_{i_1,\cdots,i_n*}: \bigoplus_{i_1+\cdots +i_n=r} \cH^{i_1,...,i_n}(\sX_1,...,\sX_n/K)\longrightarrow \cH^{i_1+\cdots +i_n}(\sX_1\times_S\cdots\times_S\sX_n/K)$$ 
 	is an isomorphism which implies $\overline{\pi}_{i_1,...,i_n}^*$ is also an isomorphism. Since $\pi_{i_1,...,i_n}^*$ is the composition of the injective tensor product 
 	(one can see it via lifting $S$ and viewing convergent $F$-isocrystals as vector bundles with connections) 
 	$$\cH^{i_1}(\sX_1/K)\otimes_K \cdots \otimes_K \cH^{i_n}(\sX_n/K)\longrightarrow \cH^{i_1,...,i_n}(\sX_1,...,\sX_n/K)$$
 	and $\overline{\pi}_{i_1,...,i_n}^*$, $\pi_{i_1,...,i_n}^*$ is also injective. In particular, the induced map 
 	\begin{equation}\label{formulainjective}
 	\cH^{i_1}(X/K)^{F=p^{i_1}}\otimes_K \cdots \otimes_K \cH^{i_n}(X/K)^{F=p^{i_n}}\longrightarrow \cH^{i_1+\cdots+i_n}(X^n/K)^{F=p^{i_1+...+i_n}}
 	\end{equation}
 	is injective. 
 	
 	Let $z\in Z^r(X)$ be a smash-nilpotent cycle, we need to see $\eta^r_X(z)=0$. By definition there is a positive integer $n$ such that the $n$-fold smash product $z^{\otimes n}$ on the $n$-fold direct product $X^n$ is rationally equivalent to zero. Thus 
 	$$0=\eta^{nr}_{X^n}(z^{\otimes n})= p_1^*(z)\cup \cdots\cup p_n^*(z)\in \cH^{2rn}(X^n)^{F=p^{rn}}$$
 	where $p_i:X^n\to X$ is the $i$-th projection. By injectivity of \eqref{formulainjective}, the map
 	$$\cH^{2r}(X/K)^{F=p^{r}}\otimes_K \cdots \otimes_K \cH^{2r}(X/K)^{F=p^{r}}\longrightarrow \cH^{2nr}(X^n/K)^{F=p^{nr}}$$
 	given by $z_1\otimes\cdots\otimes z_n\mapsto p_1^*(z_1)\cup\cdots\cup p_n^*(z_n)$ is injective. The claim follows.
 	 \end{proof}

   By a theorem of Voevodsky (\cite{voevodsky1995nilpotence}), $ACH^r(X)\subseteq SNCH^r(X)$ where $ACH^r(X)$ is the rational equivalence classes of cycles algebraic equivalent to $0$. The proposition above implies $\eta_X^r$ factors through $CH^r_A(X)$, the algebraic equivalence classes of cycles.
 	 
\begin{lem}\label{lemmacommute}
	There exists a model $f:\sX\to S$ of $X$ such that there exists a morphism $\beta: \cH^{2d}(\sX)\to K$ making the following diagram commute
	$$\begin{CD}
		CH^0_A(X)@>\eta_{\sX}^d>> \cH^{2d}(\sX)\\
		@V\deg VV @VV\beta V\\
		\bZ@>>> K.
	\end{CD}$$
\end{lem}
\begin{proof}
	We first choose a model $\sX\to S$. Since $CH^0_A(X)$ is finitely generated, use generic flatness and shrink $S$ if necessory, we can assume for each $z\in CH^0_A(X)$ there is a cycle $z'\in \sX$ whose intersection with $X$ represents $z$ such that for each $s\in |S|$ the fiber $\sX_s$ intersects with $z'$ properly. Fix $s\in |S|$ with residue field $k$ and put $K':=W(k)$. The map $\beta$ is defined as the composition 
$$H^0(S, R^{2d}f_{\Ogus*}\cO_{\sX/K})\to (R^{2d}f_{\Ogus*}\cO_{\sX/K})_s\stackrel{\cong} \longrightarrow H^{2d}(\sX_s,\cO_{X_s/K})\stackrel{Tr_{\sX_s}}\longrightarrow K$$
where the first map is restriction, the second map is proper base change (\cite[Remark 3.1.7]{ogus1984f}). The third one is the composition of the usual trace map $H^{2d}(\sX_s,\cO_{\sX/K'})\to K'$ and $\deg(K'/K)^{-1}Tr_{K'/K}(-)$.\par 

To see the commutativity, let $z\in CH^0(X)$ which has degree $a$ and $z'$ as above. Since $\sX_s$ intersects $z'$ properly, the fiber $z'_s$ is $0$-dimensional with degree $a$. Thus it is enough to show the commutativity in the absolute case. That is, suppose $X$ is smooth projective irreducible of dimension $d$ over a perfect field $k$ in characteristic $p$. We need to see commutativity of 
$$\xymatrix{
	CH^0(X) \ar_{\gamma^0_X}[rd] \ar^{deg}[r] & K\\
	 & H_{\conv}^{2d}(X)\ar_{Tr_X}[u]
	}$$
	Assume $x\in X$ has degree $a$. The compatibility of the Gysin map with the trace map (cf. \cite[Prop. VII 2.3.1]{berthelot2006cohomologie}) gives the following commutative diagram
	$$\xymatrix{
	H^0(x)\ar_{Tr_x}[rd] \ar^{G_{x/X}}[rr] & & H^{2n}(X)\ar^{Tr_X}[ld]\\
	& K &}$$ 
	Then $\deg(x)=Tr_x(1)=Tr_XG_{x/X}(1)$ as desired.

\end{proof}

\begin{lem}\label{oguspairing}
There exists a model $f:\sX\to S$ of $X$ such that there exists a commutative diagram:

$$\begin{CD}
	CH^1_A(X)_K\times CH_A^{d-1}(X)_K @>\langle , \rangle>> K\\
	@V\eta^1_\sX\times \eta^{d-1}_\sX  VV @AA\beta A\\
	\cH^{2}(\sX)\times \cH^{2d-2}(\sX) @>\cup >> \cH^{2d}(\sX)
\end{CD}
$$
 The map $\langle ,\rangle$ is induced by the intersection product together with $deg:CH^{d}(X)\to \bZ$. 
\end{lem}

\begin{proof}
We can find $\sX$ and $\beta$ as in Lemma \ref{lemmacommute}. We know that the cycle class map is compatible with intersection product (cf. \cite[VI Coro. 4.3.15]{berthelot2006cohomologie}) thus we have a commutative diagram
	$$\begin{CD}
		CH^1(X)_K\times CH^{d-1}(X)_K @>\langle ,\rangle >> CH^0(X)_K\\
		@V\eta^1_\sX\times \eta^{d-1}_\sX  VV @VV\eta^0_\sX V\\
		\cH^{2}(\sX)\times \cH^{2d-2}(\sX) @>\cup >> \cH^{2d}(\sX).
	\end{CD}$$ 
	It suffices to show the commutativity of 
	$$\xymatrix{
	CH^0(X) \ar_{\eta^0_\sX}[rd] \ar^{deg}[r] & K\\
	 & \cH^{2d}(\sX)\ar_{\beta}[u]
	}$$
	which is Lemma \ref{lemmacommute}.
\end{proof}

Assume $\sX,S$ as in the above lemma. By a result due to Ambrosi (\cite[Thm. 6.5.4.1]{ambrosi2021specializationneronseverigroupspositive}), there is an overconvergent $F$-isocrystal $\cE^{i,\dagger}$ whose completion is $R^if_{\Ogus*}\cO_{\sX/K}$. Note that 
$$H^0_{\rig}(S,\cE^{i,\dagger})^{F=p^r}\cong \Hom_{\Fisocd}(\cO^\dagger_{S}(-r),\cE^{i,\dagger})\cong \Hom_{\Fisoc}(\cO_S(-r),R^if_{\Ogus*}\cO_{\sX/K}) \cong \cH^i(X)^{F=p^r}$$
and there is a canonical inclusion
$$H^0_{rig}(S,\cE^{i,\dagger})\cong\Hom_{\isocd}(\cO^\dagger_{S},\cE^{i,\dagger})\subseteq \Hom_{\isoc}(\cO_S,R^if_{\Ogus*}\cO_{\sX/K}) \cong \cH^i(X).$$
Denote $H^0_{rig}(S,\cE^{i,\dagger})$ by $\cH^i_{\rig}(\sX)$. $\cH^i_{\rig}(\sX)^{F=p^r}$ is identified with $\cH^i(X)^{F=p^r}$ under the inclusion above thus is independent of $\sX$.

Let $\cL$ be a lisse $\bQ_l$-sheaf on $U$ when $l\neq p$ and be an overconvergent $F$-isocrystal over $U$ when $l=p$. We define the $L$-function to be 
$$L(U,\cL,t):=\prod_{x\in |U|} \det(1-t^{\deg(x)}\cdot Frob_x(\cL))^{-1}.$$
When $l\neq p$, we put $H^n(\sX)_l:=R^nf_{*}(\bQ_{l,\sX})$ to be the lisse $\bQ_l$ sheaf on $S$ and when $l=p$ we put $H^n(\sX)_p:=\cE^{n,\dagger}$ to be the overconvergent $F$-isocrystal over $S$.

\begin{lem}\label{pallem}
	We have $L(S,H^n(\sX)_l,t)\in \bQ[[t]]$ and are equal for different $l$.
\end{lem}

\begin{proof}
	It is enough to prove that for each $x\in |U|$, the factor 
	$$\det(1-t^{\deg(x)}\cdot Frob_x(\cL))^{-1}$$
	has coefficient in $\bQ$ and are equal for different $l$. By proper smooth base change \cite[Remark 3.7.1]{ogus1984f} , it is reduced to the absolute case. In this case there is an equivalence of categories $\Fisoc\simeq \Fisocd$ and the result follows from main result of \cite{katz1974some}.
	\end{proof}
	Let $\rho$ be the common order of the pole of the $L$-function at $t=p^{-d-s+1}$. The following proposition implies proposition \ref{propositionindependence}. 

\begin{prop}\label{propconvergentformulation}
	For different $l$, the following are equivalent:
\begin{enumerate}
	\item The claim $T(X,1,l)$ holds
	\item The rank of $NS(X)$ is $\rho$.
\end{enumerate}
\end{prop}

\begin{proof}
	It is well-known when $l\neq p$. For the case $l=p$, we follow argument of Pal. By Matsusaka's theorem on equivalence of algebraic equivalence and numerical equivalence for divisors, the map 
	$\eta_X^1: CH^1_{A}(X)_K\to \cH^2(X)^{F=p}$ is injective. Let $W \subseteq \cH^2_{\rig}(\sX)$ be the generalised eigenspace of $F$ with eigenvalue $p$, that is, the union $\cup_{i=1}^\infty \ker(F-p)^i$. Let $F$ act on $CH^1_{A}(X)_K $ as multiplication by $p$. By Lemma \ref{oguspairing}, the injection map $\eta_X^1: CH^1_{A}(X)_K\to \cH^2(\sX)$ is split as $\bQ_p$-linear spaces with the $F$-action. This is because we can restrict the pairing between cohomology groups to $\cH^2(\sX)\times V':=Im(\eta_X^{d-1})\to K$. Note that the $F$-action on $V'$ is multipication by $p^{d-1}$ and the pairing $CH^1_A(X)_K\times W'\to K$ induced by $\eta^1_X$ is left nondegenerated (Matsusaka's theorem). We can find a $F$-equivariant subspace $V\subseteq V'$ such that the pairing $CH^1_A(X)_K\times V\to K$ is perfect and $\cH^2(\sX)\cong CH^1_A(X)_K\oplus V^\perp$. It follows that the injective map $CH^1_{A}(X)_K\to W$ is also split as $\bQ_p$-linear spaces with the $F$-action. We will conclude from the following lemma: 
 \begin{lem}
		$\dim_K W=\rho$.
	\end{lem}
	\begin{proof}
	We have the Etesse-Le Stum trace formula (cf. \cite[Thm. 6.3]{etesse1993fonctions})
	$$L(S,\cE^{2d-2,\dagger},t)=\prod_{i=0}^{2m}\det(1-tF|H^i_{rig,c}(S,\cE^{2d-2,\dagger}))^{(-1)^{i+1}}$$
	In fact, $\cE^{2d-2,\dagger}$ is pure of weight $2d-2$ as purity is defined pointwise, thus, by \cite[Thm. 5.3.2]{kedlaya2006fourier}, for $i<2m$ the cohomology $H^i_{rig,c}(S,\cE^{2d-2,\dagger})$ is mixed of weight $\leq 2d-2+i<2(d+m-1)$. Therefore, only the $2m$-th factor contribute to the order of the pole at $t=q^{-d-m+1}$. Let $\alpha_i$ are the reciprocal roots of the characteristic polynomial of $F$ acting on $H^{2m}_{rig,c}(S,\cE^{2d-2,\dagger})$. Note that the category of overconvergent $F$-isocrystals form a Tannakian category (cf. \cite{crew1992f}), in particular, it is closed under dual. Since in the category of $F$-isocrystals, $R^2f_{\Ogus*}\cO_{\sX/K}$ is dual to $R^{2d-2}f_{\Ogus*}\cO_{\sX/K}$ and passing to $F$-isocrystals is a fully faithful functor, we get $\cE^{2,\dagger}\cong \underline{\Hom}_{\Fisocd}(\cE^{2d-2,\dagger},\cO^{\dagger}_S(d))$. The Poincare duality of rigid cohomology (cf. \cite[Thm. 1.2.3]{kedlaya2006finiteness}) tells us that the reciprocal roots of the characteristic polynomial of $F$ acting on $H^0_{\rig}(S,\cE^{2,\dagger})$ are $q^{d+s}/\alpha_i$. Thus the order of vanishing of the $L$-function at $t=q^{-d-m+1}$ is the dimension of the generalized eigenspace of $F-p$.
	\end{proof}
	Since $\cH^2(X)^{F=p}\subseteq W$, $\dim_K\cH^2(X)^{F=p}\leq \rho$. If $(b)$ holds, the injective map $\eta_X^1$ must be surjective. Assume $(a)$ is true, then $\cH^2(X)^{F=p}\subseteq W$ is a direct summund respecting action of $F$ and contains all eigenvectors of $F$. It implies $\cH^2(X)^{F=p}=W$. 
	 \end{proof}

\section{The rigid Leray spectral sequence via the overconvergent site (By Veronika Ertl)} \label{appendixB}

Let $k$ be a perfect field of characteristic $p>0$, $W(k)$ its ring of Witt vectors and $K$ its fraction field. 
These data define a good overconvergent variety 
$(k,K):=(\Spec(k)\hookrightarrow\Spf(W(k))\leftarrow\Spa(K))$ \cite[Defs.\,2.2.1, 2.5.1]{LeStum_2011}. 
To an algebraic $k$-variety $X$, one can associate the good overconvergent site $(X/K)^\dagger$  (see \cite[Defs.\,2.4.11, 2.5.13]{LeStum_2011} and the paragraphs after them), 
equipped with a structure sheaf $\sO^\dagger_{X/K}$ given by the restriction of the structure sheaf of $(W(k)/K)^\dagger$ \cite[Cor.\,3.3.3]{LeStum_2011}. 
The objects in this category are overconvergent varieties over $X$. 
These are diagrams $(Y \hookrightarrow \fY \xleftarrow{\ssp} \fY_K \xleftarrow{\lambda} \fQ)/X$, where $Y \hookrightarrow \fY$ is a formal embedding over $W(k)$, $\ssp:\fY_K\rightarrow \fY$ is the specialisation map from the generic fibre and $\lambda: \fQ\rightarrow \fY_K$ is a morphism of analytic varieties over $K$, together with a morphism $Y\rightarrow X$. 

\begin{example}
The notion of overconvergent varieties is a generalisation of the notion of frames which is usually used in the calculus of rigid cohomology. 
Recall that a triple $(Y,\overline{Y},\fY)$ is a $W(k)$-frame, if $Y\hookrightarrow \overline{Y}$ is an open immersion of $k$-varieties and $\overline{Y} \hookrightarrow \fY$ is a closed immersion into a formal $W(k)$-scheme. 
One can consider the tubes $]Y[_{\fY}$ and $]\overline{Y}[_{\fY}$ of $Y$ and $\overline{Y}$ inside $\fY$ \cite[Def.\,1.3]{berth-rig}. 

A $W(k)$-frame is called proper, if $\overline{Y}$ is proper. 
In that case, on the one hand by \cite[Ex.\,3.5]{ertl2024berthelotsconjecturehomotopytheory} the strict inclusion of analytic varieties $]Y[_{\fY}\Subset ]\overline{Y}[_{\fY}$ defines a dagger structure on $]Y[_{\fY}$ \cite[Def.\,3.3]{LBV}. 
On the other hand, $(Y \hookrightarrow \fY \xleftarrow{\ssp} \fY_K \leftarrow ]\overline{Y}[_{\frY})$ is an overconvergent variety. 
Moreover, according to \cite[Cor.\,2.3.15]{LeStum_2011} in the overconvergent site $(W(k)/K)^\dagger$ any overconvergent variety is isomorphic to one of this form. 

More generally, we can consider $W(k)$-frames $(Y,\overline{Y},\fY)/X$ over $X$, meaning, they come equipped with a morphism $Y\rightarrow X$. 
Similar to above, a proper $W(k)$-frame $(Y,\overline{Y},\fY)/X$ over $X$ gives rise to an overconvergent variety $(Y \hookrightarrow \fY \xleftarrow{\ssp} \fY_K \leftarrow ]\overline{Y}[_{\fY})/X$ over $X$, and up to isomorphism, any overconvergent variety over $X$ is of this form. 

Thus up to isomorphism, there is an equivalence between overconvergent varieties in  $(X/K)^\dagger$ and proper $W(k)$-frames over $X$. 
\end{example}

\begin{rem}
Note that in this note, we only use the notion of the ``good'' overconvergent site. 
So we omit the subscript $g$ that appears in \cite{LeStum_2011}, also in the different coefficient categories.
\end{rem}

\begin{rem}
Sometimes a sheaf on $(X/K)^\dagger$ is called an overconvergent sheaf on $X/K$, 
and an $\sO^\dagger_{X/K}$-module is called an overconvergent module on $X/K$ etc.\,\cite[Def.\,3.3.4]{LeStum_2011}.
\end{rem}

In \cite[Defs.\,3.3.4, 3.3.7, Prop.\,3.3.11]{LeStum_2011} Le Stum introduces a category of coefficients on $(X/K)^\dagger$ which correspond to the category of overconvergent isocrystals on $X/K$ \cite[Def.\,8.1.3]{lestumbook} for rigid cohomology. We recall both here.  

\begin{defn}
For an algebraic $k$-variety $X$, denote by $\Crys^\dagger(X/K)$ the category of crystals on $(X/K)^\dagger$, that is the category of $\sO^\dagger_{X/K}$-modules where all the transition maps are isomorphisms. 
Let $\Mod^\dagger_{\ffp}(X/K)$ be the full sub category of $\Crys^\dagger(X/K)$ consisting of finitely presented modules. 
Recall that $\sE\in\Crys^\dagger(X/K)$ is finitely presented if every realisation 
$\sE_{\frQ}$ is a coherent $i_Y^{-1}\sO_{\frQ}$-module \cite[Prop.\,3.3.11]{LeStum_2011} with $i_Y:]Y[_{\frQ}\hookrightarrow \frQ$ for an overconvergent variety $(Y \hookrightarrow \frY \xleftarrow{\ssp} \frY_K \xleftarrow{\lambda} \frQ)/X$. 
\end{defn}

\begin{defn}
An overconvergent isocrystal over $X/K$ consists of the following data:
\begin{enumerate}
    \item 
    For every morphism of $W(k)$-frame $(Y,\overline{Y},\frY)$ over $X$ 
    a coherent $j^\dagger_{Y}\mathcal{O}_{]\overline{Y}[_{\frY}}$-module $E_{\frY}$.
    \item 
    For every morphism of $W(k)$-frames $f:(Y',\overline{Y}',\frY')\rightarrow (Y,\overline{Y},\frY)$
    over $X$ an isomorphism
    $$
    \phi_f: f^\ast E_{\frY} \cong E_{\frY'}
    $$
    satisfying the cocycle condition.
\end{enumerate}
We denote by $\isoc^\dagger(X/K)$ the category of finitely presented overconvergent isocrystals on $X/K$.
\end{defn}

\begin{thm}
There is a canonical equivalence of categories 
$$
\Mod^\dagger_{\ffp}(X/K) \cong \isoc^\dagger(X/K).
$$
\end{thm}
\begin{proof}
This is \cite[Thm.\,4.6.7]{LeStum_2011}. 
It is possible to describe the functor explicitly. 
Recall that any sheaf $\sF$ on $(X/K)^\dagger$ is uniquely determined by its realisations $\sF_{\frQ}$ \cite[Def.\,3.1.8]{LeStum_2011} on every overconvergent variety $(Y\hookrightarrow\frY\leftarrow\frY_K \leftarrow \frQ)$ over $X$ subject to the cocycle condition \cite[Prop.\,3.1.9]{LeStum_2011}. 
Each realisation is a sheaf on $]Y[_{\frQ}$. 
But by \cite[Cor.\,2.3.15]{LeStum_2011}, every overconvergent variety is equivalent to a $W(k)$-frame $(Y,\overline{Y},\frY)$. 
Thus $\sF$ on $(X/K)^\dagger$ is equivalent to a family of sheaves on the tubes $]\overline{Y}[_{\frY}$ subject to the cocycle condition. 
Now if $\sF$ is in $\Mod^\dagger_{\ffp}(X/K)\subset \Crys^\dagger(X/K)$ the transition maps are isomorphisms.  
Moreover it is finitely presented. This means, by \cite[Cor.\,3.2.13]{LeStum_2011} $\sE\in\Mod^\dagger_{\ffp}(X/K)$ that for every proper frame $(Y,\overline{Y},\frY)$, the realisation $\sE_{]\overline{Y}[_{\frY}}:= \sE_{\frY}$ is a coherent $j^\dagger_{Y}\mathcal{O}_{]\overline{Y}[_{\frY}}$-module. 
Thus the functor is given by sending $\sE$ to all its realisations on proper frames. 
\end{proof}

According to \cite[Cor.\,4.6.8]{LeStum_2011}, this allows us to compute the rigid cohomology of an overconvergent isocrystal $\sE$ on  $X/K$:
\begin{cor}
For $\sE\in\isoc^\dagger(X/K)$, there is a canonical isomorphism
$$
H^\ast_\rig(X/K,\sE)\cong H^\ast((X/K)^\dagger,\sE). 
$$ 
In particular, for the structure sheaf which corresponds to the trivial isocrystal, 
one has
$$
H^\ast_\rig(X/K)=H^\ast((X/K)^\dagger,\sO^\dagger_{X/K}).
$$
\end{cor}

Assume  now given a morphism of algebraic varieties $\rho:X\rightarrow  S$. 
As explained in the paragraph after \cite[Def.\,2.4.11]{LeStum_2011}, 
it induces a morphism of overconvergent sites 
$\rho: (X/K)^\dagger\rightarrow  (S/K)^\dagger$.  
Now Grothendieck's formalism provides a Leray spectral sequence \cite[\href{https://stacks.math.columbia.edu/tag/0731}{Lem.\,0730}]{stacks-project}
\begin{equation}\label{equ: Leray1}
E^{i,j}_2=H^j((S/K)^\dagger, R^i\rho_\ast\sO^\dagger_{X/K}) \Longrightarrow 
H^{i+j}((X/K)^\dagger,\sO^\dagger_{X/K}).
\end{equation}
As we have seen, we can identify the abutment of this spectral sequence with the rigid cohomology $H_\rig^{i+j}(X/K)$. 
In case that $\rho$ is proper and smooth we can compute the $E_2$-sheet in terms of rigid cohomology, too. 

\begin{prop}\label{ertlberthelot}
Let $\rho:X\rightarrow S$ be a smooth and proper morphism of algebraic varieties. 
For any $n\in\bN$, $R^n\rho_\ast\sO^\dagger_{X/K}$ is a finitely presented overconvergent module on $(S/K)^\dagger$. 
In particular, there is an overconvergent isocrystal $R^n\rho_{\rig\ast}(X/S) \in \isoc^\dagger(S/K)$ which corresponds to $R^n\rho_\ast\sO^\dagger_{X/K}$ under the equivalence 
$$
\Mod^\dagger_{\ffp}(S/K) \cong \isoc^\dagger(S/K).
$$
\end{prop}

\begin{proof}
According to the second part of \cite[Thm.\,4.6.7]{LeStum_2011}, the realisation of $R^n\rho_\ast\sO^\dagger_{X/K}$ on a proper frame $(T,\overline{T},\frT)$ over $S$ is given by Berthelot's rigid push-forward $R^n\rho_{\rig,\ast}(X/(T,\overline{T},\frT))$ \cite[Rem.\,(2.5)c)]{berth-rig} (compare \cite[\S1.1]{ertl2024berthelotsconjecturehomotopytheory}). 
But by \cite[Cor.\,3.37]{ertl2024berthelotsconjecturehomotopytheory} this is the realisation of a canonical overconvergent isocrystal over $S$.
\end{proof}

\begin{cor}\label{appendspectral}
Let $\rho:X\rightarrow  S$ be a proper smooth morphism of algebraic varieties. 
There is a spectral sequence
$$
E^{i,j}_2=H^j_{\rig}(S/K,R^i\rho_{\rig\ast}(X/S)) \Rightarrow H_\rig^{i+j}(X/K)
$$
of finite dimensional $K$-vector spaces compatible with Frobenius action. 
\end{cor}

\begin{proof}
It only remains to note that $R^i\rho_{\rig\ast}(X/S)$ is naturally an $F$-isocrystal, then functoriality implies the claim.
\end{proof}

\begin{rem}
The canonical functor 
$$
\Fisocd(S/K) \rightarrow \Fisoc(S/K)
$$
which is fully faithful by \cite{kedlaya2004full} maps the overconvergent $F$-isocrystal $R^n\rho_{\rig\ast}(X/S)$ to Ogus convergent $F$-isocrystal $R^n\rho_{\Ogus\ast}(X/S)$ (see \cite[Corollary 2.34]{shiho_relative1}). 
In particular,  $R^n\rho_{\rig\ast}(X/S)$ coincides with  the overconvergent $F$-isocrystal obtained by Ambrosi \cite{ambrosi2021specializationneronseverigroupspositive}. 
Note that the latter is constructed in a way that does not allow to identify all its realisations on proper $W(k)$-frames over $S$ explicitly (compare \cite[Rem.\,4.9]{shiho_relative1}). 
Thus a priori, it cannot be identified  with $R^n\rho_\ast\sO^\dagger_{X/K}$ under the equivalence $\Mod^\dagger_{\ffp}(S/K) \cong \isoc^\dagger(S/K)$, but with the above argumentation this becomes clear. 
\end{rem}

\section* {Acknowledgements}
We would like to thank Marco D'Addezio and Alexie N.Skorobogatov for helpful comments, questions and discussions on the first version of the paper. The first named author would like to thank Matthew Morrow for important communications on $F$-crystals and log-convergent site; to the second named author for bringing attention to this problem.  The second-named author would like to express gratitude to Xinyi Yuan for introducing him to the questions on Brauer groups and for his guidance. Additionally, the second-named author extends thanks to Veronika Ertl and Timo Keller for important discussions. Veronika Ertl and Timo Keller were in the early stage of this project.

%\bigskip
%%%%%%%%%%%%%%%%%%%%%%%%%%%%%%%%%%%%%%%%%%%%%%%%%%%%%%%%%%%%%%%%%%%%%%
%%%%%%%%%%%%% bibliography
%%%%%%%%%%%%%%%%%%%%%%%%%%%%%%%%%%%%%%%%%%%%%%%%%%%%%%%%%%%%%%%%%%%%%%
\bibliographystyle{plain}
\bibliography{ptorsionbib}

\begin{thebibliography}{10}

\bibitem{ambrosi2021specializationneronseverigroupspositive}
Emiliano Ambrosi.
\newblock Specialization of {N{\'e}ron}-{Severi} groups in positive characteristic.
\newblock {\em Ann. Sci. {\'E}c. Norm. Sup{\'e}r. (4)}, 56(3):665--711, 2023.

\bibitem{crys1motive}
Fabrizio Andreatta and Luca Barbieri-Viale.
\newblock Crystalline realizations of 1-motives.
\newblock {\em Mathematische Annalen}, 331, 03 2003.

\bibitem{Bauer1992}
Werner Bauer.
\newblock On the conjecture of {B}irch and {S}winnerton-{D}yer for abelian varieties over function fields in characteristic p $>$ 0.
\newblock {\em Inventiones mathematicae}, 108(2):263--288, 1992.

\bibitem{berth-rig}
Pierre Berthelot.
\newblock G\'eom\'etrie rigide et cohomologie des vari\'et\'es alg\'ebriques de caract\'eristique $p$.
\newblock In Daniel Barsky and Philippe Robba, editors, {\em Introductions aux cohomologies $p$-adiques}, number~23 in M\'emoires de la Soci\'et\'e Math\'ematique de France, pages 7--32. Soci\'et\'e math\'ematique de France, 1986.

\bibitem{berthelot1996cohomologie}
Pierre Berthelot.
\newblock {\em Cohomologie rigide et cohomologie rigide {\`a} supports propres}.
\newblock Universit{\'e} de Rennes 1. Institut de Recherche Math{\'e}matique de Rennes [IRMAR], 1996.

\bibitem{berthelot2006cohomologie}
Pierre Berthelot.
\newblock {\em Cohomologie Cristalline des Schemas de Caracteristique p $>$ 0}, volume 407.
\newblock Springer, 2006.

\bibitem{berthelot2006theorie}
Pierre Berthelot, Lawrence Breen, and William Messing.
\newblock {\em Th{\'e}orie de {D}ieudonn{\'e} cristalline {II}}, volume 930.
\newblock Springer, 2006.

\bibitem{berthelot2007theorie}
Pierre Berthelot and William Messing.
\newblock Th{\'e}orie de {D}ieudonn{\'e} cristalline {III}: th{\'e}oremes d’{\'e}quivalence et de pleine fid{\'e}lit{\'e}.
\newblock {\em The Grothendieck Festschrift: A Collection of Articles Written in Honor of the 60th Birthday of Alexander Grothendieck}, pages 173--247, 2007.

\bibitem{BerthelotOgus1978}
Pierre Berthelot and Arthur Ogus.
\newblock {\em Notes on Crystalline Cohomology}.
\newblock Princeton University Press, Princeton, 1978.

\bibitem{CHT2017annmath}
Anna Cadoret, Chun-Yin Hui, and Akio Tamagawa.
\newblock Geometric monodromy -- semisimplicity and maximality.
\newblock {\em Ann. Math. (2)}, 186(1):205--236, 2017.

\bibitem{CHT}
Anna Cadoret, Chun~Yin Hui, and Akio Tamagawa.
\newblock {$\QQ_{\ell}$}-versus {$\FF_{\ell}$}-coefficients in the {G}rothendieck-{S}erre/{T}ate conjectures.
\newblock {\em Israel J. Math.}, 257(1):71--101, 2023.

\bibitem{CTS2}
Jean-Louis Colliot-Th{\'e}l{\`e}ne and Alexei~N. Skorobogatov.
\newblock {\em The {B}rauer-{G}rothendieck {G}roup}, volume~71.
\newblock Springer, Cham, 2021.

\bibitem{CTS1}
J.-L. Colliot-Thélène and A.~N. Skorobogatov.
\newblock Descente galoisienne sur le groupe de {B}rauer.
\newblock {\em J. reine angew. Math.}, 682:141--165, 2013.

\bibitem{crew1992f}
Richard Crew.
\newblock {F}-isocrystals and their monodromy groups.
\newblock In {\em Annales scientifiques de l'Ecole normale sup{\'e}rieure}, volume~25, pages 429--464, 1992.

\bibitem{daddezio2022heckeorbitsshimuravarieties}
Marco D'Addezio and Pol van Hoften.
\newblock Hecke orbits on {S}himura varieties of {H}odge type, 2024.
\newblock \url{https://webusers.imj-prg.fr/~marco.d-addezio/Hecke.pdf}.

\bibitem{de1998homomorphisms}
Aise~Johan de~Jong.
\newblock Homomorphisms of {B}arsotti-{T}ate groups and crystals in positive characteristic.
\newblock {\em Inventiones mathematicae}, 134:301--333, 1998.

\bibitem{de1995crystalline}
Arthur~J de~Jong.
\newblock Crystalline {D}ieudonn{\'e} module theory via formal and rigid geometry.
\newblock {\em Publications Math{\'e}matiques de l'IH{\'E}S}, 82:5--96, 1995.

\bibitem{DAd}
M.~D’Addezio.
\newblock Boundedness of the p-primary torsion of the {B}rauer group of an abelian variety.
\newblock {\em Compositio Mathematica}, 160(2):463--480, 2024.

\bibitem{EKQ}
V.~Ertl, T.~Keller, and Y.~Qin.
\newblock Comparsion of different {T}ate conjectures.
\newblock Preprint at \url{http://arxiv.org/abs/2012.01337v2}.

\bibitem{ertl2024berthelotsconjecturehomotopytheory}
Veronika Ertl and Alberto Vezzani.
\newblock Berthelot's conjecture via homotopy theory.
\newblock {\em arXiv:2406.02182}, 2024.

\bibitem{etesse1993fonctions}
Jean-Yves {\'E}tesse and Bernard Le~Stum.
\newblock Fonctions {L} associ{\'e}es aux {F}-isocristaux surconvergents: {I}. {I}nterpr{\'e}tation cohomologique.
\newblock {\em Mathematische Annalen}, 296:557--576, 1993.

\bibitem{Gabbertorsion1983}
Ofer Gabber.
\newblock Sur la torsion dans la cohomologie {{\(\ell\)}}-adique d'une vari{\'e}t{\'e}. (on the torsion in the {{\(\ell\)}}-adic cohomology of a variety.).
\newblock {\em C. R. Acad. Sci., Paris, S{\'e}r. I}, 297:179--182, 1983.

\bibitem{gros1985classes}
Michel Gros.
\newblock Classes de {C}hern et classes de cycles en cohomologie de {H}odge-{W}itt logarithmique.
\newblock {\em M{\'e}moires de la Soci{\'e}t{\'e} Math{\'e}matique de France}, 21:1--87, 1985.

\bibitem{FGA}
A~Grothendieck.
\newblock Technique de descente et theoremes d'existence en geometrie algebrique. v, vi: Les schemas de picard. sm. bourbaki 1961/62, exp 232, 236.
\newblock {\em Fondements de la geometrie algebrique. Extraits du Sm. Bourbaki}, 62, 1957.

\bibitem{Gro3}
A~Grothendieck.
\newblock Le groupe de {Brauer} {III}: exemples et compl{\'e}ments.
\newblock {\em Dix Exposes sur la Cohomologie des Schemas, Masson et Cie}, 1968.

\bibitem{illusie1975report}
Luc Illusie.
\newblock Report on crystalline cohomology.
\newblock In {\em Proc. Symp. Pure Math}, volume~29, pages 459--478, 1975.

\bibitem{Ill}
Luc Illusie.
\newblock Complexe de de {R}ham-{W}itt.
\newblock {\em Ast{\'e}risque}, 63:83--112, 1979.

\bibitem{Kat}
Nicholas Katz.
\newblock Slope filtration of {F}-crystals.
\newblock {\em Ast{\'e}risque}, 63(1):979, 1979.

\bibitem{katz1974some}
Nicholas~M Katz and William Messing.
\newblock Some consequences of the {R}iemann hypothesis for varieties over finite fields.
\newblock {\em Inventiones mathematicae}, 23(1):73--77, 1974.

\bibitem{kedlaya2004full}
Kiran~S Kedlaya.
\newblock Full faithfulness for overconvergent {F}-isocrystals.
\newblock {\em Geometric aspects of Dwork theory}, 2:819--835, 2004.

\bibitem{kedlaya2006finiteness}
KIRAN~S KEDLAYA.
\newblock Finiteness of rigid cohomology with coefficients.
\newblock {\em DUKE MATHEMATICAL JOURNAL}, 134(1), 2006.

\bibitem{kedlaya2006fourier}
Kiran~S Kedlaya.
\newblock Fourier transforms and p-adic ‘{W}eil {II}’.
\newblock {\em Compositio Mathematica}, 142(6):1426--1450, 2006.

\bibitem{kedlaya2016notes}
Kiran~S Kedlaya.
\newblock Notes on isocrystals.
\newblock {\em arXiv preprint arXiv:1606.01321v6}, 2016.

\bibitem{KrPa}
Raju Krishnamoorthy and Ambrus P{\'a}l.
\newblock Rank 2 local systems and abelian varieties.
\newblock {\em Selecta Mathematica}, 27(4):51, 2021.

\bibitem{2024boundednesspprimarytorsionbrauer}
Christopher~D. Lazda and Alexei~N. Skorobogatov.
\newblock Boundedness of the p-primary torsion of the {B}rauer groups of {K}3 surfaces.
\newblock {\em arXiv:2407.07989}.
\newblock Preprint at \url{https://arxiv.org/abs/2407.07989}.

\bibitem{LBV}
Arthur-C\'{e}sar Le~Bras and Alberto Vezzani.
\newblock The de~{R}ham--{F}argues--{F}ontaine cohomology.
\newblock {\em Algebra Number Theory}, 17(12):2097--2150, 2023.

\bibitem{lestumbook}
Bernard Le~Stum.
\newblock {\em Rigid Cohomology}.
\newblock Cambridge Tracts in Mathematics. Cambridge University Press, 2007.

\bibitem{LeStum_2011}
Bernard {Le\,Stum}.
\newblock {\em The overconvergent site}, volume 127 of {\em M\'emoires de la Soci\'et\'e Math\'ematique de France}.
\newblock Société Mathématique de France, 2011.

\bibitem{Maulik-Poonen2012}
Davesh Maulik and Bjorn Poonen.
\newblock {Néron–Severi groups under specialization}.
\newblock {\em Duke Mathematical Journal}, 161(11):2167 -- 2206, 2012.

\bibitem{Ma-Ro}
B~Mazur and L~Roberts.
\newblock Local {E}uler characteristics.
\newblock {\em Inventiones mathematicae}, 9:201--234, 1970.

\bibitem{messing1982theorie}
P~BERTHELOT-W MESSING.
\newblock Th{\'e}orie de {D}ieudonn{\'e} cristalline {I}.
\newblock {\em Ast{\'e}ristique}, 63:17--38, 1982.

\bibitem{Mil3}
James~S Milne.
\newblock {\em Etale cohomology (PMS-33)}.
\newblock Princeton university press, 1980.

\bibitem{Mor}
M.~Morrow.
\newblock A {V}ariational {T}ate {C}onjecture in crystalline cohomology.
\newblock {\em Journal of the European Mathematical Society}, 21(11):3467--3511, 2019.

\bibitem{ogus1984f}
Arthur Ogus.
\newblock F-isocrystals and {D}e {R}ham cohomology {II}--convergent isocrystals.
\newblock {\em Duke Mathematical Journal (C)}, 1984.

\bibitem{Pal}
Ambrus P{\'a}l.
\newblock The $ p $-adic monodromy group of abelian varieties over global function fields of characteristic $ p$.
\newblock {\em Documenta Mathematica}, 27:1509--1579, 2022.

\bibitem{Qin3}
Y.~Qin.
\newblock On geometric {B}rauer groups and {T}ate-{S}hafarevich groups.
\newblock Preprint at \url{https://arxiv.org/abs/2012.01681}.

\bibitem{Qin2}
Y.~Qin.
\newblock On the {B}rauer groups of fibrations {II}.
\newblock Preprint at \url{https://arxiv.org/abs/2103.06910}.

\bibitem{Qin1}
Yanshuai Qin.
\newblock On the {B}rauer groups of fibrations.
\newblock {\em Mathematische Zeitschrift}, 307(1):1--20, 2024.

\bibitem{shiho_relative1}
Atsushi Shiho.
\newblock Relative log convergent cohomology and relative rigid cohomology {I}.
\newblock arXiv:0707.1742 [math.NT], 2008.

\bibitem{SZ1}
A.N. Skorobogatov and Y.G. Zarkhin.
\newblock A finiteness theorem for the {B}rauer group of {A}belian varieties and {K}3 surfaces.
\newblock {\em Journal of Algebraic Geometry}, 17(3):481--502, 2008.

\bibitem{SZ2}
A.N. Skorobogatov and Y.G. Zarkhin.
\newblock A finiteness theorem for the {B}rauer group of {K}3 surfaces in odd characteristic.
\newblock {\em International Mathematics Research Notices}, (21):11404--11418, 2015.

\bibitem{stacks-project}
The {Stacks project authors}.
\newblock The stacks project.
\newblock \url{https://stacks.math.columbia.edu}, 2024.

\bibitem{tamme2012introduction}
G{\"u}nter Tamme.
\newblock {\em Introduction to {\'e}tale cohomology}.
\newblock Springer Science \& Business Media, 2012.

\bibitem{trihan2018comparisontheoremsemiabelianschemes}
Fabien Trihan and David Vauclair.
\newblock A comparison theorem for semi-abelian schemes over a smooth curve, 2018.
\newblock arXiv:1505.02942v2, \url{https://arxiv.org/abs/1505.02942}.

\bibitem{voevodsky1995nilpotence}
Vladimir Voevodsky.
\newblock A nilpotence theorem for cycles algebraically equivalent to zero.
\newblock {\em International Mathematics Research Notices}, 1995(4):187--199, 1995.

\bibitem{Yua3}
X.~Yuan.
\newblock Weak {L}efschetz {T}heorems for {B}rauer groups.
\newblock 2019.
\newblock Unpublished note.

\bibitem{Yua2}
X.~Yuan.
\newblock Comparison of arithmetic {B}rauer groups with geometric {B}rauer groups.
\newblock 2021.
\newblock Preprint at \url{https://arxiv.org/abs/2011.12897v2}.

\bibitem{Zar1}
Y.~G. Zarhin.
\newblock Endomorphisms of {A}belian varieties over fields of finite characteristic.
\newblock {\em Izv. Akad. Nauk SSSR Ser. Mat.}, 39:272--277, 1975.
\newblock Translation in Math. USSR Izv. \textbf{9}, 255-260.

\bibitem{Zar2}
Y.~G. Zarhin.
\newblock Abelian varieties in characteristic $p$.
\newblock {\em Mat. Zametki}, 19:393--400, 1976.
\newblock Translation in Math. Notes \textbf{19}, 240-244.

\bibitem{Ces}
K.~Česnavičius.
\newblock Purity for the {B}rauer group.
\newblock {\em Duke Mathematical Journal}, 168(8):1461--1486, 2019.

\end{thebibliography}

   \Addresses
%------------------------------------------------------
\end{document}